\title[Coupled equations for K\"ahler metrics and connections]{Coupled
equations for K\"ahler metrics\\ and Yang--Mills connections}
\author[L. \'Alvarez-C\'onsul]{Luis \'Alvarez-C\'onsul}
  \address{Instituto de Ciencias Matem\'aticas (CSIC-UAM-UC3M-UCM)\\ Nicol\'as
    Cabrera 13--15, Cantoblanco\\ 28049 Madrid, Spain}
  \email{l.alvarez-consul@icmat.es}
\author[M. Garcia-Fernandez]{Mario Garcia-Fernandez}
  \address{\'Ecole Polytechnique F\'ed\'eral de Lausanne\\ EPFL SB MATHGEOM GEOM\\ MA B1 437, Station 8\\ CH-1015 Lausanne, Switzerland}
  \email{mario.garcia@epfl.ch}
\author[O. Garc\'ia-Prada]{Oscar Garc\'ia-Prada}
  \address{Instituto de Ciencias Matem\'aticas (CSIC-UAM-UC3M-UCM)\\ Nicol\'as
    Cabrera 13--15, Cantoblanco\\ 28049 Madrid, Spain}
  \email{oscar.garcia-prada@icmat.es}
\thanks{The first and the third authors are partially supported by the
  Spanish Ministerio de Econom\'ia y Competitividad (MINECO)
  under grant~MTM2010-17717. The initial work of LAC was supported by
  the Spanish ``Programa Ram\'on y Cajal''. Partial support of LAC was
  also provided by CSIC research grant~200950I027. The initial work of
  MGF was supported by an I3P grant of the Consejo Superior de
  Investigaciones Cient\'ificas. Subsequent support of MGF was
  provided by QGM (Centre for Quantum Geometry of Moduli Spaces),
  funded by the Danish National Research Foundation and by the EPFL
  (\'Ecole Polytechnique F\'ed\'eral de Lausanne). MGF wishes also to
  thank the Max Planck Institute for Mathematics in Bonn ---that he
  was visiting while part of this research was carried out--- and the
  SFB 647 project (Humboldt University, Berlin) for financial support.
}
\theoremstyle{plain}
\newtheorem{theorem}{Theorem}[section]
\newtheorem{lemma}[theorem]{Lemma}
\newtheorem{corollary}[theorem]{Corollary}
\newtheorem{proposition}[theorem]{Proposition}
\theoremstyle{definition}
\newtheorem{definition}[theorem]{Definition}
\newtheorem{definition-theorem}[theorem]{Definition-Theorem}
\newtheorem{example}[theorem]{Example}
\newtheorem*{acknowledgements}{Acknowledgements}
\theoremstyle{remark}
\newtheorem{remark}[theorem]{Remark}
\newcommand{\secref}[1]{\S\ref{#1}}
\newcommand{\secrefs}[1]{\S\S\ref{#1}}
\numberwithin{equation}{section} \setcounter{tocdepth}{1}
\newcommand{\tr}{\operatorname{tr}}
\newcommand{\pr}{p}%{\operatorname{pr}}
\newcommand{\Id}{\operatorname{Id}}
\newcommand{\End}{\operatorname{End}}
\newcommand{\Hom}{\operatorname{Hom}}
\newcommand{\ad}{\operatorname{ad}}
\newcommand{\Ad}{\operatorname{Ad}}
\newcommand{\Aut}{\operatorname{Aut}}
\newcommand{\dbar}{\bar{\partial}}
\newcommand{\imag}{\mathop{{\fam0 {\textbf{i}}}}\nolimits}
\renewcommand{\AA}{{\mathbb A}}
\newcommand{\CC}{{\mathbb C}}
\newcommand{\PP}{{\mathbb P}}
\newcommand{\RR}{{\mathbb R}}
\newcommand{\ZZ}{{\mathbb Z}}
\renewcommand{\(}{\left(}
\renewcommand{\)}{\right)}
\newcommand{\vol}{\operatorname{vol}}
\newcommand{\Vol}{\operatorname{Vol}}
\newcommand{\defeq}{\mathrel{\mathop:}=} %{\coloneqq} %{:=}
\newcommand{\surj}{\to\kern-1.8ex\to}
\newcommand{\lto}{\longrightarrow}
\newcommand{\lra}[1]{\stackrel{#1}{\longrightarrow}}
\renewcommand{\implies}{\Rightarrow}
\newcommand{\cA}{\mathcal{A}}
\newcommand{\cC}{\mathcal{C}}
\newcommand{\cJ}{\mathcal{J}}
\newcommand{\cJi}{\mathcal{J}^{i}}
\newcommand{\cK}{\mathcal{K}}
\newcommand{\cM}{\mathcal{M}}
\newcommand{\cP}{\mathcal{P}}
\newcommand{\cF}{\mathcal{F}}
\newcommand{\cV}{\mathcal{V}}
\newcommand{\XX}{\mathcal{X}}
\newcommand{\cY}{\mathcal{Y}}
\newcommand{\cZ}{\mathcal{Z}}
\newcommand{\cG}{\mathcal{G}}
\newcommand{\cO}{\mathcal{O}}
\newcommand{\cR}{\mathcal{R}}
\newcommand{\Lie}{\operatorname{Lie}}
\newcommand{\LieG}{\operatorname{Lie} \cG}
\newcommand{\cX}{{\widetilde{\mathcal{G}}}}
\newcommand{\LieX}{\operatorname{Lie} \cX}
\newcommand{\cH}{\mathcal{H}} %\newcommand{\cH}{\operatorname{Ham}(X)}
\newcommand{\LieH}{\Lie\cH}
\newcommand{\GL}{\operatorname{GL}}
\newcommand{\U}{\operatorname{U}}
\newcommand{\SU}{\operatorname{SU}}
\newcommand{\PU}{\operatorname{PU}}
\newcommand{\Diff}{\operatorname{Diff}}
\newcommand{\cB}{\mathcal{B}}
\renewcommand{\Re}{\operatorname{Re}}
\renewcommand{\Im}{\operatorname{Im}}
\newcommand{\LieGamma}{\operatorname{Lie} \Gamma}
\newcommand{\cbI}{{\boldsymbol{I}}}
\begin{document}

\begin{abstract}
  We study equations on a principal bundle over a compact complex
  manifold coupling a connection on the bundle with a K\"ahler
  structure on the base. These equations generalize the conditions of
  constant scalar curvature for a K\"ahler metric and
  Hermite--Yang--Mills for a connection. We provide a moment map
  interpretation of the equations and study obstructions for the
  existence of solutions, generalizing the Futaki invariant, the
  Mabuchi K-energy and geodesic stability. We finish by giving some
  examples of solutions.
\end{abstract}

\maketitle

\tableofcontents

\section*{Introduction}

In this paper we consider a system of partial differential equations
coupling a K\"ahler metric on a compact complex manifold and a
connection on a principal bundle over it. These equations, inspired by
the Hitchin--Kobayashi correspondence for bundles and the
Yau--Tian--Donaldson conjecture for constant scalar curvature
K\"{a}hler (cscK) metrics, intertwine the curvature of a
Hermitian--Yang--Mills (HYM) connection on the bundle and the scalar
curvature of a K\"ahler metric on the manifold.

To write our equations explicitly, let $X$ be a smooth compact
manifold and let $G$ be a compact real Lie group with Lie algebra
$\mathfrak{g}$. Let $E$ be a principal $G$-bundle over $X$. We fix a
positive definite inner product $(\cdot,\cdot)$ on $\mathfrak{g}$
invariant under the adjoint representation. Let $\Omega^k$ be the
space of smooth $k$-forms on $X$.  Considering the space $\Omega^k(\ad
E)$ of smooth $k$-forms on $X$ with values in the adjoint bundle $\ad
E$, the inner product of $\mathfrak{g}$ induces a pairing
\begin{equation}
\label{eq:Pairing-Intro}
\Omega^p(\ad E) \times \Omega^q(\ad E) \lto \Omega^{p+q},
\end{equation}
that we write simply as $a_p \wedge a_q$ for any $a_j \in \Omega^j(\ad
E)$, $j = p,q$. The unknown variables of the equations are a K\"ahler
structure $(g,\omega,J)$ on the base $X$ and a connection $A$ on $E$,
where $g$, $\omega$ and $J$ are respectively the metric, the
symplectic form and the complex structure. We will say that a K\"ahler
structure $(g,\omega,J)$ on $X$ and a connection $A$ on $E$ satisfy
the \emph{coupled K\"ahler--Yang--Mills equations} with coupling constants $\alpha_0,
\alpha_1 \in \RR$ if
\begin{equation}
\label{eq:CYMeq00}
\left. \begin{array}{l}
    \Lambda F_A = z\\
    \alpha_0 S_g \; + \; \alpha_1 \Lambda^2 (F_A \wedge F_A) = c
\end{array}\right \}.
\end{equation}
Here $S_g$ is the scalar curvature of $g$, $F_A$ is the curvature of
$A$, $z$ is an element of $\mathfrak{g}$ which is invariant under the
adjoint $G$-action and $c$ is a real number. The precise values of $z$
and $c$ are determined by the topology of $E$, the cohomology class of
$\omega$ and the coupling constants $\alpha_0, \alpha_1$ (see
Remark~\ref{rem:z(Omega,E)} and~\eqref{eq:constant-c}). The map
$\Lambda\colon \Omega^{p,q}(\ad E) \to \Omega^{p-1,q-1}(\ad E)$ is the
contraction operator acting on $(p,q)$-type valued forms determined by
the K\"ahler structure. In the sequel, we will refer to \eqref{eq:CYMeq00} simply as \emph{the coupled equations}.

A link with holomorphic geometry is provided by the additional
integrability condition
\begin{equation}
\label{eq:integrabilityconnection}
F_A^{0,2} = 0,
\end{equation}
between the complex structure $J$ on the base and the connection
$A$. Here $F_A^{0,2}$ denotes the $(0,2)$ part of the curvature,
regarded as an $(\ad E)$-valued smooth form on $X$. Let $G^c$ be the
complexification of the group $G$. When
\eqref{eq:integrabilityconnection} holds, the pair $(J,A)$ endows the
associated principal $G^c$-bundle $E^c = E \times_G G^c$ with a
structure of holomorphic principal bundle over the complex manifold
$(X,J)$.

The moment map interpretations of the constant scalar curvature
equation for a K\"ahler metric (cscK) and the HYM equation provide a
guiding principle, leading to \eqref{eq:CYMeq00}.  Indeed, equations
\eqref{eq:CYMeq00} have an interpretation in terms of a moment map.
This is the subject of \secref{sec:MM} and \secref{chap:Ceq}. As
observed by Fujiki~\cite{Fj} and Donaldson~\cite{D1}, the cscK
equation has a moment map interpretation in terms of a symplectic form
$\omega$ on the smooth compact manifold $X$. The group of symmetries
of the theory for cscK metrics is the group $\cH$ of Hamiltonian
symplectomorphisms. This group acts on the space $\cJ^i$ of integrable
almost complex structures on $X$ which are compatible with $\omega$,
and this action is Hamiltonian for a natural symplectic form
$\omega_\cJ$ on $\cJ^i$. The moment map interpretation of the HYM
equation was pointed out first by Atiyah and Bott~\cite{AB} for the
case of Riemann surfaces and generalized by Donaldson~\cite{D3} to
higher dimensions.  Here one considers the symplectic action of the
gauge group $\cG$ of the bundle $E$ on the space of connections $\cA$
endowed with a natural symplectic form $\omega_\cA$. Relying on these
two previous cases, the phase space for our theory is provided by the
subspace of the product
\begin{equation}
\label{eq:pairsinvacs0}
\cP \subset \cJ^i \times \cA
\end{equation}
defined by the condition \eqref{eq:integrabilityconnection}. Our
choice of symplectic structure is the restriction to $\cP$ of the
symplectic form
\begin{equation}
\label{eq:Sympfamily0}
\omega_\alpha = \alpha_0 \omega_\cJ + 4\alpha_1 \omega_\cA,
\end{equation}
for a pair of non-zero coupling constants $\alpha =
(\alpha_0,\alpha_1) \in \RR^2$.

Consider now the \emph{extended gauge group} $\widetilde{\mathcal{G}}$
defined as the group of automorphisms of the bundle $E$ covering
Hamiltonian symplectomorphisms of $X$.  This is a non trivial
extension
\begin{equation}\label{eq:Ext-Lie-groupsintro}
1 \to \cG \lto \cX \lto \cH \to 1,
\end{equation}
where $\cG$ is the group of automorphisms of $E$ covering the identity
on $X$, and $\cH$, as above, is the group of Hamiltonian
symplectomorphisms of $X$. The group $\cX$ acts on $\cP$ and
in~Proposition~\ref{prop:momentmap-pairs} we show that this action is
Hamiltonian for any value of the coupling constants, we compute a
moment map $\mu_\alpha$, and show that its zero locus corresponds to
solutions of~\eqref{eq:CYMeq00}. The coupling between the metric and
the connection occurs as a direct consequence of the structure of
$\cX$.  So, away from its singularities, the moduli space of solutions
is given by the \emph{symplectic quotient}
\begin{equation}\label{eq:moduli}
\mathcal{M}_\alpha = \mu_\alpha^{-1}(0)/\widetilde{\mathcal{G}}.
\end{equation}
Furthermore, $\omega_\alpha$ is a K\"ahler form on $\cP$ when
$\alpha_1/\alpha_0 > 0$, for a natural $\cX$-invariant complex
structure on $\cP$. Hence under this condition on the coupling
constants, the smooth locus of the moduli space of
solutions~\eqref{eq:moduli} inherits a K\"ahler structure.

We see that our problem merges the well-studied theories of
Hermitian--Yang--Mills connections (obtained for $\alpha_1/\alpha_0 >
0$) and constant scalar curvature K\"ahler metrics (which correspond
to $\alpha_1/\alpha_0 = 0$) into a unique theory.  We thus expect the
K\"ahler moduli spaces obtained in our symplectic reduction process to
have a rich geometry and topology.  In~\secref{sec:CeqscalarKal-K} we
prove that~\eqref{eq:CYMeq00} arise also as absolute minima of a
purely Riemannian functional for $G$-invariant Riemannian metrics on
the total space of $E$, providing a link to the classical
Kaluza--Klein theory.

In~\secref{chap:analytic}, which is in some sense the heart of the
paper, we undertake the study of obstructions for the existence of
solutions to~\eqref{eq:CYMeq00}, generalizing the Futaki invariant,
the Mabuchi K-energy and geodesic stability that appear in the cscK
theory~\cite{Ft0,Mab2,Ch2}. We do this geometrically, by considering
the following framework. We first fix a cohomology class $\Omega\in
H^2(X,\RR)$ and a smooth principal $G^c$-bundle $E^c\to X$. Let
$\Gamma$ be the Lie group given by those $G^c$-equivariant
diffeomorphisms of $E^c$ which cover an element in the connected
component of the identity of the diffeomorphism group of
$X$. In~\secref{sub:concrete-setup}, we associate an infinite
dimensional canonical $\Gamma$-equivariant double fibration
\begin{equation}
\label{eq:fibration-compatible-pairs}
\begin{gathered}
  \xymatrix{ & \cC \ar[dl]_-{\pi_\cB} \ar[dr]^-{\pi_\cZ} & \\  \cB & & \cZ }
\end{gathered}
\end{equation}
to the data $(X, \Omega, E^c)$. Here $\cB$ is the space of pairs
$(\omega,H)$, where $\omega \in \Omega$ is a symplectic form on $X$
and $H$ is a reduction of $E^c$ to the maximal compact subgroup
$G\subset G^c$, and $\cZ$ is a space parameterizing holomorphic
structures $I$ on $E^c$ inducing a complex structure $J$ on $X$. The
space of \emph{compatible pairs} $\cC \subset \cB \times \cZ$ is
defined as those elements of the product which induce a K\"ahler
structure on $X$. Using the results of~\secref{chap:Ceq},
in~\secref{sub:concrete-setup} we prove that the fibres of $\pi_\cB$
are (formally) K\"ahler manifolds endowed with Hamiltonian group
actions.

As a preliminary step for the study of obstructions
in~\secref{sub:ANStability}, we prove in~\secref{sub:ANsymm-spc} that
the fibres of $\pi_\cZ$ are infinite dimensional symmetric spaces
(that is, each fibre has a canonical torsion-free affine connection
$\nabla$ with covariantly constant curvature), with holonomy group
contained in the extended gauge group. Note that the fibre $B_I$ of
$\pi_\cZ$ over $I\in \cZ$ is
\begin{equation}
\label{eq:BIintro}
B_I = \cK_J \times \cR,
\end{equation}
where $\cK_J$ is the space of K\"ahler forms on $(X,J)$ in the class
$\Omega$ and $\cR$ is the space of $G$-reductions of $E^c$. When
specialized to the case of trivial $G^c$, we recover the symmetric
space structure constructed by Mabuchi~\cite{Mab1} and rediscovered by
Semmes~\cite{Se} and Donaldson~\cite{D6}. Our construction follows
closely Donaldson's in~\cite[\S 2]{D6}. A special feature of the
symmetric space structure on $B_I$ is that in general it does not
carry any canonical compatible Riemannian structure (see
Remark~\ref{rem:riemannian-symm-spc-v1}). A technical assumption in
our construction is that the $G$-invariant metric in $\mathfrak{g}$
used to define~\eqref{eq:Pairing-Intro} extends to a $G^c$-invariant
symmetric bilinear pairing
\[
\mathfrak{g}^c \otimes \mathfrak{g}^c \lto \CC,
\]
where $\mathfrak{g}^c$ is the complexification of $\mathfrak{g}$.

In~\secrefs{sub:ANStability} and \ref{sub:ANCeq} we construct an
\emph{$\alpha$-Futaki character}
\[
\cF_I \colon \Lie\Aut (E^c,I) \lto \CC,
\]
which is a complex character of the Lie algebra of the automorphism
group of the holomorphic principal bundle $(E^c,I)$ and which vanishes
when~\eqref{eq:CYMeq00} is satisfied, and an \emph{$\alpha$-K-energy}
\[
\cM_I\colon B_I\lto \RR,
\]
which is convex along geodesics on $B_I$ and bounded from below
when~\eqref{eq:CYMeq00} is satisfied, provided that the symmetric
space $B_I$ is geodesically convex. Furthermore, we motivate a
definition of \emph{geodesic stability} of the orbit $\Gamma\cdot I$
and conjecture a link with~\eqref{eq:CYMeq00} when $\Gamma_I$ is
finite. We give explicit formulae for the character $\cF_I$, the
functional $\cM_I$ and the geodesic equation on $B_I$. When
specialized to the case in which $G^c$ is trivial, we recover the
Futaki character~\cite{Ft0}, the Mabuchi K-energy~\cite{Mab2} and the
notion of geodesic stability~\cite{Ch2,D6} used in the study of the
cscK equation for K\"ahler metrics. The contents of~\secref{sub:ANCeq}
will be used in Example~\ref{ex:obstruction} to provide an explicit
situation in which there cannot exist solutions to the coupled
equations.

We would like to point out that the framework developed in
\secref{chap:analytic} is rather general and may be applied to other
situations, in particular, to equations with a further coupling with
Higgs fields.

In~\secref{chap:Deformation}, we establish sufficient conditions for
the existence of solutions to the coupled equations near a given
solution, when the coupling constants and the K\"{a}hler cohomology
class are deformed while the complex structure of the base manifold
remains fixed. Our approach is based on a generalization of techniques
developed by LeBrun \& Simanca~\cite{LS2, LS1} for the corresponding
problem in the cscK theory. We fix a complex structure on $X$ and a
structure $I$ of a holomorphic principal $G^c$-bundle on $E^c$, and
consider the space of solutions $(\omega,H)$ of~\eqref{eq:CYMeq00}
with $\omega$ in a fixed cohomology class $\Omega$ and fixed
$\alpha_0, \alpha_1$.
%%%%
Then we study the behaviour of this space with respect to deformations
of the coupling constants and the K\"ahler class in a parameter space:
\begin{equation}
\label{eq:defo-par-space}
  (\alpha_0,\alpha_1,\Omega) \in \RR^2 \times H^{1,1}(X,\RR).
\end{equation}
Before doing this, in~\secref{sec:Defextremalholomorphic} we introduce
the notion of \emph{extremal pairs} $(\omega,H)$. They are analogues
in our theory of Calabi's extremal metrics in K\"ahler geometry. In
particular, an extremal pair $(\omega,H)$, with $\omega\in\Omega$, is
a solution of \eqref{eq:CYMeq00} if and only if the $\alpha$-Futaki
character associated to $I$ and $\Omega$ vanishes
(Proposition~\ref{prop:extremal-solution}). In
\secref{sec:Defmmapoperator} we study the linearization of
\eqref{eq:CYMeq00} and in \secref{sec:ANdeformingsolutions} we prove
that when $\alpha_1/\alpha_0 > 0$, any solution of the coupled
equations~\eqref{eq:CYMeq00} can be deformed into an extremal pair,
for small deformations in the parameter
space~\eqref{eq:defo-par-space}
(Theorem~\ref{thm:DeformationCYMeq1}). In \secref{sec:Defweakcoupling}
we obtain a criterion for the existence of solutions
of~\eqref{eq:CYMeq00} in the weak coupling limit $\alpha_1 \to 0$,
i.e.  for $0 < \lvert\alpha_1/\alpha_0\rvert\ll 1$
(Theorem~\ref{thm:DeformationCYMeq4}).

In \secref{chap:examples} we discuss some examples of solutions of
\eqref{eq:CYMeq00} and explain how the existence of solutions to the
limit case $\alpha_0 = 0$ can be applied, using results of Y. J. Hong
in \cite{Ho2}, to obtain cscK metrics on ruled manifolds. As for the
examples, in~\secref{sec:example1} we deal with the case of vector
bundles over Riemann surfaces and projectively flat bundles over
K\"ahler manifolds satisfying a topological constraint. In both
situations, the coupled system \eqref{eq:CYMeq00} reduces to the limit
case $\alpha_1 = 0$ (cscK equation and HYM equation). When $\dim_\CC X
> 1$, we use Theorem~\ref{thm:DeformationCYMeq1} to deform the
K\"ahler class and provide non-trivial examples of
solutions. In~\secref{sec:example2} we consider homogeneous Hermitian
holomorphic vector bundles over homogeneous K\"ahler manifolds. In
\secref{sec:example3} we discuss some (well known) examples of stable
bundles over K\"ahler--Einstein manifolds where
Theorem~\ref{thm:DeformationCYMeq4}
applies. Section~\ref{sec:example3} provides examples of solutions in
which the K\"ahler metric is not cscK and also examples where the
invariant $\cF_I$ obstructs the existence of solutions for small ratio
of the coupling constants.

Coupled equations for metrics and connections have of course been
studied for a long time in the context of unified field theories in
physics and more recently in string theory (see
e.g. \cite{LiYau}).  They have also been considered in the
context of Riemannian geometry, like the Eintein--Maxwell equations on
4-manifolds studied in~\cite{Lb}.  Our motivation, however, for this
work has been to find a K\"ahler analogue of these situations.
Another important motivation for us comes from the relation with
algebraic geometry, in particular with the moduli problem for pairs
consisting of a polarised manifold and a holomorphic bundle over
it. Despite its intrinsic mathematical interest and its relevance in
theoretical physics, the latter problem has been little explored,
probably due to the hard technical difficulties which arise in the
algebro-geometric approach as soon as the complex dimension of the
base is greater than $1$ (see~\cite{GiesMorr,Capo,Pd} for the case of
curves, and \cite{Hu,ST} for higher dimensions).  Throughout
this paper we hope to show that the study of our coupled equations
provides a reasonable differential-geometric approach to the moduli
problem for bundles and varieties, giving compelling evidence of the
existence of a Hitchin--Kobayashi correspondence for the coupled
equations as has been conjectured in~\cite{GF1}.

Since this paper was finished, there have been several developments in
the theory of the coupled K\"ahler-Yang-Mills equations. Keller and
T{\o}nnesen-Friedman \cite{KellerTonnesen} have found solutions on
line bundles over complex threefolds (with positive ratio $\alpha >
0$) that do not admit any cscK metric in the class of the
polarization. The second author jointly with C. Tipler have recently
found new examples of solutions \cite{GFT}, by deformation of the
holomorphic structure on a homogeneous bundle over $\PP^1 \times
\PP^1$. More remarkably, an interesting relation between the coupled
equations and physical equations, describing gravitating vortices over
a Riemann surface, has been recently found by the authors
\cite{AGGvortices}. These vortices represent the coupling of gravity
and a condensed matter system and are known in the physics literature
as cosmic strings (or topological defects) in the Abelian Higgs
model. Based on classical results by Y. Yang \cite{Yang}, this
relation provides a plethora of solutions of the coupled equations in
$\PP^1 \times \PP^1$ and a explicit (conjectural) description of the
moduli space \eqref{eq:moduli} and the stability condition for this
particular case.

%===============================
\begin{acknowledgements}
We want to thank Olivier Biquard, David Calderbank, Simon Donaldson,
Nigel Hitchin, Julien Keller, Alastair King, Ignasi Mundet i Riera, Vicente
Mu\~noz, Julius Ross, Ignacio Sols, Jacopo Stoppa and Richard Thomas
for helpful discussions and suggestions.  We also wish to thank the
Max Planck Institute for Mathematics (Bonn), and the Isaac Newton
Institute for Mathematical Sciences for their hospitality and
support. MGF thanks Instituto de Ciencias Matem\'aticas (Madrid),
Imperial College (London), University of Paris 6 and Humboldt
University (Berlin) for their hospitality.
\end{acknowledgements}
%===============================

\section{Hamiltonian action of the extended gauge group}
\label{sec:MM}

In this section we define the \emph{extended gauge group} $\cX$ of a
bundle over a compact symplectic manifold, an extension of the
infinite dimensional Lie groups involved in the moment map problems
for the HYM and the cscK equation. We show that the action of $\cX$ on
the space of connections of the bundle is Hamiltonian and compute an
equivariant moment map. Symplectic reductions by Lie group extensions
have been studied in the literature in various degrees of generality
(see~\cite{MMOPR} and references therein). Previous work includes
split group extensions and more general ones, although it seems that
the moment map calculations of~\secref{sec:Ceqcoupling-term}, based on
Proposition~\ref{prop:ham-act-ext-grp}, have not been previously made
(cf.~\cite[\S 3.2]{MMOPR}).

\subsection{The Hermitian--Yang--Mills equation}
\label{section:YMmmap}

First we set out some notation in order to review the moment map
interpretations of the HYM equation. Let $X$ be a compact symplectic
manifold of dimension $2n$, with symplectic form $\omega$, $G$ a real
compact Lie group with Lie algebra $\mathfrak{g}$, and $E$ a smooth
principal $G$-bundle over $X$, with the $G$-action on the right. In
the sequel $\omega^{[k]}$ will denote $\frac{\omega^k}{k!}$. The
spaces of smooth $k$-forms on $X$ and smooth $k$-forms with values in
any given vector bundle $F$ on $X$ are denoted by $\Omega^{k}$ and
$\Omega^{k}(F)$, respectively. Fix a positive definite inner product
on $\mathfrak{g}$, invariant under the adjoint action, denoted
\[
  (\cdot,\cdot): \mathfrak{g} \otimes \mathfrak{g} \lto \RR.
\]
This product induces a metric on the adjoint bundle $\ad E=E\times_G
\mathfrak{g}$, which extends to a bilinear map on $(\ad E)$-valued
differential forms (we use the same notation as in~\cite[\S 3]{AB})
\begin{equation}
\label{eq:Pairing}
\begin{gathered}
\xymatrix @R=0ex @C=-7ex{ **[l]
\Omega^p(\ad E) \times \Omega^q(\ad E)\ar[r] 
& **[r]
\Omega^{p+q}
\\ **[l]
(a_p,a_q)\ar@{|->}[r] & **[r] a_p\wedge a_q.
}\end{gathered}
\end{equation}
We consider the operator
\begin{equation}
\label{eq:Lambda}
\begin{gathered}
\xymatrix @R=0ex @C=-3ex{
**[l]\Lambda=\Lambda_\omega\colon\;\Omega^k\ar[r] & **[r]\Omega^{k-2}\\
**[l]\psi\ar@{|->}[r] & **[r] \omega^{\sharp}\lrcorner\psi,
}\end{gathered}
\end{equation}
where $\sharp$ is the operator acting on $k$-forms induced by the
symplectic duality $\sharp\colon T^*X \to TX$ and $\lrcorner$ denotes
the contraction operator. Its linear extension to $\Omega^k(\ad E)$ is
also denoted $\Lambda:\Omega^k(\ad E)\to \Omega^{k-2}(\ad E)$ (we use
the same notation as, e.g., in~\cite{D3}).

Let $\cA$ be the set of connections on $E$. This is an affine space
modelled on $\Omega^1(\ad E)$, with a left action of the gauge group
$\cG$ of $E$, i.e. the group of $G$-equivariant diffeomorphisms of $E$
covering the identity map on $X$. The 2-form on $\cA$ defined by
\begin{equation}
\label{eq:SymfC}
\omega_{\cA}(a,b) = \int_X a \wedge b \wedge \omega^{[n-1]}
\end{equation}
for $a,b \in T_A \cA = \Omega^1(\ad E)$, $A\in\cA$, is a
$\cG$-invariant symplectic form. As observed by Atiyah and
Bott~\cite{AB} when $X$ is a Riemann surface and by
Donaldson~\cite{D3,D5} in higher dimensions, the $\cG$-action on $\cA$
is Hamiltonian, with equivariant moment map $\mu_\cG\colon \cA\to
(\LieG)^*$ given by
\begin{equation}
\label{eq:momentmap-cG}
  \langle\mu_\cG(A),\zeta\rangle = \int_X \zeta \wedge (\Lambda F_A - z) \omega^{[n]},
\end{equation}
for $A\in\cA$, $\zeta \in \LieG=\Omega^0(\ad E)$, where
$F_A\in\Omega^2(\ad E)$ is the curvature of $A\in\cA$ and $z$ is an
element of the space
\begin{equation}
\label{eq:centre-z}
  \mathfrak{z}=\mathfrak{g}^G
\end{equation}
of elements of $\mathfrak{g}$ which are invariant under the adjoint
$G$-action, that we identify with sections of $\ad E$. Recall that the
moment map satisfies
\[
  d\langle \mu_\cG,\zeta\rangle=Y_\zeta\lrcorner\omega_\cA
\]
for all $\zeta\in\LieG$, where $Y_\zeta$ is the vector field on $\cA$
generated by the infinitesimal action of $\zeta$, and equivariance
means that, for all $g\in\cG$ and $A\in \cA$,
\[
\mu_\cG(g\cdot A) = \Ad(g^{-1})^* \mu_\cG(A).
\]
Suppose now that $X$ is a K\"ahler manifold, with K\"ahler form
$\omega$ and complex structure $J$. Consider the complexification
$G^c$ of $G$ and the associated principal $G^c$-bundle $E^c = E
\times_G G^c$, where $G$ acts on $G^c$ by left multiplication. There
is a distinguished $\cG$-invariant subspace
\begin{equation}
\label{eq:cA^{1,1}_J}
\cA_J^{1,1} \subset \cA
\end{equation}
consisting of connections $A$ with $F_A\in\Omega^{1,1}_J(\ad E)$, or
equivalently satisfying $F_A^{0,2}=0$, where $\Omega_J^{p,q}(\ad E)$
denotes the space of $(\ad E)$-valued smooth $(p,q)$-forms with
respect to $J$ and $F_A^{0,2}$ is the projection of $F_A$ into
$\Omega_J^{0,2}(\ad E)$. This space is in bijection with the space of
holomorphic structures on the principal $G^c$-bundle $E^c$ over the
complex manifold $(X,J)$ (see~\cite{Si}).

\begin{definition}
 A connection $A \in \cA^{1,1}_J$ is called
 \emph{Hermitian--Yang--Mills} if it satisfies the
 \emph{Hermitian--Yang--Mills equation}
\begin{equation}
\label{eq:HYM}
\Lambda F_A=z.
\end{equation}
\end{definition}

\begin{remark}
\label{rem:z(Omega,E)}
The element $z \in \mathfrak{z}$ in the right-hand side
of~\eqref{eq:HYM} is determined by the cohomology class $\Omega\defeq
[\omega]\in H^2(X)$ and the topology of the principal bundle $E$. This
follows after applying $(z_j,\cdot)$ to~\eqref{eq:HYM}, for an
orthonormal basis $\{z_j\}$ of $\mathfrak{z}\subset\mathfrak{g}$, and
then integrating over $X$, we obtain
\begin{equation}
\label{eq:z(Omega,E)}
z=\sum_j \frac{\langle z_j(E)\cup \Omega^{[n-1]},[X]\rangle}{\Vol_\Omega} z_j.
\end{equation}
Here, $\Omega^{[k]}\defeq \Omega^k/k!$, $\Vol_\Omega\defeq
\int_X\omega^{[n]} = \langle \Omega^{[n]},[X]\rangle$ and
$z_j(E)\defeq [z_j \wedge F_A]\in H^2(X)$ is the Chern--Weil class
associated to the $G$-invariant linear form $(z_j, \cdot)$ on
$\mathfrak{g}$, which only depends on the topology of the bundle $E$
(see~\cite[Ch XII, \S 1]{KNII}).
\end{remark}

The moduli space of Hermitian--Yang--Mills connections is defined as the set
of classes of gauge equivalent solutions to~\eqref{eq:HYM}. This coincides
with the quotient
\begin{equation}
\label{eq:SympredHYM}
  \mu_\cG^{-1}(0)/\cG,
\end{equation}
where $\mu_\cG$ is now the restriction of the moment map to $\cA^{1,1}_J$.
Away from its singularities, $\cA^{1,1}_J$ inherits a complex structure
compatible with $\omega_\cA$ and hence a K\"ahler structure. Thus the smooth
locus of $\cA^{1,1}_J$ is a K\"ahler manifold endowed with a Hamiltonian
$\cG$-action and hence, away from singularities, the moduli space of
Hermitian--Yang--Mills connections can be constructed as a K\"ahler reduction,
which, if non-empty, is a finite-dimensional K\"ahler manifold.

\subsection{Hamiltonian actions of extended Lie groups}
\label{sec:Ceqham-act-ext-grp}

Consider a general extension of Lie groups
\begin{equation}
\label{eq:Ext-Lie-groups}
 1 \to \cG \lra{\iota} \cX \lra{\pr} \cH \to 1.
\end{equation}
We will describe now, under certain assumptions, the Hamiltonian action of
$\cX$ on a symplectic manifold, in terms of $\cG$ and $\cH$. In the next
section we will apply this general set up to the case in which the symplectic
manifold is the space of connections of a bundle and $\cX$ is
the extended gauge group mentioned in the introduction --- this may explain the notation.

The extension~\eqref{eq:Ext-Lie-groups} determines an extension of Lie
algebras
\begin{equation}
\label{eq:Ext-Lie-alg}
 0 \to \LieG \lra{\iota} \LieX \lra{\pr} \LieH \to 0,
\end{equation}
where the use of the same symbols $\iota$ and $\pr$ should lead to no
confusion. Note that the short exact sequence~\eqref{eq:Ext-Lie-alg} does not
generally split as a sequence of Lie algebras, but it always does as a short
exact sequence of vector spaces. Let $W\subset \Hom(\LieX,\LieG)$ be the
affine space of vector space splittings.  Since $\cG\subset\cX$ is a normal
subgroup, there is a well-defined $\cX$-action on $W$, given by
\[
g\cdot \theta \defeq \Ad(g)\circ \theta \circ \Ad(g^{-1}), \text{ for
  $g\in\cX$, $\theta\in W$.}
\]
Let $\cA$ be a manifold with an action of the `extended' Lie group
$\cX$. Suppose that there exists a $\cX$-equivariant smooth map $\theta\colon \cA\to
W$. Let $\omega_\cA$ be a symplectic form on $\cA$ preserved by the
$\cX$-action. Using $\theta$, we will characterise the existence of a
$\cX$-equivariant moment map for this action in terms of $\cG$ and $\cH$. The
case considered in this paper (see~\secref{sec:Ceqcoupling-term}) is an
example where such a $\theta$ exists. Observe that if $\cA$ is a point, then
$\theta$ determines an isomorphism $\LieX \cong \LieG\rtimes\LieH$, which
shows that in this case the existence of $\theta$ is a very strong condition.

Suppose that the $\cX$-action is Hamiltonian, with $\cX$-equivariant
moment map $\mu_{\cX}\colon \cA\to (\LieX)^*$. We can use $\theta$ to
decompose this map into two pieces corresponding to $\LieG$ and
$\LieH$.  Consider $\theta^\perp$ uniquely defined by
$\label{eq:theta-perp} \Id - \iota\circ \theta = \theta^\perp\circ p$,
where $\iota$ and $p$ given in~\eqref{eq:Ext-Lie-groups}. Then the map
\[
\xymatrix @R=0ex @C=-4ex{
**[l]W\ar[r] & **[r]\Hom(\LieH,\LieX)\\
**[l]\theta\ar@{|->}[r] & **[r] \theta^\perp
}
\]
is $\cX$-equivariant, where the $\cX$-action on $\Hom(\LieH,\LieX)$
given by
\[
g\cdot\theta^\perp= \Ad(g)\circ \theta^\perp \circ \Ad(\pr (g^{-1}))
\]
for $g\in\cX$. Moreover, the map
\[
\theta^\perp \colon \cA \lto \Hom(\LieH,\LieX)
\]
is $\cX$-equivariant and we can decompose  the moment map as
\begin{equation}
\label{eq:piecescXmmap}
\langle \mu_{\cX},\zeta\rangle = \langle \mu_{\cX}, \iota \theta\zeta\rangle + \langle \mu_{\cX},
\theta^{\perp}\pr(\zeta) \rangle,
\end{equation}
for all $\zeta\in\LieX$, where the summands in the right hand side define a
pair of $\cX$-equivariant maps $\mu_{\cG}\colon \cA\to (\LieG)^*$,
$\sigma_\theta\colon \cA\to (\LieH)^*$, given by
\begin{align*}
  \langle \mu_{\cG}, \zeta \rangle & \defeq \langle \mu_{\cX},
  \iota\zeta\rangle, \text{ for all $\zeta\in\LieG$,} \notag
  \\
\langle \sigma_\theta, \eta \rangle & \defeq \langle\mu_{\cX},
\theta^\perp \eta \rangle, \text{ for all $\eta\in\LieH$.}
\end{align*}
Note that since $\cG$ is a normal subgroup of $\cX$, we can require the
map $\mu_{\cG}$ to be $\cX$-equivariant. It is now straightforward
from the moment map condition for $\mu_{\cX}$ to check that
$\mu_{\cG}$ is a %$\cG$-equivariant
moment map for the $\cG$-action on $\cA$, i.e. $d\langle
\mu_{\cG},\zeta\rangle = Y_{\zeta} \lrcorner \omega_{\cA}$ for all
$\zeta \in \LieG$. In order to see that $\sigma_\theta$ satisfies a
similar infinitesimal condition, giving our characterization of
Hamiltonian $\cX$-action, we first introduce some notation. Given a
smooth map $\zeta\colon \cA\to\LieX$, $Y_\zeta$ denotes the vector
field on $\cA$ given by
\begin{equation}
\label{eq:infact-A}
  Y_{\zeta|A} \defeq \frac{d}{dt}_{|t = 0}\exp(t\zeta_A)\cdot A,
\end{equation}
for all $A\in\cA$.  In particular, $\theta\colon \cA
\to W$ induces a map
\[
\xymatrix @R=0ex @C=-3ex{
**[l]Y_{\theta^\perp}\colon \LieH \ar[r] & **[r]\Omega^0(T\cA)\\
**[l]\eta\ar@{|->}[r] & **[r] Y_{\theta^\perp\eta}.
}
\]
Note also that, by definition, $d\theta$ is a $\cX$-invariant
$\Hom(\LieH,\LieG)$-valued 1-form on $\cA$.

\begin{proposition}
\label{prop:ham-act-ext-grp}
The $\cX$-action on $\cA$ is Hamiltonian if and only if the action of
$\cG\subset\cX$ on $\cA$ is Hamiltonian, with a $\cX$-equivariant
moment map $\mu_{\cG}\colon \cA\to (\LieG)^*$, and there exists a
smooth $\cX$-equivariant map $\sigma_\theta\colon \cA\to (\LieH)^*$
satisfying
\begin{equation}
\label{eq:muX}
Y_{\theta^\perp\eta} \lrcorner \omega_\cA = \langle \mu_\cG,\langle d\theta,\eta\rangle\rangle + d\langle \sigma_\theta, \eta\rangle,
\end{equation}
for all $\eta\in\LieH$.
In this case, a $\cX$-equivariant moment map $\mu_{\cX}\colon \cA\to
(\LieX)^*$ is given by
% a $\cX$-equivariant moment map $\mu_{\cX}\colon \cA\to (\LieX)^*$
% for the $\cX$-action on $\cA$ is given by
\begin{equation}
\label{eq:def-muX}
\langle \mu_{\cX},\zeta\rangle = \langle \mu_\cG, \theta\zeta\rangle +
\langle \sigma_\theta, \pr (\zeta) \rangle, \text{ for all $\zeta\in\LieX$.}
\end{equation}
\end{proposition}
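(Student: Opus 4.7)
The plan is to prove both implications by direct computation, treating the moment map equation as an identity between 1-forms that can be checked by the product rule after decomposing elements of $\LieX$ via $\theta$ and $\theta^\perp$. Two algebraic observations, which will be used repeatedly, are: (i) since $\theta(A)\circ\iota=\Id_{\LieG}$ is independent of $A$, the derivative $d\theta(v)\in\Hom(\LieX,\LieG)$ vanishes on $\iota(\LieG)$ and therefore descends to an element of $\Hom(\LieH,\LieG)$, which is precisely what makes $\langle d\theta,\eta\rangle$ well defined for $\eta\in\LieH$; and (ii) differentiating the relation $\theta^\perp\circ\pr=\Id-\iota\circ\theta$ in $A$ gives $d(\theta^\perp\eta)(v)=-\iota\langle d\theta,\eta\rangle(v)$ for each fixed $\eta\in\LieH$, reflecting that $\pr\circ\theta^\perp=\Id_{\LieH}$ is constant.

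For the forward direction, assume $\mu_{\cX}$ is an $\cX$-equivariant moment map and set $\langle\mu_{\cG},\zeta\rangle\defeq\langle\mu_{\cX},\iota\zeta\rangle$ for $\zeta\in\LieG$ and $\langle\sigma_\theta,\eta\rangle\defeq\langle\mu_{\cX},\theta^\perp\eta\rangle$ for $\eta\in\LieH$. Normality of $\cG\subset\cX$ ensures that the coadjoint action on $(\LieG)^*$ extends to $\cX$ and that $\mu_{\cG}$ is $\cX$-equivariant; that it is a $\cG$-moment map follows immediately by applying the $\cX$-moment map condition to the constant lift $\iota\zeta\in\LieX$. The identity \eqref{eq:muX} is obtained by applying the moment map identity to the $A$-dependent map $\theta^\perp\eta\colon\cA\to\LieX$: the product rule gives
\[
d\langle\mu_{\cX},\theta^\perp\eta\rangle=Y_{\theta^\perp\eta}\lrcorner\omega_\cA+\langle\mu_{\cX},d(\theta^\perp\eta)\rangle,
\]
and substituting (ii) together with $\langle\mu_{\cX},\iota(\cdot)\rangle=\langle\mu_{\cG},\cdot\rangle$ rearranges exactly to \eqref{eq:muX}.

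For the reverse direction, define $\mu_{\cX}$ by \eqref{eq:def-muX} and verify the moment map identity at each fixed $\zeta\in\LieX$. The product rule applied to the first summand, combined with (i), yields $d\langle\mu_{\cG},\theta\zeta\rangle=Y_{\theta\zeta}\lrcorner\omega_\cA+\langle\mu_{\cG},\langle d\theta,\pr(\zeta)\rangle\rangle$; invoking \eqref{eq:muX} with $\eta=\pr(\zeta)$ on the second summand produces $Y_{\theta^\perp\pr(\zeta)}\lrcorner\omega_\cA-\langle\mu_{\cG},\langle d\theta,\pr(\zeta)\rangle\rangle$. The two $\langle d\theta,\pr(\zeta)\rangle$ contributions cancel, and the remaining sum $Y_{\iota\theta\zeta+\theta^\perp\pr(\zeta)}\lrcorner\omega_\cA$ collapses to $Y_\zeta\lrcorner\omega_\cA$ by the splitting relation $\iota\theta+\theta^\perp\pr=\Id_{\LieX}$. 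Equivariance of $\mu_{\cX}$ follows from the equivariance of each ingredient: $\theta$ transforms by conjugation, $\pr$ is a group homomorphism, and $\mu_{\cG},\sigma_\theta$ are $\cX$-equivariant by hypothesis. The main obstacle is simply bookkeeping these $A$-dependent pieces carefully; once observations (i) and (ii) are in hand, everything is essentially forced by the splitting relation.
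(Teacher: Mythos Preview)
Your proof is correct and follows essentially the same approach as the paper's. Both arguments rest on differentiating the decomposition $\langle\mu_{\cX},\zeta\rangle=\langle\mu_{\cG},\theta\zeta\rangle+\langle\sigma_\theta,\pr(\zeta)\rangle$ via the chain rule and using the splitting $\zeta=\iota\theta\zeta+\theta^\perp\pr(\zeta)$; the paper organises the ``only if'' computation by differentiating the full decomposition, whereas you differentiate the $\sigma_\theta$ piece directly using your observation (ii), but these are the same identity read two ways. Your treatment is in fact more complete: the paper leaves the ``if'' direction and the equivariance check to the reader, while you spell both out.
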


\begin{proof}
  To prove the ``only if'' part it remains to
  check~\eqref{eq:muX}. This follows by definition, differentiating in
  \eqref{eq:piecescXmmap} and using that
  \begin{gather*}
    d \langle \mu_\cG,\theta \zeta\rangle = \langle d\mu_\cG,\theta
    \zeta\rangle + \langle \mu_\cG,\langle d\theta,\eta\rangle\rangle
    \, \text{ and }\\
    Y_\zeta \lrcorner \omega = Y_{\theta\zeta} \lrcorner \omega +
    Y_{\theta^\perp\eta} \lrcorner \omega, \text{ with $\eta \defeq
      \pr(\zeta)$,}
  \end{gather*}
  where the first equation is obtained applying the chain rule, and
  the second one holds because $\zeta = \theta\zeta +
  \theta^\perp\eta$ and $Y_\zeta$ is linear in $\zeta$. The ``if''
  part is straightforward from the statement and is left to the
  reader.
\end{proof}

Note that condition~\eqref{eq:muX} for $\sigma_\theta$ generalizes the usual
infinitesimal condition $Y_\eta \lrcorner \omega_\cA =
d\langle\mu_\cH,\eta\rangle$ ($\eta\in\cH$) for moment maps $\mu_\cH$ for the
induced $\cH$-action on $\cA$ when the Lie group
extension~\eqref{eq:Ext-Lie-groups} splits.

\subsection{The extended gauge group action on the space of
  connections}
\label{sec:Ceqcoupling-term}

We apply now the general theory developed in \secref{sec:Ceqham-act-ext-grp}
to compute the moment map for the action of the \emph{extended gauge group} of
a bundle over a compact symplectic manifold, on the space of connections.

Let $X$ be a compact symplectic manifold of dimension $2n$, with
symplectic form $\omega$. Let $G$ be a Lie group and $E$ be a smooth
principal $G$-bundle on $X$, with projection map $\pi\colon E\to
X$. Let $\cH$ be the group of Hamiltonian symplectomorphisms of
$(X,\omega)$ and $\Aut E$ be the group of automorphisms of the bundle
$E$. Recall that an \emph{automorphism} of $E$ is a $G$-equivariant
diffeomorphism $g\colon E\to E$. Any such automorphism covers a unique
diffeomorphism $\check{g}\colon X\to X$, i.e. a unique $\check{g}$
such that $\pi\circ g=\check{g}\circ \pi$. We define the
\emph{Hamiltonian extended gauge group} (to which we will simply refer
as extended gauge group) of $E$,
\[
\cX \subset \Aut E,
\]
as the group of automorphisms which cover elements of $\cH$. Then the
gauge group of $E$, already defined in~\secref{section:YMmmap}, is the
normal subgroup $\cG\subset\cX$ of automorphisms covering the
identity.

The map $\cX \lra{\pr} \cH$ assigning to each automorphism $g$ the
Hamiltonian symplectomorphism $\check{g}$ that it covers is
surjective. To show this, let $h \in \cH$. By definition there exists
a Hamiltonian isotopy $[0,1] \times X \to X \colon (t,x) \mapsto
h_t(x)$ from $h_0=\Id$ to $h_1=h$, which is the flow of a smooth
family of vector fields $\eta_t \in \LieH$, i.e. with $dh_t/dt =
\eta_t \circ h_t$ (see e.g.~\cite[\S 3.2]{McS}).  Choose a connection
$A$ on $E$.  Let $\zeta_t\in\LieX$ be the horizontal lift to $E$ of
$\eta_t$ given by $A$. The vector fields $\zeta_t$ are $G$-invariant
so their time-dependent flow $g_t$ exists for all $t\in [0,1]$ and the
$g_t\colon E\to E$ are $G$-equivariant. Since $\zeta_t$ is a lift of
$\eta_t$ to $E$, its flow $g_t$ covers $h_t$ (i.e. $h_t=\check{g}_t$),
so in particular $g_t\in\cX$ for all $t$ and $g_1\in\cX$ covers
$h=h_1$. Thus $\pr$ is surjective.  We thus have an exact sequence of
Lie groups
\begin{equation}
\label{eq:coupling-term-moment-map-1}
  1\to \cG \lra{\iota} \cX \lra{\pr} \cH \to 1,
\end{equation}
where $\iota$ is the inclusion map.

\begin{remark}
\label{rem:exact-ses}
Note that the sequence~\eqref{eq:coupling-term-moment-map-1} is exact even
when the structure group $G$ and the base manifold $X$ are non-compact. The
crucial fact is that $\cH$ lies in the identity component of the
diffeomorphism group $\Diff X$ of $X$ (see~\cite{ACMM} for further details).
\end{remark}

There is an action of $\Aut E$, and hence of the extended gauge group, on the
space $\cA$ of connections on $E$. To define this action, we view the elements
of $\cA$ as $G$-equivariant splittings $A\colon TE\to VE$ of the short exact
sequence
\begin{equation}
\label{eq:principal-bundle-ses}
  0 \to VE\lto TE\lto \pi^*TX \to 0,
\end{equation}
where $VE=\ker d\pi$ is the vertical bundle. Using the action of $g\in \Aut E$
on $TE$, its action on $\cA$ is given by $g \cdot A \defeq g\circ A \circ
g^{-1}$. Any such splitting $A$ induces a vector space splitting of the Atiyah
short exact sequence
\begin{equation}
\label{eq:Ext-Lie-alg-3}
0\to \LieG \lra{\iota} \Lie(\Aut E) \lra{\pr} \Lie(\Diff X) \to
0
\end{equation}
(cf.~\cite[equation~(3.4)]{AB}), where $\Lie(\Diff X)$ is the Lie algebra of
vector fields on $X$ and $\Lie(\Aut E)$ is the Lie algebra of $G$-invariant
vector fields on $E$. This splitting is given by maps
\begin{equation}
\label{eq:theta-thetaperp}
\theta_A\colon \Lie(\Aut E)\lto \LieG, \quad \theta_A^\perp\colon
\Lie(\Diff X) \lto \Lie(\Aut E)
\end{equation}
such that $\iota\circ \theta_A + \theta_A^\perp\circ \pr =\Id$, where $\theta_A$
is the vertical projection given by $A$ and $\theta_A^\perp$ the horizontal
lift of vector fields on $X$ to vector fields on $E$ given by $A$.

\begin{lemma}
\label{lem:infinit-action-connections}
Let $A\in\cA$, $\zeta\in\Lie(\Aut E)$ and $\check{\zeta}\defeq
\pr(\zeta)\in\Lie(\Diff X)$. Then the infinitesimal action $Y_{\zeta|A}\in
T_A\cA=\Omega^1(\ad E)$ of $\zeta$ on $A$ is given by
\begin{equation}
\label{eq:infinit-action-connections}
  Y_{\zeta|A}=-d_A(\theta_A\zeta)-\check{\zeta}\lrcorner F_A,
\end{equation}
where $d_A\colon \Omega^k(\ad E)\to \Omega^{k+1}(\ad E)$ is the
covariant derivative associated to $A$.
\end{lemma}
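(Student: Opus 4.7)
The plan is to use the linearity of $\zeta\mapsto Y_{\zeta|A}$ for fixed $A$ together with the vector space splitting of the Atiyah sequence~\eqref{eq:Ext-Lie-alg-3} induced by $A$, namely $\zeta = \iota(\theta_A\zeta) + \theta_A^\perp(\check\zeta)$. This reduces the problem to computing the infinitesimal action at $A$ of two pieces separately: the vertical element $\iota(\theta_A\zeta)\in\LieG$, and the horizontal lift $\theta_A^\perp(\check\zeta)\in \Lie(\Aut E)$. I aim to show these contribute $-d_A(\theta_A\zeta)$ and $-\check\zeta\lrcorner F_A$ respectively, whose sum is exactly~\eqref{eq:infinit-action-connections}.

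The key preliminary step is to translate from the splitting description $A\colon TE\to VE$ to the associated connection $1$-form $\omega_A\in\Omega^1(E,\mathfrak{g})$, defined via the fundamental vector field map $\mathfrak{g}\to VE$. A short computation shows that, under this identification and using the $G$-equivariance of $g\in\cX$, the prescription $g\cdot A = g\circ A\circ g^{-1}$ becomes $\omega_{g\cdot A} = (g^{-1})^*\omega_A$, so that differentiating the flow of $\zeta$ at $t=0$ yields the master formula
\[
Y_{\zeta|A} = -\mathcal{L}_\zeta\omega_A,
\]
whose right-hand side is horizontal, $G$-equivariant, and $\mathfrak{g}$-valued, hence a genuine element of $\Omega^1(\ad E) = T_A\cA$.

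For the vertical piece, $\iota(\theta_A\zeta)\in\LieG\cong\Omega^0(\ad E)$ is generated by the fundamental vector field $\sigma_s$ with $s\defeq \theta_A\zeta$. Since $\omega_A(\sigma_s)=s$ and $\sigma_s\lrcorner F_A = 0$ (the curvature is horizontal on $E$), Cartan's magic formula together with the structure equation $d\omega_A = F_A - \tfrac{1}{2}[\omega_A,\omega_A]$ give $\mathcal{L}_{\sigma_s}\omega_A = ds + [\omega_A,s] = d_A s$, so that $Y_{\iota s|A} = -d_A s$. For the horizontal piece, $\omega_A$ annihilates $\theta_A^\perp\check\zeta$ by construction, so Cartan's formula collapses $\mathcal{L}_{\theta_A^\perp\check\zeta}\omega_A$ to $\theta_A^\perp\check\zeta\lrcorner d\omega_A$; the same structure equation, combined again with the vanishing of $\omega_A$ on the horizontal lift, reduces this further to $\theta_A^\perp\check\zeta\lrcorner F_A$. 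Pushing down to $X$ identifies this horizontal equivariant $2$-form on $E$ with $\check\zeta\lrcorner F_A\in\Omega^1(\ad E)$, and summing the two contributions yields~\eqref{eq:infinit-action-connections}.

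The only substantive difficulty is bookkeeping: carefully fixing sign conventions and tracking the identifications between splittings $A\colon TE\to VE$, $\mathfrak{g}$-valued connection $1$-forms on $E$, horizontal $G$-equivariant tensors on $E$, and $\ad E$-valued tensors on $X$. Once these identifications are pinned down, each of the two computations above is a one-line application of Cartan's magic formula, and no further analytic input is required.
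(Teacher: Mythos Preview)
Your proof is correct. Both you and the paper split $\zeta$ into its vertical and horizontal parts with respect to $A$ and identify the two contributions separately, so the overall architecture is the same. The difference is in the formalism: the paper works directly in the language of the splitting $A\colon TE\to VE$ and computes $\frac{d}{dt}\big|_{t=0}\bigl(e^{t\zeta}\circ A\circ e^{-t\zeta}\bigr)$ as a Lie bracket of $G$-invariant vector fields on $E$, invoking the commutator formulae~\eqref{eq:cov-derivative-commutators} (namely $y\lrcorner d_A\xi=[\theta_A^\perp y,\xi]$ and $F_A(y,y')=-\theta_A[\theta_A^\perp y,\theta_A^\perp y']$) rather than the structure equation. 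You instead pass to the connection $1$-form $\omega_A\in\Omega^1(E,\mathfrak{g})$, obtain the master formula $Y_{\zeta|A}=-\mathcal{L}_\zeta\omega_A$, and run Cartan's magic formula together with $d\omega_A=F_A-\tfrac12[\omega_A,\omega_A]$. Your route is the more classical textbook computation and makes the role of the structure equation transparent; the paper's route has the virtue of never leaving the Atiyah-sequence language used in the rest of~\secref{sec:Ceqcoupling-term}, so the identities~\eqref{eq:cov-derivative-commutators} it establishes along the way are immediately reusable (e.g.\ in the proof of Proposition~\ref{prop:4-tuples-C1-4}).
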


\begin{proof}
By the Leibninz rule, for all
$v\in\Omega^0(TE)$,
\[
  \frac{d}{dt}_{|t = 0}\(e^{t\zeta}\circ A\circ e^{-t\zeta}(v)\)
  = \theta_A[\zeta,v] - [\zeta,\theta_A v]
  = \theta_A[\zeta,v - \theta_A v],
\]
where in the second equality we have used the fact that $\zeta$ covers a
vector field $\check{\zeta}$ on $X$, so that the vector field
$[\zeta,\theta_A v]$ is vertical. It is easy to see that this expression is
tensorial in $v$, so at each point of $E$ it only depends on its projection
$\pi_*v$. Hence the vector $Y_{\zeta|A}\in T_A\cA$, regarded as an element of
$\Omega^1(\ad E)$, is
given by
\begin{equation*}
%% \label{eq:infinit-action-connections}
\begin{split}
  Y_{\zeta|A}(y) & = \theta_A[\zeta,\theta_A^\perp y]
% \\ &
  = [\theta_A\zeta,\theta_A^\perp y] + \theta_A[\theta_A^\perp\check{\zeta},\theta_A^\perp y]
  \\ &
  = (-d_A(\theta_A\zeta) - \check{\zeta} \lrcorner F_A)(y),
\end{split}
\end{equation*}
for any $y \in \Omega^0(TX)$, where we have used the formulae
\begin{equation}
\label{eq:cov-derivative-commutators}
y \lrcorner d_A\zeta =
[\theta_A^\perp y,\zeta], \quad F_A(y,y')= -\theta_A[\theta_A^\perp
y,\theta_A^\perp y']
\end{equation}
(see the equation before (4.2) and the equation after (3.4) in \cite{AB} and
note that we are using a different sign convention for the curvature).
\end{proof}

The splitting~\eqref{eq:theta-thetaperp} restricts to a splitting of the exact
sequence
\begin{equation}
\label{eq:Ext-Lie-alg-2}
 0 \to \LieG \lra{\iota} \LieX \lra{\pr} \LieH \to 0
\end{equation}
induced by~\eqref{eq:coupling-term-moment-map-1}.  Following the
notation of~\secref{sec:Ceqham-act-ext-grp}, it is easy to see that
the map
\begin{equation}
\label{eq:theta}
\begin{gathered}
\xymatrix @R=0ex @C=2.5ex{
**[l]\theta\colon \cA \ar[r] & **[r]W\\
**[l]A\ar@{|->}[r] & **[r] \theta_A
}
\end{gathered}
\end{equation}
is a $\cX$-equivariant smooth map. It is also clear that the
$\cX$-action on $\cA$ is symplectic, for the symplectic
form~\eqref{eq:SymfC}. The methods of~\secref{sec:Ceqham-act-ext-grp}
apply here to provide a moment map. To see this, we use the
isomorphism of Lie algebras
\begin{equation}
\label{eq:LieH}
\LieH\cong C^{\infty}_0(X),
\end{equation}
where $\LieH$ is the Lie algebra of Hamiltonian vector fields on $X$
and $C^{\infty}_0(X)$ is the Lie algebra of smooth real functions on
$X$ with zero integral over $X$ with respect to $\omega^{[n]}$, with
the Poisson bracket. This isomorphism is induced by the map
$C^{\infty}(X)\to \LieH \colon \phi\mapsto \eta_\phi$, which to each
function $\phi$ assigns its Hamiltonian vector field $\eta_\phi$,
defined by
\begin{equation}
\label{eq:eta_phi}
  d\phi= \eta_\phi \lrcorner \omega.
\end{equation}

\begin{proposition}
\label{prop:momentmap-X}
The $\cX$-action on $\cA$ is Hamiltonian, with equivariant moment map
$\mu_\cX\colon \cA\to (\LieX)^*$ given by
\begin{equation}
\label{eq:thm-muX}
  \langle \mu_\cX,\zeta\rangle = \langle \mu_\cG, \theta\zeta\rangle +
  \langle \sigma, \pr (\zeta) \rangle, \text{ for all $\zeta\in\LieX$,}
\end{equation}
where $\mu_\cG: \cA \to (\LieG)^*$ and $\sigma\colon \cA\to (\LieH)^*$
are given by
\begin{equation}
\label{eq:sigma}
\begin{split} &
\langle \mu_\cG,\theta\zeta\rangle(A)
= \int_X \theta_A\zeta \wedge (\Lambda F_A - z) \omega^{[n]},
\\ &
\langle\sigma,\eta_\phi\rangle(A)
= -\frac{1}{4}\int_X \phi\(\Lambda^2 (F_A\wedge F_A) - 4 \Lambda F_A \wedge z\)\omega^{[n]},
\end{split}
\end{equation}
for all $A\in\cA$, $\phi\in C_0^\infty(X)$.
\end{proposition}

\begin{proof}
The result follows, by Proposition~\ref{prop:ham-act-ext-grp}, from the facts
that $\mu_\cG$ and $\sigma$ are $\cX$-equivariant, which is immediate
from~\eqref{eq:sigma} by the change of variable theorem, and the map $\sigma$
defined by~\eqref{eq:sigma} satisfies~\eqref{eq:muX}.  To show this, let
$\zeta\in \Lie(\Aut E)$, $A\in\cA$ and note
that~\eqref{eq:infinit-action-connections} also applies to maps $\zeta\colon
\cA\to \Lie(\Aut E)$ (with $Y_{\zeta|A}$ defined by~\eqref{eq:infact-A}). In
particular,
\begin{equation*}
%%LAC: \label{eq:momentmap-X-1}
   Y_{\theta_A^\perp\eta}(A)   = - \eta \lrcorner F_A, \text{ for $\eta\in\LieH$.}
\end{equation*}
The $\Hom(\LieH,\LieG)$-valued 1-form $d\theta$ on $\cA$ is given by
\[
\begin{gathered}%%LAC: \label{eq:momentmap-X-2}
\xymatrix @R=0ex @C=-8.5ex{
**[l]d\theta(a)\colon\LieH\ar[r]&**[r]\LieG\\
**[l]\eta\ar@{|->}[r]&**[r]\langle d\theta(a),\eta\rangle=a(\eta),
}
\end{gathered}
\]
for $A\in\cA$ and $a\in T_A\cA=\Omega^1(\ad E)$. Observe that the quantity
$$
\mu'(A) = \int_X (\theta_A \zeta \wedge z - \phi \Lambda F_A \wedge z) \omega^{[n]}
$$
is locally constant on $\cA$, so it is enough to assume $z = 0$. To see this, we use the path $A_t=A+ta$ to calculate
\begin{align*}
\frac{d}{dt}_{|t=0} \mu'(A + ta) & = \int_X a(\eta_\phi) \wedge z \omega^{[n]} - \phi d_A a \wedge z \omega^{[n-1]}\\
& = \int_X a \wedge z d\phi \wedge \omega^{[n-1]} - \phi d_A a \wedge z \omega^{[n-1]} = 0.
\end{align*}
Here we have used the identity $dF_{A_t}/dt = d_A a$ for $t=0$ and integration by parts, combined with the equality
\[
a\wedge z d\phi \wedge \omega^{[n-1]} = a(\eta_\phi) \wedge z \omega^{[n]}.
\]
Assuming $z = 0$, for the last term of the right hand side of~\eqref{eq:muX}, we have
\[
\Lambda^2(F_{A}\wedge F_{A}) \omega^{[n]} = 2F_A\wedge F_A\wedge\omega^{[n-2]}.
\]
Using now the Bianchi identity $d_A F_A=0$, a similar calculation as before shows that
\begin{equation}
\label{eq:momentmap-X-3}
\begin{split}
  d\langle \sigma,\eta\rangle (a) & = -\frac{1}{2} \frac{d}{dt}_{|t = 0}
  \int_X \phi\, (F_{A_t}\wedge F_{A_t})\wedge\omega^{[n-2]}\\
  & = - \int_X \phi \, d_Aa \wedge F_A \wedge \omega^{[n-2]}\\
  & = \int_X (\eta \lrcorner \omega) \wedge a \wedge F_A \wedge \omega^{[n-2]}.\\
\end{split}
\end{equation}
To compute the integral in the last equality, note
that %$\dim X=2n$ implies
$(a \wedge F_A) \wedge \omega^{n-1}=0$, so
contracting with $\eta$ we obtain
\[
a\wedge F_A \wedge(\eta \lrcorner \omega)\wedge  \omega^{[n-2]} = a(\eta)\wedge\Lambda F_A \omega^{[n]} - a\wedge (\eta \lrcorner F_A) \wedge \omega^{[n-1]},
\]
using the identity $F_A \wedge \omega^{[n-1]} = \Lambda F_A \omega^{[n]}$. Combined with~\eqref{eq:momentmap-X-3}, we thus obtain~\eqref{eq:muX}:
\begin{align*}
  d\langle \sigma, \eta \rangle (a) & =
\int_X a\wedge (\eta \lrcorner F_A)\wedge \omega^{n-1} - \int_X a(\eta) \wedge (\Lambda F_A - z) \omega^{[n]}\\
%& = \int_X  \((Y_{\theta_A^\perp\eta}(A) \wedge a)\wedge \omega^{n-1}
%  - (((d\theta)\eta)(a)\wedge F_A)\wedge \omega^{n-1}\) \\
& = (Y_{\theta_A^\perp\eta} \lrcorner \omega_\cA)(a) -
\langle\mu_\cG,\langle d\theta(a),\eta\rangle\rangle.
\qedhere
\end{align*}
\end{proof}

\section{The coupled equations}
\label{chap:Ceq}

In this section we give a moment map interpretation of the coupled
equations~\eqref{eq:CYMeq00} for the action of the extended gauge
group, introduced in \secref{sec:MM}. We also define a purely
Riemannian functional, the Calabi--Yang--Mills functional, whose
absolute minimum over the phase space are precisely the solutions of
the coupled equations, that we interpret in terms of the Kaluza--Klein
theory for $G$-invariant metrics on the total space of the
bundle. With this purpose we first recall the moment map
interpretation of the cscK equation given by Fujiki and Donaldson.

\subsection{The Hermitian scalar curvature}
\label{section:cscKmmap}

%% We now briefly explain the moment map interpretation of the scalar
%% curvature.
The moment map interpretation of the scalar curvature was first given
by Fujiki~\cite{Fj} for the Riemannian scalar curvature of K\"ahler
manifolds and generalized independently by Donaldson~\cite{D1} for the
Hermitian scalar curvature of almost K\"ahler manifolds. Here we
follow closely Donaldson's approach.

First we recall the notion of Hermitian scalar curvature of an almost
K\"ahler manifold. Fix a compact symplectic manifold $X$ of dimension
$2n$, with symplectic form $\omega$. An almost complex structure $J$
on $X$ is called compatible with $\omega$ if the bilinear form
$g_J(\cdot,\cdot) \defeq \omega(\cdot,J\cdot)$ is a Riemannian metric
on $X$. Any almost complex structure $J$ on $X$ which is compatible
with $\omega$ defines a Hermitian metric on $T^*X$ and there is a
unique unitary connection on $T^*X$ whose (0,1) component is the
operator $\dbar_J\colon \Omega^{1,0}_J\to \Omega^{1,1}_J$ induced by
$J$. The real $2$-form $\rho_J$ is defined as $-\imag$ times the
curvature of the induced connection on the canonical line bundle $K_X
= \Lambda^n_{\CC}T^{\ast}X$, where $\imag$ is the imaginary unit
$\sqrt{-1}$. The Hermitian scalar curvature $S_J$ is the real function
on $X$ defined by
\begin{equation}
\label{eq:def-S}
  S_J \omega^{[n]} = 2\rho_J \wedge \omega^{[n-1]}.
\end{equation}
The normalization is chosen so that $S_J$ coincides with the
Riemannian scalar curvature when $J$ is integrable. The space $\cJ$ of
almost complex structures $J$ on $X$ which are compatible with
$\omega$ is an infinite dimensional K\"ahler manifold, with complex
structure $\mathbf{J} \colon T_J\cJ \to T_J\cJ$ and K\"ahler form
$\omega_{\cJ}$ given by
\begin{equation}
\label{eq:SympJ}
\mathbf{J}\Phi \defeq J\Phi \text{ and }
\omega_{\cJ} (\Psi,\Phi) \defeq \frac{1}{2}\int_{X}\tr(J\Psi \Phi) \omega^{[n]},
\end{equation}
for $\Phi$, $\Psi \in T_J\cJ$, respectively. Here we identify $T_J\cJ$
with the space of endomorphisms $\Phi\colon TX \to TX$ such that
$\Phi$ is symmetric with respect to the induced metric
$\omega(\cdot,J\cdot)$ and satisfies $\Phi J = - J \Phi$.

The group $\cH$ of Hamiltonian symplectomorphisms $h\colon X\to X$
acts on $\cJ$ by push-forward, i.e. $h \cdot J \defeq h_{\ast} \circ J
\circ h_{\ast}^{-1}$, preserving the K\"ahler form. As proved by
Donaldson~\cite[Proposition~9]{D1}, the $\cH$-action on $\cJ$ is
Hamiltonian with equivariant moment map $\mu_\cH\colon \cJ\to
(\LieH)^*$ given by
\begin{equation}
\label{eq:scmom}
\langle \mu_\cH(J), \eta_\phi\rangle = -\int_X \phi S_J \omega^{[n]},
\end{equation}
for $\phi \in C^{\infty}_0(X)$, identified with an element $\eta_\phi$
in $\LieH$ by~\eqref{eq:LieH} and~\eqref{eq:eta_phi}. The
$\cH$-invariant subspace $\cJi \subset \cJ$ of integrable almost
complex structures is a complex submanifold (away from its
singularities), and therefore inherits a K\"ahler structure.  Over
$\cJi$, the Hermitian scalar curvature $S_J$ is the Riemannian scalar
curvature of the K\"ahler metric determined by $J$ and $\omega$.
Hence the quotient
\begin{equation}
\label{eq:modulicscK}
\mu_{\cH}^{-1} (0)/\cH,
\end{equation}
where $\mu_\cH$ is now the restriction of the moment map to $\cJi$, is
the moduli space of K\"ahler metrics with fixed K\"ahler form
$\omega$ and constant scalar curvature.  Away from singularities, this
moduli space can thus be constructed as a K\"ahler reduction
(see~\cite{Fj} and references therein for details).

\subsection{The coupled equations as a moment map condition}
\label{sec:Ceqcoupled-equations}

Fix a compact symplectic manifold $X$ of dimension $2n$ with symplectic form
$\omega$, a compact Lie group $G$ and a smooth principal $G$-bundle $E$ on
$X$. Let $\cJ$ be the space of almost complex structures compatible with
$\omega$ and $\cA$ the space of connections on $E$. Using the symplectic forms
on $\cA$ and $\cJ$ induced by $\omega$ (see~\eqref{eq:SymfC}
and~\eqref{eq:SympJ}), we define a symplectic form on the product $\cJ \times
\cA$, for each pair of non-zero real constants $\alpha = (\alpha_0,
\alpha_1)$, as the weighted sum
\begin{equation}
\label{eq:Sympfamily}
\omega_\alpha = \alpha_0  \omega_\cJ + 4\alpha_1  \omega_\cA
\end{equation}
(we omit pullbacks to $\cJ\times\cA$). The extended gauge group $\cX$
has a canonical action on $\cJ \times \cA$ and this action is
symplectic for any $\omega_\alpha$. Following the notation
of~\secref{sec:Ceqcoupling-term}, this action is given by
\[
  g\cdot (J,A)=(\pr(g)\cdot J, g\cdot A),
\]
for $g\in\cX$ and $(J,A)\in \cJ \times \cA$, with $\pr$ as
in~\eqref{eq:coupling-term-moment-map-1}. Using the moment maps $\mu_\cH$ and
$\mu_\cX$ given by~\eqref{eq:scmom} and Proposition~\ref{prop:momentmap-X}, we
obtain the following.

\begin{proposition}
\label{prop:momentmap-pairs}
The $\cX$-action on $\cJ\times \cA$ is Hamiltonian with respect to
$\omega_\alpha$, with equivariant moment map $\mu_\alpha\colon \cJ \times
\cA\to (\LieX)^*$ given by
\begin{equation}
\label{eq:thm-muX}
\begin{split}
\langle \mu_\alpha(J,A),\zeta\rangle
& = 4 \alpha_1 \int_X \theta_A\zeta \wedge (\Lambda F_A - z) \omega^{[n]}\\
& - \int_X\phi\(\alpha_0 S_J + \alpha_1 \Lambda^2 (F_A\wedge F_A) - 4 \alpha_1\Lambda F_A \wedge z\)\omega^{[n]},
\end{split}
\end{equation}
for all $(J,A)\in\cJ\times\cA$, $\zeta\in\LieX$, and
$\pr(\zeta) = \eta_\phi$ with $\phi \in C_0^\infty(X)$.
\end{proposition}

The $\cX$-action also preserves the almost complex structure
$\mathbf{I}$ on $\cJ \times \cA$ given by
\begin{equation}
\label{eq:complexstructureI}
\mathbf{I}(\dot{J},a) = (J\dot{J},-a(J \cdot)),
\end{equation}
for all $(\dot{J},a)\in T_J\cJ\times T_A\cA$.
Using the complex structure $\mathbf{J}$ on $\cJ$ given by~\eqref{eq:SympJ},
the canonical projection $\cJ \times \cA \to \cJ$ becomes now a holomorphic
submersion. It is easy to see that, for $\alpha_0, \alpha_1$ positive, the
complex structure $\mathbf{I}$ is compatible with the family of symplectic
structures \eqref{eq:Sympfamily}. The formal integrability of the almost complex structure
$\mathbf{I}$ is not obvious \emph{a priori}, so we now provide a proof of this
fact. By ``formal integrability'' here, we mean, as in \cite{D6}, that the
associated Nijenhuis tensor vanishes.

\begin{proposition}
\label{prop:CeqintegrableI}
The almost complex structure $\mathbf{I}$ is formally integrable.
\end{proposition}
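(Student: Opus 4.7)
The plan is to verify directly that the Nijenhuis tensor $N_\mathbf{I}$ of $\mathbf{I}$ vanishes at every point $(J,A) \in \cJ \times \cA$. Since $N_\mathbf{I}$ is a tensor, we may evaluate it on any convenient extensions of tangent vectors to vector fields. Writing tangent vectors as pairs $(\dot J, a) \in T_J\cJ \oplus T_A\cA$, bilinearity and antisymmetry reduce the problem to three cases: both arguments in $T\cJ$, both in $T\cA$, or one of each. Throughout, the affine structure on $\cA$ (modelled on $\Omega^1(\ad E)$) lets us choose ``constant'' extensions in the $\cA$-direction.

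In the $T\cA \times T\cA$ case, take extensions $V_i = (0, a_i)$ with $a_i$ constant; then $\mathbf{I} V_i = (0, -a_i \circ J)$ also has vanishing $\cJ$-component and $\cA$-component depending only on $J$, so the four Lie brackets in $N_\mathbf{I}$ all vanish by a symmetric-mixed-partials argument. In the $T\cJ \times T\cJ$ case, take extensions $V_i = (\Xi_i(J), 0)$; both $V_i$ and $\mathbf{I} V_i = (J \Xi_i(J), 0)$ preserve the zero $\cA$-component, and $N_\mathbf{I}((\dot J_1, 0), (\dot J_2, 0))$ reduces to $(N_\mathbf{J}(\dot J_1, \dot J_2), 0)$, which vanishes by the standard formal integrability of the complex structure $\mathbf{J}$ on $\cJ$.

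The essential case is the mixed one; this is where I expect the only conceptual input, namely that the variation of $\mathbf{K}_J \colon a \mapsto -a \circ J$ along $T_J\cJ$ is exactly compensated by the anticommutation defining $T_J\cJ$. Take $V_1 = (\Xi_1(J), 0)$ and $V_2 = (0, a_2)$ with $a_2$ constant, so that $\mathbf{I} V_1 = (J \Xi_1(J), 0)$ and $\mathbf{I} V_2 = (0, -a_2 \circ J)$. Of the four Lie brackets at $(J, A)$, the first two vanish and the last two pick up $\cA$-contributions from the $J$-dependence of $\mathbf{I} V_2$:
\[
[V_1, \mathbf{I} V_2] = (0,\, -a_2 \circ \dot J_1), \qquad [\mathbf{I} V_1, \mathbf{I} V_2] = (0,\, -a_2 \circ J \dot J_1).
\]
Substituting into $N_\mathbf{I}(V_1,V_2) = [V_1,V_2] + \mathbf{I}[\mathbf{I} V_1,V_2] + \mathbf{I}[V_1, \mathbf{I} V_2] - [\mathbf{I} V_1, \mathbf{I} V_2]$ and using $\mathbf{I}(0, \beta) = (0, -\beta \circ J)$ gives
\[
N_\mathbf{I}(V_1, V_2) = \bigl(0,\; a_2 \circ (\dot J_1 J + J \dot J_1)\bigr) = 0
\]
by the defining anticommutation $\dot J_1 J + J \dot J_1 = 0$ of $T_J\cJ$. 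Beyond careful bookkeeping of the differentiations, no serious obstacle arises; in particular the argument does not use the integrability condition $F_A^{0,2}=0$, so formal integrability holds on all of $\cJ \times \cA$.
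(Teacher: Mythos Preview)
Your proof is correct and follows essentially the same approach as the paper: both dispatch the pure $T\cJ\times T\cJ$ and $T\cA\times T\cA$ cases by invoking integrability of $\mathbf{J}$ on $\cJ$ and of the constant complex structure on each fibre, then compute the two nonvanishing brackets in the mixed case to obtain $a_2\circ(\dot{J}_1 J + J\dot{J}_1)=0$ from the anticommutation defining $T_J\cJ$. The only cosmetic differences are your sign convention for $N_\mathbf{I}$ and your use of an arbitrary extension $\Xi_1$ in place of the paper's explicit one $\dot{J}_{|J'}=\tfrac{1}{2}(J\dot{J}J'-J'J\dot{J})$, neither of which matters since $N_\mathbf{I}$ is tensorial; the paper likewise remarks that compatibility of $J$ with $\omega$ is not used.
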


\begin{proof}
Since the complex structure $\mathbf{J}$ on the base $\cJ$ and the one on each
fibre are integrable, the integrability condition for $\mathbf{I}$ reduces to
the vanishing condition for the value of the Nijenhuis tensor $N_{\mathbf{I}}$
on each pair of vectors $\dot{J} \in T_J\cJ$, $a \in T_A\cA$, for $(J,A)
\in \cJ\times\cA$. Now, $a$ and $\dot{J}$ extend to vector fields on $\cA$ and
$\cJ$, respectively, and hence to $\cJ\times\cA$ ($a$ extends to a constant
vector field on the affine space $\cA$ and $\dot{J}$ extends to a vector field
on $\cJ$ given by $\dot{J}_{|J'} = (1/2)(J\dot{J}J' -
J'J\dot{J})$). Furthermore,
\[
N_{\mathbf{I}}(\dot{J},a)
 = [\mathbf{I}\dot{J},\mathbf{I}a] - \mathbf{I}[\mathbf{I}\dot{J},a] - \mathbf{I}[\dot{J},\mathbf{I}a] - [\dot{J},a]
= [\mathbf{I}\dot{J},\mathbf{I}a] - \mathbf{I}[\dot{J},\mathbf{I}a],
\]
where the brackets denote the Lie brackets between vector fields on
$\cJ\times\cA$ and we have used the fact that $[\mathbf{I}\dot{J},a] =
[\dot{J},a] = 0$ because the flow of $a$ covers the identity on $\cJ$.
To compute the remaining terms, we denote by $J_t(\dot{J})$ the flow of
any vector field $\dot{J}$ on $\cJ$, viewed as a vector field on $\cJ \times
\cA$. Then $J_t(\dot{J})$ induces the identity on $\cA$, and hence
\begin{align*}
N_{\mathbf{I}}(\dot{J},a) & = \frac{d}{dt}_{|t = 0} \mathbf{I}a_{|J_t(\mathbf{I}\dot{J})} - \mathbf{I}_{|J}
\frac{d}{dt}_{|t = 0} \mathbf{I} a_{|J_t(\dot{J})}
\\ &
= - \frac{d}{dt}_{|t = 0} a(J_t(\mathbf{I}\dot{J}) \cdot)
%\\ &
+ \mathbf{I}_{|J} \frac{d}{dt}_{|t = 0}a(J_t(\dot{J})
\cdot)
\\ &
= - a(J\dot{J} \cdot) - a(\dot{J}J \cdot)
  = - a(J\dot{J} + \dot{J}J \cdot) = 0,
\end{align*}
where $a$ is now viewed as an element of $\Omega^1(\ad E)$.

Note that the vanishing of $N_{\mathbf{I}}(\dot{J},a)$ does not require any
compatibility condition between $J$ and $\omega$.
\end{proof}

\begin{remark}
  There is another $\cX$-invariant almost complex structure on $\cJ
  \times \cA$ which is given by $\mathbf{I}'(\dot{J},a) = (J\dot{J},
  a(J \cdot))$. This is compatible with $\omega_\alpha$ for $\alpha_0
  > 0 > \alpha_1$, and the projection $\cJ \times \cA \to \cJ$ is
  pseudoholomorphic for this $\mathbf{I}'$, but one can modify the
  proof of Proposition~\ref{prop:CeqintegrableI} to show that
  $\mathbf{I}'$ is not formally integrable.
\end{remark}

Suppose now that $X$ has K\"ahler structures with K\"ahler form
$\omega$. In the notation of~\secref{section:cscKmmap}, this means
that the subspace $\cJi \subset \cJ$ of integrable almost complex
structures compatible with $\omega$ is not empty. Define
\begin{equation}
\label{eq:cP}
  \cP\subset \cJ\times \cA
\end{equation}
as the space of pairs $(J,A)$ with $J\in \cJi$ and $A\in \cA^{1,1}_J$,
where $\cA^{1,1}_J\subset\cA$ is the space of connections defined
in~\eqref{eq:cA^{1,1}_J}. Then $\cP\subset \cJ\times \cA$ is a
$\cX$-invariant complex and hence K\"ahler subspace by construction
(see also Lemma~\ref{lem:pairsinvacs2}).

We say that a pair  $(J,A)\in\cP$ satisfies  the \emph{coupled K\"ahler--Yang--Mills equations} if
\begin{equation}
\label{eq:CYMeq}
\left. \begin{array}{l}
\Lambda F_A = z\\
\alpha_0 S_J \; + \; \alpha_1 \Lambda^2 (F_A \wedge F_A) = c
\end{array}\right \},
\end{equation}
where $S_J$ is the scalar curvature of the metric $g_J =
\omega(\cdot,J\cdot)$ on $X$ and $c \in \RR$. These equations are the
central subject of this paper.  The set of solutions to the coupled
equations is invariant under the action of $\cX$ and we define the
moduli space of solutions as the set of all solutions modulo the
action of $\cX$. We have the following.

\begin{proposition}
\label{mm-equations}
The subset $\mu_{\alpha}^{-1}(0)\subset \cP$ coincides with the set of
pairs $(J,A)\in\cP$ satisfying equations~\textup{\eqref{eq:CYMeq}}.
\end{proposition}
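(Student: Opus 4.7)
The plan is to unpack $\langle \mu_\alpha(J,A), \zeta\rangle = 0$ against arbitrary $\zeta\in\LieX$, using the explicit formula of Proposition~\ref{prop:momentmap-pairs}, and to read off the two coupled equations of~\eqref{eq:CYMeq} one at a time.

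First I would test against $\zeta \in \LieG \subset \LieX$, for which $\pr(\zeta) = 0$ (hence $\phi = 0$) and $\theta_A\zeta = \zeta$. The moment map formula collapses to $4\alpha_1 \int_X \zeta \wedge (\Lambda F_A - z)\, \omega^{[n]} = 0$, and since $\alpha_1\neq 0$ and the $L^2$-pairing $\Omega^0(\ad E)\times \Omega^0(\ad E)\to\RR$ induced by $(\cdot,\cdot)$ and $\omega^{[n]}$ is non-degenerate, letting $\zeta$ vary forces the Hermitian--Yang--Mills equation $\Lambda F_A = z$, which is the first of the coupled equations.

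With this identity in hand, the $\theta_A\zeta$-integral in~\eqref{eq:thm-muX} vanishes for every $\zeta\in\LieX$, and $\mu_\alpha(J,A) = 0$ reduces to $\int_X \phi\bigl(\alpha_0 S_J + \alpha_1 \Lambda^2\bigl((F_A - \hat z)\wedge(F_A - \hat z)\bigr)\bigr)\omega^{[n]} = 0$ for every $\phi\in C^\infty_0(X)$, using the surjectivity of $\pr\colon\LieX\to\LieH\cong C^\infty_0(X)$ established earlier via horizontal lifts of Hamiltonian vector fields. Orthogonality to the zero-mean subspace then forces the bracketed function to be constant on $X$.

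The last step will be an algebraic identity: when $\Lambda F_A = z$, the function $\Lambda^2((F_A - \hat z)\wedge(F_A - \hat z))$ differs from $\Lambda^2(F_A\wedge F_A)$ only by an additive constant depending on $(z,z)$ and $n$. Expanding the wedge square with $\hat z = z\omega/(n-1)$, the cross term contributes a multiple of $\Lambda^2(\omega\wedge(z,F_A))$ and the quadratic term contributes a multiple of $\Lambda^2(\omega^2)\,(z,z)$; the standard K\"ahler commutator $[\Lambda, \omega\wedge\cdot\,] = (n-k)\Id$ on $\Omega^k$ collapses these to $2(n-1)(z,\Lambda F_A) = 2(n-1)(z,z)$ and $2n(n-1)(z,z)/(n-1)^2$ respectively, both constant. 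Absorbing these into the final constant $c$ yields the second coupled equation $\alpha_0 S_J + \alpha_1\Lambda^2(F_A\wedge F_A) = c$. The reverse implication is immediate by retracing the argument, and the case $n=1$ is handled in parallel using~\eqref{eq:thm-muXn=1} together with $F_A\wedge F_A = 0$ on a surface. I expect the only substantive step to be the K\"ahler-type identity above; the rest is a bookkeeping unpacking of the moment map formula.
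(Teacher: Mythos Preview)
Your argument is correct and follows the same overall structure as the paper's proof: test $\mu_\alpha$ separately against $\LieG$ and against horizontal lifts of $\LieH$ to extract the two equations, then pass from $\Lambda^2\bigl((F_A-\hat z)\wedge(F_A-\hat z)\bigr)$ to $\Lambda^2(F_A\wedge F_A)$ by showing the discrepancy is constant once $\Lambda F_A=z$ holds.

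The one genuine difference is the algebraic identity you use for that last step. You expand $(F_A-\hat z)\wedge(F_A-\hat z)$ directly and reduce the cross and square terms via the commutator $[\Lambda,L]=(n-k)\Id$; this is self-contained and sufficient here. The paper instead invokes the pointwise identity
\[
\Lambda^2\bigl((F_A-\hat z)\wedge(F_A-\hat z)\bigr)
= 2\Bigl|\Lambda F_A-\tfrac{zn}{n-1}\Bigr|^2 - 2\lvert F_A-\hat z\rvert^2 + 8\lvert F_A^{0,2}\rvert^2,
\]
which is more than is needed for the proposition but is reused later (Remark~\ref{rem:coupled-eq} and the Calabi--Yang--Mills functional in~\secref{sec:CeqscalarKal-K}). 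Your route is the more economical one for this statement in isolation; the paper's route pays off because the same identity drives the equivalence with~\eqref{eq:CYMeq-v2} and the variational characterization in Proposition~\ref{propo:Variational}.
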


\begin{proof}
Suppose that $(J,A) \in \mu_{\alpha}^{-1}(0)$. First, evaluating $\mu_\alpha(J,A)$ on
elements of the form $\theta_A^\perp\eta$ with $\eta \in \LieH$, we
see that there exists a $c' \in \RR$ such that
\begin{equation}
\begin{split}
\label{eq:identitysquaredFA}
\frac{c' - \alpha_0 S_J}{\alpha_1} & =  \Lambda^2(F_A \wedge F_A )- 4\Lambda F_A \wedge z \\
& = 2|\Lambda F_A|^2
- 2\lvert F_A\rvert^2 +
8\lvert F_A^{0,2}\rvert^2 - 4\Lambda F_A \wedge z,
\end{split}
\end{equation}
where the last equality follows from a pointwise computation
(cf. \cite[proof of Lemma~7.9]{MR}). Here, the pointwise norms are
defined using the metric $g_J=\omega(\cdot,J\cdot)$ and the inner
product $(\cdot,\cdot)$ on $\mathfrak{g}$ and $F_A^{0,2}$ denotes the
$(0,2)$ part of $F_A$ with respect to $J$. Second, as
$\langle\mu_\alpha(J,A), \zeta\rangle=0$ for all $\zeta \in \LieG$, we
have $\Lambda F_A = z$ and hence it is straightforward to see from
\eqref{eq:identitysquaredFA} that
\[
\alpha_0 S_J + \alpha_1 \Lambda^2 (F_A \wedge F_A)
= c' + 4\alpha_1\lvert z\rvert^2 \in \RR.
\]
The converse follows also from \eqref{eq:identitysquaredFA}.
\end{proof}

Note that we have not used the fact that $(J,A) \in \cP$. Observe also
that $c$ is a `topological constant', i.e. it only depends on the
cohomology class $\Omega\defeq [\omega]\in H^2(X)$, the topology of
the bundle $E$ and the coupling constants $\alpha_0, \alpha_1$
(cf. Remark~\ref{rem:z(Omega,E)}). This follows by integrating the
second equation in~\eqref{eq:CYMeq} over $X$, obtaining
\begin{equation}
\label{eq:constant-c}
c = \alpha_0 \hat{S} + 2\alpha_1 \hat{c},
\end{equation}
where $\hat{S}$ is the average of the Hermitian scalar curvature,
\begin{equation}
\label{eq:hat-S}
  \hat{S}\defeq \frac{\int_X S_J \omega^{[n]}}{\int_X\omega^{[n]}}  =
  2\pi \frac{\left\langle c_1(X)\cup \Omega^{[n-1]},[X]\right\rangle}{\Vol_\Omega},
\end{equation}
which only depends on the cohomology class $\Omega\in H^2(X)$, and
\begin{equation}
\label{eq:constant-c-hat}
\hat{c} \defeq \frac{\int_X F_A \wedge F_A \wedge \omega^{[n-2]}}{\int_X\omega^{[n]}} = \frac{\left\langle c(E) \cup \Omega^{[n-2]},[X]\right\rangle}{\Vol_\Omega},
\end{equation}
where $c(E) \defeq [F_A \wedge F_A] \in H^4(X)$ is the Chern--Weil
class associated to the $G$-invariant symmetric bilinear form $(\cdot,
\cdot)$ on $\mathfrak{g}$, and so $\hat c$ only depends on $\Omega$
and the topology of $E$ (see~\cite[Ch XII, \S 1]{KNII}).

From Proposition \ref{mm-equations}, we can identify the moduli space
of solutions to the coupled equations with the quotient
\begin{equation}
\label{eq:symplecticreduccX}
\mu_{\alpha}^{-1}(0)/\cX,
\end{equation}
where $\mu_{\alpha}$ denotes now the restriction of the moment map to
$\cP$.  Away from singularities, this is a K\"ahler quotient for the
action of $\cX$ on the smooth part of $\cP\subset \cJ\times \cA$
equiped with the K\"ahler form obtained by the restriction of
$\omega_\alpha$.

\begin{remark}
\label{rem:coupled-eq}
The coupled equations~\eqref{eq:CYMeq} can also be written as
\begin{equation}
\label{eq:CYMeq-v2}
\left. \begin{array}{l}
\Lambda F_A = z\\
\alpha_0 S_g - 2\alpha_1 \lvert F_A\rvert_g^2 = c - 2 \alpha_1\lvert z\rvert^2
\end{array}\right \}.
\end{equation}
Here $S_g$ is the scalar curvature of the metric $g=
\omega(\cdot,J\cdot)$, $\lvert F_A\rvert^2_g$ is the pointwise norm of
$F_A$ defined using $g$ and the inner product $(\cdot,\cdot)$ on
$\mathfrak{g}$, and $z\in\mathfrak{z}$, $c \in \RR$ are as
in~\eqref{eq:CYMeq}. The purely Riemannian nature of the second
(scalar) equation in~\eqref{eq:CYMeq-v2} will be used
in~\secref{sec:CeqscalarKal-K}. The equivalence of~\eqref{eq:CYMeq}
and~\eqref{eq:CYMeq-v2} follows from~\eqref{eq:identitysquaredFA}
using that $A\in\cA_J^{1,1}$ (i.e. $F_A^{0,2}=0$).
\end{remark}

\subsection{The Calabi--Yang--Mills functional}
\label{sec:CeqscalarKal-K}
\newcommand{\CYM}{\operatorname{CYM}_\alpha}

K\"ahler metrics of constant scalar curvature arise as the absolute
minima of the Calabi functional~\cite{Ca}, which is defined as the
$L^2$-norm of the scalar curvature for K\"ahler metrics running over a
fixed K\"ahler class on a compact complex manifold. Alternatively, we
can see the cscK metrics as the absolute minima of the $L^2$-norm of
the scalar curvature defined over the space $\cJ^i$ of complex
structures compatible with a fixed symplectic form $\omega$ (see
e.g.~\cite{FO}). As a further step in Calabi's programme, in this
section we define the Calabi--Yang--Mills (CYM) functional
$\CYM$. This is a purely Riemannian functional that intertwines the
Yang--Mills functional for connections with the $L^2$-norm of the
scalar curvature of invariant metrics in the total space of the
principal bundle $E$. Interpreting the elements of $\cJ \times \cA$ as
invariant Riemannian metrics $g_\alpha$ on $E$, we prove that the
absolute minima of $\CYM$ over $\cJ^i \times \cA$ are precisely the
solutions $(J,A)\in\cP$ of~\eqref{eq:CYMeq}. We will also see that the
coupled equations~\eqref{eq:CYMeq} can be formulated in terms of the
Ricci tensor and the scalar curvature of $g_\alpha$, when it is
defined by an element of a suitable subspace $\cP^* \subset \cP$.

We start with a principal $G$-bundle $E$ over a compact manifold $X$
and a fixed $G$-invariant inner product $(\cdot,\cdot)$ on
$\mathfrak{g}$. Consider the $G$-invariant metric $g_V$ on the
vertical bundle $VE\subset TE$ induced by $(\cdot,\cdot)$ via the
identification of $VE$ with the trivial bundle
$E\times\mathfrak{g}$. Using a connection $A$ on $E$ and a scaling
constant $\alpha > 0$, each Riemannian metric $g$ on $X$ lifts to a
$G$-invariant Riemannian metric $g_\alpha$ on $E$, given by
\begin{equation}
\label{eq:invariant-metric}
  g_\alpha = \pi^*g + \alpha g_V(\theta_A \cdot , \theta_A \cdot),
\end{equation}
where $\pi\colon E\to X$ is the canonical projection and $\theta_A
\colon TE \to VE$ is the vertical projection determined by $A$. Given
positive constants $\alpha_0, \alpha_1\in\RR$, we denote respectively
by $S_{g_\alpha}$, $\vol_\alpha$ and $\Vol_\alpha(E)$ the scalar
curvature and the volume form of the metric $g_\alpha$ and the volume
of $E$ with respect to $g_\alpha$, where $\alpha =
2\alpha_1/\alpha_0$. We also denote by $\vol_g$ and $\Vol_g(X)$ the
volume form of the metric $g$ and the corresponding volume of $X$,
respectively. We define the Calabi--Yang--Mills functional by the
formula
\begin{equation}
\label{eq:CYMfunctional}
\CYM(g,A) \defeq \frac{1}{\Vol_\alpha(E)} \int_E S_{g_\alpha}^2 \vol_\alpha
+ \frac{\alpha_1}{\Vol_g(X)}\int_X |F_A|^2_g\vol_g,
\end{equation}
for pairs $(g,A)$, where $g$ is a Riemannian metric on $X$, $A$ is a
connection on $E$ and $|F_A|^2_g$ is as in~\eqref{eq:CYMeq-v2}. Note
that~\eqref{eq:CYMfunctional} is a weighted sum of the Calabi
functional~\cite{Ca} for metrics on $E$ and the Yang--Mills functional
for $E$ (see e.g.~\cite[\S 2.1.4]{DK}).

Fix now a symplectic form $\omega$ on $X$ so that
$\vol_{g_J}=\omega^{[n]}$ for all $J\in\cJ$, where $g_J =
\omega(\cdot,J\cdot)$ and $\dim X=2n$. Although the
functional~\eqref{eq:CYMfunctional} is well defined for arbitrary
Riemannian metrics on $X$ and connections on $E$, the solutions of the
coupled equations~\eqref{eq:CYMeq} are the absolute minima of $\CYM$
only when this functional is restricted to metrics of the form
$g=g_J$, where $J$ is in the space $\cJ^i$ of integrable almost
complex structures on $X$ which are compatible with $\omega$. In other
words, we consider the functional
\begin{equation}
\label{eq:CYMfunctional-v2}
\begin{gathered}
\xymatrix @R=0ex @C=-6.5ex{
**[l]\cJ^i\times\cA\colon\;\Omega^k\ar[r]&**[r]\RR\\
**[l](J,A)\ar@{|->}[r]&**[r]\CYM(g_J,A).
}\end{gathered}
\end{equation}

\begin{proposition}
\label{propo:Variational}
If $(J,A)\in\cP$ satisfies the coupled equations~\eqref{eq:CYMeq},
then the pair $(J,A)$ is an absolute minimum of the
functional~\eqref{eq:CYMfunctional-v2}, provided that $\alpha_0$ and
$\alpha_1$ are positive and
\begin{equation}
\label{eq:ineqalpha}
\alpha_1 > 2 \alpha \hat S + \alpha^2(\hat c - |z|^2) + 2s,
\end{equation}
where $\alpha = 2\alpha_1/\alpha_0$, $s$ is the (constant) scalar
curvature of the biinvariant metric induced by $(\cdot,\cdot)$ on $G$,
$z$ is given by~\eqref{eq:z(Omega,E)} and $\hat S$, $\hat c$ are as in
\eqref{eq:constant-c}, with $\Omega=[\omega]$.
\end{proposition}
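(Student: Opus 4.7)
The plan is to reduce $\CYM(g_J,A)$ to an integral over $X$ via a Kaluza--Klein-type formula for $S_{g_\alpha}$, then bound this below by a function of the single scalar $h\defeq \frac{1}{\Vol_g(X)}\int_X\lvert F_A\rvert_g^2\,\vol_g$, and finally use the hypothesis \eqref{eq:ineqalpha} to force this lower bound to be attained at any solution of \eqref{eq:CYMeq}. The first step is the standard O'Neill computation for the Riemannian submersion $\pi\colon(E,g_\alpha)\to(X,g)$ with totally geodesic fibres isometric to $(G,\alpha(\cdot,\cdot))$, which yields, after pullback, an identity of the shape
\[
S_{g_\alpha}=\pi^*S_g-\alpha\,\pi^*\lvert F_A\rvert_g^2+\tfrac{s}{\alpha},
\]
with the coefficient in front of $\alpha\lvert F_A\rvert_g^2$ fixed precisely so that $\alpha_0 S_{g_\alpha}$ agrees, up to an additive constant, with the scalar combination $\alpha_0 S_g-2\alpha_1\lvert F_A\rvert_g^2$ appearing in \eqref{eq:CYMeq-v2} once $\alpha=2\alpha_1/\alpha_0$. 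Since $S_{g_\alpha}$ is a pullback and $\vol_\alpha$ factors as $\pi^*\vol_g\wedge\vol_{G,\alpha}$, integration over the fibre reduces the $E$-integral in $\CYM$ to
\[
\CYM(g_J,A)=\frac{1}{\Vol_g(X)}\int_X\Bigl[(S_g-\alpha\lvert F_A\rvert_g^2+s/\alpha)^2+\alpha_1\lvert F_A\rvert_g^2\Bigr]\vol_g.
\]

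Next I would apply two independent lower bounds. By the Cauchy--Schwarz (Jensen) inequality on the first summand, $\frac{1}{\Vol_g(X)}\int_X P^2\,\vol_g\ge\bar P^2$ with equality iff $P\defeq S_g-\alpha\lvert F_A\rvert_g^2+s/\alpha$ is pointwise constant, and $\bar P=\hat S-\alpha h+s/\alpha$ by construction. Independently, integrating the pointwise identity \eqref{eq:identitysquaredFA} against $\omega^{[n]}$ and using that both $\int_X F_A\wedge F_A\wedge\omega^{[n-2]}=\hat c\,\Vol_g(X)$ (cf.\ \eqref{eq:constant-c-hat}) and $\int_X\Lambda F_A\,\omega^{[n]}=z\,\Vol_g(X)$ (cf.\ Remark~\ref{rem:z(Omega,E)}) are topological, one finds
\[
h=|z|^2-\hat c+\tfrac{1}{\Vol_g(X)}\int_X\lvert\Lambda F_A-z\rvert^2\vol_g+\tfrac{4}{\Vol_g(X)}\int_X\lvert F_A^{0,2}\rvert^2\vol_g\ge|z|^2-\hat c,
\]
with equality iff $\Lambda F_A=z$ and $F_A^{0,2}=0$, i.e.\ iff $(J,A)\in\cP$ satisfies the first equation of \eqref{eq:CYMeq}.

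Combining both estimates gives $\CYM(g_J,A)\ge\phi(h)\defeq(\hat S-\alpha h+s/\alpha)^2+\alpha_1 h$, which is a convex quadratic in $h$ with strictly positive leading coefficient $\alpha^2$. A direct differentiation gives $\phi'(h^\ast)=\alpha_1-2\alpha\hat S-2\alpha^2(\hat c-|z|^2)-2s$ at $h^\ast\defeq|z|^2-\hat c$; the hypothesis \eqref{eq:ineqalpha} is precisely the statement that this quantity is non-negative, so $\phi$ is non-decreasing on $[h^\ast,\infty)$ and $\phi(h)\ge\phi(h^\ast)$ for every $(J,A)\in\cJi\times\cA$. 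When $(J_0,A_0)\in\cP$ solves \eqref{eq:CYMeq}, both inequalities become equalities: the Cauchy--Schwarz one because the scalar equation in \eqref{eq:CYMeq-v2} makes $P$ pointwise constant, the Chern--Weil one because the first equation of \eqref{eq:CYMeq} together with membership in $\cP$ are exactly the equality cases. Hence $\CYM(g_{J_0},A_0)=\phi(h^\ast)$, and the proposition follows.

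The main technical obstacle is the careful constant-tracking in the Kaluza--Klein step: the precise coefficient of $\alpha\lvert F_A\rvert_g^2$ in $S_{g_\alpha}$ (which depends on how $\lvert F_A\rvert_g^2$ is normalized relative to the pairing $(\cdot,\cdot)$) and the $\alpha^{\dim G/2}$ factor in $\vol_{G,\alpha}$ must be handled so that the scalar coupled equation really appears as the equality case of the Cauchy--Schwarz bound; once this is done correctly, the form of \eqref{eq:ineqalpha} is dictated purely by the convexity analysis of $\phi$ at $h^\ast$, but any error in the Kaluza--Klein constants would shift the required inequality by explicit numerical factors and so must be handled carefully.
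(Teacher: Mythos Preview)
Your strategy is sound and runs parallel to the paper's. Both proofs use the same two ingredients: the Kaluza--Klein identity $S_{g_\alpha}=\pi^*(S_g-\alpha\lvert F_A\rvert^2+s/\alpha)$ to reduce to an $X$-integral, and the Chern--Weil/Bogomolov identity
\[
\tfrac{1}{\Vol_g(X)}\|F_A\|_{L^2}^2 = |z|^2-\hat c + \tfrac{1}{\Vol_g(X)}\bigl(\|\Lambda F_A - z\|_{L^2}^2 + 4\|F_A^{0,2}\|_{L^2}^2\bigr)
\]
to control the Yang--Mills term. The organizational difference is that the paper completes the square around the \emph{fixed} constant $c''/\alpha_0+s/\alpha$ (chosen so that the cross term becomes exactly a multiple of $\|F_A\|^2_{L^2}$), giving an exact decomposition $\CYM = (\text{nonneg}) + C\cdot\tfrac{1}{V}\|F_A\|^2_{L^2} + (\text{topological})$, while you instead apply Jensen's inequality (which amounts to completing the square around the \emph{running} average $\bar P$) and then analyze the resulting quadratic $\phi(h)$. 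The paper's decomposition makes the role of the hypothesis visible in one line (it is the positivity of $C$); your argument is equally valid but trades that directness for a convexity step.

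There is, however, one arithmetic slip. Your computation of $\phi'(h^\ast)$ is correct, namely
\[
\phi'(h^\ast)=\alpha_1-2\alpha\hat S-2\alpha^2(\hat c-|z|^2)-2s,
\]
but this is \emph{not} ``precisely'' the hypothesis \eqref{eq:ineqalpha}: the coefficient of $\alpha^2(\hat c-|z|^2)$ there is $1$, not $2$. Your argument is rescued by an observation you did not make explicit: the existence of a solution $(J,A)\in\cP$ to \eqref{eq:CYMeq} forces $h^\ast=|z|^2-\hat c\ge 0$ (since $h=\tfrac{1}{V}\|F_A\|^2\ge 0$ and $h=h^\ast$ at the solution), so $\hat c - |z|^2\le 0$, whence the stated hypothesis implies $\phi'(h^\ast)>0$ a fortiori. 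Once you insert this remark, your proof is complete. (The paper's own algebra, carried out carefully, also produces the coefficient $2\alpha^2$ in front of $(\hat c-|z|^2)$; the proposition is nonetheless correct as stated for the same reason.)
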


\begin{proof}
Note first that for any metric as in \eqref{eq:invariant-metric},
$\pi\colon (E,g_\alpha) \to (X,g)$ is a Riemannian submersion with
totally geodesic fibres (see~\cite[Theorem~9.59]{Be}, where the
$G$-Riemannian manifold playing the role of the typical fibre is $G$
itself with its biinvariant metric). Then $g_\alpha$ has scalar
curvature $S_{g_\alpha} = S_\alpha\circ\pi$, where
\begin{equation}
\label{eq:scalarKal-K}
S_\alpha = S_g - \alpha |F_A|^2_g + s/\alpha \in C^\infty(X),
\end{equation}
$S_g$ being the scalar curvature of $g$
(see~\cite[Proposition~9.70]{Be}). Here, the group is identified with
the fibre $E_x$ over $x \in X$. Since the volume of $E_x$ is
independent of $x$, we have
\[
\frac{1}{\Vol_\alpha(E)}\int_E S_{g_\alpha}^2 \vol_\alpha  = \frac{1}{\Vol_g(X)} \int_X S_\alpha^2 \vol_g.
\]
In particular, for $g = g_J$, with $J \in \cJ^i$, and $c'' = \alpha_0 \hat S +
2\alpha_1(\hat c - |z|^2)$, we obtain
\begin{align*}
\CYM (g,A) =& \frac{\alpha_0^{-2}}{\Vol_\Omega}\left\|\alpha_0 S_g - 2\alpha_1 |F_A|_g^2 - c''\right\|^2_{L^2} +   \frac{\alpha_1}{\Vol_\Omega}  \|F_A\|^2_{L^2}\\
&  + \frac{2 (c''/\alpha_0 + s/\alpha)}{\Vol_\Omega}\int_X \(S_g - \alpha |F_A|_g^2 - c''/\alpha_0\)  \vol_g\\
&  +  (c''/\alpha_0 + s/\alpha)^2\\
=& \frac{\alpha_0^{-2}}{\Vol_\Omega}\left\|\alpha_0 S_g - 2\alpha_1 |F_A|_g^2 - c''\right\|^2_{L^2}\\
&  +   \frac{\alpha_1 - 2 \alpha \hat S - \alpha^2(\hat c - |z|^2) - 2s}{\Vol_\Omega}  \|F_A\|^2_{L^2}\\
&   +  \(\hat S + \alpha(\hat c - |z|^2) + s/\alpha\) \(\hat S - \alpha(\hat c - |z|^2 + s/\alpha\),
\end{align*}
where the $L^2$-norms are defined using $g$, $\omega^{[n]}$ and the
inner product on $\mathfrak{g}$. Note that the last summand in the
right-hand side of the last equation only depends on $\alpha$, $s$,
the cohomology class $\Omega\defeq [\omega]$ and the topology of the
bundle $E$. The inequality~\eqref{eq:ineqalpha} implies that the
factor multiplying the Yang--Mills functional is positive, and the
result follows from the alternative formulation~\eqref{eq:CYMeq-v2} of
the coupled equations combined with~\eqref{eq:identitysquaredFA},
which gives
\begin{align*}
\|F_A\|^2_{L^2} & = \|\Lambda F_A\|^2_{L^2} + 4 \|F_A^{0,2}\|^2_{L^2} - \hat c \Vol_\Omega\\
& = \|\Lambda F_A - z\|^2_{L^2} + 4 \|F_A^{0,2}\|^2_{L^2}\\
& + 2 \langle z(E)\cup \Omega^{[n-1]},[X]\rangle - (|z|^2 + \hat c) \Vol_\Omega.
\end{align*}
Here, $z(E) \defeq [z \wedge F_A] \in H^2(X)$ is the Chern--Weil class
associated to the $G$-invariant linear form $(z,\cdot)$ on
$\mathfrak{g}$, with $z$ given by~\eqref{eq:z(Omega,E)}, so the last
line in the previous equation only depends on $\Omega\in H^2(X)$ and
the topology of the bundle $E$ (see~\cite[Ch XII, \S 1]{KNII}).
\end{proof}

\begin{remark}
\label{remark:Variational}
The inequality~\eqref{eq:ineqalpha} imposes no restrictions on the
solutions $(J,A)$ of~\eqref{eq:CYMeq}, because any solution $(J,A)$
of~\eqref{eq:CYMeq} for some $(\alpha_0,\alpha_1)$ is also a solution
for the constants $(t\alpha_0,t\alpha_1)$, for all $t \in \RR$. The
claim follows from the fact the RHS on~\eqref{eq:ineqalpha} is
invariant by this scaling procedure.
\end{remark}

\begin{remark}
\label{remark:Variational2}
Fixing a complex structure on $X$, we can view $\CYM$ as a functional
on the pairs $(\omega,A)$, with $\omega$ as in the second part of
Remark~\ref{rem:coupled-eq}. Exactly as in
Proposition~\ref{propo:Variational}, in this case a solution of the
coupled equations is always an absolute minima of this functional.
\end{remark}

The coupled equations~\eqref{eq:CYMeq} can also be interpreted in
purely Riemannian terms, considering the $G$-invariant metrics
$g_\alpha$ on $E$ defined by \eqref{eq:invariant-metric}. To explain
this, note that given such a metric its Ricci tensor $R_{g_\alpha}$
decomposes as $R_{g_\alpha} = (R_{g_\alpha})_{hh} +
(R_{g_\alpha})_{vv} + (R_{g_\alpha})_{hv}$, where the indices ``$h$''
and ``$v$'' denote the horizontal and vertical directions in $TE$
defined by the connection $A$, respectively. Let $\cP^*\subset\cP$ be
the open subset of pairs $(J,A)$ with $A\in\cA^*$--- the open subset
of $\cA$ consisting of irreducible connections. By irreducible
connection $A \in \cA$ we mean, as in~\cite[\S 4.2.2]{DK}, that its
isotropy group $\cG_A$ inside the gauge group of $E$ is minimal--- the
centre of $G$. Then a pair $(J,A)\in\cP^*$ satisfies~\eqref{eq:CYMeq}
if and only if the associated metric $g_\alpha$ satisfies the
following equations.
\begin{equation}
\label{eq:CYMeq-v3}
\left. \begin{array}{l}
(R_{g_\alpha})_{hv}=0\\
S_{g_\alpha}= \text{const.}
\end{array}\right \}
\end{equation}
We thus have an interpretation of the K\"ahler
quotient~\eqref{eq:symplecticreduccX} (with $\mu_\alpha$ restricted to
the open subset $\cP^*\subset\cP$) as a moduli space of $G$-invariant
metrics on the total space of $E$ satisfying~\eqref{eq:CYMeq-v3}.
An interesting fact here is that the condition $\alpha_1/\alpha_0 > 0$
is needed both to have a K\"ahler form $\omega_\alpha$ on $\cP$ given
by~\eqref{eq:Sympfamily} (see the explanation
before~\eqref{eq:symplecticreduccX}) and $G$-invariant Riemannian
metrics $g_\alpha$ on $E$, as given in~\eqref{eq:invariant-metric}.

To prove the equivalence of~\eqref{eq:CYMeq} and~\eqref{eq:CYMeq-v3}
for a pair $(J,A)\in\cP^*$, note that $J$ defines a structure of
K\"ahler manifold on $(X,\omega)$. The Hermitian--Yang--Mills equation
$\Lambda F_A=0$ for an \emph{irreducible} $A\in\cA^{1,1}_J$ is
equivalent to the \emph{a priori} weaker Yang--Mills equation $d^*_A
F_A=0$ (see~\cite[Proposition~3]{D3}). This follows because if $A \in
\cA^{1,1}_J$ is an irreducible Yang--Mills connection then, by the
K\"ahler identities,
\[
d_A \Lambda F_A = 0 \,\Longrightarrow\, \Lambda F_A \in \LieG_A = \mathfrak{z}.
\]
Therefore the first equations in~\eqref{eq:CYMeq-v2}
and~\eqref{eq:CYMeq-v3} are equivalent because the Yang--Mills
equation is equivalent to the equation $(R_{g_t})_{hv}=0$
(see~\cite[Proposition~9.61]{Be}). Finally, the second equations
in~\eqref{eq:CYMeq-v2} and~\eqref{eq:CYMeq-v3} are equivalent
by~\eqref{eq:scalarKal-K}.

Note that the system~\eqref{eq:CYMeq-v3} is half way between the
Einstein equation and the constant scalar curvature equation, in the
sense that
\begin{equation}
\label{eq:KK-eq}
\text{$g_\alpha$ is an Einstein metric $\implies$  $g_\alpha$
  satisfies~\eqref{eq:CYMeq-v3} $\implies$ $S_{g_\alpha}=\text{const.}$,}
\end{equation}
for all $(J,A)\in\cP^*$, as any metric $g_\alpha$ satisfying the
Einstein equation $R_{g_\alpha}=\lambda g_\alpha$ (with
$\lambda\in\RR$) has constant scalar curvature.

\section{The $\alpha$-Futaki character and the $\alpha$-K-energy}
\label{chap:analytic}

In~\secref{chap:analytic} we construct obstructions to the existence
of solution of the coupled equations, generalizing the Futaki
character~\cite{Ft0}, the Mabuchi K-energy~\cite{Mab2,Mab1} and the
notion of geodesic stability~\cite{Ch2,D6} used in the cscK
Theory. For this,
in~\secrefs{sub:concrete-setup},~\ref{sub:ANsymm-spc},~\ref{sub:ANStability},
we develop an abstract framework that we apply in~\secref{sub:ANCeq}
to the study of the coupled equations.

Throughout~\secref{chap:analytic}, we fix a compact real manifold $X$,
a cohomology class $\Omega\in H^2(X,\RR)$, a reductive complex Lie
group $G^c$ with Lie algebra $\mathfrak{g}^c$, a maximal compact Lie
subgroup $G\subset G^c$ with Lie algebra $\mathfrak{g}$ and a smooth
principal $G^c$-bundle $\pi\colon E^c\to X$. We also fix $z \in
\mathfrak{z}$ as in \eqref{eq:z(Omega,E)}. We assume that the space of
K\"ahler forms in $\Omega$ is non-empty.

\subsection{Invariant Hamiltonian K\"ahler fibrations}
\label{sub:concrete-setup}

In~\secref{sub:concrete-setup}, we will associate to the data $(X,
\Omega, E^c)$ a canonical infinite-dimensional double fibration
\[
\cB\stackrel{\pi_\cB}{\longleftarrow}\cC\stackrel{\pi_\cZ}{\longrightarrow}\cZ,
\]
equivariant for the action of an infinite-dimensional Lie group
$\Gamma$, and show that the fibres of $\pi_\cB$ are (formally)
K\"ahler manifolds with Hamiltonian group actions. The fibres of
$\pi_\cZ$ will be studied in~\secref{sub:ANsymm-spc}.

Let $\Diff_0 X$ be the identity component of the diffeomorphism group
of $X$ and $\Aut E^c$ the group of automorphisms of $E^c$, that is,
the $G^c$-equivariant diffeomorphisms $g\colon E^c\to E^c$. Any such
$g$ determines a unique diffeomorphism $\check{g}\colon X\to X$ such
that $\pi\circ g=\check{g} \circ \pi$. Define the real Lie group
\[
  \Gamma \defeq\{ g\in\Aut E^c \,|\, \check{g}\in\Diff_0 X  \}.
\]
Note that the Lie bracket in the Lie algebra $\LieGamma$ of $\Gamma$ is
\begin{equation}
\label{eq:Lie-bra-Gamma}
  [y,y']_{\Gamma}=-[y,y']
\end{equation}
for $y,y'\in \LieGamma\subset \Omega^0(TE^c)$, where $[\cdot,\cdot]$ is the
Lie bracket of vector fields on $E^c$ (cf.~\cite[Remark~3.3]{McS}).

Let $\cZ$ be the space of holomorphic structures on the principal
$G^c$-bundle $E^c$, i.e. the integrable $G^c$-equivariant almost
complex structures $I$ on the total space of $E^c$ which preserve the
vertical bundle $VE^c$ and whose restriction to $VE^c$ equals
multiplication by $\sqrt{-1}$, via its identification with
$E^c\times\mathfrak{g}^c$. By
$G^c$-equivariance, any such $I$ determines a unique integrable almost
complex structure $\check{I}$ on $X$ such that $\check{I} \circ
d\pi=d\pi\circ I$. The group $\Gamma$ has a left action on $\cZ$ by
push-forward,
%% given by $g\cdot I\defeq g_* I$
preserving the canonical almost complex structure $\mathbf{I}$ on $\cZ$ given
by
\begin{equation}
\label{eq:complexstructureIZ}
\mathbf{I}\dot{I} = I\dot{I},
\text{ for all $I \in \cZ$, $\dot{I} \in T_I\cZ$}
\end{equation}
(cf.~\eqref{eq:SympJ}), where $\dot{I}$ is viewed as a $G^c$-equivariant
endomorphism of $TE^c$.
%%%

Recall that the space $\cR=\Omega^0(E^c/G)$ of
%% metrics on $E^c$, i.e.
smooth sections $H$ of the bundle $E^c/G \to X$ is in bijection with the set
of reductions of $E^c$ to principal $G$-bundles $E_H\subset E^c$, via the map
$H \mapsto E_H\defeq p_G^{-1}(H(X))$, where $p_G$ is the projection
$E^c\to E^c/G$.
%%%
Let $\cB$ be the space of pairs $(\omega,H)$, where $\omega\in\Omega$ is a
symplectic form and $H\in\cR$.
The group $\Gamma$ has a left action on $\cB$ given by
\[
  g\cdot(\omega,H) = (\check{g}_*\omega, g\cdot H),
\]
where $(g\cdot H)(x)\defeq g(x)\cdot H(\check{g}^{-1}(x))$ for $x\in X$ and
$\check{g}_*\omega\in\Omega$ by the homotopy invariance of the de Rham
cohomology, as $\check{g}\in\Diff_0 X$.

We define the space of \emph{compatible pairs} as
\[
  \cC \defeq\{ ((\omega,H),I)  \,|\,
  (X,\check{I},\omega) \text{ is a K\"ahler manifold}\}
  \subset \cB\times\cZ.
\]
Note that this space is invariant under the diagonal $\Gamma$-action on
$\cB\times\cZ$. The canonical maps
\begin{equation}
\label{eq:fibration-compatible-pairs}
\begin{gathered}
  \xymatrix{ & \cC \ar[dl]_-{\pi_\cB} \ar[dr]^-{\pi_\cZ} & \\  \cB & & \cZ }
\end{gathered}
\end{equation}
will be viewed as two fibrations with total space $\cC$, whose fibres
are
\[
  Z_b\defeq \pi_\cB^{-1}(b)
  \text{ and }
  B_I\defeq \pi_\cZ^{-1}(I)
  \text{ for all $b\in\cB$, $I\in\cZ$}.
\]
Since $\cC\subset \cB\times\cZ$ is $\Gamma$-invariant, the fibres $B_I$ and
$Z_b$ are invariant under the actions of the isotropy groups $\Gamma_I \subset
\Gamma$ and $\Gamma_b \subset \Gamma$, respectively.

In more concrete terms, for any $I \in \cZ$, the isotropy group $\Gamma_I$ is
the group of automorphisms $g$ of the holomorphic principal $G^c$-bundle
$(E^c,I)$ such that $\check{g}\in\Diff_0X$ is an automorphism of the complex
manifold $(X,\check{I})$.
%%%%
Similarly, for any $b=(\omega,H)\in\cB$, the isotropy group $\Gamma_b$ is the
group of automorphisms $g$ of the principal $G$-bundle $E_H$ such that
$\check{g}\in\Diff_0X$ is a symplectomorphism of $(X,\omega)$.
%%%%
Hence the extended gauge group $\cX_b$ of $E_H$ on $(X,\omega)$ (defined
in~\secref{sec:Ceqcoupling-term}) is a subgroup of $\Gamma_b$, which is normal
because the group of Hamiltonian symplectomorphisms is a normal subgroup of
the symplectomorphism group (see e.g.~\cite[Proposition~10.2]{McS}).
%%%%
Note also that the fibre $B_I$ is a contractible space, as it is
\begin{equation}
\label{eq:B-product-K-R}
B_I=\cK_{\check{I}}\times\cR,
\end{equation}
where $\cK_{\check{I}}$ is the space of K\"ahler forms in $\Omega$ on
the complex manifold $(X,\check{I})$. The fibre $Z_b$ has a gauge-theoretic
description. Let $\cJ_\omega$ be the space of almost complex
structures on $X$ compatible with $\omega$ and $\cA_H$ the space of
connections on $E_H$. Given $b=(\omega,H) \in \cB$, define
\begin{equation}
\label{eq:cP_b}
\cP_b \subset \cJ_\omega\times \cA_H
\end{equation}
as in~\eqref{eq:cP}, i.e. as the space of pairs $(J,A)$ such that $J$
is integrable and $F_A\in\Omega^{1,1}_J(\ad E_H)$. This subspace is
clearly $\Gamma_b$-invariant and has an almost complex structure
$\mathbf{I}$ given by~\eqref{eq:complexstructureI}, which is formally
integrable by Proposition~\ref{prop:CeqintegrableI}.
%%%
Note also that for all $H \in \cR$, each connection $A\in\cA_H$
induces canonically a connection on $E^c$, given by $G^c$-equivariant
maps
\begin{equation}
\label{eq:induced-theta}
\theta_A\colon TE^c \to VE^c, \quad \theta_A^{\perp} \colon\pi^*TX \to TE^c,
\end{equation}
where $\pi\colon E^c \to X$ is the canonical projection
(cf.~\eqref{eq:principal-bundle-ses}), via the canonical isomorphism
\begin{equation}
\label{eq:E^c-vs-E_H}
  E^c\cong E_H\times_G G^c
\end{equation}
of principal $G^c$-bundles (with $G$ acting on $G^c$ by left
multiplication).

\begin{lemma}
\label{lem:pairsinvacs2}
The map $\pi_\cB\colon \cC \to \cB$ is a `$\Gamma$-invariant
almost-complex fibration', that is, its fibres $Z_b\subset\cZ$ are
preserved by $\mathbf{I}$ and their induced almost complex structures
are exchanged by the $\Gamma$-action. % i.e. $g_*\circ
%\mathbf{I} = \mathbf{I}_{g\cdot b} \circ g_*: TZ_b\to TZ_{g\cdot b}$, for
%all $g\in\Gamma$, $b\in\cB$.
Furthermore, the map
\begin{equation}
\label{eq:pairsinvacs}
\begin{gathered}
\xymatrix @R=0ex @C=-6.5ex{**[l]
\mathbb{I}\colon\;\cP_b\ar[r] & **[r]\cZ\\
**[l](J,A)\ar@{|->}[r] & **[r] \operatorname{\mathbf{i}} \theta_A
+ \theta_A^{\perp} \circ \pi^*J \circ d\pi,
}\end{gathered}
\end{equation}
is a well-defined $\Gamma_b$-equivariant holomorphic embedding whose
image is $Z_b$, for all $b=(\omega,H)\in\cB$.
\end{lemma}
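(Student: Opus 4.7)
The plan is to concentrate on the second assertion: once the map $\mathbb{I}\colon \cP_b\to \cZ$ is shown to be a $\Gamma_b$-equivariant holomorphic embedding onto $Z_b$, the first assertion is immediate, because $Z_b=\mathbb{I}(\cP_b)$ will then be a smooth almost complex submanifold of $\cZ$, and the $\Gamma$-action exchanges the induced almost complex structures by the evident $\Gamma$-invariance of $\mathbf{I}$ on $\cZ$ given by~\eqref{eq:complexstructureIZ}.

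To show $\mathbb{I}$ is well-defined, I first extend $A$ to a $G^c$-connection on $E^c$ via~\eqref{eq:induced-theta} and the isomorphism~\eqref{eq:E^c-vs-E_H}. The endomorphism $\mathbb{I}(J,A)$ of $TE^c$ is then $G^c$-equivariant by construction, restricts to multiplication by $\mathbf{i}$ on $VE^c$, and squares to $-\Id$ (since $\mathbf{i}^2=-1$ on $VE^c$ and $J^2=-\Id$ on $\pi^*TX$). Its formal integrability on $E^c$ reduces, by a classical Nijenhuis computation of Koszul--Malgrange/Atiyah type, to integrability of $J$ on $X$ together with $F_A^{0,2}=0$; both hold since $(J,A)\in\cP_b$. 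As $\check I=J$ is compatible with $\omega$, this places $\mathbb{I}(J,A)$ in $Z_b$. Injectivity is immediate, for the restriction of $\mathbb{I}(J,A)$ to $VE^c$ recovers $\theta_A$ (hence $A$) and the induced complex structure on the base recovers $J$. For surjectivity, given $I\in Z_b$ I set $J=\check I$ and take $A\in\cA_H$ to be the Chern connection of $I$ with respect to $H$, i.e.\ the unique $G$-connection on $E_H$ whose $G^c$-extension has $(0,1)$-part (with respect to $I$) equal to the $\bar\partial$-operator defined by $I$; then $F_A^{0,2}=0$, so $(J,A)\in\cP_b$, and $\mathbb{I}(J,A)=I$ by construction. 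The $\Gamma_b$-equivariance follows by direct substitution from the transformation laws $\theta_{g\cdot A}=g_*\circ\theta_A\circ g_*^{-1}$ and $\theta_{g\cdot A}^\perp=g_*\circ\theta_A^\perp\circ\check g_*^{-1}$ under $g\in\Gamma_b$, together with $d\pi\circ g_*=\check g_*\circ d\pi$.

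The main technical step is holomorphicity. Differentiating the identity $\theta_A+\theta_A^\perp\circ d\pi=\Id_{TE^c}$ at $A$ in the direction $a\in T_A\cA_H=\Omega^1(\ad E_H)$ yields $d\theta_A(a)=\widetilde a^{\,\sharp}\circ d\pi$ and $d\theta_A^\perp(a)=-\widetilde a^{\,\sharp}$ as maps into $VE^c$, where $\sharp\colon\ad E^c\to VE^c$ is the fundamental vector field identification and $\widetilde a$ is $a$ regarded as a horizontal $\mathfrak{g}$-valued $1$-form on $E^c$. Differentiating~\eqref{eq:pairsinvacs} at $(J,A)$ against $(\dot J,a)\in T_{(J,A)}\cP_b$ and comparing $\mathbb{I}(J,A)\circ d\mathbb{I}_{(J,A)}(\dot J,a)$ with $d\mathbb{I}_{(J,A)}(J\dot J,-a(J\cdot))$ then reduces, after a short pointwise cancellation using $\mathbf{i}^2=-1$ and $J^2=-\Id$, to the required identity $\mathbf{I}_\cZ\circ d\mathbb{I}_{(J,A)}=d\mathbb{I}_{(J,A)}\circ\mathbf{I}_{\cP_b}$ matching~\eqref{eq:complexstructureI} with~\eqref{eq:complexstructureIZ}. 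Injectivity of $d\mathbb{I}$ follows from the same vertical/horizontal splitting already used for injectivity of $\mathbb{I}$, making it a holomorphic embedding. The principal obstacle is the bookkeeping in this derivative computation, where the signs of the variations of $(\theta_A,\theta_A^\perp)$ must be matched against those of~\eqref{eq:complexstructureI}; this is routine and uses no essentially new ingredient beyond those already appearing in the proof of Proposition~\ref{prop:CeqintegrableI}.
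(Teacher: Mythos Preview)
Your proposal is correct and follows essentially the same approach as the paper: well-definedness via the Nijenhuis tensor computation (reducing to integrability of $J$ and $F_A^{0,2}=0$), surjectivity onto $Z_b$ via the Chern connection as inverse, and holomorphicity by direct differentiation of~\eqref{eq:pairsinvacs}. The paper's proof is much terser but hits the same checkpoints in the same order; your version simply spells out the bookkeeping the paper leaves to the reader. One minor difference is that the paper derives the first assertion directly from~\eqref{eq:complexstructureIZ} rather than from the embedding, but your reduction to the second assertion is equally valid.
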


\begin{proof}
The first assertion follows immediately
from~\eqref{eq:complexstructureIZ}. For the second, note
that~\eqref{eq:pairsinvacs} is well defined by direct computation of
the Nijenhuis tensor of $\mathbb{I}(J,A)$. Using the classical
construction~\cite{Si} of the Chern connection $\theta_{H,I}$ of $I\in
Z_b$ on $E_H$, we see that the map~\eqref{eq:pairsinvacs} is injective
with image $Z_b$, as
\begin{equation}
\label{eq:Chrern-inverse}
  I = \mathbb{I} (\check{I}, \theta_{H,I})
\end{equation}
for all $I\in Z_b$. Furthermore,~\eqref{eq:pairsinvacs} is clearly $\Gamma_b$-equivariant.
Another direct computation shows now that~\eqref{eq:pairsinvacs} is a
holomorphic embedding, i.e. its differential is also injective and
exchanges the almost complex structures on $\cP_b$ and $\cZ$.
\end{proof}

As an immediate consequence, $Z_b\cong \cP_b$ equipped with the
restriction of $\mathbf{I}$ is a formally integrable complex manifold,
by Proposition~\ref{prop:CeqintegrableI}. Using
Lemma~\ref{lem:pairsinvacs2}, we can now transfer the constructions
of~\secref{sec:Ceqcoupled-equations} to the fibres
\begin{equation}
\label{eq:pairsinvacs2}
Z_b = \mathbb{I}(\cP_b),
\end{equation}
obtaining the following theorem, where the Lie groups
$\Gamma_b\subset\Gamma$ and their normal subgroups
$\cG_b\subset\Gamma_b$, parameterized by $b\in\cB$, are viewed as the
fibres of two Lie group subbundles
\begin{equation}
\label{eq:Lie-group-bundles}
  \cX_\cB \subset \Gamma_\cB \subset\cB\times\Gamma
\end{equation}
over $\cB$. Their associated Lie algebra bundles are denoted
$\LieX_\cB \subset \LieGamma_\cB\subset \cB\times\LieGamma$.

\begin{theorem}
\label{prop:Kahlerfibration}
Each pair of positive real numbers $\alpha_0,\alpha_1$ determines a
structure of `$\Gamma$-invariant Hamiltonian K\"ahler fibration' on
$\pi_\cB\colon \cC\to\cB$, that is, a smooth family $\omega_\cC$ of
K\"ahler forms $\omega_b$ on the fibres $Z_b$, parameterized by
$b\in\cB$, which are exchanged by the $\Gamma$-action, and a morphism
\begin{equation}
\label{eq:mmapB}
\mu_\cC \colon \cC \lto (\LieX_\cB)^*
\end{equation}
of fibrations over $\cB$, whose fibre $\mu_b\colon Z_b\to(\LieX_b)^*$ is a moment map for the $\cX_b$-action on $Z_b$, and such that
\begin{equation}
\label{eq:mmapBequiv}
\langle \mu_{g\cdot b}(g\cdot I),\zeta \rangle = \langle \mu_b(I),\Ad(g^{-1})\zeta \rangle
\end{equation}
for all $(b,I) \in \cC$, $g \in \Gamma$, $\zeta \in \LieX_{g\cdot b}$.
\end{theorem}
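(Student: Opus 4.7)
The strategy is to transport the K\"ahler and moment-map data of Section~\ref{chap:Ceq} from $\cP_b$ to the fibres $Z_b$ via the $\Gamma_b$-equivariant holomorphic diffeomorphism $\mathbb{I}$ of Lemma~\ref{lem:pairsinvacs2}, and then to verify that the resulting family of structures depends smoothly on $b\in\cB$ and transforms correctly under the full group $\Gamma$.

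Fix $b=(\omega,H)\in\cB$. By Section~\ref{sec:Ceqcoupled-equations}, the space $\cP_b\subset\cJ_\omega\times\cA_H$ carries the form $\omega_\alpha=\alpha_0\omega_\cJ+4\alpha_1\omega_\cA$, which for $\alpha_0,\alpha_1>0$ is K\"ahler with respect to the restriction of the formally integrable almost complex structure $\mathbf{I}$ of Proposition~\ref{prop:CeqintegrableI}, and Proposition~\ref{prop:momentmap-pairs} supplies a $\cX_b$-equivariant moment map $\mu_\alpha\colon \cP_b\to (\LieX_b)^*$ given by~\eqref{eq:thm-muX} (or \eqref{eq:thm-muXn=1} if $n=1$). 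I would therefore define
\[
\omega_b \defeq (\mathbb{I}^{-1})^*\omega_\alpha, \qquad
\mu_b \defeq \mu_\alpha\circ\mathbb{I}^{-1}\colon Z_b\to(\LieX_b)^*.
\]
Since $\mathbb{I}$ is a $\Gamma_b$-equivariant biholomorphism with image $Z_b$, these give a K\"ahler form and a moment map for the induced $\cX_b$-action on $Z_b$.

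The next step is to assemble these fibrewise objects into the required smooth family over $\cB$ and to check $\Gamma$-equivariance. The spaces $\cJ_\omega$ and $\cA_H$ fit into natural Fr\'echet fibrations over $\cB$, and the defining expressions for $\omega_\cJ$, $\omega_\cA$, and $\mu_\alpha$ involve only pointwise algebraic operations combined with integration against $\omega^{[n]}$, so smoothness in $b$ follows by direct inspection. For equivariance, given $g\in\Gamma$ and $b'=g\cdot b$, conjugation by $g$ defines a Lie group isomorphism $\cX_b\to\cX_{b'}$, and the $\Gamma$-action on $\cC$ restricts to a diffeomorphism $Z_b\to Z_{b'}$ intertwining $\mathbb{I}$ with its counterpart on $\cP_{b'}$. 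The naturality of the pairing~\eqref{eq:Pairing}, of the vertical/horizontal splittings~\eqref{eq:theta-thetaperp} associated to a connection, and of the Hermitian scalar curvature~\eqref{eq:def-S} under pullback by $\check{g}\in\Diff_0 X$, together with a change-of-variables argument for the defining integrals, would then show that this diffeomorphism exchanges $\omega_b$ with $\omega_{b'}$ and satisfies~\eqref{eq:mmapBequiv}.

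The main obstacle is organizational rather than conceptual: one must fit the varying groups $\cX_b$ into the Lie group subbundle $\cX_\cB\subset\cB\times\Gamma$ of~\eqref{eq:Lie-group-bundles} and confirm that the pairing of $\mu_b$ with $\LieX_b$ in~\eqref{eq:thm-muX} transforms correctly under $\Ad(g^{-1})$. Both points ultimately rest on the naturality of the Atiyah sequence~\eqref{eq:Ext-Lie-alg-3} and of the $\Ad$-invariant inner product on $\mathfrak{g}$ under $G$-equivariant bundle automorphisms, so that conjugation by $g\in\Gamma$ commutes with every ingredient of the construction of $\omega_b$ and $\mu_b$, delivering~\eqref{eq:mmapBequiv}.
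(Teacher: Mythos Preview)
Your approach is essentially the same as the paper's: transport $\omega_\alpha$ and $\mu_\alpha$ from $\cP_b$ to $Z_b$ via the biholomorphism $\mathbb{I}$ of Lemma~\ref{lem:pairsinvacs2}, then check that the resulting family is $\Gamma$-equivariant by naturality of all ingredients and a change of variables in the defining integrals. The paper organizes the equivariance step slightly more explicitly, by recording the identities $g\cdot\theta_{H,I}=\theta_{g\cdot H,g\cdot I}$ and $g\cdot F_{H,I}=F_{g\cdot H,g\cdot I}$ for the Chern connection and its curvature, and by packaging the scalar part of the moment map into a single function $S_\alpha(b,I)$ with the transformation law $S_\alpha(gb,gI)=S_\alpha(b,I)\circ\check{g}^{-1}$; but conceptually this is exactly what you sketch.

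There is, however, one technical point you gloss over and the paper makes explicit. You invoke ``naturality of the pairing~\eqref{eq:Pairing}'' and ``the $\Ad$-invariant inner product on $\mathfrak{g}$ under $G$-equivariant bundle automorphisms'', but the group $\Gamma$ consists of $G^c$-equivariant automorphisms of $E^c$, and a general $g\in\Gamma$ does \emph{not} preserve the reduction $E_H$. The pairing~\eqref{eq:Pairing} lives on $\ad E_H$, not on $\ad E^c$, so its naturality under $\Gamma_b$ is clear but under all of $\Gamma$ it is not. The paper resolves this by assuming at the outset that the $G$-invariant inner product on $\mathfrak{g}$ extends to a $G^c$-invariant symmetric $\CC$-bilinear form on $\mathfrak{g}^c$; this gives a pairing on $\Omega^*(\ad E^c)$ (equation~\eqref{eq:pairingcomplex}) that is genuinely $\Aut E^c$-equivariant, and the fibrewise pairings on the various $\ad E_H$ are then compatible restrictions of this single global object. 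Without this hypothesis your equivariance argument for~\eqref{eq:mmapBequiv} does not go through, so you should state it explicitly.
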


\begin{proof}
As in~\secref{chap:Ceq}, we fix a $G$-invariant positive definite
inner product on $\mathfrak{g}$. Suppose that it extends to a
$G^c$-invariant symmetric bilinear form $(\cdot,\cdot) \colon
\mathfrak{g}^c \otimes \mathfrak{g}^c \to \CC$ (e.g., we can use
$(\cdot,\cdot)\defeq -\tr (\rho(\cdot)\circ\rho(\cdot))$ for a
faithful representation $\rho\colon G^c\to \GL(r,\CC)$ such that
$\rho(G)\subset\U(r)$). This form induces another one on the adjoint
bundle $\ad E^c=E^c\times_{G^c} \mathfrak{g}^c$, which extends to a
$\CC$-bilinear map
\begin{equation}
\label{eq:pairingcomplex}
\begin{gathered}
\xymatrix @R=0ex @C=-8.5ex{**[l]
\Omega^p(\ad E^c) \times \Omega^q(\ad E^c)\ar[r]&**[r]\Omega^{p+q} \otimes \CC\\
**[l](a_p,a_q)\ar@{|->}[r]&**[r] a_p\wedge a_q
}\end{gathered}
\end{equation}
(cf.~\eqref{eq:Pairing}), which clearly is equivariant under the
action of $\Aut E^c$ given by pull-back.

Fix $\alpha_0,\alpha_1>0$. By the results of~\secref{chap:Ceq}, for
each $b = (\omega,H)\in \cB$ we have a K\"ahler manifold
\begin{equation}
\label{eq:Kahlerfibration-Kah-mnfd}
  (Z_b,\mathbf{I},\omega_b),
\end{equation}
where $\mathbf{I}$ is the restriction
of~\eqref{eq:complexstructureIZ} and $\omega_b$ corresponds
to~\eqref{eq:Sympfamily} via the isomorphism $Z_b\cong\cP_b$ of
Lemma~\ref{lem:pairsinvacs2}. Furthermore, the $\cX_b$-action on
$(Z_b,\omega_b)$ is Hamiltonian, with moment map
\begin{equation}
\label{eq:Kahlerfibration-moment-map}
  \mu_b \colon Z_b \lto (\LieX_b)^*
\end{equation}
which corresponds to the moment map in
Proposition~\ref{prop:momentmap-pairs} via the isomorphism
$Z_b\cong\cP_b$ of Lemma~\ref{lem:pairsinvacs2}. Using now the $(\Aut
E^c)$-equivariance of~\eqref{eq:pairingcomplex}, it is easy to see
that $\omega_b$ and $\mu_b$ are the fibres of a family $\omega_\cC$
defining a $\Gamma$-invariant K\"ahler fibration and a morphism of
bundles as in~\eqref{eq:mmapB}, respectively.

To prove~\eqref{eq:mmapBequiv}, note that the actions of $\Aut E^c$ on
the Chern connection $\theta_{H,I}$ of $H\in\cR$ and $I\in Z_b$,
regarded as a connection on $E^c$, and on its curvature
$F_{H,I}\in\Omega^2(\ad E^c)$, satisfy
\begin{equation}
\label{eq:changecurvature}
g \cdot \theta_{H,I} = \theta_{g\cdot H,g \cdot I}, \qquad g \cdot F_{H,I} = F_{g\cdot H,g\cdot I},
\end{equation}
for all $g \in \Aut E^c, H\in\cR, I\in\cZ$ (cf.~\cite[\S
1.1]{D3}). Given $(b,I)\in\cC$, we define
\begin{equation}
\label{eq:Salpha-bI}
S_\alpha(b,I) \defeq - \alpha_0 S_{\omega,\check{I}}
- \alpha_1 \Lambda^2_\omega \(F_{H,I}\wedge F_{H,I}\) + 4\alpha_1 \Lambda_\omega F_{H,I} \wedge z
\in C^\infty(X),
\end{equation}
where $b=(\omega,H)$ and $S_{\omega,\check{I}}$ is the scalar curvature
of $(X,\check{I},\omega)$. By the equivariance
of~\eqref{eq:pairingcomplex} and the second identity
in~\eqref{eq:changecurvature},
\begin{equation}
\label{eq:Salpha-bI-g}
S_\alpha(gb,gI) = S_\alpha(b,I) \circ \check{g}^{-1},
\end{equation}
for all $g\in\Aut E^c$. Combining now~\eqref{eq:Chrern-inverse},
\eqref{eq:changecurvature} and \eqref{eq:Salpha-bI-g}, and making a
change of variable in~\eqref{eq:thm-muX}, we
obtain~\eqref{eq:mmapBequiv}, as required.
\end{proof}

\begin{remark}
\label{rem:Gromov}
  The two fibrations~\eqref{eq:fibration-compatible-pairs} can be compared
  with those in~\cite[\S 2.C]{Gr}, used to see that the spaces of tamed and
  compatible complex structures on a symplectic vector space are contractible
  (cf. \cite[Proposition 2.51]{McS}).
\end{remark}

\subsection{Invariant fibration by symmetric spaces}
\label{sub:ANsymm-spc}

Throughout~\secref{sub:ANsymm-spc}, we will use the framework
introduced in~\secref{sub:concrete-setup} and in particular the first
part of Lemma~\ref{lem:pairsinvacs2} (however, the isomorphism
$\cP_b\cong Z_b$ of Lemma~\ref{lem:pairsinvacs2} and the families
$\omega_\cC$ and $\mu_\cC$ of Theorem~\ref{prop:Kahlerfibration} will
not be used until~\secref{sub:ANStability}). Our task now is to
construct a canonical structure of `$\Gamma$-invariant symmetric space
fibration' on $\pi_\cZ\colon \cC \to \cZ$, that is, symmetric space
structures on the fibres $B_I$ that are exchanged by the
$\Gamma$-action.
%%%
As in~\secref{sec:Ceqham-act-ext-grp}, the Lie groups and manifolds
considered here are infinite dimensional, so one has to be careful
with many standard results in finite dimensions. In particular, the
Newlander--Nirenberg theorem fails in general, so we use the notion of
formally integrable complex structure, as in
Proposition~\ref{prop:CeqintegrableI}.

Let $W$ be the space of complex structures on the real vector space
underlying the Lie algebra $\LieGamma$ (i.e. linear maps whose square
is $-\Id$). Consider the tautological $\Gamma$-equivariant map
\begin{equation}
\label{eq:imag_ICeq}
\cZ \lto W
\end{equation}
which assigns to each $I$ the endomorphism $\LieGamma \to \LieGamma
\colon y \mapsto Iy$. Then, since any $I\in\cZ$ is
integrable,~\eqref{eq:imag_ICeq} satisfies the conditions
\begin{equation}
\label{eq:compatible-Y-imag_I}
Y_{Iy|I} = \mathbf{I} Y_{y|I}, \qquad [y,y']_\Gamma + I [y,I y']_\Gamma + I[I y,y']_\Gamma - [I y,I y']_\Gamma = 0,
\end{equation}
for all $y,y'\in \LieGamma$ (with $[\cdot,\cdot]_\Gamma$ as
in~\eqref{eq:Lie-bra-Gamma}), where
\begin{equation}
\label{eq:inf-action-Gamma-Z}
Y_{y|I}\in T_I\cZ
\end{equation}
is the infinitesimal action of $y\in\LieGamma$ on $I\in\cZ$, given by
the Lie derivative $-L_yI$.

To construct the symmetric space fibration, we first prove that $\cZ$
parameterizes right-invariant formally integrable complex structures
on the group $\Gamma$. Given $g \in \Gamma$, define
\begin{equation}
\label{eq:left-right-action}
\begin{gathered}
\xymatrix @R=0ex @C=1ex{
**[l]L_g\colon\;\Gamma\ar[r] & **[r]\Gamma\\
**[l]h\ar@{|->}[r] & **[r] gh,
}\quad
\xymatrix @R=0ex @C=1ex{
**[l]R_g\colon\;\Gamma\ar[r] & **[r]\Gamma\\
**[l]h\ar@{|->}[r] & **[r] hg,
}
\end{gathered}
\end{equation}
as the left and right multiplication by $g$, respectively. To each $I\in\cZ$, we associate a right-invariant almost complex
structure $\cbI$ on $\Gamma$, defined for $v \in T_g \Gamma$, $g \in \Gamma$ by
\begin{equation}
\label{eq:complexstructureGamma}
\cbI v= (R_g)_*I(R_g)^{-1}_*v.
\end{equation}

\begin{proposition}
\label{prop:Gammacomplex}
The almost complex structure $\cbI$ is formally integrable, for all
$I\in\cZ$.
\end{proposition}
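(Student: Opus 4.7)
The plan is to reduce the integrability of $\mathbf{I}$ on the infinite-dimensional Lie group $\Gamma$ to the algebraic condition on $\Lie\Gamma$ supplied by the second equation in~\eqref{eq:compatible-Y-imag_I}. By ``formally integrable'' we mean, as in Proposition~\ref{prop:CeqintegrableI}, that the associated Nijenhuis tensor $N_{\cbI}$ vanishes identically on vector fields on $\Gamma$.

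Since $\cbI$ is right-invariant by construction and the bracket of right-invariant vector fields is right-invariant, the tensor $N_{\cbI}$ is itself right-invariant. Hence it is enough to check $N_{\cbI}(Y,Y')_e = 0$ at the identity $e\in\Gamma$, and moreover it suffices to do this for vector fields $Y,Y'$ which are the right-invariant extensions of arbitrary elements $y,y'\in\Lie\Gamma$. Here I would extend $y,y'$ to right-invariant vector fields $Y,Y'$ with $Y(g)=(R_g)_* y$, $Y'(g)=(R_g)_* y'$. Right-invariance of $\cbI$ then gives $\cbI Y = (Iy)^R$ and $\cbI Y' = (Iy')^R$ where $(\cdot)^R$ denotes the right-invariant extension.

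The key computational step is that the Lie bracket of right-invariant vector fields on $\Gamma$ satisfies $[Y,Y']_e = -[y,y']_\Gamma$, using the convention fixed in~\eqref{eq:Lie-bra-Gamma} (the sign is consistent with the fact that $[\cdot,\cdot]_\Gamma$ differs by a sign from the vector field bracket on $E^c$). Substituting this into the standard formula
\[
N_{\cbI}(Y,Y')_e = [\cbI Y,\cbI Y']_e - \cbI_e[\cbI Y,Y']_e - \cbI_e[Y,\cbI Y']_e - [Y,Y']_e,
\]
and using $\cbI_e = I$, one obtains, up to the overall sign coming from the above right-invariant bracket identity,
\[
N_{\cbI}(Y,Y')_e = -\bigl([I y,I y']_\Gamma - I[I y, y']_\Gamma - I[y, I y']_\Gamma - [y,y']_\Gamma\bigr).
\]
This is precisely (the negative of) the left-hand side of the second identity in~\eqref{eq:compatible-Y-imag_I}, which vanishes because $I\in\cZ$ is an integrable complex structure on $E^c$. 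By right-invariance of $N_{\cbI}$ and the arbitrariness of $y,y'$, the Nijenhuis tensor vanishes on all of $\Gamma$.

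The main obstacle is purely bookkeeping: being careful with the sign conventions relating the Lie-algebra bracket $[\cdot,\cdot]_\Gamma$ of $\Gamma$, the vector-field bracket on $E^c$ used implicitly in $\Lie\Gamma\subset\Omega^0(TE^c)$, and the bracket of right-invariant vector fields on the infinite-dimensional group $\Gamma$. Since the Nijenhuis identity is invariant under overall sign, any consistent convention will produce the vanishing; the only genuine input is the algebraic integrability condition on $\Lie\Gamma$ guaranteed by $I$ being a complex structure on $E^c$.
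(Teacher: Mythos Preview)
Your proposal is correct and follows precisely the approach indicated in the paper's (one-line) proof: evaluate $N_{\cbI}$ on right-invariant vector fields and reduce to the algebraic identity given by the second equation in~\eqref{eq:compatible-Y-imag_I}. The only difference is that you have spelled out the computation and the sign bookkeeping in detail, whereas the paper merely states that the result follows from this evaluation.
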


\begin{proof}
The statement follows from the second equation in~\eqref{eq:compatible-Y-imag_I}, evaluating the Nijenhuis tensor
$N_\cbI$ of $\cbI$ on right invariant vector fields.
\end{proof}

The next step in the construction of our symmetric space fibration relies on
the following condition for all $I \in \cZ$ such that $B_I$ is non-empty (this
property will be proved in Proposition~\ref{lem:isom-LieX=TB_I}):
\begin{enumerate}
\label{lem:isom-LieX=TB_I}
\item[($\star$)] 
There exists a well-defined isomorphism of vector bundles
\begin{equation}\label{eq:infactcB-2}
\begin{gathered}
\xymatrix @R=0ex @C=0ex{
**[l]\LieX_{\cB|B_I}\ar[r]^{\cong} & **[r]T B_I\\
**[l](b,\zeta)\ar@{|->}[r] & **[r] Y_{I \zeta|b}
}\end{gathered}
\end{equation}
provided by the infinitesimal action of $I\LieX_b\subset\LieGamma$ on $B_I$.
\end{enumerate}
In the sequel, the inverse of~\eqref{eq:infactcB-2} is denoted
\begin{equation}
\label{eq:vectorfield-zeta}
\zeta_I \colon TB_I \lto \LieX_{\cB|B_I}.
\end{equation}

Given a compatible pair $(b,I)\in\cC$, we define a space
\begin{equation}
\label{eq:principal-bundle-Y}
\cY=\cY_{b,I} \defeq \{ g \in \Gamma \; | \; g\cdot b \in B_I \},
\end{equation}
a map $\pi=\pi_{b,I} \colon \cY\to B_I$ given by $\pi(g) = g\cdot b$ and a
right $\Gamma_b$-action on $\cY$ given by right multiplication in $\Gamma$.

\begin{proposition}
\label{prop:4-tuples-C1-4}
For any $(b,I)\in\cC$, the following properties hold:
\begin{enumerate}
\item[\textup{(1)}]
$\cY$ is principal $\Gamma_b$-bundle over $B_I$.
\item[\textup{(2)}]
There exists a canonical connection $\AA$ on $\cY$, with horizontal lift
\begin{equation}\label{eq:connectioncY}
\begin{gathered}
\xymatrix @R=0ex @C=-4ex{
**[l]\theta_\AA^\perp\colon\;\pi^*TB_I\ar[r] & **[r]T\cY\\
**[l](g,v)\ar@{|->}[r] & **[r] (R_g)_*I\zeta_I(v)
}\end{gathered}
\end{equation}
and curvature given by
\begin{equation}\label{eq:curvaturecY}
F_\AA(v_0,v_1) = (R_g)_*[\zeta_I(v_0),\zeta_I(v_1)]_\Gamma,
\end{equation}
for all $g\in\cY$ and $v_0, v_1 \in T_{g\cdot b}B_I$.
\end{enumerate}
\end{proposition}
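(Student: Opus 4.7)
The plan is to prove (1) using Moser's trick together with the transitivity of the complex gauge group on reductions, and to prove (2) by exploiting the integrability of the right-invariant almost complex structure $\cbI$ on $\Gamma$ from Proposition~\ref{prop:Gammacomplex}.

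For (1), surjectivity of $\pi\colon\cY\to B_I$ reduces to transitivity of the $\Gamma$-action on $B_I$. Given $b'=(\omega',H')\in B_I$, since $\omega,\omega'$ are K\"ahler forms on $(X,\check I)$ in the common class $\Omega$, Moser's argument along the path $\omega_t=(1-t)\omega+t\omega'$ produces $\check g_1\in\Diff_0 X$ with $(\check g_1)_*\omega=\omega'$. Horizontal transport along any connection on $E^c$ (cf.\ the proof of surjectivity of $\pr$ in~\secref{sec:Ceqcoupling-term}) lifts $\check g_1$ to $g_1\in\Gamma$, so that $g_1\cdot b=(\omega',g_1\cdot H)$. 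Now $g_1\cdot H$ and $H'$ are reductions of $E^c$ to $G$, and the pointwise transitivity of $G^c$ on $G^c/G$ yields an element $g_2$ in the complex gauge group (automorphisms of $E^c$ covering $\Id_X$) with $g_2\cdot(g_1\cdot H)=H'$; hence $g_2 g_1\in\cY$ and $\pi(g_2 g_1)=b'$. The right $\Gamma_b$-action on $\cY$ is free and transitive on fibres of $\pi$, since $g_1\cdot b=g_2\cdot b$ iff $g_1^{-1}g_2\in\Gamma_b$. Local triviality of $\pi$ follows from~($\star$), by exponentiating the infinitesimal isomorphism $I\LieX_{\cB|B_I}\cong TB_I$ to obtain smooth local sections of $\pi$ near any point of $B_I$.

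For (2), I specify the horizontal distribution via~\eqref{eq:connectioncY} and verify the defining properties of a principal connection. The vertical subspace at $g\in\cY$ is $(L_g)_*\LieGamma_b=(R_g)_*\LieGamma_{g\cdot b}$, the tangent space to the fibre $g\cdot\Gamma_b$. By~($\star$), the map $I\LieX_{g\cdot b}\to T_{g\cdot b}B_I,\ I\zeta\mapsto Y_{I\zeta|g\cdot b}$, is an isomorphism, so $I\LieX_{g\cdot b}\cap\LieGamma_{g\cdot b}=\{0\}$ and $(R_g)_*I\LieX_{g\cdot b}$ is complementary to the vertical; moreover, given $v\in T_g\cY$ with $v=(R_g)_*y$ and $Y_{y|g\cdot b}\in T_{g\cdot b}B_I$, one splits $y=\big(y-I\zeta_I(Y_{y|g\cdot b})\big)+I\zeta_I(Y_{y|g\cdot b})$, the first summand lying in $\LieGamma_{g\cdot b}$ and the second in $I\LieX_{g\cdot b}$, so the two subspaces span $T_g\cY$. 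The identity $(R_{gh})_*=(R_h)_*(R_g)_*$ yields $\Gamma_b$-equivariance, and $d\pi_g((R_g)_*I\zeta_I(v))=Y_{I\zeta_I(v)|g\cdot b}=v$ shows that $\theta_\AA^\perp$ is a horizontal lift of $v$.

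The curvature is computed from $F_\AA(v_0,v_1)=-\theta_\AA([\tilde v_0,\tilde v_1])$ for horizontal lifts $\tilde v_i$. Evaluating at $g=e$ and propagating to general $g$ by $\Gamma_b$-equivariance, the Lie bracket of the vector fields $\tilde v_i(g)=(R_g)_*I\zeta_I(v_i(g\cdot b))$ splits as the right-invariant part $-[I\zeta_I(v_0),I\zeta_I(v_1)]_\Gamma$ together with correction terms coming from the $g$-dependence of $\zeta_I(v_i(g\cdot b))$. The integrability condition~\eqref{eq:compatible-Y-imag_I} gives
\[
[I\zeta_0,I\zeta_1]_\Gamma=[\zeta_0,\zeta_1]_\Gamma + I\bigl([\zeta_0,I\zeta_1]_\Gamma+[I\zeta_0,\zeta_1]_\Gamma\bigr),
\]
in which the first summand $[\zeta_I(v_0),\zeta_I(v_1)]_\Gamma\in\LieX_b\subset\LieGamma_b$ is vertical, while the $I(\cdots)$ piece lies in $I\LieGamma$. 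A direct check shows that this $I$-piece together with the derivative corrections is horizontal, so it vanishes under $\theta_\AA$; this leaves precisely $(R_g)_*[\zeta_I(v_0),\zeta_I(v_1)]_\Gamma$, which is~\eqref{eq:curvaturecY}. The main obstacle is the clean handling of the derivative corrections: the coefficient $\zeta_I(v_i(g\cdot b))$ depends on $g$, and controlling its variation to confirm that the corresponding bracket contribution lies in the horizontal distribution requires the $\Gamma$-equivariance of the isomorphism in~($\star$) and careful bookkeeping of the reductive-like behaviour of $I\LieX$ under $\LieGamma_b$-bracket.
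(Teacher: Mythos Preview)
Your proof of (1) via Moser's trick plus transitivity of the complex gauge group on reductions is a valid alternative to the paper's argument. The paper instead takes any smooth curve $b_t$ in $B_I$ from $b$ to $b'$, sets $y_t = I\zeta_I(\dot b_t)$ using condition~($\star$), and shows that the flow $g_t$ of $y_t$ satisfies $g_t\cdot b = b_t$; this is less elementary than yours but has the advantage that the flow $g_t$ \emph{is} the horizontal lift of $b_t$, so parts (1) and (2) are proved by a single construction.

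For (2), your verification that \eqref{eq:connectioncY} defines a principal connection is fine. The gap is in the curvature computation. You work with the ``natural'' horizontal lifts $\tilde v_i(g)=(R_g)_*I\zeta_I(v_i(g\cdot b))$, whose coefficients depend on $g$; the bracket then produces derivative corrections which you claim combine with the $I(\cdots)$-piece to lie in the horizontal distribution, but you neither prove this nor indicate how. Verifying it directly requires differentiating the map $b'\mapsto\zeta_I$ along $B_I$, which is exactly the bookkeeping you flag as the obstacle. Also, ``propagating to general $g$ by $\Gamma_b$-equivariance'' only moves you within a single fibre of $\pi$, not between fibres.

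The paper avoids all of this by a choice of horizontal extension that kills the derivative terms. For fixed $v_0,v_1\in T_{g\cdot b}B_I$ with $\zeta_i=\zeta_I(v_i)$, the vector fields
\[
W_i \;=\; \cbI\,\XX_{\Ad(g^{-1})\zeta_i}
\]
on $\Gamma$ (left-invariant $\XX_y$ composed with the right-invariant $\cbI$) are horizontal everywhere on $\cY$ and agree with $\theta_\AA^\perp v_i$ at $g$. Since the flow of $\XX_y$ is by right translations, $L_{\XX_y}\cbI=0$, so $[\XX_y,\cbI\,\cdot\,]=\cbI[\XX_y,\cdot]$; plugging this into the vanishing Nijenhuis tensor of $\cbI$ (Proposition~\ref{prop:Gammacomplex}) gives the clean identity
\[
[\cbI\XX_{y_0},\cbI\XX_{y_1}] \;=\; -\,\XX_{[y_0,y_1]_\Gamma},
\]
with no correction terms. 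Evaluating at $g$ and applying $\theta_\AA$ then yields \eqref{eq:curvaturecY} directly. The missing idea in your argument is precisely this choice of extension with \emph{constant} coefficients in the left-invariant frame; once you use it, there is nothing left to bookkeep.
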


\begin{proof}
The $\Gamma_b$-action on $\cY$ is clearly free, so leaving aside global
topological questions, to prove part (1), it suffices to show that
$\pi$ is surjective and induces $\cY/\Gamma_b\cong B_I$, that is, for
all $b'\in B_I$, there exists $g\in\Gamma$ such that $b'=g \cdot b$.
Since $B_I$ is contractible (see~\eqref{eq:B-product-K-R}), there
exists a smooth curve $b_t$ on $B_I$ with $b_0=b$, $b_1=b'$. Let
\begin{equation}
\label{eq:vectorfieldyt}
  y_t= I\zeta_I(\dot b_t) \in \LieGamma,
\end{equation}
with $\zeta_I$ given by~\eqref{eq:vectorfield-zeta}. Let $g_t\in\Gamma$ be the
flow of $y_t$, defined by
\begin{equation}
\label{eq:4-tuples-C1-4-2}
    \dot g_t\cdot g_t^{-1} =y_t,
\end{equation}
with initial condition $g_0=1$. Note that the flow $g_t$ exists for
all $t$ because $y_t$ is $G^c$-invariant, so it covers a vector field
$\check{y}_t$ on $X$, whose flow $\check{g}_t\in \Diff_0 X$ exists for
all $t$ because $X$ is compact
(cf.~\eqref{eq:coupling-term-moment-map-1} and
Remark~\ref{rem:exact-ses}). Now, by the Leibniz rule,
\[
  \frac{d}{dt}\(g_t^{-1}\cdot b_t\)
  = g^{-1}_t \cdot\( -Y_{y_t|b_t} + \dot{b}_t\) = 0,
\]
because $\zeta_I$ inverts the infinitesimal action of
$I\LieX_b\subset\LieGamma$ on $B_I$ (cf.~\cite[p. 17]{D6}). Thus
$g_t^{-1}\cdot b_t$ is independent of $t$, so $b'=g_1\cdot b$, as
required.

For (2), note that the horizontal lift of curves on $B_I$ to $\cY$
determined by the flow of \eqref{eq:vectorfieldyt} defines a canonical
connection $\AA$ on $\cY$. To obtain~\eqref{eq:connectioncY}, let
$b_t$ be a curve on $B_I$ with $\dot{b}_0=v$ and $g\in\Gamma$ such
that $g\cdot b=b_0$. By definition, the horizontal lift $g_t$ of $b_t$
through $g$ is the flow of~\eqref{eq:vectorfieldyt} with $g_0 = g$
(recall that it exists because $y_t$ is $G^c$-invariant). Hence
\[
\theta_\AA^\perp(g,v) = \frac{d}{dt}_{|t=0}g_tg^{-1}g = (R_g)_*(I\zeta_I({v})).
\]
To check~\eqref{eq:curvaturecY}, given $y \in \LieGamma$ we denote by
$\XX_y$ the associated left-invariant vector field on $\Gamma$,
given by
\begin{equation}
\label{eq:leftinvariantyen}
\XX_{y|g} \defeq (L_g)_*y.
\end{equation}
Since $\cbI$ is right invariant, $[\XX_y,\cbI\cdot] =
\cbI[\XX_y,\cdot]$ for any $y\in \Lie\Gamma$, which implies that
$$
[\cbI \XX_{y_0},\cbI \XX_{y_1}]_{|1} = - [y_0,y_1]_\Gamma
$$
for any $y_0,y_1 \in \Lie\Gamma$, by
Proposition~\ref{prop:Gammacomplex}. Note also that
$$
\theta_\AA^\perp(g,v) = \cbI(L_g)_*(\Ad(g^{-1})\zeta_I(v)) = \cbI\XX_{\Ad(g^{-1})\zeta_I(v)|g} = ((R_g)_*(\cbI\XX_{\zeta_I(v)})_{|g},
$$
for any $g \in \cY$ and $v \in T_{gb}B_I$. Hence given $v_0, v_1 \in
T_{gb}B_I$,
\begin{align*}
F_\AA(v_0,v_1) & = -\theta_\AA (R_g)_*[\cbI\XX_{\zeta_I(v_0)},\cbI\XX_{\zeta_I(v_1)}]_{|1}\\
& = \theta_\AA (R_g)_* [\zeta_I(v_0),\zeta_I(v_1)]_\Gamma = (R_g)_* [\zeta_I(v_0),\zeta_I(v_1)]_\Gamma,
\end{align*}
where the first equality follows
from~\eqref{eq:cov-derivative-commutators} and the third because
\begin{equation*}
(R_g)_* [\zeta_I(v_0),\zeta_I(v_1)]_\Gamma =
(L_g)_*\Ad(g^{-1})[\zeta_I(v_0),\zeta_I(v_1)]_\Gamma
\end{equation*}
is a vertical vector field on $\cY$.
\end{proof}

Given $b,b'\in B_I$, $b'=g\cdot b$ for any $g$ in the fibre of
$\cY_{b,I}$ over $b'$, by Proposition~\ref{prop:4-tuples-C1-4}. Then
we have an isomorphism of principal bundles
\begin{equation}\label{eq:pfb-Y-dependence}
\begin{gathered}
\xymatrix @R=0ex @C=2.5ex{
**[l] \cY_{b,I}\ar[r]^-{\cong}&\cY_{b',I}\\
**[l] g'\ar@{|->}[r] & **[r] g'g^{-1},
}\end{gathered}\end{equation}
with corresponding isomorphism $\Gamma_b\lra{\cong}\Gamma_{b'} \colon
g'\longmapsto \Ad(g)g'$ between their structure groups. It follows
from the definition of the canonical connection in terms
of~\eqref{eq:vectorfieldyt}, or from~\eqref{eq:connectioncY}, that
this isomorphism exchanges the canonical connections on these
principal
bundles.% and the global trivializations~\eqref{eq:Ytrivialization}.

We are now in a position to construct the promised canonical structure
of `$\Gamma$-invariant symmetric space fibration' on $\pi_\cZ\colon
\cC \to \cZ$. Observe first that the
connection~\eqref{eq:connectioncY} induces a canonical affine
connection
\begin{equation}
\label{eq:connectionBI}
  \nabla\colon \Omega^0_B(TB) \lto \Omega^1_B(TB)
\end{equation}
on $B_I$, obtained using the canonical isomorphism
\begin{equation}
\label{eq:TBIassociated}
TB_I \cong \cY \times_{\Gamma_b}\LieX_b \subset \ad \cY,
\end{equation}
which follows from the canonical isomorphism $TB_I \cong
\pi^*TB_I/\Gamma_b$ and the $\Gamma_b$-equivariant isomorphism of vector
bundles
\begin{equation}\label{eq:TBIpullback}
\begin{gathered}
\xymatrix @R=0ex @C=-5.5ex{
**[l]\cY \times  \Lie \cX_b \ar[r]^-{\cong} & **[r]\pi^*T B_I\\
**[l](g,\zeta)\ar@{|->}[r] & **[r] \(g,Y_{I\Ad(g)\zeta|g\cdot b}\).
%%%% \pi^*T B_I \lra{\cong} \cY \times  \Lie \cX_b\colon (g,\dot{b}) \mapsto  \Ad(g^{-1})\zeta_I(\dot{b}),
}\end{gathered}
\end{equation}
%%%% where $\dot{b} \in T_{b'} B$ and $b'=g\cdot b$.
%
Note also that the parallel transport $\tau_t(v)$ of a tangent vector
$v\in T_{b_0}B_I$ along a curve $b_t$ on $B_I$, and hence the affine
connection $\nabla$, do not depend on the choice of the base point
$b\in\cB_I$ used implicitly in the right-hand side
of~\eqref{eq:TBIassociated}. In fact,
it is given by the curve on $TB_I$ defined as
\begin{equation}
\label{eq:parallel-trans-canonical-connection}
\tau_t(v)=Y_{I\zeta_t|b_t},
\text{ where }
\zeta_t\defeq \Ad(g_t)\zeta_I(v).
\end{equation}
Here, $g_t$ is the flow of~\eqref{eq:vectorfieldyt} with $g_0=1$. This follows
from~\eqref{eq:TBIassociated},~\eqref{eq:TBIpullback} and standard properties
about horizontal lifts~\cite[p. 114]{KNI}.

Note that the canonical connections~\eqref{eq:connectioncY}
and~\eqref{eq:connectionBI} are constructed exactly as for any
finite-dimensional symmetric space (cf. e.g.~\cite[Ch XI, \S 3]{KNII}) and
that they are exchanged by the $\Gamma$-actions. In fact, our next result
shows that $(B_I,\nabla)$ is a symmetric space, in a similar sense to~\cite[\S
4, Proposition~2]{D6}.

\begin{theorem}
\label{thm:symm-spc}
Let $I\in\cZ$ be such that $B_I$ is non-empty. Then $B_I$ is a symmetric
space, i.e. it has a torsion-free affine connection $\nabla$, with
holonomy group contained in $\cX_b$ and covariantly constant curvature
$R_\nabla$, given by
\begin{equation}
\label{eq:curvatureBI}
\zeta_I(R_\nabla(v_0,v_1)v_2) =
[[\zeta_I(v_0),\zeta_I(v_1)]_{\Gamma},\zeta_I(v_2)]_{\Gamma},
\end{equation}
for any $b \in B_I$ and $v_0,v_1,v_2 \in T_bB_I$.
\end{theorem}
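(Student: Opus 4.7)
The plan is to derive all four conclusions from the canonical principal connection $\AA$ on the $\Gamma_b$-bundle $\pi\colon\cY\to B_I$ of Proposition~\ref{prop:4-tuples-C1-4}, under the identification $TB_I\cong\cY\times_{\Gamma_b}\LieX_b$ from~\eqref{eq:TBIassociated}, adapting the classical construction of the canonical connection on a reductive homogeneous space.

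A first observation is that $F_\AA$ actually takes values in the Lie subalgebra $\LieX_b\subset\LieGamma_b$: indeed $\zeta_I(v)\in\LieX_b$ for every $v\in TB_I$ and $\LieX_b$ is closed under $[\cdot,\cdot]_\Gamma$, so~\eqref{eq:curvaturecY} gives $F_\AA\in\Omega^2(\cY,\LieX_b)$. The reduction-of-structure-group theorem (or the Ambrose--Singer theorem in its infinite-dimensional formulation) then constrains the holonomy of $\AA$, and consequently that of the induced affine connection $\nabla$, to lie in $\cX_b$. The curvature formula~\eqref{eq:curvatureBI} follows from the standard expression for the curvature of the connection induced by $\AA$ on the associated bundle $\cY\times_{\Gamma_b}\LieX_b$: the normality of $\cX_b\subset\Gamma_b$ stated right after~\eqref{eq:cP_b} guarantees that $\Gamma_b$ acts on the fibre $\LieX_b$ by $\Ad$, whose derivative at the identity sends $\eta\in\LieGamma_b$ to $\zeta\mapsto [\eta,\zeta]_\Gamma$; substituting $\eta = F_\AA(v_0,v_1)$ and $\zeta = \zeta_I(v_2)$ in this formula gives~\eqref{eq:curvatureBI}.

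The main obstacle is the torsion-freeness of $\nabla$. Following the classical symmetric space pattern, the torsion at $b$ is controlled by the bracket $[I\zeta_0,I\zeta_1]_\Gamma$ for $\zeta_0,\zeta_1\in\LieX_b$, and the strategy is to invoke the second identity in~\eqref{eq:compatible-Y-imag_I}---the formal integrability of the tautological complex structure on $\cZ$---applied with $y=\zeta_0$ and $y'=\zeta_1$ to rewrite
\begin{equation*}
[I\zeta_0,I\zeta_1]_\Gamma = [\zeta_0,\zeta_1]_\Gamma + I\bigl([\zeta_0,I\zeta_1]_\Gamma+[I\zeta_0,\zeta_1]_\Gamma\bigr),
\end{equation*}
and then to show that the right-hand side has no component tangent to $B_I$ at $b$. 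The first summand lies in $\LieX_b\subset\LieGamma_b$ and hence its infinitesimal action on $b$ is vertical; the second summand requires a separate verification that the infinitesimal $\Gamma$-action on $\cZ$ sends $I$-twisted brackets of $\LieX_b$-elements into vertical directions, by unpacking the definition of $\LieX_b$ and the $G^c$-equivariant action of $I$ on $TE^c$. This is the direct analogue of Donaldson's verification in the cscK case~\cite[\S 2]{D6} and constitutes the technical heart of the proof.

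Finally, $\nabla R_\nabla = 0$ would follow almost formally from the parallel transport description~\eqref{eq:parallel-trans-canonical-connection}: along a curve $b_t = g_t\cdot b$ the parallel transport acts on $\zeta_I$ by $\Ad(g_t)$, and since the bracket $[\cdot,\cdot]_\Gamma$ is $\Gamma$-equivariant under conjugation, the iterated bracket on the right of~\eqref{eq:curvatureBI} is itself parallel along $b_t$; hence $\nabla R_\nabla = 0$.
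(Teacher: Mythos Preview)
Your treatment of the holonomy, the curvature formula~\eqref{eq:curvatureBI}, and the covariant constancy $\nabla R_\nabla=0$ matches the paper's argument closely; these are formal consequences of~\eqref{eq:curvaturecY}, the associated-bundle description~\eqref{eq:TBIassociated}, and the parallel-transport formula~\eqref{eq:parallel-trans-canonical-connection}, just as you say.

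The torsion argument, however, has a genuine gap. You assert that ``the torsion at $b$ is controlled by the bracket $[I\zeta_0,I\zeta_1]_\Gamma$'', tacitly invoking the classical formula $T(X,Y)=-[X,Y]_{\mathfrak m}$ for the canonical connection on a reductive homogeneous space. But the splitting $T_e\cY=\Lie\Gamma_b\oplus I\LieX_b$ is \emph{not} reductive here: for $h\in\Gamma_b$ one has $\Ad(h)(I\zeta)=I\Ad(h)\zeta$ only when $h$ preserves $I\in\cZ$, which is not assumed. Consequently the pointwise bracket of horizontal lifts does not by itself compute $T_\nabla$, and your proposed reduction to showing that the right-hand side of the integrability identity ``has no component tangent to $B_I$'' is not justified. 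The subsequent sentence, which speaks of the infinitesimal $\Gamma$-action on $\cZ$, also conflates two different actions: the torsion on $B_I$ is detected by the action on $\cB$ (via $\Lie\Gamma_b$), not on $\cZ$.

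The paper handles this by introducing the right-invariant almost complex structure $\cbI$ on $\Gamma$ (see~\eqref{eq:complexstructureGamma}), whose formal integrability (Proposition~\ref{prop:Gammacomplex}) is precisely the second identity in~\eqref{eq:compatible-Y-imag_I}. It then establishes the \emph{exact} identity
\[
N_{\cbI}(\hat V_1,\hat V_2)\;=\;-\,\theta_\AA^\perp\,T_\nabla(V_1,V_2),
\]
where $\hat V_j(g)=(R_g)_*\zeta_I(V_j(gb))$ are specific $\Gamma_b$-invariant \emph{vertical} vector fields on $\cY$, by expanding the Nijenhuis tensor and recognising $\cbI\hat V_j=\theta_\AA^\perp V_j$, $\widehat{\nabla_{V_i}V_j}=[\cbI\hat V_i,\hat V_j]$, and $F_\AA(V_1,V_2)=-[\hat V_1,\hat V_2]$. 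This identity requires no reductivity, and $T_\nabla=0$ then follows at once from $N_{\cbI}=0$. So your key ingredient---the integrability relation~\eqref{eq:compatible-Y-imag_I}---is the right one, but the substantive step is packaging it as $T_\nabla=-d\pi\circ N_{\cbI}$ rather than the pointwise bracket reduction you sketch.
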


\begin{proof}
To prove this, we relate the torsion $T_\nabla$ of $\nabla$ with the Nijenhuis
tensor $N_\cbI$ of $(\Gamma,\cbI)$ and its curvature $R_\nabla$
with the curvature $F_\AA$ of $\AA$.

Let $V_1$ and $V_2$ be two vector fields on $B_I$. Then
\[
  T_\nabla(V_1,V_2) \defeq \nabla_{V_1} V_2 - \nabla_{V_2} V_2 - [V_1,V_2].
\]
Consider the principal $\Gamma_b$-bundle $\pi\colon \cY \to B_I$
associated to a fixed $b \in B_I$. By~\eqref{eq:TBIassociated}, $TB_I$
is a subbundle of $\ad \cY$, so $V_j$ induces a $\Gamma_b$-invariant
vertical vector fields $\hat V_j$ on $\cY$, given by
\[
\hat V_j(g) = (R_g)_*\zeta_I(V_j(gb)), %= \XX_{\Ad(g^{-1})\zeta_I(V_j(gb))|g},
\]
for $g\in\cY$, $j = 0,1$. We claim that
\begin{equation}
\label{eq:TorsionNI}
T_\nabla(V_1,V_2) = - d\pi(N_\cbI(\hat V_1,\hat V_2)),
\end{equation}
and so $T_\nabla = 0$ by Proposition~\ref{prop:Gammacomplex}. To see this,
note first that
\[
\cbI\hat V_j = \theta_\AA^\perp V_j \quad \text{and} \quad F_\AA(V_1,V_2) = - [\hat V_1, \hat V_2],
\]
by~\eqref{eq:connectioncY} and~\eqref{eq:curvaturecY}. Moreover, by the
construction of $\nabla$ and the definition of the covariant
derivative $d_\AA$ induced by $\AA$ on $\ad \cY$
(see~\eqref{eq:cov-derivative-commutators}),
\[
\widehat{\nabla V_j} = d_\AA\hat V_j \defeq [\theta_\AA^\perp(\cdot),\hat V_j] = [\cbI\widehat{(\cdot)},\hat V_j].
\]
It follows then that
\begin{align*}
N_\cbI(\hat V_1,\hat V_2) & \defeq [\cbI\hat V_1,\cbI\hat V_2] - \cbI[\cbI \hat V_1,\hat V_2] - \cbI[\hat V_1,\cbI \hat V_2] -  [\hat V_1,\hat V_2]\\
& = [\theta_\AA^\perp V_1,\theta_\AA^\perp V_2] - \cbI\widehat{\nabla_{V_1}V_2} + \cbI\widehat{\nabla_{V_2}V_1} + F_\AA(V_1,V_2)\\
& = \theta_\AA^\perp([V_1,V_2] - \nabla_{V_1}V_2 + \nabla_{V_2}V_1)\\
& = - \theta_\AA^\perp T_\nabla(V_1,V_2),
\end{align*}
and so~\eqref{eq:TorsionNI} holds.

Since the curvature $R_\nabla$ is induced by $F_{\mathbb{A}}$ via the
adjoint representation, it follows from~\eqref{eq:curvaturecY},
\eqref{eq:TBIassociated} and the fact that $\LieX_b\subset\LieGamma$
is a Lie subalgebra, that
\begin{equation}
  \label{eq:CurvatureNI}
R_\nabla(v_0,v_1)v_2 = Y_{I[[\zeta_I(v_0),\zeta_I(v_1)]_{\Gamma},\zeta_I(v_2)]_{\Gamma}|b},
\end{equation}
for $v_0,v_1,v_2 \in T_b B_I$, which implies~\eqref{eq:curvatureBI},
by condition ($\star$). Hence, since the group $\cX_b$ is normal in
$\Gamma_b$ and $B_I$ is contractible, it follows
from~\eqref{eq:CurvatureNI} that the holonomy group of $\nabla$ is
contained in $\cX_b$ (see~\cite[Theorem~8.1]{KNI}).
Using~\eqref{eq:CurvatureNI} and the
formula~\eqref{eq:parallel-trans-canonical-connection} for the
parallel transport $\tau_t$ of a curve on $B_I$, it is now
straighforward that $\tau_t^*R_\nabla=R_\nabla$, so $\nabla R_\nabla =
0$.
\end{proof}

\begin{remark}
When $H^1(X,\RR) = 0$, so $\Lie \cX_b = \LieGamma_b$, it follows from Proposition~\ref{prop:4-tuples-C1-4}(1) that the bundle $\cY$, endowed
with the restriction of the formally integrable almost complex structure of
Proposition~\ref{prop:Gammacomplex}, is an infinitesimal complexification of
$\Gamma_b$ in the sense of Donaldson~\cite[\S 4]{D6}.
If in addition $\Gamma_I$ is trivial, then there is an alternative
proof of Theorem~\ref{thm:symm-spc} which does not use
Proposition~\ref{prop:Gammacomplex}. In this case, the almost complex structure on
$\cY_{b,I}$ can be defined as the pull-back of the formally integrable
almost complex structure on $Z_b$ by the holomorphic map
\begin{equation}\label{eq:cxaction}
\begin{gathered}
\xymatrix @R=0ex @C=2ex{
**[l]\cY_{b,I} \ar[r] & **[r] Z_b\\
**[l]g\ar@{|->}[r] & **[r] g^{-1}I.
}
\end{gathered}
\end{equation}
\end{remark}

\subsection{The uniqueness and existence problem for the coupled equations}
\label{sub:ANStability}

We apply now the framework of~\secrefs{sub:concrete-setup},
\ref{sub:ANsymm-spc} to construct obstructions to the existence of
solutions to the coupled equations~\eqref{eq:CYMeq}.

Fix coupling constants $\alpha_0, \alpha_1>0$. It follows from
Proposition~\ref{mm-equations}, Lemma~\ref{lem:pairsinvacs2} and the
construction of $\mu_b$ in Theorem~\ref{prop:Kahlerfibration} for each
$b=(\omega,H)\in\cB$, that the existence of a solution $(J,A)\in\cP_b$
of the coupled equations~\eqref{eq:CYMeq} (for the symplectic manifold
$(X,\omega)$ and the principal $G$-bundle $E_H$) is equivalent to the
condition $\mu_b(I)=0$ for some $I\in Z_b$. By the
equivariance~\eqref{eq:mmapBequiv} of $\mu_\cC$, this is equivalent to
the condition
\begin{equation}
\label{eq:slogan}
\pi_\cZ^{-1}(\Gamma\cdot I) \cap \mu_\cC^{-1}(0) \neq \emptyset,
\end{equation}
where $\Gamma\cdot I\subset \cZ$ is the orbit of $I$. Given such an orbit,
in~\secref{sub:ANStability} we construct a complex character $\cF_I$
of the complex Lie algebra $\LieGamma_I$, which vanishes
when~\eqref{eq:slogan} is satisfied, and an `integral of the moment
map' $\cM_I\colon B_I\to \RR$, which is bounded from below
when~\eqref{eq:slogan} is satisfied, provided that the symmetric space
$B_I$ is geodesically convex. Furthermore, we motivate a definition of
`geodesic stability' of the orbit $\Gamma\cdot I$ and conjecture a
link with~\eqref{eq:slogan} when $\Gamma_I$ is finite.

We first reformulate condition~\eqref{eq:slogan} in terms of a
$\Gamma$-invariant family $\sigma$ of $1$-forms $\sigma_I$ on the
fibres $B_I$ of $\pi_\cZ\colon \cC\to \cZ$, defined by the formula
\begin{equation}
\label{eq:sigmaC}
\sigma_I(v) \defeq - \langle\mu_b(I),\zeta_I(v)\rangle,
\end{equation}
for all $(b,I) \in \cC$, $v \in T_bB_I$, with $\zeta_I$ defined as
in~\eqref{eq:vectorfield-zeta}. Here, the $\Gamma$-invariance of
$\sigma$ means
\begin{equation}
\label{eq:Gamma-invariance-sigma}
  \sigma_{g\cdot I}(gv)= \sigma_I(v)
\end{equation}
for all $(b,I) \in \cC$, $v \in T_bB_I$, $g\in\Gamma$. Note
that~\eqref{eq:Gamma-invariance-sigma} follows
from~\eqref{eq:mmapBequiv} and the fact that
\[
\Ad(g) \zeta_I(v) = \zeta_{g I}(v)
\]
for all $g \in \Gamma$, which is immediate from the definition of
$\zeta_I$. Observe also that
\begin{equation}
\label{eq:equivalent-slogan}
\pi_\cZ^{-1}(\Gamma\cdot I) \cap \mu_\cC^{-1}(0) \neq \emptyset \text{ }
\Longleftrightarrow
\text{ $\sigma_I \in \Omega^1(B_I)$ has a zero.}
\end{equation}

Now, since $B_I$ is contractible (see~\eqref{eq:B-product-K-R}), it
suffices to study $\sigma_I$ along curves on $B_I$. Let $V_t$ be a
vector field on $B_I$ along a curve $b_t$ on $B_I$, i.e. a curve on
$TB_I$ with $V_t\in T_{b_t}B_I$ for all $t$. We use the standard
notation $\nabla_{\dot b_t}V_t$ for the covariant derivative of $V_t$
in the direction of $\dot{b}_t$ on the symmetric space $(B_I,\nabla)$
(see~\eqref{eq:connectionBI} and Theorem~\ref{thm:symm-spc}).

\begin{proposition}
\label{propo:sigmaIalongcurve}
\hspace{0.3cm}
\begin{enumerate}
  \item[\textup{(1)}] $\frac{d}{dt}\sigma_I(V_t) = \omega_{b_t}(Y_{\zeta_I(V_t)|I},\mathbf{I}Y_{\zeta_I(\dot b_t)|I}) + \sigma_I(\nabla_{\dot b_t}V_t)$.
  \item[\textup{(2)}]
    $\sigma_I$ is closed.
  \end{enumerate}
\end{proposition}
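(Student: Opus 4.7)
The plan is to transfer the computation to the fixed fibre $Z_b$ via the $\Gamma$-equivariance of $\mu_\cC$, where the moment map condition for the $\cX_b$-action is directly available. Fix $b = b_0 \in B_I$, and let $g_t$ be the horizontal lift of $b_t$ to $\cY_{b,I}$ starting at $g_0 = 1$ given by Proposition~\ref{prop:4-tuples-C1-4}, namely the flow of $y_t = I\zeta_I(\dot b_t)$. Then $b_t = g_t \cdot b$, and the equivariance~\eqref{eq:mmapBequiv} rewrites
$$
\sigma_I(V_t) = -\langle \mu_{b_t}(I), \zeta_I(V_t)\rangle = -\langle \mu_b(I_t), \xi_t\rangle,
$$
with $I_t \defeq g_t^{-1}\cdot I \in Z_b$ and $\xi_t \defeq \Ad(g_t^{-1})\zeta_I(V_t) \in \Lie\cX_b$.

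For part~(1), I would differentiate at $t = 0$ using the moment map condition $d\langle\mu_b,\xi\rangle = Y_\xi \lrcorner \omega_b$ for the $\cX_b$-action on $Z_b$, obtaining
$$
\frac{d}{dt}\Big|_{t=0}\sigma_I(V_t) = -\omega_b(Y_{\xi_0|I}, \dot I_0) - \langle \mu_b(I), \dot\xi_0\rangle.
$$
At $t = 0$, $\xi_0 = \zeta_I(V_0)$ and $\dot I_0 = -Y_{y_0|I} = -\mathbf{I}Y_{\zeta_I(\dot b_0)|I}$ by the first identity in~\eqref{eq:compatible-Y-imag_I}, which turns the first term into $\omega_b(Y_{\zeta_I(V_0)|I},\mathbf{I}Y_{\zeta_I(\dot b_0)|I})$. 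The decisive observation for the second term is that $\dot\xi_0 = \zeta_I(\nabla_{\dot b_0}V_0)$: writing $V_t = \tau_t(U_t)$ for a curve $U_t \in T_bB_I$ and applying the parallel-transport formula~\eqref{eq:parallel-trans-canonical-connection} gives $\zeta_I(U_t) = \Ad(g_t^{-1})\zeta_I(V_t) = \xi_t$, so $\nabla_{\dot b_0}V_0 = \dot U_0$ corresponds to $\dot\xi_0$ under the isomorphism $Y_{I\cdot|b}\colon \Lie\cX_b \cong T_bB_I$. The resulting formula holds at an arbitrary $t$ by $t$-translation.

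For part~(2), I would extend tangent vectors $v,w$ at $b \in B_I$ to commuting vector fields $V, W$, and compute $V(\sigma_I(W))$ and $W(\sigma_I(V))$ via~(1). The two ``symplectic'' terms coincide, because the K\"ahler metric $g_b = \omega_b(\cdot,\mathbf{I}\cdot)$ on $Z_b$ is symmetric in its entries. The difference therefore reduces to $\sigma_I(\nabla_V W - \nabla_W V) = \sigma_I([V,W]) = 0$, using the torsion-freeness of $\nabla$ from Theorem~\ref{thm:symm-spc} and $[V,W]=0$. Hence $d\sigma_I(V,W) = V(\sigma_I(W)) - W(\sigma_I(V)) - \sigma_I([V,W]) = 0$.

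The main obstacle will be the bookkeeping in part~(1): one must carefully identify the derivative $\dot\xi_0$ of the curve $\xi_t = \Ad(g_t^{-1})\zeta_I(V_t)$ with $\zeta_I(\nabla_{\dot b_0}V_0)$, reconciling the Lie bracket convention~\eqref{eq:Lie-bra-Gamma} and the sign of the infinitesimal $\Gamma$-action on $\cZ$ with the parallel-transport formula on the symmetric space $B_I$. Once this identification is secured, everything else reduces to the standard moment map identity on $Z_b$ and the Kähler compatibility of $\mathbf{I}$ and $\omega_b$ from Theorem~\ref{prop:Kahlerfibration}.
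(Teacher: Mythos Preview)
Your proposal is correct and follows essentially the same route as the paper: transfer $\sigma_I(V_t)$ to the fixed fibre $Z_b$ via the horizontal lift $g_t$ and the equivariance~\eqref{eq:mmapBequiv}, differentiate using the moment map identity and~\eqref{eq:compatible-Y-imag_I}, and identify the $\dot\xi_0$-term with the covariant derivative via~\eqref{eq:parallel-trans-canonical-connection}; for (2), reduce $d\sigma_I$ to the torsion of $\nabla$ using the symmetry of $\omega_b(\cdot,\mathbf{I}\cdot)$. The only cosmetic differences are that the paper carries out the computation at general $t$ (invoking the $\Gamma$-invariance of $\omega_\cC$ to pass from $\omega_b$ to $\omega_{b_t}$) rather than at $t=0$ followed by translation, and in (2) it does not assume $[V,W]=0$ but lands directly on $\sigma_I(T_\nabla(V_1,V_2))$.
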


\begin{proof}
To prove (1), let $g_t$ the horizontal lift of $b_t$ to $\cY_{b_0,I}$
prescribed by the connection~\eqref{eq:connectioncY}, with
$g_0=1$. Then $b_t = g_t\cdot b$ (see Proposition~\ref{prop:4-tuples-C1-4}),
so~\eqref{eq:mmapBequiv} implies
\begin{equation}
\label{eq:propo:sigmaIalongcurve0}
\sigma_I(V_t) = -\langle\mu_b(I_t),\zeta_t\rangle,
%= - \left\langle\mu_b(g_t^{-1}I),\Ad(g_t)^{-1}\zeta_I(V_t)\right\rangle.
\end{equation}
where $I_t \defeq g_t^{-1}\cdot I$ and $\zeta_t \defeq
\Ad(g_t)^{-1}\zeta_I(V_t)$. Using~\eqref{eq:compatible-Y-imag_I}, we
obtain
\[
\dot I_t = - g_t^{-1}\dot g_t g_t^{-1}I = - g_t^{-1}
Y_{I\zeta_I(\dot b_t)|I} = - g_t^{-1} \mathbf{I}Y_{\zeta_I(\dot b_t)|I},
\]
so using formula~\eqref{eq:parallel-trans-canonical-connection} for the
parallel transport $\tau_{t,s}\colon T_{b_t}B_I \to T_{b_s}B_I$ and
the definition of covariant derivative (see e.g.~\cite[p. 114]{KNI}),
\begin{align*}
\nabla_{\dot b_t}V_t \defeq & \frac{d}{ds}_{|s = t}\tau_{t,s}^{-1}(V_t) =
\frac{d}{ds}_{|s = t} Y_{I \Ad(g_s g_t^{-1})^{-1}\zeta_I(V_t)|b_t}\\
= & Y_{I\Ad(g_t)\dot\zeta_t|b_t} = g_t Y_{I_t \dot\zeta_t|b}.
\end{align*}
Formula (1) follows now from this equation and the $\Gamma$-invariance
of $\sigma_I$, as they imply $\sigma_{I_t}(Y_{I_t \dot\zeta_t|b}) =
\sigma_I(\nabla_{\dot b_t}V_t)$, that combined with
~\eqref{eq:propo:sigmaIalongcurve0} imply
\begin{align*}
\frac{d}{dt}\sigma_I(V_t)  =& - \langle d\mu_b(\dot I_t),\zeta_t)\rangle
- \langle\mu_b(I_t),\dot \zeta_t\rangle\\
= & \omega_{b}(g_t^{-1}Y_{\zeta_I(V_t)|I},g_t^{-1}\mathbf{I}Y_{\zeta_I(\dot b_t)|I}) + \sigma_{I_t}(Y_{I_t \dot\zeta_t|b})\\
= & \omega_{b_t}(Y_{\zeta_I(V_t)|I},\mathbf{I}Y_{\zeta_I(\dot b_t)|I}) +
\sigma_I(\nabla_{\dot b_t}V_t),
\end{align*}
since $\mu_b$ is a moment map and $\omega_\cC$ is $\Gamma$-invariant.

To prove (2), let $V_1$ and $V_2$ be two vector fields on $B_I$. Then
\begin{equation}
\label{eq:propo:sigmaIalongcurve2}
d \sigma_I(V_1,V_2) = V_1(\sigma_I(V_2)) - V_2(\sigma_I(V_1)) -
\sigma_I([V_1,V_2]),
\end{equation}
so, using~(1) and the fact that $\omega_\cB(\cdot,\mathbf{I}\cdot)$ is
a family of symmetric bilinear forms, we see that
\[
d \sigma_I(V_1,V_2) = \sigma_I(T_\nabla(V_1,V_2)),
\]
which vanishes because $\nabla$ is torsion-free, by Theorem~\ref{thm:symm-spc}.
\end{proof}

To define our first obstruction to~\eqref{eq:slogan}, note that
$\LieGamma_I$ is a complex Lie algebra for all $I \in \cZ$,
by~\eqref{eq:compatible-Y-imag_I} and the equivariance
of~\eqref{eq:imag_ICeq}. Given $I \in \cZ$ and $b\in B_I$, combining
the 1-form $\sigma_I$ and~\eqref{eq:imag_ICeq}, we obtain a
$\CC$-linear map
\begin{equation}
\label{eq:Futakiformal}
\begin{gathered}
\xymatrix @R=0ex @C=-13ex{
**[l]\cF_I\colon\;\LieGamma_I\ar[r] & **[r] \CC\\ **[l]
\zeta\ar@{|->}[r] & **[r] \langle\cF_I,\zeta\rangle\defeq
\imag\sigma_I(Y_{\zeta|b})+\sigma_I(Y_{I \zeta|b}).
}
\end{gathered}\end{equation}
By the $\Gamma$-invariance of $\sigma$
(see~\eqref{eq:Gamma-invariance-sigma}), this map is also
$\Gamma$-invariant, i.e.
\begin{equation}
\label{eq:Futakiformal-invariant}
\langle\cF_{g\cdot I},\Ad(g)\zeta\rangle=\langle\cF_I,\zeta\rangle,
\end{equation}
for all $\zeta\in\LieGamma_I$, $g\in\Gamma$.

\begin{theorem}
\label{thm:futakiobstructionformal}
The map~\eqref{eq:Futakiformal} is independent of $b \in B_I$. It defines a
character
\[
\cF_I \colon \Lie \Gamma_I \lto \CC
\]
of $\Lie \Gamma_I$ that vanishes if $\sigma_I$ has a zero.
\end{theorem}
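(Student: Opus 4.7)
The plan is to reduce all three assertions to a single observation: for every $\zeta \in \LieGamma_I$, the function $\sigma_I(Y_\zeta)$ is constant on the fibre $B_I$. Once this is in place, the three claims will follow by short arguments, the last two being almost immediate.

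To establish the constancy, I would first note that since $g \cdot I = I$ for every $g \in \Gamma_I$, the vector field $Y_\zeta$ on $\cC$ associated to $\zeta \in \LieGamma_I$ is tangent to the fibre $B_I = \pi_\cZ^{-1}(I)$. The $\Gamma$-invariance property \eqref{eq:Gamma-invariance-sigma}, restricted to $g \in \Gamma_I$, yields $g^*\sigma_I = \sigma_I$, and hence $L_{Y_\zeta}\sigma_I = 0$ on $B_I$. Combining this with Proposition~\ref{propo:sigmaIalongcurve}(2) via Cartan's magic formula,
\[
d\(\sigma_I(Y_\zeta)\) = L_{Y_\zeta}\sigma_I - Y_\zeta \lrcorner d\sigma_I = 0,
\]
so $\sigma_I(Y_\zeta)$ is locally constant, and hence constant on the contractible space $B_I$ (see \eqref{eq:B-product-K-R}).

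Independence of the base point $b$ in definition~\eqref{eq:Futakiformal} will then be immediate, because the second term $\sigma_I(Y_{I\zeta|b})$ is also constant in $b$: the subalgebra $\LieGamma_I \subset \LieGamma$ is complex by \eqref{eq:compatible-Y-imag_I}, so $I\zeta \in \LieGamma_I$ whenever $\zeta \in \LieGamma_I$, and the above constancy applies to it as well. Complex linearity $\langle\cF_I, I\zeta\rangle = \imag \langle\cF_I, \zeta\rangle$ is a one-line computation using $I^2 = -\Id$, and the vanishing of $\cF_I$ at a zero of $\sigma_I$ is tautological from \eqref{eq:Futakiformal}.

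The character property is the only nontrivial remaining point. I would apply Proposition~\ref{propo:sigmaIalongcurve}(2) to the two vector fields $Y_{\zeta_1}, Y_{\zeta_2}$ on $B_I$ for $\zeta_1,\zeta_2 \in \LieGamma_I$:
\[
0 = d\sigma_I(Y_{\zeta_1},Y_{\zeta_2}) = Y_{\zeta_1}\sigma_I(Y_{\zeta_2}) - Y_{\zeta_2}\sigma_I(Y_{\zeta_1}) - \sigma_I([Y_{\zeta_1},Y_{\zeta_2}]).
\]
The first two terms vanish by the constancy proved above, and $[Y_{\zeta_1},Y_{\zeta_2}] = Y_{[\zeta_1,\zeta_2]_\Gamma}$, the sign in \eqref{eq:Lie-bra-Gamma} being arranged precisely so that $\zeta \mapsto Y_\zeta$ is a Lie algebra homomorphism from $\LieGamma$ to vector fields on $\cC$. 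Hence $\sigma_I(Y_{[\zeta_1,\zeta_2]_\Gamma|b}) = 0$, and running the same argument with $\zeta_2$ replaced by $I\zeta_2 \in \LieGamma_I$ gives the corresponding vanishing of $\sigma_I(Y_{I[\zeta_1,\zeta_2]_\Gamma|b})$. Both summands in \eqref{eq:Futakiformal} evaluated on $[\zeta_1,\zeta_2]_\Gamma$ therefore vanish, yielding the character property. The only delicate point in the whole plan is this sign bookkeeping: one must verify carefully that with the convention \eqref{eq:Lie-bra-Gamma} the infinitesimal action really is a homomorphism (rather than antihomomorphism), which is what makes the bracket term in the Cartan computation have the right form.
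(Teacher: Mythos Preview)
Your argument is correct and, for the independence of $b$, essentially identical to the paper's: both use Cartan's formula together with $d\sigma_I=0$ (Proposition~\ref{propo:sigmaIalongcurve}(2)) and the $\Gamma_I$-invariance of $\sigma_I$ to conclude that $\sigma_I(Y_\zeta)$ is constant on the contractible fibre $B_I$.

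For the character property you take a slightly different route. The paper argues in one line that $\cF_I$ is $\CC$-linear and $\Gamma_I$-invariant (via~\eqref{eq:Futakiformal-invariant}), and any adjoint-invariant linear functional on a Lie algebra automatically annihilates brackets. You instead re-expand $d\sigma_I(Y_{\zeta_1},Y_{\zeta_2})=0$ and use the constancy already established to kill the first two terms, obtaining $\sigma_I(Y_{[\zeta_1,\zeta_2]_\Gamma})=0$ directly. Both are valid; the paper's version is shorter and avoids the sign-bookkeeping you flag (it never needs $[Y_{\zeta_1},Y_{\zeta_2}]=Y_{[\zeta_1,\zeta_2]_\Gamma}$ explicitly), while yours is more self-contained and recycles the same tool used for independence rather than invoking the abstract invariance-implies-character fact. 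Your caution about the sign is warranted but the convention~\eqref{eq:Lie-bra-Gamma} is set up precisely so that the infinitesimal action is a homomorphism, so the identity you need does hold.
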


\begin{proof}
The proof essentially follows a previous one by Bourguignon~\cite{Bo}.
For the first part, it is enough to prove that $\sigma_I(Y_{\zeta})$
is a constant function on $B_I$, for all $\zeta \in \Lie
\Gamma_I$. Now, $\sigma_I\in \Omega^1(B_I)$ is closed (by
Proposition~\eqref{propo:sigmaIalongcurve}) and $\Gamma_I$-invariant
(since $\sigma$ is $\Gamma$-invariant), so
\[
d(\sigma_I(Y_{\zeta})) = -Y_{\zeta} \lrcorner d\sigma_I +
L_{Y_{\zeta}} \sigma_I = 0,
\]
and hence $\sigma_I(Y_{\zeta})$ is constant, because $B_I$ is
contractible. The second part follows because $\cF_I$ is $\CC$-linear and
$\Gamma_I$-invariant, by~\eqref{eq:Futakiformal-invariant}.
\end{proof}

To obtain the second obstruction, note that, by
Proposition~\ref{propo:sigmaIalongcurve} and the contractibility of $B_I$,
$\sigma_I$ is exact and so there exists a functional
\begin{equation}
\label{eq:integralMmap}
\cM_I \colon B_I\times B_I\to\RR
\end{equation}
such that $d \cM_I(\cdot,b) = \sigma_I$ and $\cM_I(b,b) = 0$ for all
$b\in B_I$. Along a curve $b_t$ on $B_I$,
\begin{equation}
\label{eq:integralMmap-along-curve}
  \cM_I(b_t,b) = \cM_I(b_0,b) + \int_0^t \sigma_I(\dot b_s) ds.
\end{equation}
Moreover, the $\Gamma$-invariance of $\sigma$ implies that
\begin{equation}
\label{eq:Mcocycle}
\cM_I(gb',b) = \cM_{g^{-1}I}(b',b) + \cM_I(b',b),
\end{equation}
for all $g \in \Gamma$ such that $gb' \in B_I$ (i.e. $g \in \cY_{b',I}$).

\begin{proposition}
\label{propo:ANuniqueness}
The functional $\cM_I(\cdot,b)\colon B_I \to \RR$ is convex along geodesics on
$(B_I,\nabla)$. If $B_I$ is geodesically convex and $\sigma_I$ has a zero, then
$\cM_I(\cdot,b)$ is bounded from below, for all $b\in B_I$.
\end{proposition}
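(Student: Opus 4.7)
The plan is to exploit the formula in Proposition~\ref{propo:sigmaIalongcurve}(1) by specialising it to the velocity vector of a geodesic, so that the covariant-derivative term drops out. Concretely, let $b_t$ be a geodesic in $(B_I,\nabla)$ and set $V_t\defeq \dot b_t$, so that $\nabla_{\dot b_t}\dot b_t=0$. Since $d\cM_I(\cdot,b)=\sigma_I$, we have
\[
\frac{d}{dt}\cM_I(b_t,b)=\sigma_I(\dot b_t),
\]
and Proposition~\ref{propo:sigmaIalongcurve}(1) then gives
\[
\frac{d^2}{dt^2}\cM_I(b_t,b)=\omega_{b_t}\!\(Y_{\zeta_I(\dot b_t)|I},\mathbf{I}Y_{\zeta_I(\dot b_t)|I}\).
\]
By Theorem~\ref{prop:Kahlerfibration}, $\omega_{b_t}$ is a K\"ahler form on $Z_{b_t}$ with compatible complex structure $\mathbf{I}$, so the right-hand side is the squared norm of the tangent vector $Y_{\zeta_I(\dot b_t)|I}\in T_IZ_{b_t}$ with respect to the associated K\"ahler metric, and in particular is $\geq 0$. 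This proves convexity of $\cM_I(\cdot,b)$ along geodesics.

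For boundedness from below, assume $B_I$ is geodesically convex and that $\sigma_I$ vanishes at some point $b_\ast\in B_I$. Given any $b'\in B_I$, choose a geodesic $b_t$ in $B_I$ with $b_0=b_\ast$ and $b_1=b'$. Set $f(t)\defeq \cM_I(b_t,b)$. By the first part $f$ is convex on $[0,1]$, and
\[
f'(0)=\sigma_I(\dot b_0)=0
\]
because $\sigma_I$ vanishes at $b_\ast$. A convex function with vanishing derivative at $0$ satisfies $f(1)\geq f(0)$, hence $\cM_I(b',b)\geq \cM_I(b_\ast,b)$, giving the desired lower bound independently of $b'$.

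No step looks genuinely obstructive: the only thing to be careful with is that the quadratic form $\omega_{b_t}(\cdot,\mathbf{I}\cdot)$ be the K\"ahler metric and hence positive semi-definite, which follows from Theorem~\ref{prop:Kahlerfibration} together with $\alpha_0,\alpha_1>0$. The geodesic-convexity hypothesis is used precisely to produce the geodesic joining $b_\ast$ to an arbitrary point $b'$, so the bound from below is not claimed without it.
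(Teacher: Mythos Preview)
Your proof is correct and follows essentially the same approach as the paper: both apply Proposition~\ref{propo:sigmaIalongcurve}(1) with $V_t=\dot b_t$ along a geodesic to obtain $\frac{d^2}{dt^2}\cM_I(b_t,b)=\omega_{b_t}(Y_{\zeta_I(\dot b_t)|I},\mathbf{I}Y_{\zeta_I(\dot b_t)|I})\geq 0$, and then use a geodesic from the zero of $\sigma_I$ to an arbitrary point together with convexity and $f'(0)=0$. The only cosmetic difference is that the paper first reduces to the case $b_\ast=b$ (via $\cM_I(\cdot,b')=\cM_I(b,b')+\cM_I(\cdot,b)$) to obtain the explicit lower bound $0$, whereas you obtain the equally valid lower bound $\cM_I(b_\ast,b)$.
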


\begin{proof}
The first part follows because~\eqref{eq:integralMmap-along-curve} and
Proposition~\ref{propo:sigmaIalongcurve}(1) imply
\begin{equation}
\label{eq:sigmaIincreasing}
\frac{d^2}{dt^2}\cM_I(b_t,b)=\frac{d}{dt}\sigma_I(\dot b_t) = \left\|Y_{\zeta_I(\dot b_t)|I}\right\|^2
\geq 0,
\end{equation}
for any geodesic $b_t$ on $(B_I,\nabla)$, where $\|\cdot\|$ is the $L^2$-norm
with respect to the metric on $Z_{b_t}$.

For the second part, suppose $b' \in B_I$ is a zero of $\sigma_I$. We can
suppose $b' = b$, because using~\eqref{eq:integralMmap-along-curve} along a
curve joining $b$ and $b'$, we see that
\[
  \cM_I(\cdot,b')=\cM_I(b,b')+\cM_I(\cdot,b).
\]
Now, given $b'' \in B_I$, by hypothesis there exists a geodesic $b_t$ with
$b_0 = b$ and $b_1 = b''$. Hence
\[
\cM_I(b'',b)
= \int_0^1\int_0^t\left\|Y_{\zeta_I(\dot b_s)|I}\right\|^2 ds \wedge dt \geq 0,
\]
and so $\cM_I(\cdot,b)$ is bounded from below by $0 \in \RR$.
\end{proof}

\begin{corollary}
\label{cor:ANuniqueness2}
If $B_I$ is geodesically convex, then $\sigma_I$ has at most one zero on $B_I$
modulo the action of $\Gamma_I$.
\end{corollary}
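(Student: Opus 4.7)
The plan is to exploit the geodesic convexity of $\cM_I$ (Proposition~\ref{propo:ANuniqueness}) to force any two zeros of $\sigma_I$ to lie in a common $\Gamma_I$-orbit. The argument proceeds in two stages: first, use a geodesic joining the two zeros to produce an infinitesimal symmetry of $I$ along the geodesic; then, integrate that infinitesimal symmetry to an element of $\Gamma_I$ carrying one zero to the other.

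For the first stage, given two zeros $b_0, b_1 \in B_I$ of $\sigma_I$, I would choose a geodesic $b_t$ on $(B_I,\nabla)$ joining them (this is where the hypothesis of geodesic convexity enters) and consider the real-valued function $f(t) \defeq \cM_I(b_t, b_0)$. By~\eqref{eq:integralMmap-along-curve} one has $f'(t) = \sigma_I(\dot b_t)$, and by~\eqref{eq:sigmaIincreasing} one has $f''(t) = \|Y_{\zeta_I(\dot b_t)|I}\|^2 \geq 0$. Because $\sigma_I|_{b_0} = 0$ and $\sigma_I|_{b_1} = 0$, the nondecreasing function $f'$ vanishes at both endpoints of $[0,1]$, hence vanishes identically there; this forces $Y_{\zeta_I(\dot b_t)|I} = 0$, i.e.\ $\zeta_I(\dot b_t) \in \LieGamma_I$, for every $t \in [0,1]$.

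For the second stage I would set $y_t \defeq I\zeta_I(\dot b_t)$; since $\LieGamma_I \subset \LieGamma$ is a complex Lie subalgebra (by~\eqref{eq:compatible-Y-imag_I} and the equivariance of~\eqref{eq:imag_ICeq}, exactly as noted in the paragraph preceding Theorem~\ref{thm:futakiobstructionformal}), $y_t$ again belongs to $\LieGamma_I$. Let $g_t \in \Gamma$ be the flow of $y_t$ with $g_0 = 1$; the construction carried out in the proof of Proposition~\ref{prop:4-tuples-C1-4}(1) identifies $g_t$ with the horizontal lift of $b_t$ to $\cY_{b_0,I}$ under the canonical connection $\AA$, so $g_t \cdot b_0 = b_t$. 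To check that $g_t \in \Gamma_I$, I would invoke the identity $\tfrac{d}{dt}(g_t^{-1} \cdot I) = -g_t^{-1} Y_{I\zeta_I(\dot b_t)|I}$ already computed inside the proof of Proposition~\ref{propo:sigmaIalongcurve}(1); this derivative vanishes because $y_t \in \LieGamma_I$, so $g_t^{-1} \cdot I \equiv I$ and $g_t \in \Gamma_I$ for all $t$. Setting $g \defeq g_1$ then gives the required element sending $b_0$ to $b_1$. The most delicate point is this last one: the pointwise vanishing $Y_{\zeta_I(\dot b_t)|I} = 0$ only says that each individual $\zeta_I(\dot b_t)$ infinitesimally preserves $I$, and concluding that the whole time-dependent flow preserves $I$ genuinely uses both that $\LieGamma_I$ is complex (so that $y_t$, and not merely $\zeta_I(\dot b_t)$, lies in $\LieGamma_I$) and the explicit identification of $g_t$ with the horizontal lift in $\cY_{b_0,I}$ supplied by~\secref{sub:ANsymm-spc}.
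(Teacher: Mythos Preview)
Your proposal is correct and follows essentially the same approach as the paper: connect the two zeros by a geodesic, use~\eqref{eq:sigmaIincreasing} to force $Y_{\zeta_I(\dot b_t)|I}=0$ along it, and then argue that the flow $g_t$ of $I\zeta_I(\dot b_t)$ lies in $\Gamma_I$ and carries $b_0$ to $b_t$. The paper's proof is more terse at the integration step---it simply asserts that $g_t\in\Gamma_I$---whereas you have spelled out carefully why $I\zeta_I(\dot b_t)\in\LieGamma_I$ and why $g_t^{-1}\cdot I$ is constant; your extra care here is justified and not a deviation in strategy.
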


\begin{proof}
Given zeros $b,b' \in B_I$ of $\sigma_I$, let $b_t$ a geodesic joining
them. Then
\[
\left\|Y_{\zeta_I(\dot b_t)|I}\right\|^2 = 0
\]
for all $t$, because~\eqref{eq:sigmaIincreasing} implies that
\begin{equation}
\label{eq:increasing-function}
\begin{gathered}
\xymatrix @R=0ex @C=3ex{
**[l]\RR\ar[r] & **[r]\RR\\
t\ar@{|->}[r] & **[r] \sigma(\dot{b}_t)
}
\end{gathered}
\end{equation}
is an increasing function which vanishes for $t = 0$ and $t=1$. Hence the flow
$g_t$ of $I\zeta_I(\dot b_t)$ lies in $\Gamma_I$ for all $t$ and $g_t b =
b_t$. In particular, $g_1b = b'$.
\end{proof}

\begin{remark}
\label{rem:sign-alpha-sigma-F}
Proposition~\ref{propo:sigmaIalongcurve} and
Theorem~\ref{thm:futakiobstructionformal} hold even when $\alpha_0, \alpha_1$
are not positive (their proofs depend only on the condition that $\omega_b$ is
of type (1,1) with respect to $\mathbf{I}$). In~\secref{chap:Deformation}, we
will use these facts about $\sigma_I$ and $\cF_I$ for arbitrary $\alpha_0,
\alpha_1$. However, Proposition~\ref{propo:ANuniqueness},
Corollary~\ref{cor:ANuniqueness2} and the remainder
of~\secref{sub:ANStability} depend on the assumption that $\alpha_0, \alpha_1$
are positive, although Proposition~\ref{propo:HKpartialConv} also holds in the
degenerate case $\alpha_0\alpha_1=0$.
\end{remark}

If $Z_b$ and $\Gamma_b$ are finite-dimensional manifolds and $\cX_b=\Gamma_b$
is compact, there is a well-known numerical condition, called the
Hilbert--Mumford criterion, which characterizes~\eqref{eq:slogan} (see the
example at the end of~\secref{sub:ANStability}).  In this case, the principal
bundle $\cY$ of Proposition~\ref{prop:4-tuples-C1-4} is the complexification
of $\Gamma_b$ (by the observations about infinitesimal complexifications at
the end of~\secref{sub:ANsymm-spc}, as formally integrable almost complex
structures are integrable in finite dimensions), and the criterion is
formulated in terms of 1-parameter subgroups of $\cY$. In the generality
of~\secref{chap:analytic}, the Lie group $\Gamma_b$ may have no
complexification, but the geodesics of the symmetric space $(B_I,\nabla)$ are
a substitute for the 1-parameter subgroups, and we have the following
generalization of this condition (cf.~\cite{Ch2}, \cite[\S 8]{D6}).

\begin{definition}
\label{def:stability}
A point $I \in \cZ$ is \emph{geodesically semistable} if
\begin{equation}
\label{eq:stability}
\lim_{t \to\infty}\sigma_I(\dot b_t) \geq 0
\end{equation}
for any infinite geodesic ray $b_t$, $t\in [0,\infty[$, in $(B_I,\nabla)$. It is
\emph{geodesically stable} if the inequality~\eqref{eq:stability} is strict
whenever $b_t$ is non-constant.
\end{definition}

Observe that the limit~\eqref{eq:stability} always exists,
because~\eqref{eq:increasing-function} is an increasing function for
geodesic rays, by~\eqref{eq:sigmaIincreasing}. Note also that the geodesic
stability and semistability conditions only depend on the $\Gamma$-orbit of $I
\in \cZ$, because $\sigma$ is $\Gamma$-invariant and the connections on the
fibres of $\pi_\cZ$ are exchanged by the $\Gamma$-action and hence so are
their geodesic rays.

In the finite-dimensional case, by the Kempf--Ness Theorem~\cite{KN}, an orbit
$\Gamma\cdot I\in\cZ$ is geodesically stable if and only if~\eqref{eq:slogan}
holds and $\Gamma_I$ is finite (see the example at the end
of~\secref{sub:ANStability}). The following result provides some evidence that
a sensible question is whether this equivalence also holds in the generality
of~\secref{chap:analytic}, at least when $B_I$ is geodesically convex.

\begin{proposition}
\label{propo:HKpartialConv}
Let $(b,I) \in \cC$. Then
\begin{enumerate}
\item[\textup{(1)}]
  If $\Gamma_{b,I} \defeq \Gamma_b \cap \Gamma_I$ is not finite,
  then $\Gamma \cdot I$ is not geodesically stable.
\item[\textup{(2)}]
  Suppose that $B_I$ is geodesically convex. If~\eqref{eq:slogan} is satisfied,
  then $\Gamma \cdot I$ is geodesically semistable.
\end{enumerate}
\end{proposition}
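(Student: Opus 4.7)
The plan is to prove part~(2) by a direct convexity argument on the functional $\cM_I$, and part~(1) by exhibiting explicit non-constant geodesic rays along which $\sigma_I(\dot b_t)$ is constant with controllable sign.

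For part~(2), the hypothesis together with the $\Gamma$-equivariance~\eqref{eq:mmapBequiv} of $\mu_\cC$ yields a $b^\ast\in B_I$ with $\mu_{b^\ast}(I)=0$, equivalently a zero of $\sigma_I$. Proposition~\ref{propo:ANuniqueness} then gives that $\cM_I(\cdot,b^\ast)\colon B_I\to\RR$ is bounded below. Along any infinite geodesic ray $b_t$, the function $f(t)\defeq\cM_I(b_t,b^\ast)$ satisfies $f'(t)=\sigma_I(\dot b_t)$ by~\eqref{eq:integralMmap-along-curve} and $f''(t)\ge 0$ by~\eqref{eq:sigmaIincreasing}. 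Hence $f'$ is monotone non-decreasing and admits a limit $L\in(-\infty,+\infty]$; if $L<0$, then $f'(t)\le L/2<0$ for $t$ large, so $f(t)\to-\infty$, contradicting the lower bound, and $L\ge 0$ as required.

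For part~(1), suppose $\Lie\Gamma_{b,I}\neq 0$ and choose a non-zero $\zeta\in\Lie\Gamma_{b,I}$. Because $\Lie\Gamma_I$ is closed under multiplication by $I$ (combining~\eqref{eq:compatible-Y-imag_I} with the equivariance of~\eqref{eq:imag_ICeq}), the flow $g_t\defeq\exp(tI\zeta)$ lies in $\Gamma_I$, so the curve $b_t\defeq g_t\cdot b$ lies in $B_I$. From the description of the connection $\nabla$ via the horizontal lift~\eqref{eq:connectioncY} and the parallel-transport formula~\eqref{eq:parallel-trans-canonical-connection}, $b_t$ is a geodesic, since $\zeta_t\defeq\Ad(g_t^{-1})\zeta_I(\dot b_t)$ is automatically constant along the orbit of a one-parameter subgroup of $\Gamma_I$. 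Its velocity $\dot b_t=Y_{I\zeta|b_t}$ and the constancy argument from the proof of Theorem~\ref{thm:futakiobstructionformal} applied to $\eta=I\zeta\in\Lie\Gamma_I$ give $\sigma_I(\dot b_t)=\sigma_I(Y_{I\zeta|b})$ for all $t$; combining with $Y_{\zeta|b}=0$ (since $\zeta\in\Lie\Gamma_b$) and~\eqref{eq:Futakiformal} yields $\sigma_I(\dot b_t)\equiv\langle\cF_I,\zeta\rangle\in\RR$. Replacing $\zeta$ by $-\zeta$ produces a second geodesic along which $\sigma_I(\dot b_t)\equiv-\langle\cF_I,\zeta\rangle$; provided at least one of these rays is non-constant, at least one of the two constants is non-positive and the strict inequality in Definition~\ref{def:stability} fails.

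The main obstacle is twofold. The formal verification that $\exp(tI\zeta)\cdot b$ is a geodesic for $\nabla$ relies crucially on the symmetric-space description of~\secref{sub:ANsymm-spc} and condition~$(\star)$, but is straightforward once the parallel-transport formula is in place. More subtle is arranging that the geodesic be non-constant, which requires $I\zeta\notin\Lie\Gamma_b$; this holds unless $\Lie\Gamma_{b,I}$ is $I$-invariant, i.e.\ contains a non-trivial complex Lie subalgebra fixing both $b$ and $I$. In that degenerate case one replaces the base point $b$ by some $b'\in B_I$ at which the complex subgroup generated by $\zeta$ and $I\zeta$ acts non-trivially, and exploits the same constancy of $\sigma_I(Y_{I\zeta|\cdot})$ on $B_I$ to obtain the required non-constant ray.
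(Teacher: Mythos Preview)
Your approach mirrors the paper's for both parts. For part~(2), the argument via the lower bound on $\cM_I$ from Proposition~\ref{propo:ANuniqueness} is the same (the paper phrases it as a contrapositive, you phrase it directly). For part~(1), the key construction is identical: take nonzero $\zeta\in\Lie\Gamma_{b,I}$, form $b_t=\exp(tI\zeta)\cdot b$, verify it is a geodesic with $\sigma_I(\dot b_t)\equiv\langle\cF_I,\zeta\rangle\in\RR$ (using $Y_{\zeta|b}=0$ in~\eqref{eq:Futakiformal}), and do a sign analysis on $\langle\cF_I,\zeta\rangle$.

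Where you diverge is in handling non-constancy. The paper dispatches this in one line by invoking the isomorphism~\eqref{eq:infactcB-2} from condition~$(\star)$: since $\zeta\mapsto Y_{I\zeta|b}$ is injective on $\LieX_b$, $\zeta\neq 0$ forces $\dot b_0=Y_{I\zeta|b}\neq 0$ directly. You instead cast non-constancy as a ``subtle obstacle'' requiring $I\zeta\notin\Lie\Gamma_b$, and your proposed fix for the $I$-invariant degenerate case---passing to some other base point $b'$ where the complex subgroup acts non-trivially---is not actually carried out and is not obviously workable (why should such a $b'$ exist, and why would the resulting curve still be a geodesic for $\nabla$?). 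Note moreover that your own geodesic verification already uses $\zeta_I$, which by definition takes values in $\LieX_b$; once the argument is run inside $\LieX_b$, the injectivity in~\eqref{eq:infactcB-2} gives non-constancy for free, so the entire ``main obstacle'' paragraph is superfluous. The paper's route is both shorter and avoids the vague patching.
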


\begin{proof}
For part (1), let $\zeta \in \LieGamma_{I}$ be non-zero. Let $g_t$ the flow
of $I\zeta$. Then $b_t = g_tb$ is an infinite geodesic ray starting at $b$, because
\[
\dot b_t = g_t Y_{I\zeta|b} = Y_{\Ad(g_t)I\zeta|b_t} = Y_{I\Ad(g_t)\zeta|b_t}
= \tau_t \dot b_0.
\]
Furthermore, if $\zeta \in \LieGamma_{b,I}$, then $\dot b_0 =
Y_{I\zeta|b}\neq 0$, by~\eqref{eq:infactcB-2}, so $b_t$ is non-constant.
Then
\[
\sigma_I(\dot b_t) = \sigma_I(g_t\dot b_0) = \sigma_{g_t^{-1}}(\dot b_0) = \sigma_I(\dot b_0)
\]
and so
\[
\lim_{t \to \infty} \sigma_I(\dot b_t) = \sigma_I(\dot b_0) =
\sigma_I(Y_{I\zeta|b}) = \langle\cF_I,\zeta\rangle.
\]
There are three possibilities. If $\langle\cF_I,\zeta\rangle < 0$, then part (1) is
obvious. The case $\langle\cF_I,\zeta\rangle > 0$ reduces to the previous one by taking the
non-trivial geodesic corresponding to $-I\zeta$. Finally, if $\langle\cF_I,\zeta\rangle =
0$, since $b_t$ is non-trivial, then by definition $I$ is not geodesically
stable.

For part (2), suppose that $B_I$ is geodesically convex and $\Gamma \cdot I$
is not geodesically semistable. Then there exists an infinite geodesic ray
$b_t$ such that
\[
C\defeq \lim_{t \to\infty}\sigma_I(\dot b_t) < 0,
\]
where $\sigma_I(\dot{\sigma}_t) \leq C$ for all $t$,
as~\eqref{eq:increasing-function} is an increasing function, so
$\cM_I(b_t,b_0) \leq Ct$, by~\eqref{eq:integralMmap-along-curve}.  Therefore
$\cM_I(\cdot,b_0)$ is not bounded from below, so~\eqref{eq:slogan} cannot be
satisfied, by Proposition~\ref{propo:ANuniqueness}.
\end{proof}

We would like to point out that the framework developed
in~\secrefs{sub:ANsymm-spc}, \ref{sub:ANStability} is rather general, as it
relies only on formal properties of the double
fibration~\eqref{eq:fibration-compatible-pairs}, and may be applied to other
situations (in particular, to equations with a further coupling with Higgs
fields). The basic ingredients are a real Lie group $\Gamma$, a
$\Gamma$-equivariant double fibration~\eqref{eq:fibration-compatible-pairs},
where $(\cZ,\mathbf{I})$ is an almost complex manifold, and a
$\Gamma$-equivariant map~\eqref{eq:imag_ICeq}
satisfying~\eqref{eq:compatible-Y-imag_I}. It is
crucial that $\pi_\cZ$ satisfies condition ($\star$)
of~\secref{sub:ANsymm-spc}, all its fibres are contractible and $\pi_\cB$
satisfies the properties of Theorem~\ref{prop:Kahlerfibration} for a fibration
of normal subgroups as in~\eqref{eq:Lie-group-bundles} (note that the formal
integrability of the almost complex structures on the fibres of $\pi_\cB$ was
never used).

To see how this general framework works, we
conclude~\secref{sub:ANStability} by explaining how it applies to the
standard theory of finite-dimensional K\"ahler quotients (as
presented e.g. in~\cite[\S 5]{MR}) and its relation with Geomeric
Invariant Theory (GIT). Suppose that $\cZ$ is a finite-dimensional
K\"ahler manifold with a left action of a complex reductive Lie
group $G^c$ preserving its complex structure. Suppose also that this
action restricts to a Hamiltonian action of a maximal compact subgroup
$G\subset G^c$, with $G$-equivariant moment map
\[
  \mu\colon \cZ\lto\mathfrak{g}^*,
\]
where $\mathfrak{g}$ is the Lie algebra of $G$. To compare
with~\secrefs{sub:ANsymm-spc}, \ref{sub:ANStability}, we define:
\begin{itemize}
\item
  $\Gamma$ is the real Lie group underlying $G^c$,
\item
  $\cB=G^c/G$ is the orbit space for the action by right multiplication of $G$
  on $G^c$,
\item
the map $\cZ\to W$ of~\eqref{eq:imag_ICeq} is the constant map given by the
complex structure on the Lie algebra $\mathfrak{g}^c$ of $G^c$,
\item
$\cC=\cB\times\cZ$ and $\cX_\cB=\Gamma_\cB$.
\end{itemize}
Then the isotropy
group of any $G$-orbit $b=[g]\defeq gG \in\cB$ is
\[
\cX_b=\Gamma_b=\Ad(g)G
\]
and the fibre of the morphism~\eqref{eq:infactcB-2} over a point $b=[g]$ is
\begin{equation}\label{eq:infactcB-2-KN}
\begin{gathered}
\xymatrix @R=0ex @C=-3ex{
**[l]\Ad(g)\mathfrak{g}\ar[r]^{\cong} & **[r]T_gG^c/T_g(gG)\\
\zeta\ar@{|->}[r] & **[r][(R_g)_*(\imag\zeta)].
}
\end{gathered}
\end{equation}
Therefore~\eqref{eq:infactcB-2} is an isomorphism and condition
($\star$) of~\secref{sub:ANsymm-spc} is satisfied, and hence so are
the conclusions of Proposition~\ref{sub:ANsymm-spc} and
Theorem~\ref{sub:ANStability}.
%%%
In this finite-dimensional case, the construction of the
connections~\eqref{eq:connectioncY} and~\eqref{eq:connectionBI} reduce
to the classical constructions of the canonical connections on
finite-dimensional symmetric spaces (see e.g.~\cite[Ch XI, \S
3]{KNII}).
Hence, by~\cite[Ch XI, Theorem~3.2(3)]{KNII}, the infinite geodesic
rays on $G^c/G$ starting at $[g]$ are the curves
\begin{equation}\label{eq:geodesic-rays-KN}\begin{gathered}
\xymatrix @R=0ex @C=0ex{
**[l][0,\infty[\ar[r] & **[r]G^c/G\\ **[l]
t\ar@{|->}[r] & **[r][e^{t\imag \zeta}g],
}
\end{gathered}\end{equation}
with $\zeta \in \Ad(g)\mathfrak{g}$.
%%%
Note that the canonical projection
\[
  \pi_\cB\colon G^c/G\times\cZ\lto G^c/G
\]
is a `trivial' $G^c$-invariant complex fibration. However, since $G^c$
does not necessarily preserve $\omega_\cZ$, to view $\pi_\cB$ as a
$G^c$-invariant Hamiltonian fibration, we endow this map with the
non-trivial family $\omega_\cC$ of symplectic 2-forms $\omega_b\defeq
g_*\omega_\cZ$ on the fibres $Z_b=\cZ$, for $b=[g]\in G^c/G$. Indeed,
the isotropy group $\Ad(g)G$ preserves $\omega_b$ and has moment map
given by
\begin{equation}
\label{eq:mmapB-KN}
\langle \mu_b(I),\zeta\rangle
\defeq \langle\mu(g^{-1}I), \Ad(g^{-1})\zeta\rangle,
\end{equation}
for $b=[g]\in G^c/G$, and~\eqref{eq:mmapB-KN} defines the morphism~\eqref{eq:mmapB} of fibrations over $G^c/G$. Using
the isomorphism~\eqref{eq:infactcB-2-KN}, we obtain the formula
\[
  \langle\cF_I,\zeta\rangle=-\langle \mu(I), \zeta_0\rangle - \imag \langle \mu(I),
  \zeta_1\rangle,
\]
for all $I\in\cZ$, $\zeta=\zeta_0 + \imag \zeta_1
\in\mathfrak{g}^c_I$, where $\zeta_0,\zeta_1\in\mathfrak{g}^c$. Hence
Theorem~\ref{thm:futakiobstructionformal} reduces
to~\cite[Proposition~6 and Corollary~8]{W}.

Suppose now that $\cZ$ is a $G^c$-linearised projective manifold,
i.e. there is a $G^c$-equivariant closed embedding
$\cZ\subset\CC\PP^N$ and $\omega_\cZ$ is the restriction of the
Fubini--Study K\"ahler form. Then geodesic stability/semistability
coincide with GIT stability/semistability, by the Hilbert--Mumford
criterion. This essentially follows because any 1-parameter subgroup
\[
  \lambda\colon \CC^*\to G^c
\]
restricts to a group homomorphism $\lambda\colon S^1\to \Ad(g)G$ for
some $g\in G^c$, which induces an infinite geodesic
ray~\eqref{eq:geodesic-rays-KN} starting at $[g]$ and because the
Hilbert--Mumford weight for $\lambda$ at a point $I\in\cZ$ is
precisely the left-hand side of~\eqref{eq:stability}.
Furthermore, the functional~\eqref{eq:integralMmap} is the Kempf--Ness
functional~\cite{KN}, which provides the key tool to prove the Kempf--Ness theorem relating
the symplectic and GIT quotients:
\[
  \mu^{-1}(0)/G\cong \cZ/\!\!/G^c.
\]
Finally, we should remark that this theorem has been extended to
non-projective manifolds (see e.g.~\cite[\S 5]{MR}, \cite{Te}). In
this case, the functional~\eqref{eq:integralMmap} is the integral of
the moment map in~\cite[\S 5]{MR} and geodesic stability coincides
with analytic stability (by \cite[Corollary~5.3]{MR}).

\subsection{The $\alpha$-Futaki character, the $\alpha$-K-energy and the geodesic equation}
\label{sub:ANCeq}

We now prove that condition ($\star$) of~\secref{sub:ANsymm-spc} is
satisfied and give explicit formulae for the character $\cF_I$, the
functional $\cM_I$ and the geodesic equation on $B_I$ introduced
in~\secref{sub:ANStability}.

Fix a complex structure on $X$ for which $\Omega\in H^2(X,\RR)$ is a
K\"ahler class (i.e. it contains a K\"{a}hler form) and a holomorphic
structure on the principal $G^c$-bundle $\pi\colon E^c \to X$. These
data determine a point $I\in\cZ$. As explained
in~\secref{sub:ANStability}, condition~\eqref{eq:slogan} for the orbit
$\Gamma\cdot I$ is equivalent to the existence of a pair
$b=(\omega,H)\in B_I$ such that the point $(J,A)\in\cP_b$
corresponding to $I$ via Lemma~\ref{lem:pairsinvacs2} satisfies the
coupled equations~\eqref{eq:CYMeq}. In other words,
condition~\eqref{eq:slogan} for the orbit $\Gamma\cdot I$ is
equivalent to the existence of a solution $b=(\omega,H)\in B_I$ to the
following coupled equations, where $S_\omega$ is the scalar curvature
of the Riemannian metric $g_{\check{I}}=\omega(\cdot,\check{I}\cdot)$
and $F_H$ is the curvature of the Chern connection of $H$ and $I$:
\begin{equation}
\label{eq:CYMeq2} \left. \begin{array}{l}
\Lambda_\omega F_H = z\\
\alpha_0 S_{\omega} \; + \; \alpha_1 \Lambda^2_\omega (F_H \wedge F_H) = c
\end{array}\right \}.
\end{equation}
By~\eqref{eq:equivalent-slogan}, these equations are satisfied if and only if the 1-form
$\sigma_I$ on $B_I$ has a zero. Now, the definition of $\sigma_I$
in~\eqref{eq:sigmaC}, and in fact the whole
of~\secrefs{sub:ANsymm-spc}, \ref{sub:ANStability}, depend on
condition ($\star$) of~\secref{sub:ANsymm-spc}.
%%%
To prove this condition, note first that by~\eqref{eq:B-product-K-R},
there is a canonical isomorphism
\begin{equation}
\label{eq:symmetricspace0}
T_b B_I \cong dd^cC^{\infty}(X) \oplus \imag\Omega^0(\ad E_H),
\end{equation}
for all $b=(\omega,H)\in B_I$, obtained from the $dd^c$-lemma and from the pointwise isomorphism
$\imag\Lie G\cong G^c/G$ induced by the exponential.
%%%
Define now $\LieG^c=\Omega^0(\ad E^c)$ and $\LieG_H=\Omega^0(\ad E_H)$ as
the Lie algebras of the gauge group $\cG^c$ of $E^c$ and the gauge
group $\cG_H$ of $E_H$, respectively. Consider the projection maps onto the
real and imaginary parts associated to $H\in\cR$,
\begin{equation}
\label{eq:pi1map}
\Re_H, \Im_H \colon \LieG^c \lto \LieG_H,
\end{equation}
defined by $y=\Re_Hy+\imag\Im_Hy$ for all $y\in\LieG^c$, where we are
using the canonical isomorphism
\[
\LieG^c\cong \LieG_H\oplus \operatorname{\mathbf{i}} \LieG_H
\]
induced by~\eqref{eq:E^c-vs-E_H} and $\mathfrak{g}^c = \mathfrak{g}
\oplus \imag\mathfrak{g}$.

\begin{lemma}
\label{lem:infactR}
  The infinitesimal action of $y\in\Lie(\Aut E^c)$ on $H\in\cR$ is
  \[
   Y_{y|H} = \operatorname{\mathbf{i}} \Im_H (\theta_A y),
  \]
  where $\theta_A\colon TE^c\to VE^c$ is the vertical projection
  induced by any connection $A$ on $E_H$.
\end{lemma}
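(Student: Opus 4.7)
The plan is to compute the infinitesimal action pointwise by tracking a well-chosen lift in $E^c$. First I would nail down the identification $T_H\cR \cong \imag\Omega^0(\ad E_H)$: since $E^c/G\to X$ has vertical tangent bundle canonically isomorphic to $E_H\times_G(\mathfrak{g}^c/\mathfrak{g})\cong E_H\times_G\imag\mathfrak{g}=\imag\ad E_H$, the reduction $H$ gives the identification together with the splitting $\LieG^c=\LieG_H\oplus\imag\LieG_H$ of \eqref{eq:pi1map}.

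Next, fix $x\in X$ and a point $e_x\in E_H$ with $\pi(e_x)=x$, and let $g_t$ be the flow of $y$, covering the flow $\check g_t$ of $\check y\defeq \pr(y)$ on $X$. By the definition of the $\Aut E^c$-action on $\cR$ recalled before~\eqref{eq:fibration-compatible-pairs} and the $G^c$-equivariance of $g_t$, one has $(g_t\cdot H)(x) = p_G(g_t(e_t))$ for any smooth family $e_t\in E^c$ with $p_G(e_t)=H(\check g_t^{-1}(x))$. I would take $e_t$ to be the $A$-horizontal lift of $t\mapsto\check g_t^{-1}(x)$ through $e_x$, which stays in $E_H$ because $A$ is a connection on $E_H$ and satisfies $\dot e_0=-\theta_A^\perp(\check y)(e_x)$.

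Since $\pi(g_t(e_t))=x$ for all $t$, the curve $g_t(e_t)$ lies in the fibre over $x$ and its derivative at $0$ is vertical. Using the splitting $y=\theta_Ay+\theta_A^\perp\check y$ induced by~\eqref{eq:induced-theta}, I compute
\[
  \frac{d}{dt}\bigg|_{t=0} g_t(e_t)\;=\;y(e_x)+\dot e_0\;=\;\theta_Ay(e_x)\in V_{e_x}E^c.
\]
Pushing forward by $dp_G$ and quotienting $V_{e_x}E^c\cong\mathfrak{g}^c$ by $V_{e_x}E_H\cong\mathfrak{g}$ gives $\mathfrak{g}^c/\mathfrak{g}\cong\imag\mathfrak{g}$, which sends $\theta_Ay(e_x)=\Re_H(\theta_Ay)(e_x)+\imag\Im_H(\theta_Ay)(e_x)$ to $\imag\Im_H(\theta_Ay)(x)$, yielding the claimed formula.

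Finally, I would verify that the answer is independent of $A$ (the lemma says ``any connection''). For another connection $A'$ on $E_H$, the identity $\theta_{A'}y-\theta_Ay=(A'-A)(\check y)$ holds because $\theta_{A'}^\perp-\theta_A^\perp=-(A'-A)^\sharp$ on $\pi^*TX$; hence $\theta_{A'}y-\theta_Ay\in\Omega^0(\ad E_H)$, which is killed by $\Im_H$. The only delicate point in the whole argument is picking the curve $e_t$ so that $g_t(e_t)$ remains in a single fibre of $\pi$ — this is what lets the splitting $y=\theta_Ay+\theta_A^\perp\check y$ collapse to the vertical piece $\theta_Ay(e_x)$ and makes the rest algebraic bookkeeping.
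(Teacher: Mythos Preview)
Your argument is correct and slightly different from the paper's. The paper uses the same three-term decomposition $y=\imag\Im_H(\theta_A y)+\Re_H(\theta_A y)+\theta_A^\perp\check y$, but then factors the flow $g_t$ as a product of an automorphism $f_t\in\Aut E_H$ (generated by the last two summands, which preserves $H$) and a residual flow generated by the imaginary vertical part; the derivative of the action then reduces to the derivative of the residual flow acting on $H$. Your route sidesteps the flow factorisation by working pointwise: choosing the $A$-horizontal lift $e_t$ in $E_H$ forces $g_t(e_t)$ to stay in a single fibre, so the horizontal piece of $y$ cancels against $\dot e_0$ and only $\theta_A y$ survives, after which $dp_G$ kills the real part. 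Both arguments hinge on the same observation (the $\Re_H(\theta_A y)+\theta_A^\perp\check y$ part is tangent to the $\Aut E_H$-orbit of $H$), but yours is more concrete and also makes the independence of $A$ explicit, which the paper leaves to the reader.
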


\begin{proof}
Using the maps $\theta_A, \theta_A^\perp$
in~\eqref{eq:induced-theta}, any $y\in\Lie(\Aut E^c)$ can be
decomposed as
\[
y = \imag \Im_H (\theta_A y) + \Re_H (\theta_A y) + \theta_A^\perp \check y,
\]
where $\check y$ is the vector field on $X$ covered by $y$. Hence the
flow $g_{-t}$ of $-y$ can be written as
\[
g_{-t} = f_{-t} \circ s_t
\]
where $f_t \in \Aut E_H$ is the flow of $\Re_H(\theta_A y) +
\theta^\perp \check y$ and $s_t$ is the flow of the time-dependent
vector field $- f_{t*}(\imag \Im_H(\theta_A y))$. Therefore, using the
isomorphism $T_H\cR\cong\imag\Omega^0(\ad E_H)$ (also used
in~\eqref{eq:symmetricspace0}) and the fact that $f_t^{-1}$ preserves
$H$, we see that the flow $g_t$ of $y$ satisfies
\[
Y_{y|H} = \frac{d}{dt}_{|t=0} g_t \cdot H = \frac{d}{dt}_{|t=0} s_t^{-1} \cdot H = f_{t*}\(\imag \Im_H(\theta_A y)\)_{|t =0} = \imag \Im_H(\theta_A y).
\qedhere
\]
\end{proof}

\begin{proposition}
\label{lem:isom-LieX=TB_I}
Condition \textup{($\star$)} is satisfied. The inverse
of~\eqref{eq:infactcB-2} is given by
\begin{equation}
\label{eq:vectorfieldzeta}
\zeta_I(v) = -\operatorname{\mathbf{i}}\dot{H} - \theta_H^\perp\eta_\phi \in \LieX_b,
\end{equation}
where $v\in T_b B_I$ corresponds to $(dd^c\phi,\dot{H})\in
dd^cC^{\infty}(X) \oplus \operatorname{\mathbf{i}}\Omega^0(\ad E_H)$
via~\eqref{eq:symmetricspace0}.
\end{proposition}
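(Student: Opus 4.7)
The plan is to verify the claim by direct computation: take $\zeta\in\LieX_b$ defined by the right-hand side of~\eqref{eq:vectorfieldzeta}, compute $Y_{I\zeta|b}\in T_bB_I$, and match it with $v$. First, fix $b=(\omega,H)\in B_I$ and $v\in T_b B_I$ with components $(dd^c\phi,\dot H)\in dd^c C^\infty(X)\oplus \imag\Omega^0(\ad E_H)$ under~\eqref{eq:symmetricspace0}; write $\dot H=\imag\xi$ with $\xi\in \LieG_H$, so that the proposed preimage reads $\zeta=\xi-\theta_H^\perp \eta_\phi$, where $\theta_H=\theta_{H,I}$ is the Chern connection of $H$ and $I$ on $E_H$. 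As a $G$-invariant vector field on $E_H$ covering the Hamiltonian vector field $-\eta_\phi$ on $(X,\omega)$, this $\zeta$ indeed belongs to $\LieX_b$.

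The main step is then to evaluate $I\zeta$ and its infinitesimal action on $b$. The key input is that, by~\eqref{eq:Chrern-inverse} and~\eqref{eq:pairsinvacs}, $I=\imag\,\theta_{H}+\theta_{H}^\perp\circ\pi^*\check I\circ d\pi$; this decomposition shows that $I$ multiplies vertical vectors by $\imag$ and maps horizontal lifts of tangent vectors to horizontal lifts of their $\check I$-images. Applied to $\zeta$, this yields
\[
I\zeta=\imag\xi-\theta_H^\perp(\check I\eta_\phi),
\]
which is $G^c$-invariant on $E^c$ and covers $-\check I\eta_\phi$. The component of $Y_{I\zeta|b}$ in $T_\omega\Omega$ is $-L_{-\check I\eta_\phi}\omega=L_{\check I\eta_\phi}\omega=d(\check I\eta_\phi\lrcorner\omega)$ since $\omega$ is closed; using the $\check I$-compatibility identity $\omega(\check Iu,v)=-\omega(u,\check Iv)$ together with $\eta_\phi\lrcorner\omega=d\phi$, one gets $\check I\eta_\phi\lrcorner\omega=-d\phi\circ\check I=d^c\phi$, hence the $\omega$-component equals $dd^c\phi$. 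For the $H$-component, Lemma~\ref{lem:infactR} applied with $A=\theta_H$ gives $Y_{I\zeta|H}=\imag\,\Im_H(\theta_H(I\zeta))=\imag\,\Im_H(\imag\xi)=\imag\xi=\dot H$, since the horizontal lift $\theta_H^\perp(\check I\eta_\phi)$ is killed by $\theta_H$. Therefore $Y_{I\zeta|b}=(dd^c\phi,\dot H)=v$, proving that~\eqref{eq:vectorfieldzeta} provides a right inverse.

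Finally, this same computation, read backwards, shows that the assignment $\zeta\mapsto Y_{I\zeta|b}$ is injective: if $Y_{I\zeta|b}=0$ then the $dd^c\phi$ component forces $\phi$ to be constant, so $\eta_\phi=0$; then $\imag\Im_H(\theta_H(I\zeta))=0$ forces $\xi=0$ and hence $\zeta=0$. Together with surjectivity, this establishes condition~($\star$) fibrewise, and the formula~\eqref{eq:vectorfieldzeta} is manifestly smooth in $b$ because the Chern connection $\theta_{H,I}$ and the Hamiltonian lift depend smoothly on $b=(\omega,H)\in B_I$, so the inverse assembles into a morphism of vector bundles $TB_I\to\LieX_{\cB|B_I}$. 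The only delicate points, where errors are most likely to creep in, are the sign conventions in identifying $I\zeta$ via the Chern decomposition and in translating $L_{\check I\eta_\phi}\omega$ into $dd^c\phi$; both rely on the $\check I$-invariance of $\omega$ and on the explicit form~\eqref{eq:pairsinvacs} of $\mathbb{I}$.
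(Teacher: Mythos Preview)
Your proof is correct and follows essentially the same strategy as the paper's: decompose $I\zeta$ into vertical and horizontal parts via the Chern connection (using the explicit form~\eqref{eq:pairsinvacs}), then invoke Lemma~\ref{lem:infactR} for the $H$-component and the Cartan formula for the $\omega$-component. The paper differs only in that it first records the slightly more general formula $Y_{I\zeta|b} = (-d(\check I\check\zeta\lrcorner\omega), \imag\Re_H(\theta_H\zeta))$ for arbitrary $\zeta\in\LieGamma$ before specializing to $\LieX_b$, and leaves the injectivity and smoothness-in-$b$ arguments implicit where you spell them out.
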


\begin{proof}
Fix $b=(\omega,H)\in B_I$. Given $\zeta \in \LieGamma$ covering a vector field
$\check{\zeta}$ on $X$, we have
\begin{equation}
\label{eq:infactcBiLiebeth}
  Y_{I \zeta|b}  = (-d(\check I \check \zeta \lrcorner
  \omega),\imag\Re_H(\theta_H \zeta))
\end{equation}
by Lemma~\ref{lem:infactR}, as $I\zeta$ covers $\check I \check \zeta$
and $\theta_H \circ I = \imag \theta_H$, where $\theta_H\colon TE^c\to VE^c$ is the
vertical projection in~\eqref{eq:induced-theta} induced by the Chern
connection of $I$ on $E_H$. In particular, when $\zeta\in \LieX_b$,
$\check\zeta=\eta_\phi$ is the Hamiltonian vector field of some $\phi
\in C^\infty(X)$ and~\eqref{eq:infactcBiLiebeth} becomes
\begin{equation}
\label{eq:infactcBiLieX}
Y_{I \zeta|b}= (-dd^c\phi, \imag\theta_H \zeta).
\end{equation}
Hence, by~\eqref{eq:symmetricspace0} the infinitesimal
action~\eqref{eq:infactcBiLieX} is in $T_b B_I$ and so the
morphism~\eqref{eq:infactcB-2} is well
defined. Furthermore,~\eqref{eq:infactcBiLieX} easily implies
that~\eqref{eq:infactcB-2} has an inverse given
by~\eqref{eq:vectorfieldzeta}.
\end{proof}

Using the formula~\eqref{eq:vectorfieldzeta}, the 1-form $\sigma_I$ on
$B_I$ is given by
\begin{equation}
\label{eq:sigmaC2}
\sigma_I(v) = 4 \alpha_1 \int_X
\imag\dot{H}\wedge(\Lambda_\omega F_H - z) \omega^{[n]} + \int_X\phi S_\alpha(b,I) \omega^{[n]},
\end{equation}
for all $v=(dd^c\phi,\dot{H})\in T_bB_I$, where $\phi\omega^{[n]}$ has
zero integral on $X$ and $S_\alpha(b,I)$ is given
by~\eqref{eq:Salpha-bI}.

The complex character $\cF_I\colon \LieGamma_I \to \CC$ defined
by~\eqref{eq:Futakiformal} provides our first obstruction to the
existence of solutions to~\eqref{eq:CYMeq2}. To give an explicit
expression for $\cF_I$, note first that
\[
\LieGamma_I = \Lie \Aut(E^c,I)
\]
is the Lie algebra of the automorphism group of the holomorphic bundle
$(E^c,I)$, so each $\zeta \in \LieGamma_I$ covers a real holomorphic
vector field $\check \zeta$ on $(X,\check I)$. Now, we can write
\[
\check \zeta = \eta_{\phi_1} + \check I \eta_{\phi_2} + \beta,
\]
for any given a K\"ahler form $\omega \in \cK_{\check I}$, where
$\eta_{\phi_j}$ is the Hamiltonian vector field of $\phi_j \in
C^\infty_0(X)$ on $(X,\omega)$, for $j = 1,2$, and $\beta$ is the dual
of a $1$-form which is harmonic with respect to the K\"ahler metric
$\omega(\cdot,\check{I}\cdot)$ (see e.g.~\cite{LS1}). Using this
decomposition in~\eqref{eq:infactcBiLiebeth}, we see that the
infinitesimal action of $\zeta \in \Lie\Aut(E^c,I)$ on $b=(\omega,H)
\in B_I$ is
\[
Y_{\zeta|b}= (-dd^c\phi_2,\imag \Im_H\theta_H\zeta), 
\]
hence defining the complex-valued function $\phi \defeq \phi_1 + \imag
\phi_2$,
\begin{equation}
\label{eq:alphafutakismooth}
\langle\cF_I,\zeta\rangle
= - 4 \alpha_1 \int_X \theta_H\zeta\wedge(\Lambda_\omega F_H - z) \omega^{[n]}
- \int_X \phi S_\alpha(b,I) \omega^{[n]},
\end{equation}
which must vanish if~\eqref{eq:CYMeq2} has a solution, by
Theorem~\ref{thm:futakiobstructionformal}.

It is now clear from formula~\eqref{eq:alphafutakismooth} that for
trivial $G^c$, $\cF_I$ is the Futaki invariant~\cite{Ft0} of the
K\"ahler class $\Omega$ on $(X,\check I)$, up to a multiplicative
factor.
For non-trivial $G^c$ and $\alpha_0 = 0$, the character $\cF_I$,
restricted to the Lie subalgebra of $\LieGamma_I$ consisting of vector
fields covering holomorphic complex Hamiltonian vector fields
(i.e. vector fields that vanish somewhere on $X$), has already been
constructed by Futaki (see \cite[Theorem~1.1]{Ft1}).

Using now~\eqref{eq:integralMmap-along-curve}, the $\alpha$-K-energy
can be written explicitly along a curve $b_t = (\omega_t,H_t)$ on
$B_I$, with $\omega_t = \omega_0 + dd^c\phi_t$ and
$\dot{\phi}_t\omega_t^{[n]}$ with zero integral on $X$, as
\begin{align}
\notag
\cM_I(b_t,b) =
& \cM_I(b_0,b) +  4 \alpha_1 \int_0^t\int_X \imag\dot{H}_s\wedge(\Lambda_{\omega_s} F_{H_s} - z) \omega_s^{[n]} \wedge ds\\
\label{eq:cM-explicit}
& + \int_0^t\int_X\dot{\phi}_s S_\alpha(b_s,I) \omega^{[n]}\wedge ds.
\end{align}
By Proposition~\ref{propo:ANuniqueness}, $\cM_I(\cdot,b)$ is convex
along geodesics on the symmetric space $(B_I,\nabla)$. The explicit
expression of the affine connection $\nabla$ and its geodesic equation
in the coordinates provided by the canonical
isomorphism~\eqref{eq:symmetricspace0} are the content of the
following proposition.

For the next result, given $b=(\omega,H) \in B_I$, we denote by
$(\cdot,\cdot)_\omega$ the metric on $T^*X$ associated to
$\omega(\cdot,\check{I}\cdot)$ and by $d_H$ the covariant derivative
associated to the Chern connection of $H$ and $I$.

\begin{proposition}
\label{thm:symm-spcCeq}
\begin{enumerate}
\item[\textup{(1)}]
The Christoffel symbol $\Gamma\colon T_bB_I\times
T_bB_I\to T_bB_I$ is
\[
\qquad
\Gamma(\dot{b}_1,\dot{b}_2) =
\(-  dd^c(d\phi_1,d\phi_2)_\omega,- \check I \eta_{\phi_1} \lrcorner d_H\dot H_2- \check I \eta_{\phi_2} \lrcorner d_H\dot H_1 + \operatorname{\mathbf{i}} F_H(\eta_{\phi_1},\check I \eta_{\phi_2})\),
\]
for all $\dot b_j = (dd^c \phi_j,\dot H_j) \in T_b B_I$,
with $j = 1,2$.
\item[\textup{(2)}]
A curve $b_t = (\omega_t,H_t)$ on $B_I$, with $\omega_t = \omega +
dd^c\phi_t$, is a geodesic if and only if
\begin{equation}
\label{eq:geodesicequation}
\left. \begin{array}{l}
dd^c(\ddot \phi_t - (d\dot\phi_t,d\dot\phi_t)_{\omega_t}) = 0\\
\ddot H_t - 2\check I \eta_{\dot \phi_t} \lrcorner d_{H_t}\dot H_t +
\operatorname{\mathbf{i}} F_{H_t}(\eta_{\dot \phi_t},\check I \eta_{\dot\phi_t})= 0
\end{array}\right \},
\end{equation}
where $\eta_{\dot \phi_t}$ is the Hamiltonian vector field of
$\dot\phi_t$ over $(X,\omega_t)$.
\end{enumerate}
\end{proposition}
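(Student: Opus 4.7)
\emph{Plan.} Part (2) is an immediate consequence of (1). Indeed, for any smooth curve $b_t = (\omega_t,H_t)$ on $B_I$, the acceleration with respect to the natural product/affine structure on $B_I \cong \cK_{\check I}\times \cR$ is $(dd^c\ddot\phi_t,\ddot H_t)$, so the geodesic equation $\nabla_{\dot b_t}\dot b_t = 0$ reads $\ddot b_t + \Gamma(\dot b_t,\dot b_t) = 0$. Substituting $\dot b_1 = \dot b_2 = \dot b_t$ in (1) and reading off the two components yields precisely~\eqref{eq:geodesicequation}; in the first equation an additive constant along $X$ is allowed, since $\omega_t$ depends on $\phi_t$ only through $dd^c\phi_t$. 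So the real content is (1).

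To prove (1), I would extend $\dot b_1,\dot b_2$ to constant vector fields $V_1,V_2$ on $B_I$ in the linear coordinates of~\eqref{eq:symmetricspace0}; the flat connection of the product structure annihilates them, so $\Gamma(\dot b_1,\dot b_2) = \nabla_{V_1}V_2|_b$. Along a curve $b_t = (\omega_t,H_t)$ through $b$ with $\dot b_0 = V_1$, the parallel transport formula~\eqref{eq:parallel-trans-canonical-connection}, together with the isomorphism $\zeta_I$ of Proposition~\ref{lem:isom-LieX=TB_I}, gives
\[
\zeta_I\bigl(\nabla_{V_1}V_2|_b\bigr) = \bigl[I\zeta_I(V_1),\zeta_I(V_2)\bigr]_\Gamma + \frac{d}{dt}\Big|_{t=0}\zeta_I(V_2),
\]
where the bracket is the one of $\LieGamma$ in~\eqref{eq:Lie-bra-Gamma}. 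Substituting $\zeta_I(v) = -\imag\dot H - \theta_H^\perp\eta_\phi$ from Proposition~\ref{lem:isom-LieX=TB_I} and noting that $I$ acts as multiplication by $\imag$ on the vertical bundle of $E^c$ and exchanges $\theta_H^\perp y$ with $\theta_H^\perp\check I y$, the bracket expands via~\eqref{eq:cov-derivative-commutators} into (i) the commutator of vertical sections $\imag[\dot H_1,\dot H_2]$, which is antisymmetric in $j$ and will cancel against its counterpart from $V_2\leftrightarrow V_1$ after symmetrizing; (ii) contractions $\check I\eta_{\phi_1}\lrcorner d_H\dot H_2$ and $\eta_{\phi_2}\lrcorner d_H\dot H_1$ from brackets of horizontal lifts with vertical sections; and (iii) the bracket $[\theta_H^\perp\check I\eta_{\phi_1},\theta_H^\perp\eta_{\phi_2}]$, whose vertical part is $\imag F_H(\eta_{\phi_1},\check I\eta_{\phi_2})$ and whose horizontal part is the horizontal lift of $[\check I\eta_{\phi_1},\eta_{\phi_2}]$ on $X$. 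The variational term $\frac{d}{dt}|_{t=0}\zeta_I(V_2)$ captures the dependence of the Chern horizontal lift $\theta_{H_t}^\perp$ on $H_t$ and of $\eta^{\omega_t}_{\phi_2}$ on $\omega_t$; the first variation supplies the symmetric partner $\check I\eta_{\phi_2}\lrcorner d_H\dot H_1$, while the second, together with the horizontal contribution from (iii), collapses by the identity $(d\phi_1,d\phi_2)_\omega = \omega(\eta_{\phi_1},\check I\eta_{\phi_2})$ into the classical Mabuchi--Semmes--Donaldson Christoffel symbol $-dd^c(d\phi_1,d\phi_2)_\omega$ on $\cK_{\check I}$. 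Reassembling the vertical and horizontal pieces through $\zeta_I$ and symmetrizing yields the formula in (1).

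The principal obstacle is the bookkeeping of signs and the verification that antisymmetric contributions (such as $\imag[\dot H_1,\dot H_2]$) cancel, so that the resulting $\Gamma$ is symmetric in its two arguments, as must happen by the torsion-freeness established in Theorem~\ref{thm:symm-spc}. The key technical inputs beyond Proposition~\ref{lem:isom-LieX=TB_I} are the standard variational formulas for the Chern connection under a variation of the reduction $H$ and for the Hamiltonian vector field $\eta_\phi$ under a variation of $\omega$. A useful sanity check is the case of trivial $G^c$, where the bundle terms disappear and the computation reduces to Donaldson's derivation of the canonical connection on $\cK_{\check I}$ in~\cite{D6}.
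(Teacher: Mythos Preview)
Your strategy---differentiating the parallel-transport identity $\zeta_I(\tau_t(v)) = \Ad(g_t)\zeta_I(v)$ from~\eqref{eq:parallel-trans-canonical-connection} and then expanding the resulting Lie bracket via~\eqref{eq:cov-derivative-commutators}---is essentially the paper's. The paper carries it out by first writing $\tau_{0,t}(v)$ explicitly (in terms of $g_t$, $\check g_t$ and the two Chern connections $\theta_{H_t,I_t}$, $\theta_{H_t,I}$) and then differentiating $\tau_{t,s}^{-1}(v_s)$ at $s=t$; the brackets $[I\zeta_I(\dot b_t),\imag\xi_t]$ and $[I\zeta_I(\dot b_t),\theta_H^\perp\eta_{\psi_t}]$ that appear in its computation of the second component are exactly the ones you describe. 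Part~(2) follows from~(1) as you say.

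One point in your sketch is not right and is worth flagging. The vertical--vertical bracket piece (your ``$\imag[\dot H_1,\dot H_2]$'') does not disappear ``by symmetrizing'': since $\nabla$ is torsion-free and your $V_j$ are meant to be constant, $\nabla_{V_1}V_2$ is already $\Gamma(V_1,V_2)$ and no symmetrization is available. The actual mechanism is that $[\dot H_t,\imag\xi_t]$ is a \emph{real} section of $\ad E_H$ (both factors lie in $\imag\Omega^0(\ad E_H)$), so it is killed by the projection $\Im_{H_t}$ of~\eqref{eq:pi1map} that the paper uses to extract the $\cR$-component; equivalently, in your bracket-plus-variation formula this term lies outside $\LieX_b$ and must cancel against a piece of the variational term. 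This is tied to a second imprecision: a ``constant'' extension of $V_2$ in the $\cR$-direction is ambiguous because $T_H\cR\cong\imag\Omega^0(\ad E_H)$ moves with $H$, and any choice of trivialization feeds exactly the correction needed to cancel that bracket term. The paper sidesteps both issues by working with an arbitrary field $v_t=(dd^c\psi_t,\imag\xi_t)$ along the curve and reading off $\Gamma$ from $\nabla_{\dot b_t}v_t=\dot v_t+\Gamma(\dot b_t,v_t)$ directly.
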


\begin{proof}
The proof of part (1) is a computation of the covariant derivative of
a vector field $v_t = (dd^c\psi_t, \operatorname{\mathbf{i}}\xi_t)$
along a curve $b_t= (\omega_t,H_t)$ on $B_I$, i.e. a curve $v_t$ on
$TB_I$ with $v_t\in T_{b_t}B_I$ for all $t$. Recall that the covariant
derivative of $v_t$ along $b_t$ is (see e.g.~\cite[p. 114]{KNI})
\begin{equation}\label{eq:covariantdervt}
  \nabla_{\dot{b}_t}v_t \defeq \frac{d}{ds}_{|s=t} \tau_{t,s}^{-1}(v_s) = \dot{v}_t + \Gamma(\dot{b}_t,v_t),
\end{equation}
where $\dot{v}_t=(dd^c\dot{\psi}_t, \operatorname{\mathbf{i}}
\dot{\xi}_t)$ and $\tau_{t,s}\colon T_{b_t}B_I \to T_{b_s}B_I$ is the parallel transport along $b_t$. To calculate~\eqref{eq:covariantdervt} we compute the parallel transport $\tau_{0,t}(v)$ of any
$v=(dd^c\phi,\dot H)\in T_bB_I$ along $b_t$
using~\eqref{eq:parallel-trans-canonical-connection}. Let $\zeta_t=
\Ad(g_t)\zeta_I(v)$, where $g_t$ is the horizontal lift of $b_t$ to
$\cY_{b_0,I}$ (i.e. the flow of $I\zeta_I(\dot b_t)$) with $g_0=1$
(see~\eqref{eq:principal-bundle-Y} and
Proposition~\ref{prop:4-tuples-C1-4}). By~\eqref{eq:vectorfieldzeta},
%%%
% the right hand side of~\eqref{eq:4-tuples-C1-4-2} is
% \[
% y_t\defeq I\zeta_I(\dot b_t) = I (-\imag\dot{H}_t - \theta_{H_t,I}^\perp\eta_{\dot\phi_t}) = \dot{H}_t - \theta_{H_t,I}^\perp(\check I\eta_{\dot \phi_t})
% \]
% and
%%%
\begin{align*}
\zeta_t & % \defeq \Ad(g_t)\zeta_I(v)
%\\ &
= (g_t)_*(-\imag\dot{H} - \theta_{H,I}^\perp\eta_\phi)
%\\ &
= -\imag(g_t)_*\dot{H} - \theta_{H_t,I_t}^\perp((\check g_t)_*\eta_\phi)
%\\ &
= -\imag(g_t)_*\dot{H} - \theta_{H_t,I_t}^\perp\eta_t,
\end{align*}
where $\eta_t$ is the Hamiltonian vector field of $\phi \circ \check
g_t^{-1}$ over $(X,\omega_t)$, so
by~\eqref{eq:parallel-trans-canonical-connection}
and~\eqref{eq:infactcBiLiebeth},
\begin{align}
\tau_{0,t}(\dot{b}) &= Y_{I\zeta_t|b_t}= (dd^c(\phi \circ \check
g_t^{-1}),\imag
\operatorname{Re}_{H_t}\theta_{H_t,I}(-\imag(g_t)_*\dot{H} -
\theta_{H_t,I_t}^\perp\eta_t))
\notag\\
& = (dd^c(\phi \circ \check g_t^{-1}),\operatorname{Im}_{H_t}((g_t)_*\dot{H}) + \eta_t \lrcorner \imag (\theta_{H_t,I_t} - \theta_{H_t,I})).
\label{eq:parallel-trans-explicit}
\end{align}
Hence we obtain
\[
  \tau_{t,s}^{-1}(v_s)=(dd^c(\psi_s \circ \check g_s),\operatorname{Im}_{H_t}(\imag(g_{t,s})^*\xi_s) + \eta_{t,s} \lrcorner \imag (\theta_{H_t,I_s} - \theta_{H_t,I})),
\]
where $v_s = (dd^c\psi_s,\imag\xi_s)\in T_{b_s}B_I$, $g_{t,s}$ is the
flow of $I\zeta_I(\dot b_s)$ with $g_{t,t}=1$, $\eta_{t,s}$ is the
Hamiltonian vector field of $\psi_s \circ \check g_{t,s}$ over
$(X,\omega_t)$ and $I_s = g_{t,s}^{-1}\cdot I$. Thus denoting
$\nabla_{\dot{b}_t} v_t=(w^1_t,w^2_t)$, we conclude that
\begin{align*}
 w^1_t = & dd^c(\dot \psi_t - d\psi_t(\check I \eta_{\dot \phi_t,\omega_t})) = % dd^c(\dot \psi_t - (d\psi_t,d\dot\phi_t)_{\omega_t})
  dd^c\dot \psi_t -  dd^c(d\psi_t,d\dot\phi_t)_{\omega_t},
\\
w_t^2 = & \operatorname{Im}_{H_t}(\imag \dot \xi_t + [I \zeta_I(\dot b_t),\imag \xi_t]) - \imag \theta_{H_t,I}[I\zeta_I(\dot b_t),\theta_{H_t,I}^\perp\eta_{\psi_t}]\\
  = & \imag \dot \xi_t + \operatorname{Im}_{H_t}[I \zeta_I(\dot b_t),\imag \xi_t] + \imag \theta_{H_t,I}[\zeta_I(\dot b_t),\theta_{H_t,I}^\perp(\check I \eta_{\psi_t})]\\
= & \dot v^2_t + \operatorname{Im}_{H_t}[\dot H_t,\imag \xi_t] - [\theta_{H_t,I}^\perp(\check I \eta_{\dot \phi_t}),\imag \xi_t] - [\theta_{H_t,I}^\perp(\check I \eta_{\psi_t}),\dot H_t]\\
& - \imag \theta_{H_t,I}[\theta_{H_t,I}^\perp\eta_{\dot \phi_t},\theta_{H_t,I}^\perp(\check I \eta_{\psi_t})]\\
= & \imag\dot\xi_t - \check I \eta_{\dot \phi_t} \lrcorner d_{H_t}(\imag \xi_t)- \check I \eta_{\psi_t} \lrcorner d_{H_t}\dot H_t + \imag F_{H_t}(\eta_{\dot \phi_t},\check I \eta_{\psi_t}).
\end{align*}
This proves (1). Note that $F_H$ and $\omega$ are of type $(1,1)$, so
the torsion is $T_\nabla = 0$ (cf. Theorem~\ref{thm:symm-spc}) and the
geodesic equation is~\eqref{eq:geodesicequation}. This proves (2).
\end{proof}

\begin{remark}
\label{rem:cov-derivative-canonical-connetion2}
When $G^c$ is the trivial group, so $E^c=X$,
Theorem~\ref{thm:symm-spc} and Proposition~\ref{thm:symm-spcCeq}
reduce to the corresponding results for the space of K\"ahler metrics
$\cK_{\check I}$ already studied by Mabuchi~\cite{Mab2,Mab1} and
Donaldson~\cite{D6}. More precisely, we recover the Levi--Civita
connection of the Mabuchi metric on the Riemannian symmetric space
$\cK_{\check I}$, the functional $\cM_I(\cdot,b)$ is the Mabuchi
K-energy~\cite{Mab2,Mab1} on the space of K\"ahler metrics, by
formula~\eqref{eq:cM-explicit}, and~\eqref{eq:geodesicequation}
reduces to the geodesic equation on the space of K\"ahler
metrics~\cite{Mab1}
\begin{equation}
\label{eq:geodesicequationK}
\ddot \phi_t - (d\dot\phi_t,d\dot\phi_t)_{\omega_t} = 0.
\end{equation}
It seems plausible that the methods used by Chen \& Tian~\cite{Ch1,
  ChT} in their study of~\eqref{eq:geodesicequationK} could be adapted
to equation~\eqref{eq:geodesicequation} and to the existence and
uniqueness problem for the coupled equations. As in the case
of~\eqref{eq:geodesicequationK}, this would require a reformulation
of~\eqref{eq:geodesicequation} as a complex Monge--Amp\`ere equation.
\end{remark}

Note that the explicit formula for the Christoffel symbols in
Proposition~\ref{thm:symm-spcCeq} provides a direct proof of the
vanishing of the torsion $T_\nabla$
(cf. Theorem~\ref{thm:symm-spc}). Observe also that the two factors of
$B_I = \cK_{\check I} \times \cR$ are Riemannian symmetric spaces with
holonomy groups contained in $\cH_\omega$ (see~\cite{Mab1}
and~\cite[\S 4]{D6}) and $\cG_H$, and that the holonomy group of $B_I$
is contained in their group extension $\cX_b$
(see~\eqref{eq:coupling-term-moment-map-1}). Here, the structure of Riemannian symmetric space on $\cR$ depends on the choice of an element $\omega \in \cK_{\check I}$. However,
Proposition~\ref{thm:symm-spcCeq} implies that the symmetric space
structure of $B_I$ is not the product structure. In fact, it is an
open question whether $B_I$ carries a Riemannian metric compatible
with $\nabla$ (see Remark~\ref{rem:riemannian-symm-spc-v1} for
details).

\section{Extremal pairs and deformation of solutions}
\label{chap:Deformation}

Following the approach of LeBrun \& Simanca~\cite{LS2, LS1}, in this
section we define an extremality condition for pairs $(\omega,H)$ and
also an extremality condition in the weak coupling limit
(see~\eqref{eq:extremalpair} and~\eqref{eq:extremalpairsplit0}). We
establish existence results for extremal pairs $(\omega,H)$ near
solutions to the coupled equations under deformations of the coupling
constants and the K\"ahler class (Theorems~\ref{thm:DeformationCYMeq1}
and~\ref{thm:DeformationCYMeq3}) and find sufficient conditions for
the existence of solutions to the coupled equations
(Theorems~\ref{thm:DeformationCYMeq2}
and~\ref{thm:DeformationCYMeq4}).

In~\secref{chap:Deformation} we fix a complex reductive Lie group
$G^c$, an $n$-dimensional compact complex manifold $(X,J)$, with
underlying real manifold $X$ and complex structure $J$, and a
holomorphic principal $G^c$-bundle $(E^c,I)$ over $(X,J)$ with
underlying real principal $G^c$-bundle $E^c$ and complex structure $I$
(so $\check{I}=J$ in the notation of~\secref{sub:concrete-setup}). We
also fix a maximal compact Lie subgroup $G\subset G^c$. The Lie
algebras of $G\subset G^c$ are denoted $\mathfrak{g}\subset
\mathfrak{g}^c$, respectively. As in
Theorem~\ref{prop:Kahlerfibration}, we fix a $G^c$-invariant symmetric
bilinear form $(\cdot,\cdot) \colon \mathfrak{g}^c \otimes
\mathfrak{g}^c \to \CC$ which restrics to a $G$-invariant positive
definite inner product on $\mathfrak{g}$. Finally, $\mathfrak{z} =
\mathfrak{g}^G$ and $\mathfrak{z}^c = (\mathfrak{g^c})^{G^c}$ denote
the subsets of elements of $\mathfrak{g}$ and $\mathfrak{g}^c$ which
are invariant under the adjoint actions of $G$ and $G^c$, respectively
(cf.~\eqref{eq:centre-z}).

\subsection{Extremal pairs}
\label{sec:Defextremalholomorphic}

We start studying an extremality condition which will be useful to
prove Theorem~\ref{thm:DeformationCYMeq2}.
Throughout~\secref{sec:Defextremalholomorphic}, we fix
$\alpha=(\alpha_0,\alpha_1)\in\RR^2$ such that $\alpha_1\neq 0$, and a
K\"{a}hler class $\Omega$ on $(X,J)$. Note that we will not assume
$\alpha_0, \alpha_1>0$, but that we can still apply
Proposition~\ref{propo:sigmaIalongcurve} and
Theorem~\ref{thm:futakiobstructionformal} (see
Remark~\ref{rem:sign-alpha-sigma-F}). We define
\[
  B_\Omega\defeq\cK_\Omega\times\cR,
\]
where $\cK_\Omega$ is the space of K\"ahler forms in $\Omega$ compatible with the complex
structure $J$ and $\cR=\Omega^0(E^c/G)$ (cf.~\eqref{eq:B-product-K-R}).

The following definition is closely related to the vanishing condition
for the linearisation at a solution of the coupled equations (see
Proposition~\ref{propo:DefTalpha} and Lemma~\ref{lemma:PTalpha}).

\begin{definition}
\label{def:ext-pair}
A pair $b = (\omega,H)\in B_\Omega$ is \emph{extremal} if it satisfies the equations
\begin{equation}
\label{eq:extremalpair} \left. \begin{array}{l}
4\alpha_1d_H\Lambda_\omega F_H + \eta_{\alpha}(b) \lrcorner F_H = 0\\
L_{\eta_{\alpha}(b)} J = 0
\end{array}\right \},
\end{equation}
where $\eta_{\alpha}(b)$ is the Hamiltonian vector field on
$(X,\omega)$ of the function
\begin{equation}
\label{eq:Salpha}
S_\alpha(b) \defeq -\alpha_0 S_\omega
-\alpha_1\Lambda^2_\omega\(F_H\wedge F_H\) + 4\alpha_1\Lambda_\omega F_H \wedge z_\Omega
\in C^\infty(X),
\end{equation}
(cf.~\eqref{eq:Salpha-bI}). Here $S_\omega$ is the scalar curvature
of the metric $\omega(\cdot,J\cdot)$ and $z_\Omega$ is the element of $\mathfrak{z}\subset\mathfrak{g}$
given by~\eqref{eq:z(Omega,E)}.
\end{definition}

Extremal K\"ahler metrics in $\Omega$, introduced by Calabi in~\cite[\S 1]{Ca}, can be characterized as those $\omega \in
\cK_\Omega$ such that the Hamiltonian vector field of
$S_\omega$ over $(X,\omega)$ is in $\LieH_J$~\cite[\S
2]{Ca}. In particular, all cscK metrics are extremal. Similarly, extremal pairs admit a
description in terms of real-holomorphic vector fields on the total
space of $E^c$. To see this, recall that each $H\in\cR$ induces a reduction of $E^c$ to a principal
$G$-bundle $E_H\subset E^c$ (see \secref{sub:concrete-setup}) and each
$b=(\omega,H)\in B_\Omega$ determines a short exact sequence of Lie
groups (see~\secref{sec:Ceqham-act-ext-grp})
\begin{equation}
\label{eq:ses-G_H-X_b-H_omega}
  1 \to \cG \lto \cX \lra{p} \cH\to 1,
\end{equation}
given from left to right by the gauge group of $E_H$, the extended
gauge group of $E_H$ over $(X,\omega)$ and the group of Hamiltonian
symplectomorphisms of $(X,\omega)$. This exact sequence induces
another one
\begin{equation}
\label{eq:Ext-Lie-group-holmorphic}
1 \to \cG_I \lto \cX_I \lra{p} \cH_J,
\end{equation}
where $\cG_I=\cG\cap\Aut(E^c,I)$, $\cX_I=\cX\cap\Aut(E^c,I)$
and $\cH_J =\cH\cap\Aut(X,J)$ are finite dimensional complex
Lie groups (see e.g.~\cite[\S 2.120]{Be}). Note that the Lie algebra $\LieX_I$ is given by $G^c$-invariant
real-holomorphic vector fields on the total space of $(E^c,I)$
covering Hamiltonian (real-holomorphic) vector fields on
$(X,J,\omega)$.

Using the horizontal lift $\theta_H^\perp\colon
\LieH\to \LieX_b$, of the Chern connection associated to $H$ and
$I$ (cf.~\eqref{eq:induced-theta}), we define
\begin{equation}
\label{eq:zetab}
\zeta_{\alpha}(b) \defeq - 4\alpha_1(\Lambda_\omega F_H - z_\Omega) - \theta_H^\perp\eta_{\alpha}(b)
\in \LieX_b,
\end{equation}
for each $b=(H,\omega)\in B_\Omega$. Then it follows
from~\eqref{eq:infinit-action-connections} and~\eqref{eq:pairsinvacs2}
that
\begin{equation}
\label{eq:extrpair-holomvectfield}
\text{$b\in B_\Omega$ is extremal
$\Longleftrightarrow$ $\zeta_{\alpha}(b) \in \LieX_I$.}
\end{equation}

The following link between extremal pairs and the coupled equations is
a generalization of the corresponding link for K\"{a}hler metrics (see
e.g.~\cite[Lemma~1]{LS1}). To establish this, note that each $\omega\in\cK_\Omega$ induces $L^2$-inner products on
$C^\infty(X)$ and $\Omega^0(\ad E_H)$, given by
\begin{subequations}
\label{eq:pairings}
\begin{align}
\label{eq:pairingH}
\langle\phi_0,\phi_1\rangle_\omega & \defeq \int_X \phi_0 \phi_1 \omega^{[n]},
\\
\label{eq:LieG-dual}
\langle\xi_0,\xi_1\rangle_\omega & \defeq \int_X (\xi_0 \wedge \xi_1)
\omega^{[n]},
\end{align}
\end{subequations}
for $\phi_j \in C^\infty(X)$, $\xi_j \in \Omega^0(\ad E_H)$
($j=0,1$). Their associated $L^2$-norms are denoted
$\|\cdot\|_\omega$.

\begin{proposition}
\label{prop:extremal-solution}
A pair $b\in B_\Omega$ is a
solution to the coupled equations~\eqref{eq:CYMeq2} if and only if it is an extremal pair and $\cF_{\alpha,\Omega}=0$.
\end{proposition}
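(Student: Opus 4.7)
My plan is to treat both directions via the moment map interpretation of~\secref{chap:Ceq} combined with the explicit formula~\eqref{eq:alphafutakismooth} for the $\alpha$-Futaki character.

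For the forward implication, I would assume $b=(\omega,H)\in B_\Omega$ solves~\eqref{eq:CYMeq2}. By Proposition~\ref{mm-equations} and Proposition~\ref{prop:momentmap-pairs}, this is equivalent to $\Lambda_\omega F_H=z$ together with $S_\alpha(b)$ constant---these being the gauge and Hamiltonian components of the moment map vanishing. The first identity gives $d_H\Lambda_\omega F_H=d_Hz=0$ because $z$ is central, and the second gives $\eta_\alpha(b)=0$, so both extremal equations~\eqref{eq:extremalpair} hold trivially. A direct inspection of~\eqref{eq:sigmaC2} then shows that $\sigma_I$ vanishes at $b$, so Theorem~\ref{thm:futakiobstructionformal} yields $\cF_{\alpha,\Omega}=0$.

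For the converse, I would assume $b=(\omega,H)$ is an extremal pair with $\cF_I=0$. The key point is that the characterization~\eqref{eq:extrpair-holomvectfield} places the element $\zeta_\alpha(b)\in\LieX_b$ of~\eqref{eq:zetab} inside $\LieX_I\subset\LieGamma_I=\Lie\Aut(E^c,I)$, so $\cF_I$ can be evaluated on it. The hard part, which I expect to be the technical heart of the argument, is to establish the identity
\[
\langle\cF_I,\zeta_\alpha(b)\rangle
=16\alpha_1^2\,\|\Lambda_\omega F_H-z\|_\omega^2
+\|S_\alpha(b)-\bar s_\alpha\|_\omega^2,
\]
where $\bar s_\alpha\defeq \Vol_\Omega^{-1}\int_XS_\alpha(b)\,\omega^{[n]}$. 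To obtain this I would substitute~\eqref{eq:zetab} into~\eqref{eq:alphafutakismooth}, reading off that the vertical part of $\zeta_\alpha(b)$ is $\theta_H\zeta_\alpha(b)=-4\alpha_1(\Lambda_\omega F_H-z)$ and that the covered vector field $-\eta_\alpha(b)$ is Hamiltonian with normalised potential $\phi_1=-S_\alpha(b)+\bar s_\alpha$ (the shift being the unique constant placing $\phi_1$ in $C_0^\infty(X)$), while $\phi_2=0$ and $\beta=0$ because the second extremal equation guarantees that $\eta_\alpha(b)$ is real-holomorphic on $(X,J)$.

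Granted the displayed identity, $\cF_I=0$ forces both summands to vanish, so $\Lambda_\omega F_H=z$ and $S_\alpha(b)\equiv\bar s_\alpha$; by Propositions~\ref{prop:momentmap-pairs} and~\ref{mm-equations} these two conditions are together equivalent to~\eqref{eq:CYMeq2}, with $c$ the topological constant~\eqref{eq:constant-c}. Beyond the central sum-of-squares computation, the only subtlety I foresee is careful bookkeeping of signs and of the normalisation of $\phi_1$ to make the identity come out cleanly.
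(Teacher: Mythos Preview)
Your proposal is correct and follows essentially the same route as the paper: both directions are handled exactly as you describe, and the converse hinges on the same sum-of-squares identity $\langle\cF_I,\zeta_\alpha(b)\rangle=\|S_\alpha(b)-\hat S_\alpha\|_\omega^2+16\alpha_1^2\|\Lambda_\omega F_H-z\|_\omega^2$, obtained by plugging~\eqref{eq:zetab} into~\eqref{eq:alphafutakismooth}. Your discussion of why $\phi_2=0$ and $\beta=0$ is a helpful elaboration of a step the paper leaves implicit.
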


\begin{proof}
If there exists a solution $b=(\omega,H)\in B_\Omega$ to the
coupled equations~\eqref{eq:CYMeq2}, then $\cF_{\alpha,\Omega}=0$, by ~\eqref{eq:equivalent-slogan}
(or~\eqref{eq:alphafutakismooth}) and
Theorem~\ref{thm:futakiobstructionformal}, and futhermore, $b$ is
obviously an extremal pair, since
$d_H\Lambda_\omega F_H=0$ and $\eta_{\alpha}(b)=0$.
%%%
Conversely, if $b=(\omega,H)$ is extremal, i.e. $\zeta_{\alpha}(b) \in \LieX_I$, then
\[
  \cF_{\alpha,\Omega}(\zeta_{\alpha}(b)) = \|S_\alpha(b)-\hat S_\alpha\|_\omega^2 + 16 \alpha^2_1\|\Lambda_\omega F_H - z_\Omega\|_\omega^2 \geq 0,
\]
by~\eqref{eq:alphafutakismooth}, where $\hat S_\alpha = \int_X S_\alpha(b)\omega^{[n]}/\Vol_\Omega$, so $\cF_{\alpha,\Omega}=0$ implies
that $b$ satisfies~\eqref{eq:CYMeq2}.
\end{proof}

Extremal pairs enjoy good regularity properties, similar to those of extremal K\"{a}hler metrics~\cite[Proposition
4]{LS2}:

\begin{lemma}
\label{lem:reg-extremalpairs}
Let $(\omega,H)$ be an extremal pair such that $\omega$
is a K\"{a}hler form of class $C^2$ on $(X,J)$ and $H$ is a section of
$E^c/G$ of class $C^4$. Then both $\omega$ and $H$ are smooth.
\end{lemma}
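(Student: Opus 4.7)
The plan is to follow the strategy used by LeBrun \& Simanca in~\cite{LS2, LS1} for the analogous regularity result for extremal K\"ahler metrics, combining the characterization of extremality as holomorphicity of a vector field on $E^c$ with elliptic regularity for $\bar\partial$ and a Schauder bootstrap.

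\emph{Reformulation.} First, by~\eqref{eq:extrpair-holomvectfield}, $b=(\omega,H)$ is extremal if and only if the vector field $\zeta_\alpha(b)$ defined in~\eqref{eq:zetab} is a real-holomorphic $G^c$-invariant vector field on the smooth complex manifold $(E^c,I)$, i.e.\ $\bar\partial_I\zeta_\alpha(b)=0$. Under the regularity hypotheses, $S_\omega$ is only of class $C^0$, so $\eta_\alpha(b)$ and $\zeta_\alpha(b)$ are a priori only distributional sections of $TE^c$, and the Cauchy--Riemann equation for $\zeta_\alpha(b)$ is to be interpreted in the distributional sense.

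\emph{Elliptic regularity.} Next, I would apply standard elliptic regularity to the first-order elliptic operator $\bar\partial_I$, whose coefficients are smooth since $I$ is. Any distributional solution of $\bar\partial_I\zeta=0$ is then automatically $C^\infty$, so $\zeta_\alpha(b)$ is smooth. Hence its projection $\eta_\alpha(b)$ to $X$ is smooth, and since the Chern connection $\theta_{H,I}$ is of class $C^3$ when $H\in C^4$, the vertical component $-4\alpha_1(\Lambda_\omega F_H-z_\Omega)=\zeta_\alpha(b)+\theta_{H,I}^\perp\eta_\alpha(b)$ must lie in $C^3$. From $dS_\alpha(b)=\eta_\alpha(b)\lrcorner\omega\in C^2$ one also recovers $S_\alpha(b)\in C^3$.

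\emph{Bootstrap.} Finally, writing $\omega=\omega_0+\imag\partial\bar\partial\phi$ with $\omega_0$ a smooth K\"ahler form in $\Omega$, and representing $H$ with respect to a smooth reference reduction, the extremality system translates into a coupled quasilinear elliptic system: the equation $\Lambda_\omega F_H=z_\Omega+(\text{smooth})$ is second order elliptic for $H$, while $\alpha_0 S_\omega=-\alpha_1\Lambda_\omega^2(F_H\wedge F_H)-S_\alpha(b)$ is fourth order elliptic for $\phi$, its leading symbol being the linearisation of the scalar curvature. With the right-hand sides of these two equations now one derivative more regular than the unknowns (by the previous step), Schauder estimates applied in H\"older scales would improve the regularity of $\phi$ and $H$ by one derivative each, and iterating yields $\omega,H\in C^\infty$. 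The main difficulty is to separate the leading-order coupling between $\omega$ and $H$, so that each equation is genuinely elliptic in a single unknown; this is accomplished by the same choice of gauge (K\"ahler potential on the base, smooth background unitary connection on the bundle) that works for the uncoupled scalar-curvature and Hermitian--Yang--Mills equations, since the coupling affects only lower-order terms. Modulo this gauge choice, the argument then proceeds as in the pure K\"ahler case treated in~\cite{LS2}.
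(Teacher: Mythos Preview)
Your proposal is correct and follows essentially the same approach as the paper: use the holomorphicity (hence real analyticity) of $\zeta_\alpha(b)$ and $\eta_\alpha(b)$ to extract improved regularity of $\Lambda_\omega F_H$ and $S_\alpha(b)$, then bootstrap via the scalar curvature equation for $\omega$ (exactly as in LeBrun--Simanca) and a second-order elliptic equation for $H$ (the paper writes it explicitly as $\Delta_{\bar\partial}H = H(\Lambda_\omega F_H - \Lambda_\omega(\bar\partial(H^{-1})\wedge\partial H))$ in local holomorphic coordinates). The paper organizes the induction explicitly in H\"older classes, showing $\omega\in C^{2l-1,\beta}$ and $H\in C^{2l+1,\beta}$ for all $l$ and gaining two derivatives per step, but this is only a difference in presentation.
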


\begin{proof}
We will show by induction on $l \in \mathbb{N}$ that $\omega$ and $H$
are H\"older of class $C^{2l-1,\beta}$ and $C^{2l+1,\beta}$
respectively, for all $\beta \in (0,1)$ and $l \in
\mathbb{N}$. By
assumption, $\omega$ and $H$ are of class $C^{1,\beta}$ and
$C^{3,\beta}$, respectively. Suppose now that $\omega$ and $H$ are of
class $C^{2l-1,\beta}$ and $C^{2l+1,\beta}$, respectively. As
$\eta_{\alpha}(b)$ is a real holomorphic vector field
by~\eqref{eq:extrpair-holomvectfield}, it is real analytic, so
$dS_\alpha(b)$ is of class $C^{2l-1,\beta}$, i.e. $S_\alpha(b) \in
C^{2l,\beta}$, and hence it follows from~\eqref{eq:Salpha} that the
scalar curvature $S_\omega$ is of class $C^{2l-1,\beta}$, because
\[
\Lambda^2_\omega (F_H\wedge F_H) - 4\Lambda_\omega F_H \wedge z_\Omega
\]
is of class $C^{2l-1,\beta}$. Arguing as in \cite[Proposition~4]{LS2},
it follows from the regularity theory for the Laplacian and for the
Monge--Amp\`ere equation that $\omega$ is of class $C^{2l+1,\beta}$
(recall that the scalar curvature can be written in holomorphic
coordinates as $\Delta_\omega \log \det(\omega)$). Since
$\zeta_{\alpha}(b)$, defined by~\eqref{eq:zetab}, is a real-holomorphic
vector field on $(E^c,I)$ by assumption
(see~\eqref{eq:extrpair-holomvectfield}), it is real analytic and so
$\Lambda_\omega F_H$ is of class $C^{2l,\beta}$, because $\alpha_1\neq
0$. Identifying $H$ locally with a
function on the base with values in $\exp(\imag\mathfrak{g})\subset
G^c$ and using holomorphic coordinates for the bundle $E^c$, we can
write
\begin{equation}
\label{eq:proporeg2}
\Delta_{\dbar}H = H(\Lambda_\omega F_H - \Lambda_\omega (\dbar(H^{-1})\wedge \partial H)),
\end{equation}
where the right-hand side is of class $C^{2l,\beta}$ and
$\Delta_{\dbar}$ is elliptic with $C^{2l+1,\beta}$ coefficients. By
the regularity theory of linear elliptic differential operators, $H$
is of class $C^{2l+2,\beta}$ (see
e.g.~\cite[Theorem~3.55]{Au}). Applying this argument again
to~\eqref{eq:zetab}, we see that the right-hand side of
\eqref{eq:proporeg2} is of class $C^{2l+1,\beta}$ and so $H$ is of
class $C^{2l+3,\beta}$, as required.
\end{proof}

\begin{remark}
\label{rem:riemannian-symm-spc-v1}
Note that $B_\Omega=\cK_\Omega\times\cR$ has a Riemannian metric
$g_\Omega$, given by
\begin{equation}
\label{eq:pairingscX}
g_\Omega(v_0,v_1) = \int_X \phi_0 \phi_1 \omega^{[n]} + \int_X (\xi_0 \wedge \xi_1) \omega^{[n]}
\end{equation}
for $b=(\omega,H)$, $v_j=(\phi_j,\xi_j)\in T_bB_\Omega\cong
C_0^\infty(X) \times\Omega^0(\ad E_H)$
(cf.~\eqref{eq:pairing-product}), with $\dot{H}=\imag\xi$ in the
notation of~\secref{sub:ANCeq}.
Although this metric is rather canonical, it does not endow the
symmetric space $(B_\Omega,\nabla)$ of Theorem~\ref{thm:symm-spc} with
a structure of Riemannian symmetric space, since $g_\Omega$ is not
preserved in general by the canonical affine connection $\nabla$ on
$B_\Omega$ constructed in~\secref{sub:ANsymm-spc}. In fact, by a
straightforward calculation using
formula~\eqref{eq:parallel-trans-explicit} for the parallel transport,
\begin{align*}
(\nabla_{v_0} g_\Omega)(v_1,v_2) & = - \int_X \bigg{(}\dot{\xi}_1
\wedge(\theta_{H}[\zeta_I(v_0),\theta_{H}^\perp J \eta_{\phi_2}])\\
& + (\theta_{H}[\zeta_I(v_0),\theta_{H}^\perp J
\eta_{\phi_1}])\wedge\dot{\xi}_2\bigg{)} \; \omega^{[n]}.
\end{align*}
However, if the group $G^c$ is trivial, so $B_\Omega=\cK_\Omega$, then
$g_\Omega$ is precisely the Mabuchi metric and $\nabla g_\Omega=0$, by
the previous formula, so we recover the known fact~\cite{D6,Mab1} that
$\cK_\Omega$ is a Riemannian symmetric space with Levi--Civita
connection $\nabla$, by Theorem~\ref{thm:symm-spc}.
\end{remark}

\subsection{Holomorphic vector fields on the principal bundle}
\label{sub:hol-vect-fields}

Given $b=(\omega,H)\in B_\Omega$, we now relate the Lie algebra
$\LieX_I$ (see~\eqref{eq:Ext-Lie-group-holmorphic}) to the space
of solutions to a fourth-order elliptic differential equation which is
closely related to the linearization of our coupled equations. We will use the inner product on $C^\infty(X)\times\Omega^0(\ad E_H)$
induced by~\eqref{eq:pairings}, i.e. given by
\begin{equation}
\label{eq:pairing-product}
\langle v_0,v_1\rangle_\omega \defeq \langle\phi_0,\phi_1\rangle_\omega + \langle\xi_0,\xi_1\rangle_\omega,
\end{equation}
for $v_j=(\phi_j,\xi_j)\in C^\infty(X)\times\Omega^0(\ad E_H)$
($j=0,1$).

We define an operator
\begin{equation}\label{eq:P_omega}
\begin{gathered}
\xymatrix @R=0ex @C=-7ex{
**[l]\operatorname{P}=\operatorname{P}_\omega\colon\; C^{\infty}(X)\ar[r] & **[r]\Omega^0(\End TX)\\
**[l]\phi\ar@{|->}[r] & **[r] -L_{\eta_\phi}J.
}
\end{gathered}
\end{equation}
In other words, $\operatorname{P}$ is induced by the infinitesimal
action of $\LieH$ on $\cJ_\omega$. Let $\operatorname{P}^*$ be the
formal adjoint of $\operatorname{P}$ with respect to the $L^2$-inner
products on $C^\infty(X)$ and $\Omega^0(\End TX)$ induced by
$\omega(\cdot,J\cdot)$, with the $L^2$-inner product on $\Omega^0(\End
TX)$ multiplied by a factor of $1/2$, so that its restriction to
$T_J\cJ$ coincides with $\omega_\cJ(\cdot,\mathbf{J}\cdot)$ (defined
by~\eqref{eq:SympJ}). Then $\operatorname{P}^*\operatorname{P}$ is, up
to a multiplicative constant factor, the Lichnerowicz operator of the
compact K\"ahler manifold $(X,J,\omega)$. This is an elliptic
self-adjoint semipositive differential operator of order 4, whose
kernel is the set of functions $\phi$ such that $\eta_\phi\in\LieH_J$,
and which may be interpreted as the linearization of the cscK equation
at $\omega$ (see e.g.~\cite{LS2}).

We define now an operator which is closely related to the
linearization of the coupled equations (see
Proposition~\ref{propo:DefTalpha}) and which will play the role of the
Lichnerowicz operator in our study. The operator
is
\begin{equation}
\label{eq:Lalphaoperator}
\mathbf{L}_{\alpha,b}= (\mathbf{L}_{\alpha,b}^0,\mathbf{L}_{\alpha,b}^1)
\colon C^\infty(X) \times \Omega^0(\ad E_H) \lto C^\infty(X) \times \Omega^0(\ad E_H),
\end{equation}
where $\mathbf{L}_{\alpha,b}^0 $ and $\mathbf{L}_{\alpha,b}^1$ are
defined by
\begin{equation}
\label{eq:Dalphapaoperator}
\begin{split}
\mathbf{L}_{\alpha,b}^0(\phi,\xi) & = \alpha_0 \operatorname{P}^*\operatorname{P}
\phi - 2\alpha_1 \Lambda^2_\omega (F_H\wedge d_H J(d_H \xi +
\eta_\phi\lrcorner F_H)) + \mathbf{L}_{\alpha,b}^1(\phi,\xi) \wedge z_\Omega,\\
\mathbf{L}_{\alpha,b}^1(\phi,\xi) & = 4\alpha_1\Lambda_\omega d_H J (d_H \xi +
\eta_\phi\lrcorner F_H) = 4\alpha_1d_H^*(d_H \xi + \eta_\phi\lrcorner F_H).
\end{split}
\end{equation}
Here, $J$ is the endomorphism of $\Omega^1(\ad E_H)$ induced by the
complex structure $J$ (see~\cite[(2.8)]{Be}), $d_H\colon
\Omega^0(\ad E_H) \to \Omega^1(\ad E_H)$ is the covariant derivative
of the Chern connection of $H$ and $I$ and, by the K\"{a}hler
identities, $d_H^*=\Lambda_\omega d_H J$ is its formal adjoint.

Recall that the Chern connection associated to any
$H\in\cR$ and $I$ induces a vector space isomorphism
(see~\secref{sec:Ceqcoupling-term})
\begin{equation}
\label{eq:zetaphixi}
C^\infty(X)/\RR \times \Omega^0(\ad E_H) \lra{\cong} \LieX_b
\colon ([\phi],\xi) \longmapsto \xi + \theta_H^\perp\eta_\phi.
\end{equation}

Let $(Z_b,\textbf{I}_b,\omega_b)$ be the K\"{a}hler manifold
constructed in Lemma~\ref{lem:pairsinvacs2} and
Theorem~\ref{prop:Kahlerfibration}.

\begin{lemma}
\label{lem:bra-L-ket}
Let $v_j=(\phi_j,\xi_j)\in C^\infty(X)\times\Omega^0(\ad E_H)$,
for $j=0,1$. Then
\begin{equation}
\label{eq:Dalphaselfadjoint}
  \langle v_0,\mathbf{L}_{\alpha,b} v_1\rangle_\omega
  = \omega_b(Y_{\zeta_0|I},\mathbf{I} Y_{\zeta_1|I})
    + 4\alpha_1 \langle J\eta_{\phi_0}\lrcorner(d_H \xi_1 +
    \eta_{\phi_1}\lrcorner F_H),\Lambda_\omega F_H - z_\Omega\rangle_\omega,
\end{equation}
where $Y_{\zeta_j}$ is the infinitesimal action of $\zeta_j = \xi_j +
\theta_H^\perp\eta_{\phi_j} \in \LieX_b$ on $Z_b$, for $j=0,1$.
\end{lemma}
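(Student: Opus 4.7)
The plan is to compute both sides of \eqref{eq:Dalphaselfadjoint} directly and then match them via integration by parts and K\"ahler identities. Under the holomorphic identification $Z_b \cong \cP_b$ of Lemma~\ref{lem:pairsinvacs2}, with $A = \theta_{H,I}$ the Chern connection, the tangent vector $Y_{\zeta_j|I}$ corresponds to $(\dot J_j, a_j) \in T_J\cJ \times T_A\cA$ where $\dot J_j = -L_{\eta_{\phi_j}}J = \operatorname{P}\phi_j$ and, by Lemma~\ref{lem:infinit-action-connections}, $a_j = -d_H\xi_j - \eta_{\phi_j}\lrcorner F_H = -\beta_j$, with $\beta_j \defeq d_H\xi_j + \eta_{\phi_j}\lrcorner F_H$. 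The almost complex structure $\mathbf{I}$ of~\eqref{eq:complexstructureI} acts on $(\dot J,a)$ by $(J\dot J, Ja)$, where $J$ on $a \in \Omega^1(\ad E_H)$ is defined by $(Ja)(v) = -a(Jv)$.

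First I would split $\omega_b = \alpha_0\omega_\cJ + 4\alpha_1\omega_\cA$ as in~\eqref{eq:Sympfamily} and handle each factor. The $\cJ$-contribution gives $\alpha_0\omega_\cJ(\operatorname{P}\phi_0,\mathbf{J}\operatorname{P}\phi_1) = \alpha_0 \langle \phi_0, \operatorname{P}^*\operatorname{P}\phi_1\rangle_\omega$ by the very definition of $\operatorname{P}^*$. The $\cA$-contribution reduces, via the pointwise K\"ahler identity $a\wedge Ja' \wedge \omega^{[n-1]} = (a,a')\,\omega^{[n]}$ for $\ad E_H$-valued real $1$-forms, to $4\alpha_1\omega_\cA(a_0, \mathbf{I}_\cA a_1) = 4\alpha_1 \langle \beta_0, \beta_1\rangle_\omega$.

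Next I would expand the LHS using \eqref{eq:Dalphapaoperator}. The first term of $\mathbf{L}^0_{\alpha,b}$ produces $\alpha_0\langle\phi_0,\operatorname{P}^*\operatorname{P}\phi_1\rangle_\omega$, matching the $\cJ$-piece. After rewriting $\langle\xi_0,d_H^*\beta_1\rangle_\omega = \langle d_H\xi_0,\beta_1\rangle_\omega$ in the $\mathbf{L}^1$-term and decomposing $\langle\beta_0,\beta_1\rangle_\omega = \langle d_H\xi_0,\beta_1\rangle_\omega + \langle \eta_{\phi_0}\lrcorner F_H,\beta_1\rangle_\omega$, all terms cancel except for the claim that
\[
4\alpha_1\langle \eta_{\phi_0}\lrcorner F_H, \beta_1\rangle_\omega + 2\alpha_1\langle \phi_0, \Lambda^2_\omega((F_H - \hat z_\omega)\wedge d_H J\beta_1)\rangle_\omega = 4\alpha_1\langle J\eta_{\phi_0}\lrcorner \beta_1, \Lambda_\omega F_H - z\rangle_\omega.
\]

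The hard part will be verifying this last K\"ahler identity. I would rewrite the $\Lambda^2_\omega$-term as $4\alpha_1\int_X \phi_0\,(F_H - \hat z_\omega)\wedge d_H J\beta_1 \wedge \omega^{[n-2]}$ (using $\Lambda^2(\gamma)\omega^{[n]} = 2\gamma\wedge\omega^{[n-2]}$, as in the proof of Proposition~\ref{prop:momentmap-X}), integrate by parts exploiting Bianchi $d_H F_H = 0$, $d\omega = 0$, and $d\phi_0 = \eta_{\phi_0}\lrcorner\omega$, and then apply the contraction identity to the vanishing $(2n+1)$-form $\eta_{\phi_0}\lrcorner((F_H - \hat z_\omega)\wedge J\beta_1\wedge \omega^{[n-1]}) = 0$, together with $J\eta_{\phi_0}\lrcorner \beta_1 = -\eta_{\phi_0}\lrcorner J\beta_1$, $F_H \wedge \omega^{[n-1]} = (\Lambda_\omega F_H)\omega^{[n]}$ and $\hat z_\omega = z\omega/(n-1)$, to reproduce the right-hand side. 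The sign and K\"ahler-identity bookkeeping in this step is the main technical obstacle, and it is a direct (if lengthy) adaptation of the moment map computation in the proof of Proposition~\ref{prop:momentmap-X}.
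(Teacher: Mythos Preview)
Your approach is correct, but it takes a different route from the paper's. You compute $\omega_b(Y_{\zeta_0|I},\mathbf{I}Y_{\zeta_1|I})$ directly from the definitions of $\omega_\cJ$ and $\omega_\cA$, then match it against the left-hand side by redoing the integration-by-parts/K\"ahler-identity manipulation that already appears in the proof of Proposition~\ref{prop:momentmap-X}. The paper instead uses the moment map property as a black box: since $\mu_b$ is a moment map, $d_I\langle\mu_b,\zeta_0\rangle(\mathbf{I}Y_{\zeta_1|I}) = \omega_b(Y_{\zeta_0|I},\mathbf{I}Y_{\zeta_1|I})$, and one simply differentiates the explicit formula~\eqref{eq:thm-muX} for $\langle\mu_b,\zeta_0\rangle$ in the direction $\mathbf{I}Y_{\zeta_1|I}=(-JL_{\eta_{\phi_1}}J,\,J\beta_1)$. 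The $S_J$-term contributes $\alpha_0\operatorname{P}^*\operatorname{P}\phi_1$ via $\delta_JS_\omega\circ\mathbf{J}\circ\operatorname{P}=-\operatorname{P}^*\operatorname{P}$, the $\Lambda F_A$- and $\Lambda^2((F_A-\hat z)^2)$-terms give exactly $\mathbf{L}^1_{\alpha,b}$ and the second piece of $\mathbf{L}^0_{\alpha,b}$, and the extra correction term drops out immediately from the $A$-dependence of $\theta_A\zeta_0$ in~\eqref{eq:thm-muX}, since $d_A(\theta_A\zeta_0)(a')=a'(\eta_{\phi_0})=-J\eta_{\phi_0}\lrcorner\beta_1$. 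Thus the paper never has to revisit the K\"ahler identity you isolate at the end --- that work was already absorbed into the proof that $\mu_b$ is a moment map. Your route is more elementary and self-contained, but longer; the paper's route is shorter and makes transparent that the lemma is really just the moment map condition plus the one ``connection-dependence'' term.
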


\begin{proof}
By the moment map interpretation of scalar curvature of the K\"{a}hler
metric $\omega(\cdot,J\cdot)$, its derivative $\delta_JS_\omega\colon
T_J\cJ_\omega\to C^\infty(X)$ with respect to $J\in\cJ_\omega$
satisfies
\begin{equation}
\label{eq:deltaJS}
\delta_J S_\omega\circ\mathbf{J}\circ\operatorname{P} = - \operatorname{P}^*\operatorname{P}
\end{equation}
(see~\secref{section:cscKmmap} and~\cite[equation (26)]{D7}).
By~\eqref{eq:compatible-Y-imag_I} and Lemma~\ref{lem:pairsinvacs2},
$\mathbf{I} Y_{\zeta_0|I}$ is identified with the infinitesimal
action of $I\zeta_0$ on $(J,A)\in\cP_b$, where $A$ is the Chern
connection of $H$ and $I$, so
\begin{equation}
\label{eq:IYzeta}
  \mathbf{I} Y_{\zeta_1|I} = - L_{I\zeta_1}I
  = (- JL_{\eta_{\phi_1}}J,J(d_H\xi_1 + \eta_{\phi_1} \lrcorner F_H)),
\end{equation}
by
Lemma~\ref{lem:infinit-action-connections}. Hence~\eqref{eq:Dalphaselfadjoint}
follows from formula~\eqref{eq:thm-muX} for the moment map $\mu_b$.
\end{proof}

Given a pair $b=(\omega,H)\in B_\Omega$, an element $\zeta\in\LieX_b$
is in $\LieX_I$ if and only if $Y_{\zeta|I}=0$.
%%%
Using~\eqref{eq:IYzeta}, we see that $\LieX_I
\subset \LieX_b $ is the subset of elements $\zeta=\xi +
\theta_H^\perp\eta_\phi$ such that
\begin{equation}
\label{eq:infinit-action-connections2}
\operatorname{P}\phi = 0, \quad d_H \xi + \eta_\phi \lrcorner F_H = 0.
\end{equation}
Hence if $(\phi,\xi) \in C^\infty(X)\times\Omega^0(\ad E_H)$ satisfies
$\xi + \theta_H^\perp\eta_\phi\in \LieX_I$, then it is in
$\ker\mathbf{L}_{\alpha,b}$ (see~\eqref{eq:Dalphapaoperator}). We
provide now sufficient conditions to obtain the converse implication.

Given a pair $b=(\omega,H)\in B_\Omega$, $H$ is a
\emph{Hermitian--Yang--Mills} reduction (HYM) on $(E^c,I)$ with
respect to $\omega$ if it satisfies
\begin{equation}
\label{eq:HMY-reduction}
\Lambda_\omega F_H=z_\Omega.
\end{equation}
Note that if $H$ is HYM on $(E^c,I)$
with respect to $\omega$, then~\eqref{eq:Dalphaselfadjoint} becomes
simply
\begin{equation}
\label{eq:bra-L-ket-HYM}
\langle v_0, \mathbf{L}_{\alpha,b} v_1\rangle_\omega
= \omega_b(Y_{\zeta_0|I},\mathbf{I} Y_{\zeta_1|I}).
\end{equation}

\begin{proposition}
\label{propo:impfunc}
The operator $\mathbf{L}_{\alpha,b}$ is elliptic. If $H$ is
HYM %%%on $(E^c,I)$
with respect to $\omega$, then $\mathbf{L}_{\alpha,b}$ is also
self-adjoint. If furthermore $\alpha_0\alpha_1 > 0$, then
\begin{equation}
\label{eq:ker-L}
\ker\mathbf{L}_{\alpha,b} = \{(\phi,\xi) \in C^\infty(X)\times\Omega^0(\ad E_H)
\,|\, \xi + \theta_H^\perp\eta_\phi\in \LieX_I\}.
\end{equation}
\end{proposition}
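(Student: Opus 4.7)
My plan is to treat the three parts in order, relying on Douglis--Nirenberg ellipticity for~(i) and on the identity~\eqref{eq:bra-L-ket-HYM} for~(ii) and~(iii).

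For~(i), I would verify Douglis--Nirenberg ellipticity with weights $s_0 = 2$, $s_1 = 0$ on the two equations and $t_\phi = t_\xi = 2$ on the two unknowns, so that each entry of $\mathbf{L}_{\alpha,b}$ has order at most $s_i + t_j$. Under these weights, the $(0,\xi)$-block is of order $2 < s_0 + t_\xi = 4$, so its principal symbol vanishes, and the principal Douglis--Nirenberg symbol at $\eta \neq 0$ is block lower-triangular with diagonal blocks given by the symbols of $\alpha_0 \operatorname{P}^*\operatorname{P}$ (the Lichnerowicz operator, whose principal symbol is proportional to $|\eta|^4 \Id$) and of $4\alpha_1 d_H^* d_H$ (with principal symbol $4\alpha_1 |\eta|^2 \Id_{\ad E_H}$). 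Since $\alpha_0, \alpha_1 \neq 0$, both diagonal blocks are invertible for $\eta \neq 0$, so the full principal symbol is invertible and $\mathbf{L}_{\alpha,b}$ is elliptic.

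For~(ii) and~(iii), the HYM condition $\Lambda_\omega F_H = z_\Omega$ cancels the extra term in~\eqref{eq:Dalphaselfadjoint}, leaving
\begin{equation*}
\langle v_0, \mathbf{L}_{\alpha,b} v_1 \rangle_\omega = \omega_b(Y_{\zeta_0|I}, \mathbf{I}_b Y_{\zeta_1|I}),
\end{equation*}
where $\zeta_j = \xi_j + \theta_H^\perp \eta_{\phi_j}$. By Theorem~\ref{prop:Kahlerfibration}, when $\alpha_0, \alpha_1 > 0$ the bilinear form $\omega_b(\cdot, \mathbf{I}_b \cdot)$ is the Riemannian metric of the K\"ahler manifold $(Z_b, \mathbf{I}_b, \omega_b)$, hence symmetric; the case $\alpha_0, \alpha_1 < 0$ is identical after flipping the global sign. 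Symmetry of the right-hand side in $(v_0, v_1)$ then yields self-adjointness. For~(iii), the hypothesis $\alpha_0 \alpha_1 > 0$ makes this symmetric form definite, so $\omega_b(Y_{\zeta|I}, \mathbf{I}_b Y_{\zeta|I}) = 0$ if and only if $Y_{\zeta|I} = 0$, which by~\eqref{eq:infinit-action-connections2} is equivalent to $\zeta \in \LieX_I$. Taking $v_0 = v_1 = v$ in the identity above gives the inclusion ``$\subseteq$'' in~\eqref{eq:ker-L}; conversely, if $\zeta \in \LieX_I$ then $Y_{\zeta|I} = 0$, so $\langle v_0, \mathbf{L}_{\alpha,b} v \rangle_\omega = 0$ for every $v_0$, and non-degeneracy of $\langle \cdot, \cdot \rangle_\omega$ forces $\mathbf{L}_{\alpha,b} v = 0$.

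The main technical obstacle is the Douglis--Nirenberg bookkeeping in~(i): one has to verify carefully that the cross-terms $-2\alpha_1 \Lambda^2_\omega((F_H - \hat{z}_\omega) \wedge d_H J d_H \xi)$ in $\mathbf{L}^0$ and $4\alpha_1 d_H^*(\eta_\phi \lrcorner F_H)$ in $\mathbf{L}^1$ really are of order $2$ (rather than $4$) in $\xi$ and $\phi$ respectively, so that the block-triangular structure of the principal symbol is genuine and the diagonal argument applies.
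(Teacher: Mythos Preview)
Your argument follows the same route as the paper's, using identity~\eqref{eq:bra-L-ket-HYM} for parts~(ii) and~(iii), and your treatment of the kernel is identical to the paper's. Your ellipticity argument is more careful than the paper's: the paper simply asserts that $\mathbf{L}_{\alpha,b}$ is elliptic ``because so are $\operatorname{P}^*\operatorname{P}$ and $d_H^*d_H$'', whereas you spell out the Douglis--Nirenberg weights that make this rigorous for a mixed-order system, and you correctly flag the cross-term orders as the point to check.

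There is one small gap in your self-adjointness argument. The proposition claims self-adjointness under the HYM hypothesis alone, with no sign restriction on $\alpha_0,\alpha_1$; you establish symmetry of $\omega_b(\cdot,\mathbf{I}_b\cdot)$ only when $\alpha_0\alpha_1>0$, by invoking the K\"ahler metric (or its negative). The paper's argument is sign-free: $\omega_b(\cdot,\mathbf{I}\cdot)$ is symmetric whenever $\omega_b$ is of type $(1,1)$ with respect to $\mathbf{I}$, since then $\omega_b(v,\mathbf{I}u)=\omega_b(\mathbf{I}v,\mathbf{I}^2u)=-\omega_b(\mathbf{I}v,u)=\omega_b(u,\mathbf{I}v)$. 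Both $\omega_\cJ$ and $\omega_\cA$ are of type $(1,1)$, so this holds for arbitrary real $\alpha_0,\alpha_1$ (cf.\ Remark~\ref{rem:sign-alpha-sigma-F}). Replacing your K\"ahler argument by this one-line observation closes the gap.
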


\begin{proof}
The operator $\mathbf{L}_{\alpha,b}$ is elliptic because so are
$\operatorname{P}^*\operatorname{P}$ and
$d_H^*d_H$. If $H$ is
HYM, then we can apply~\eqref{eq:bra-L-ket-HYM}, where
$\omega_b(\cdot, \mathbf{I}\cdot)$ is symmetric, so $\mathbf{L}_{\alpha,b}$ is
self-adjoint. We have already seen that the right-hand side
of~\eqref{eq:ker-L} is contained in $\ker\mathbf{L}_{\alpha,b}$. If
in addition $\alpha_0\alpha_1 > 0$, then $\omega_b$ is compatible with
either $\mathbf{I}$ or $-\mathbf{I}$
(see~\secref{sec:Ceqcoupled-equations}), so if $v=(\phi,\xi)$
satisfies $\zeta\defeq \xi+\theta_H^\perp\eta_\phi\notin\LieX_I$,
then $\langle v, \mathbf{L}_{\alpha,b} v\rangle_\omega =
\omega_b(Y_{\zeta|I},\mathbf{I} Y_{\zeta|I}) \neq 0$
by~\eqref{eq:bra-L-ket-HYM}, and hence
$v\notin\ker\mathbf{L}_{\alpha,b}$. This implies~\eqref{eq:ker-L}.
\end{proof}

Observe that, although $\mathbf{L}_{\alpha,b}$ is an analogue in our
context of the Lichnerowicz operator, there is an important difference
between these two operarors, since by Proposition~\ref{propo:impfunc},
we can ensure that $\mathbf{L}_{\alpha,b}$ is self-adjoint and its
kernel corresponds to $\LieX_I$ via~\eqref{eq:zetaphixi} only when
$b=(\omega,H)$ satisfies the Hermitian--Yang--Mills
equation~\eqref{eq:HMY-reduction} and $\alpha_0\alpha_1 > 0$.

\subsection{The linearized coupled equations}
\label{sec:Defmmapoperator}

Throughout~\secref{sec:Defmmapoperator}, we fix a coupling constant
$\alpha\in\RR^2$, a holomorphic structure $I$ on $E^c$ over $(X,J)$, a
K\"{a}hler class $\Omega$ on $(X,J)$ and $b=(\omega,H)\in
B_\Omega$. Let $H^{1,1}(X,\RR)\subset H^2(X,\RR)$ be the vector subspace of those
de Rham classes which are representable by real closed $(1,1)$-forms
on $(X,J)$. Recall that $H^{1,1}(X,\RR)$ is identified by Hodge theory
with the space $\cH^{1,1}(X)$ of real harmonic $(1,1)$-forms on
$(X,J,\omega)$.

In~\secref{sec:Defmmapoperator}, we will compute the first-order
deformations of the moment map $\mu_b$ constructed in
Theorem~\ref{prop:Kahlerfibration} under deformations given by a new
K\"{a}hler form $\widetilde{\omega}$ and a new holomorphic structure
$\widetilde{I}$ on the principal bundle $E^c$ over $(X,J)$, given by
\begin{subequations}
\label{eq:omega-deformations}
\begin{align}
\label{eq:omegatilde}
\widetilde{\omega} & \defeq \omega + \gamma + dd^c\phi,
\\
\label{eq:Htilde}
\widetilde{I} & \defeq e^{\imag\xi} \cdot I,
\end{align}
\end{subequations}
parametrised by a triple
\[
  (\gamma, \phi, \xi) \in \cH^{1,1}(X)\times C^\infty(X)\times \Omega^0(\ad E_H).
\]
We will also consider the deformed pair
\begin{equation}
\label{eq:btilde}
\widetilde{b}=(\widetilde{\omega},\widetilde{H})\in B_{\widetilde{\Omega}},
\text{ with } \widetilde{H}\defeq e^{-\imag\xi} \cdot H\in\cR,
\end{equation}
where $\widetilde{\Omega}$ is the cohomology class of
$\widetilde{\omega}$. Note that~\eqref{eq:changecurvature} implies
\begin{equation}
\label{eq:Ftilde}
F_{H,  \widetilde{I}} = e^{\imag\xi}\cdot F_{\widetilde{H},I},
\end{equation}
where $F_{H,I}$ is the curvature of the Chern connection
$\theta_H=\theta_{H,I}$ associated to $H$ and $I$.

In fact, to prove Theorems~\ref{thm:DeformationCYMeq2}
and~\ref{thm:DeformationCYMeq4}, we will need to apply the implicit
function theorem, so we will work in Sobolev spaces. Let $L^2_k(X)$
and $L^2_k(\ad E_H)$ be the Sobolev spaces of real-valued functions on
$X$ and sections of the bundle $\ad E_H$, respectively, whose
distributional derivatives up to order $k$ are square
integrable. These are real Hilbert spaces that, by the Sobolev
embedding theorem, have natural bounded inclusion maps
$L^2_k(X)\subset C^l(X)$ and $L^2_k(\ad E_H)\subset C^l(\ad E_H)$ into
the Banach spaces of $l$-times continously differentiable functions
and sections of $\ad E_H$, respectively, provided $k>n+l$. Moreover,
if $k>n$, then $L^2_k(X)$ is a Banach algebra.
Fix $k>n$. Let
\begin{equation}
\label{eq:subset-U}
  \mathcal{U}=\hat{\mathcal{U}}\times L^2_{k+4}(\ad E_H) \subset \cH^{1,1}(X) \times L^2_{k+4}(X) \times L^2_{k+4}(\ad E_H),
\end{equation}
where $\hat{\mathcal{U}}\subset\cH^{1,1}(X)\times L^2_{k+4}(X)$ is the
open neighbourhood of $(0,0)$ consisting of pairs $(\gamma, \phi)$
such that $\widetilde{\omega}(\cdot,J\cdot)$ is a K\"{a}hler metric of
class $C^2$, with $\widetilde{\omega}$ defined
by~\eqref{eq:omegatilde}. Define the \emph{moment map operator}
\begin{equation}
\label{eq:Talphamap}
\begin{gathered}
\xymatrix @R=0ex @C=-16ex {**[l]
\operatorname{T}_\alpha=(\operatorname{T}_\alpha^0,\operatorname{T}_\alpha^1) \colon
\; \mathcal{U} \ar[r] & **[r] L^2_{k}(X) \times L^2_{k+2}(\ad E_H)\\
**[l](\gamma,\phi,\xi) \ar@{|->}[r] &  **[r]
\(S_\alpha(\widetilde{b},I), %%constant: -\hat{S}_{\alpha,\widetilde{\Omega}},
4\alpha_1(\Lambda_{\widetilde{\omega}}F_{H,\widetilde{I}} - z_{\widetilde{\Omega}})\),\\
}
\end{gathered}
\end{equation}
where $\widetilde{\omega}, \widetilde{I}$ and $\widetilde{b}$ are
defined by~\eqref{eq:omega-deformations} and~\eqref{eq:btilde}, while
%%constant: $\hat{S}_{\alpha,\widetilde{\Omega}}$,
$S_\alpha(\widetilde{b},I)$ and $z_{\widetilde{\Omega}}$ are given by
the formulae~\eqref{eq:Salpha} and~\eqref{eq:HMY-reduction}, using the
K\"{a}hler class
\[
  \widetilde{\Omega}\defeq [\widetilde{\omega}]\in H^{1,1}(X,\RR).
\]
Observe that $\operatorname{T}_\alpha$ is a variant for Sobolev
spaces of the families of moment maps $\mu_b$.

The following proposition can be compared with~\cite[Proposition~5]{LS1}.

\begin{proposition}
\label{propo:DefTalpha}
For $k>n$, $\operatorname{T}_\alpha$ is a well-defined $C^1$ map
whose Fr\'echet derivative $\delta \operatorname{T}_\alpha$ at the
origin $(0,0,0)$ is given by
\begin{equation}
\label{eq:Talphamapder}
\begin{split}
\delta\operatorname{T}_\alpha(\dot{\gamma},\dot{\phi},\dot{\xi}) = \mathbf{L}_{\alpha,b}(\dot{\phi},\dot{\xi}) & + \((d(S_\alpha(b,I)),d\dot{\phi})_\omega,4\alpha_1J\eta_{\dot{\phi}}\lrcorner d_H\Lambda_\omega F_H\)\\
 & + \delta_{\dot{\gamma}}\operatorname{T}_\alpha,\\
\end{split}
\end{equation}
for all $(\dot{\gamma},\dot{\phi},\dot{\xi})\in \cH^{1,1}(X) \times
L^2_{k+4}(X) \times L^2_k(\ad E_H)$, where
\[
\mathbf{L}_{\alpha,b}\colon L^2_{k+4}(X) \times L^2_{k+4}(\ad E_H) \lto
L^2_{k}(X) \times L^2_{k+2}(\ad E_H)
\]
is given by~\eqref{eq:Dalphapaoperator}, $(\cdot,\cdot)_\omega$ is the
inner product on $T^*X$ induced by $\omega(\cdot,J\cdot)$, $\eta_{\dot{\phi}}$ is the Hamiltonian vector
field of $\dot{\phi}$ on $(X,\omega)$ and
$\delta_{\dot{\gamma}}\operatorname{T}_\alpha$ is the
directional derivative of $\operatorname{T}_\alpha$ at the
origin in the direction $(\dot{\gamma},0,0)$.
\end{proposition}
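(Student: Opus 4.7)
The map $\operatorname{T}_\alpha$ is constructed from the scalar curvature $S_{\widetilde\omega}$, the Chern curvature $F_{H,\widetilde I}$, and the contractions $\Lambda_{\widetilde\omega}$ and $\Lambda^2_{\widetilde\omega}$, composed via pointwise multiplications and the exponential $e^{\imag\xi}$. My plan is, first, to establish well-definedness and $C^1$ regularity via Sobolev multiplication, and second, to compute the Fr\'echet derivative at $(0,0,0)$ by decomposing the variation into the three independent directions $\dot\gamma$, $\dot\phi$, $\dot\xi$ and assembling the contributions.

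For well-definedness and regularity, the crucial point is that $k>n$ makes $L^2_k$ a Banach algebra by the Sobolev multiplication theorem, and $L^2_{k+4}\hookrightarrow C^4$ by Sobolev embedding. Hence, in a neighbourhood of $(0,0)$ in $\hat{\mathcal{U}}$, the form $\widetilde\omega$ is a genuine K\"ahler form whose inverse metric depends smoothly on $(\gamma,\phi)$ as a section in $L^2_{k+2}$; the gauge $e^{\imag\xi}$ depends smoothly on $\xi\in L^2_{k+4}$ by the power series; consequently $F_{H,\widetilde I}\in L^2_{k+3}$ and $\Lambda_{\widetilde\omega}F_{H,\widetilde I}\in L^2_{k+2}$, while the second derivatives of $\widetilde\omega$ appearing in $S_{\widetilde\omega}$ lie in $L^2_k$ and the $\Lambda^2$-terms lie in $L^2_k$ as well. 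Each composition is smooth (in fact analytic) between the relevant Sobolev spaces, so $\operatorname{T}_\alpha$ is $C^1$.

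To compute $\delta\operatorname{T}_\alpha$ I will treat each direction separately. Setting $\dot\gamma=\dot\phi=0$, the metric is frozen and the variation comes entirely from $\widetilde I = e^{\imag t\dot\xi}\cdot I$; the standard deformation formula for the Chern curvature under a change of reduction gives $\delta_{\dot\xi}F_{H,\widetilde I} = d_HJd_H\dot\xi$ (up to a sign fixed by the conventions), so via the K\"ahler identity $\Lambda_\omega d_HJ=d_H^*$ one obtains $\delta_{\dot\xi}\operatorname{T}_\alpha^1 = 4\alpha_1 d_H^* d_H\dot\xi=\mathbf{L}_{\alpha,b}^1(0,\dot\xi)$, and expanding $\Lambda^2_\omega((F_H-\hat z_\omega)\wedge\cdot)$ yields $\delta_{\dot\xi}\operatorname{T}_\alpha^0=\mathbf{L}_{\alpha,b}^0(0,\dot\xi)$. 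Setting $\dot\gamma=\dot\xi=0$, the bundle data are frozen and only $\widetilde\omega=\omega+dd^c\dot\phi$ varies; the classical LeBrun--Simanca linearization of scalar curvature (compare~\cite{LS1}) gives $\delta_{\dot\phi} S_{\widetilde\omega} = -\operatorname{P}^*\operatorname{P}\dot\phi + (dS_\omega,d\dot\phi)_\omega$, and analogous Lie-derivative computations for $\Lambda_{\widetilde\omega}F_H$ and $\Lambda^2_{\widetilde\omega}((F_H-\hat z_\omega)\wedge(F_H-\hat z_\omega))$ produce the principal-part contribution to $\mathbf{L}_{\alpha,b}(\dot\phi,0)$ plus transport terms involving the Hamiltonian vector field $\eta_{\dot\phi}$; note that $\hat z_{\widetilde\omega}=\hat z_\omega$ because $\widetilde\Omega=\Omega$, so this factor contributes nothing. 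These transport terms aggregate exactly to $((d(S_\alpha(b,I)),d\dot\phi)_\omega,\, 4\alpha_1 J\eta_{\dot\phi}\lrcorner d_H\Lambda_\omega F_H)$. Finally, the $\dot\gamma$-direction contributes $\delta_{\dot\gamma}\operatorname{T}_\alpha$ by definition.

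The main technical obstacle is the $\dot\phi$-direction. Matching the variation of the cross-term $\Lambda^2_{\widetilde\omega}((F_H-\hat z_\omega)\wedge(F_H-\hat z_\omega))$ with $\mathbf{L}_{\alpha,b}^0(\dot\phi,0)$ plus the single gradient correction $(d(S_\alpha(b,I)),d\dot\phi)_\omega$ requires a careful Lie-derivative calculation on $(p,q)$-forms, combined with repeated use of the K\"ahler identities and the Bianchi identity $d_HF_H=0$, in order to move all derivatives onto $\dot\phi$ and to collect the contributions from the $-\alpha_0 S_\omega$ and $-\alpha_1\Lambda^2_\omega(\cdot)$ pieces of $S_\alpha(b,I)$ into a single term involving $dS_\alpha(b,I)$.
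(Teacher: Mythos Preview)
Your overall strategy---treating well-definedness via Sobolev multiplication and then computing directional derivatives separately in $\dot\gamma$, $\dot\phi$, $\dot\xi$---is sound in principle but differs substantially from the paper's proof, and your sketch contains an error and leaves the hardest step undone.

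The paper does \emph{not} separate the $\dot\phi$ and $\dot\xi$ directions. Instead it treats them together by the following equivariance trick: given the curve $b_t=(\widetilde\omega+tdd^c\dot\phi,\,e^{-\imag(\xi+t\dot\xi)}\cdot H)$, let $g_t$ be the flow of $y_t=I\zeta_I(\dot b_t)$, so that $b_t=g_t\cdot\widetilde b$. Then the $\Gamma$-equivariance~\eqref{eq:Salpha-bI-g} gives $S_\alpha(b_t,I)=S_\alpha(\widetilde b,I_t)\circ\check g_t^{-1}$ with $I_t=g_t^{-1}\cdot I$. Differentiating at $t=0$ converts the variation of the \emph{metric} into a variation of the \emph{complex structure}, and the latter is immediately identified with $\mathbf{L}_{\alpha,b}^0$ via the moment-map interpretation~\eqref{eq:deltaJS} and~\eqref{eq:IYzeta}. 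The transport term $(d(S_\alpha),d\dot\phi)_\omega$ arises automatically from the pullback by $\check g_t^{-1}$. An analogous argument using~\eqref{eq:changecurvature} handles $\operatorname{T}_\alpha^1$. This bypasses entirely the ``careful Lie-derivative calculation'' you flag as the main obstacle.

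Your direct approach could also succeed, but two points need attention. First, your claim that $\hat z_{\widetilde\omega}=\hat z_\omega$ because $\widetilde\Omega=\Omega$ is false: the scalar $z=z_\Omega\in\mathfrak{z}$ is cohomological, but $\hat z_\omega=\frac{z\omega}{n-1}$ is a $2$-form that genuinely changes with $\omega$, contributing an extra term you have not tracked. Second, you assert that the transport terms ``aggregate exactly'' to the claimed expression without computation; in a direct approach this is the heart of the proof, requiring you to show that the variations of $\Lambda_\omega^2$, of $\hat z_\omega$, and of the quadratic form in $F_H$ combine (via Bianchi and the K\"ahler identities) to yield precisely $-2\alpha_1\Lambda_\omega^2\bigl((F_H-\hat z_\omega)\wedge d_HJ(\eta_{\dot\phi}\lrcorner F_H)\bigr)$ plus the $S_\alpha$-gradient correction. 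The paper's equivariance argument is what makes this step transparent.
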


\begin{proof}
The operator $\operatorname{T}_\alpha$ is well-defined because
$L^2_k(X)$ is a Banach algebra for $k>n$,
$\operatorname{T}_\alpha^0$ is a non-linear differential
operator of order 4 in $\phi$ and order 2 in $\gamma$ and $\xi$, while
$\operatorname{T}_\alpha^1$ is a non-linear differential
operator of order 2 in $\phi$ and $\xi$ and order $0$ in $\gamma$.

To prove that $\operatorname{T}_\alpha$ is $C^1$, we will
calculate its directional derivatives
$\delta_{(\dot{\phi},\dot{\xi})}\operatorname{T}_\alpha(\gamma,\phi,\xi)$
and $\delta_{\dot{\gamma}}\operatorname{T}_\alpha(\gamma,\phi,\xi)$
at $(\gamma,\phi,\xi)$ in the directions $(0,\dot{\phi},\dot{\xi})$
and $(\dot{\gamma},0,0)$, respectively, for $(\gamma,\phi,\xi) \in
\mathcal{U}$, $(\dot{\gamma},\dot{\phi},\dot{\xi})\in \cH^{1,1}(X)
\times L^2_{k+4}(X) \times L^2_k(\ad E_H)$.

To compute $\delta_{(\dot{\phi},\dot{\xi})}\operatorname{T}_\alpha(\gamma,\phi,\xi)$,
we define a curve (on an appropriate Sobolev completion of $B_\Omega$ and for $|t|$ small),
given by
\[
b_t = (\widetilde{\omega}_t,\widetilde{H}_t)\defeq
(\widetilde{\omega}+tdd^c\dot{\phi},e^{-\imag(\xi+t\dot{\xi})}\cdot H).
\]
Let $\eta_t$ be the Hamiltonian vector field of $\dot{\phi}$ over
$(X,\widetilde{\omega}_t)$ and $g_t$ the flow of
\begin{equation}
\label{eq:DefTalpha-y_t}
y_t\defeq I\zeta_I(\dot b_t) = - I(\dot{\xi} +
\theta_{H_t}^\perp\eta_t),
\end{equation}
i.e. the curve of $G^c$-equivariant automorphisms of $E^c$ satisfying
$\dot g_t\cdot g_t^{-1} =y_t$, with initial condition $g_0=\Id$. Since
the K\"ahler class $\widetilde{\Omega}$ of $\widetilde{\omega}_t$ is
constant along the curve $b_t$, we can apply the constructions in the
proof of Proposition~\ref{prop:4-tuples-C1-4}(1), so the flow $g_t$
exists and satisfies
\begin{equation}
\label{eq:flow-DefTalpha}
b_t = g_t\cdot\tilde{b}
\end{equation}
(as $b_0=\tilde{b}$). Note that the identity~\eqref{eq:flow-DefTalpha}
holds in a strong sense, as $k > n$, so the K\"ahler metrics
$\widetilde{\omega}_t$ are of class $C^2$ and the $G$-reductions
$\widetilde{H}_t$ are of class $C^4$. Define another curve
\[
  I_t \defeq g^{-1}_t\cdot I
\]
in (an appropriate Sobolev completion of) the space $Z_{\widetilde{b}}$ of holomorphic structures on the
principal $G^c$-bundle $E^c$ which are compabible with $\widetilde{b}$
(see~\secref{sub:concrete-setup}). Using
the dependence of $S_\alpha(b_t,I)$ on the holomorphic structure $I$
on $E^c$, we obtain
\[
\operatorname{T}^0_t
\defeq\operatorname{T}_\alpha^0(\gamma,\phi+t\dot{\phi},\xi+t\dot{\xi})
=S_\alpha(b_t,I) %%constant: -\hat{S}_{\alpha,\widetilde{\Omega}}
=S_\alpha(\widetilde{b},I_t)\circ\check{g}_t^{-1} %%constant: -\hat{S}_{\alpha,\widetilde{\Omega}},
\]
by~\eqref{eq:Salpha-bI-g} and~\eqref{eq:flow-DefTalpha}. Since
$\frac{d}{dt}_{|t=0} I_t = L_{y_0}I$, this implies
\begin{equation}
\label{eq:directional-deriv-1-Talpha^0}
\begin{split}
\delta_{(\dot{\phi},\dot{\xi})}\operatorname{T}_\alpha^0(\gamma,\phi,\xi) & =
\frac{d}{dt}_{|t=0}\operatorname{T}^0_t = (\delta_I S_\alpha)_{|(\tilde{b},I)}(L_{y_0}I) + J\eta_{\dot{\phi}} \lrcorner d(S_\alpha(\tilde{b},I))\\
& = (\delta_I S_\alpha)_{|(\tilde{b},I)}(L_{y_0}I) + (d(S_\alpha(\tilde{b},I)),d\dot{\phi})_{\widetilde{\omega}},
\end{split}
\end{equation}
where $\delta_I S_\alpha\colon T_I Z_{\widetilde{b}}\to C^\infty(X)$ is the
derivative of $S_\alpha$ with respect to $I$. Now, by~\eqref{eq:IYzeta}
\[
L_{y_0}I = (- JL_{\eta_{\dot{\phi}}}J,
J(d_{\widetilde{H}}\dot{\xi} + \eta_{\dot{\phi}} \lrcorner F_{\widetilde{H}}))
\]
and from this formula,~\eqref{eq:deltaJS}
and~\eqref{eq:Dalphapaoperator}, we obtain
\begin{align*}
(\delta_I S_\alpha)_{|(\widetilde{b},I)}(\mathbf{I}Y_{\zeta_{\dot{b}}}) & =
\alpha_0 \operatorname{P}^*\operatorname{P} \dot{\phi}
% \\ &
- 2\alpha_1 \Lambda^2_{\widetilde{\omega}} (F_{\widetilde{H}}\wedge d_{\widetilde{H}}J(d_{\widetilde{H}}\dot{\xi} + \eta_{\dot{\phi}} \lrcorner F_{\widetilde{H}}) + \mathbf{L}_{\alpha,\widetilde{b}}^1(\dot{\phi},\dot{\xi}) \wedge z_{\widetilde{\Omega}}
\\%%
& = \mathbf{L}_{\alpha,\widetilde{b}}^0(\dot{\phi},\dot{\xi}),
\end{align*}
where $z_{\widetilde{\Omega}}$ is defined as
in~\eqref{eq:Salpha} using the K\"{a}hler class $\widetilde{\Omega}$, so the right hand side
of~\eqref{eq:directional-deriv-1-Talpha^0} is
\begin{equation}
\label{eq:directional-deriv-Talpha^0}
\delta_{(\gamma,\phi,\xi)}\operatorname{T}_\alpha^0(0,\dot{\phi},\dot{\xi})
= \mathbf{L}_{\alpha,\widetilde{b}}^0
(\dot{\phi},\dot{\xi}) + (d(S_\alpha(\widetilde{b},I)),d\dot{\phi})_{\widetilde{\omega}}.
\end{equation}
By~\eqref{eq:changecurvature}, we also have
\begin{align*}
\operatorname{T}^1_t
& \defeq \operatorname{T}_\alpha^1(\gamma,\phi + \dot{\phi}_t,\xi + t \dot{\xi})\\
%= 4\alpha_1 (\Lambda_{\widetilde{\omega}_t} F_{H,\widetilde{I}_t} - z_{\widetilde{\Omega}})\\
& = 4\alpha_1\(e^{\imag (\xi + t \dot{\xi})}g_t\)\cdot
(\Lambda_{\widetilde{\omega}_t} F_{H,I_t} - z_{\widetilde{\Omega}}),
\end{align*}
and a straightforward calculation shows that
\begin{align}
\notag
\delta_{(\dot{\phi},\dot{\xi})}\operatorname{T}_\alpha^1(\gamma,\phi,\xi) = &
\frac{d}{dt}_{|t=0}\operatorname{T}^1_t
= 4\alpha_1\Lambda_{\widetilde{\omega}} d_{\widetilde{H}}J(d_{\widetilde{H}}\dot{\xi} + \eta_{\dot{\phi}} \lrcorner F_{\widetilde{H}}) + 4\alpha_1J\eta_{\dot{\phi}} \lrcorner d_{\widetilde{H}}\Lambda_{\widetilde{\omega}} F_{\widetilde{H}}
\\
\label{eq:directional-deriv-1-Talpha^1}
= &
\mathbf{L}_{\alpha,b}^1(\dot{\phi},\dot{\xi})
+ 4\alpha_1J\eta_{\dot{\phi}} \lrcorner d_{\widetilde{H}}\Lambda_{\widetilde{\omega}} F_{\widetilde{H}}.
\end{align}
%%%%%%%%%%
%%%%%%%%%%
To compute $\delta_{\dot{\gamma}}\operatorname{T}_\alpha(\gamma,\phi,\xi)$,
for $(\gamma,\phi,\xi)\in\mathcal{U}$ and $\dot{\gamma}\in\cH^{1,1}(X)$,
we define a curve
\[
  b_t=(\omega_t,\widetilde{H})=(\widetilde{\omega}+t\dot{\gamma},\widetilde{H})
\]
(for $t\in\RR$ small). Let
\begin{align*}
\operatorname{T}_t^0 \defeq &
\operatorname{T}_\alpha^0(\gamma+t\dot{\gamma},\phi,\xi)=S_\alpha(b_t,I)%% constant: -\hat{S}_{\alpha,\Omega_t},
\\
= & - \alpha_0 S_{\omega_t} - \alpha_1 \Lambda^2_{\omega_t}(F_{\widetilde{H}} \wedge F_{\widetilde{H}}) + 4\alpha_1\Lambda_{\omega_t}F_{\widetilde{H}}\wedge z_{\Omega_t} , %%constant:-\hat{S}_{\alpha,\Omega_t},
\\
\operatorname{T}_t^1 \defeq & \operatorname{T}_\alpha^1(\gamma+t\dot{\gamma},\phi,\xi)=4\alpha_1(\Lambda_{\omega_t}F_{H,\widetilde{I}} - z_{\Omega_t}),
\end{align*}
where $\Omega_t=[\omega_t]\in H^{1,1}(X,\RR)$. %, $\hat{z}_{\omega_t}= z_{\Omega_t}\omega_t/(n-1)$ if $n > 1$ and %$\hat{z}_{\omega_t} =
%F_{\widetilde{H}}$ if $n=1$.
As shown by LeBrun \& Simanca
(see~\cite[Proposition~5]{LS1} and~\cite[Proposition~6]{LS2}), the
derivative of the first term of $\operatorname{T}_t^0$ is given by
\[
\delta_{\dot{\gamma}}S_{\widetilde{\omega}}
\defeq\frac{d}{dt}_{|t=0} S_{\omega_t}
=\Delta_{\widetilde{\omega}}(\widetilde{\omega},\dot{\gamma})_{\widetilde{\omega}}
-2(\rho_{\widetilde{\omega}},\dot{\gamma})_{\widetilde{\omega}},
\]
where $\Delta_{\widetilde{\omega}}$ and $\rho_{\widetilde{\omega}}$
are the Laplacian and the Ricci curvature of
$\widetilde{\omega}(\cdot, J\cdot)$, respectively. To calculate the
derivatives of $\operatorname{T}_t^1$ and of the second term of
$\operatorname{T}_t^0$, we use the equality
\[
\frac{d}{dt}_{|t=0} \omega^{[n]}_t
=\dot{\gamma}\wedge\widetilde{\omega}^{[n-1]}
=(\Lambda_{\widetilde{\omega}}\dot{\gamma}) \widetilde{\omega}^{[n]}
\]
and the following computations:
\begin{align*}
\frac{d}{dt}_{|t=0} & \(\Lambda_{\omega_t}F_{H,\widetilde{I}}\omega_t^{[n]}\)
\\ = &
\frac{d}{dt}_{|t=0}\(F_{H,\widetilde{I}}\wedge \omega_t^{[n-1]}\) = F_{H,\widetilde{I}} \wedge \dot{\gamma} \wedge \tilde{\omega}^{[n-2]},
\\
%%\end{align*}\begin{align*}
\frac{d}{dt}_{|t=0}  & \(\(\Lambda_{\omega_t}^2(F_{\widetilde{H}}\wedge
F_{\widetilde{H}}) - 4\Lambda_{\omega_t}F_{\widetilde{H}}\wedge z_{\Omega_t}\) \omega_t^{[n]}\)
\\ = &
\frac{d}{dt}_{|t=0} \(2F_{\widetilde{H}}\wedge
F_{\widetilde{H}}\wedge \omega_t^{[n-2]} - 4F_{\widetilde{H}}\wedge z_{\Omega_t} \omega_t^{[n-1]}\)
\\ = & 2 F_{\widetilde{H}}\wedge F_{\widetilde{H}} \wedge \dot{\gamma} \wedge \widetilde{\omega}^{[n-3]}
%\\ &
- 4 F_{\widetilde{H}} \wedge \(z_{\widetilde{\Omega}} \dot{\gamma} + \frac{\delta_{\dot{\gamma}}z_{\widetilde{\Omega}} \widetilde{\omega}}{n-1}\) \wedge \widetilde{\omega}^{[n-2]}.
\end{align*}
Here,~\eqref{eq:HMY-reduction} implies
\begin{equation}
\label{eq:delta-z}
  \delta_{\dot{\gamma}}z_{\widetilde{\Omega}} \defeq \frac{d}{dt}_{|t=0} z_{\Omega_t}
  = \sum_j \beta_j z_j
\end{equation}
for an orthonormal basis $\{z_j\}$ of $\mathfrak{z}$, with
\begin{align*}
\beta_j \defeq & \frac{d}{dt}_{|t=0} \frac{\langle z_j(E)\cup \Omega_t^{[n-1]},[X]\rangle}{\Vol_{\Omega_t}}\\
= & \frac{\langle z_j(E)\cup [\dot{\gamma}] \cup \widetilde{\Omega}^{[n-2]},[X]\rangle}{\Vol_{\widetilde{\Omega}}} - \frac{\langle z_j(E)\cup \widetilde{\Omega}^{[n-1]},[X]\rangle \langle[\dot{\gamma}]\cup \widetilde{\Omega}^{[n-1]},[X]\rangle}{\Vol_{\widetilde{\Omega}}^2}.
\end{align*}
From these equalities, we obtain the directional
derivatives
\begin{subequations}
\label{eq:directional-deriv-2-Talpha}
\begin{align}
\label{eq:directional-deriv-2-Talpha^0}
\delta_{\dot{\gamma}} \operatorname{T}_\alpha^0  & (\gamma,\phi,\xi)
= % &
\frac{d}{dt}_{|t=0}\operatorname{T}_t^0
= \alpha_0\(2(\rho_{\widetilde{\omega}},\dot{\gamma})_{\widetilde{\omega}} - \Delta_{\widetilde{\omega}} (\widetilde{\omega},\dot{\gamma})_{\widetilde{\omega}}\)
\\ \notag &
- \frac{\alpha_1}{3}\Lambda_{\widetilde{\omega}}^3\(F_{H,\widetilde{I}}\wedge F_{H,\widetilde{I}}\wedge \dot{\gamma}\)
%\\ \notag &
 + 2\alpha_1\Lambda_{\widetilde{\omega}}^2\(F_{H,\widetilde{I}} \wedge \(z_{\widetilde{\Omega}} \dot{\gamma} + \frac{\delta_{\dot{\gamma}}z_{\widetilde{\Omega}} \widetilde{\omega}}{n-1}\)\)
\\ \notag
& + \alpha_1\(\Lambda_{\widetilde{\omega}}^2(F_{H,\widetilde{I}}\wedge F_{H,\widetilde{I}}) - 4\Lambda_{\widetilde{\omega}}F_{\widetilde{H}}\wedge z_{\widetilde{\Omega}}\)(\Lambda_{\widetilde{\omega}}\dot{\gamma}),\\
%%constant: + \delta_{\dot{\gamma}}\hat{S}_\alpha,\widetilde{\Omega}},\\
\label{eq:directional-deriv-2-Talpha^1}
\delta_{\dot{\gamma}} \operatorname{T}_\alpha^1  & (\gamma,\phi,\xi)
= \frac{d}{dt}_{|t=0}\operatorname{T}_t^1 = 4\alpha_1\((F_{H,\widetilde{I}},\dot{\gamma})_{\widetilde{\omega}} - \delta_{\dot{\gamma}}z_{\widetilde{\Omega}}\).
\end{align}
\end{subequations}
It now follows
from~\eqref{eq:directional-deriv-1-Talpha^0},~\eqref{eq:directional-deriv-1-Talpha^1},~\eqref{eq:directional-deriv-2-Talpha}
and the formula~\eqref{eq:delta-z} %%constant: and~\eqref{eq:delta-c}
for $\delta_{\dot{\gamma}}z_{\widetilde{\Omega}}$ %%constant: and $\delta_{\dot{\gamma}}\hat{S}_\alpha,\widetilde{\Omega}}$, respectively,
that the directional derivatives are continuous.
Therefore, $\operatorname{T}_\alpha$ is $C^1$ and its Fr\'echet
derivative given by~\eqref{eq:Talphamapder}
(by~\eqref{eq:directional-deriv-1-Talpha^0}
and~\eqref{eq:directional-deriv-1-Talpha^1}).
\end{proof}

Note that an explicit formula for the directional derivative
$\delta_{\dot{\gamma}} \operatorname{T}_\alpha$ has been
calculated in~\eqref{eq:directional-deriv-2-Talpha},
%%% (see~\eqref{eq:delta-z} and~\eqref{eq:delta-c}).
although it has not been recorded in~\eqref{eq:Talphamapder}, as it is
not needed in this paper.

\subsection{Deformation of solutions}
\label{sec:ANdeformingsolutions}

As in~\secref{sec:Defmmapoperator}, we now fix a holomorphic structure $I$ on $E^c$ over $(X,J)$, a
K\"{a}hler class $\Omega$ on $(X,J)$ and $b=(\omega,H)\in
B_\Omega$. Note that $\cH_J$ acts trivally on the space
$\cH^{1,1}(X)\subset\Omega^2(X)$ of real harmonic $(1,1)$-forms for
the metric $\omega(\cdot,J\cdot)$.
Let
\begin{equation}
\label{eq:inv-Sobolev-spc}
L^2_k(X)^{\cH_J} \subset L^2_k(X)
\quad \textrm{ and }
\quad L^2_{k}(\ad E_H)^{\cX_I} \subset L^2_k(\ad E_H)
\end{equation}
be the closed subspaces of $\cH_J$-invariant functions and
$\cX_I$-invariant sections, respectively. Let
$\hat{\mathcal{V}}=\hat{\mathcal{U}}\cap\(\cH^{1,1}(X)\times
L^2_k(X)^{\cH_J}\)$ and
\begin{equation}
\label{eq:subset-V}
\mathcal{V} = \hat{\mathcal{V}}\times L^2_{k}(\ad E_H)^{\cX_I}
= \mathcal{U} \cap \(\mathcal{H}^{1,1}(X) \times L^2_{k+4}(X)^{\cH_J} \times L^2_{k+4}(\ad E_H)^{\cX_I}\).
\end{equation}
Given coupling constants $\alpha\in\RR^2$, by restriction of the maps of Proposition~\ref{propo:DefTalpha}, for
$k>n$, we obtain well-defined maps
\begin{subequations}
\label{eq:Talphamap-Lalpha-invariant}
\begin{align}
\label{eq:Talphamapinvariant}
\hat{\operatorname{T}}_\alpha \colon &
\mathcal{V} \lto L^2_{k}(X)^{\cH_J} \times L^2_{k+2}(\ad E_H)^{\cX_I},
\\
\label{eq:Dalphapaoperatorinvariant}
\hat{\mathbf{L}}_{\alpha,b} \colon &
L^2_{k+4}(X)^{\cH_J} \times L^2_{k+4}(\ad
E_H)^{\cX_I} \lto L^2_{k}(X)^{\cH_J} \times L^2_{k+2}(\ad E_H)^{\cX_I}
\end{align}
\end{subequations}
(cf.~\cite[(5.1)]{LS2}), where
$\hat{\operatorname{T}}_\alpha$ is $C^1$ with Fr\'echet
derivative given by~\eqref{eq:Talphamapder}, and
$\hat{\mathbf{L}}_{\alpha,b}$ is a linear elliptic operator.

Let $d^*$ and $\mathbf{G}$ be the formal adjoint of the de Rham
differential and the Green operator of the Laplacian for the fixed
metric $\omega(\cdot,J\cdot)$, respectively. Then for any symplectic
form $\widetilde{\omega}$ and any $\widetilde{\eta}$ in the Lie algebra
$\LieH_{\widetilde{\omega}}$ of Hamiltonian vector fields over
$(X,\widetilde{\omega})$ we have
\begin{equation}
\label{eq:Hamiltonianfunction=Greenfunction}
d(\mathbf{G}d^*(\widetilde{\eta}\lrcorner\widetilde{\omega})) =\widetilde{\eta}\lrcorner\widetilde{\omega}.
\end{equation}
As the image of the Green operator is perpendicular to the constants,
the Hamiltonian function $f =
\mathbf{G}d^*(\widetilde{\eta}\lrcorner\widetilde{\omega})$ is
`normalized' for the volume form $\omega^{[n]}$, that is, $\int_X f
{\omega}^{[n]}=0$.

For each $(\gamma,\phi,\xi)\in\cV$, we define a linear map
\begin{equation}
\label{eq:Pbold}
\begin{gathered}
\xymatrix @R=0ex @C=-22ex {**[l]
\mathbf{P}_{(\gamma,\phi,\xi)}=(\mathbf{P}^0_{(\gamma,\phi)},\mathbf{P}^1_{\xi})\colon\;\;
\RR \times \mathfrak{z}(\LieX_I) \ar[r] & **[r]
L^2_{k}(X)^{\cH_J} \times L^2_{k+2}(\ad E_H)^{\cX_I}\\
**[l](t,v) \ar@{|->}[r] &  **[r]
\(\mathbf{G}d^*(p(v) \lrcorner \widetilde{\omega}) + t,\theta_{H,\widetilde{I}}v\),\\
}
\end{gathered}
\end{equation}
where
\[
  \mathfrak{z}(\LieX_I) \defeq(\LieX_I)^{\cX_I}
\]
is the centre of $\LieX_I$ (cf.~\eqref{eq:centre-z}) and $p \colon
\cX_I \to \cH_J$ is the map in~\eqref{eq:Ext-Lie-group-holmorphic},
while $\widetilde{\omega}$ and $\widetilde{H}$ are defined
by~\eqref{eq:omegatilde} and~\eqref{eq:btilde}.
%%%
The map $\mathbf{P}_{(\gamma,\phi,\xi)}$ attaches to a vector field
$v\in\mathfrak{z}(\LieX_I)$ its vertical part
$\theta_{H,\widetilde{I}}v$, calculates the normalized Hamiltonian
function of the vector field $p(v)$ over $(X,\widetilde{\omega})$, and
adds an extra parameter $t$ which accounts for the fact that
Hamiltonian functions are only determined up to a constant
(cf.~\eqref{eq:zetaphixi},~\cite[\S
5]{LS2},~\cite[Proposition~2]{LS1}).

Here is the key link between extremal pairs and the linarization of
the coupled equations.

\begin{lemma}
\label{lemma:PTalpha}
Let $(\gamma,\phi,\xi) \in \mathcal{V}$.
\begin{enumerate}
\item[\textup{(1)}]
   $\mathbf{P}_{(\gamma,\phi,\xi)}$ is injective.
\item[\textup{(2)}]
  If $\hat{\operatorname{T}}_\alpha(\gamma,\phi,\xi)\in\Im\mathbf{P}_{(\gamma,\phi,\xi)}$,
  then $\widetilde{b}=(\widetilde{\omega},\widetilde{H})$ is an
  extremal pair.
\item[\textup{(3)}]
  $\Im\mathbf{P}_{0}\subset\ker\hat{\mathbf{L}}_{\alpha,b}$, with
  equality if $\alpha_0\alpha_1 > 0$ and $H$ is HYM with respect to
  $\omega$.
\end{enumerate}
\end{lemma}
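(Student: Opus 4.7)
The proof will treat the three parts independently. Throughout, the essential tool is the defining property~\eqref{eq:Hamiltonianfunction=Greenfunction} of the Green operator $\mathbf{G}$, which converts an expression of the form $\mathbf{G}d^{*}(\eta\lrcorner\widetilde{\omega})$ into the normalised Hamiltonian function on $(X,\widetilde{\omega})$ of the Hamiltonian vector field $\eta$.

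For part (1), the argument is direct. Assuming $\mathbf{P}_{(\gamma,\phi,\xi)}(t,v)=0$, the vanishing of the second component gives $\theta_{H,\widetilde{I}}v=0$, i.e.\ $v$ is $\theta_{H,\widetilde{I}}$-horizontal and hence uniquely determined by $p(v)$. The vanishing of the first component reads $\mathbf{G}d^{*}(p(v)\lrcorner\widetilde{\omega})+t=0$; since the image of $\mathbf{G}$ is $L^{2}$-orthogonal to the constants, both $t$ and the Green contribution must vanish. Applying $d$ and~\eqref{eq:Hamiltonianfunction=Greenfunction} then yields $p(v)\lrcorner\widetilde{\omega}=0$, so $p(v)=0$ by non-degeneracy of $\widetilde{\omega}$. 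Combined with horizontality this forces $v=0$.

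For part (2), given $\hat{\operatorname{T}}_{\alpha}(\gamma,\phi,\xi)=\mathbf{P}_{(\gamma,\phi,\xi)}(t,v)$, the first component together with~\eqref{eq:Hamiltonianfunction=Greenfunction} gives $dS_{\alpha}(\widetilde{b},I)=p(v)\lrcorner\widetilde{\omega}$, so $p(v)=\eta_{\alpha}(\widetilde{b})$ is the Hamiltonian vector field of $S_{\alpha}(\widetilde{b})$ over $(X,\widetilde{\omega})$; since $v\in\LieX_{I}$ projects to a real-holomorphic vector field on $(X,J)$, this already gives the second extremal equation $L_{\eta_{\alpha}(\widetilde{b})}J=0$. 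For the first extremal equation, the second component of the hypothesis reads $4\alpha_{1}(\Lambda_{\widetilde{\omega}}F_{H,\widetilde{I}}-z_{\widetilde{\Omega}})=\theta_{H,\widetilde{I}}v$; transporting this by the gauge transformation $g=e^{\imag\xi}$, which sends $(\widetilde{H},I)$ to $(H,\widetilde{I})$, and recombining vertical and horizontal parts of $g^{-1}_{*}v$ over the new reduction gives $\zeta_{\alpha}(\widetilde{b})=-g^{-1}_{*}v$. Applying~\eqref{eq:extrpair-holomvectfield} to $\widetilde{b}$, it remains to show that $-g^{-1}_{*}v\in\LieX_{\widetilde{b},I}$, which is equivalent to $v\in\LieX_{\widetilde{I}}$. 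This is the main obstacle: it does not follow from $v\in\LieX_{I}$ alone, and I expect to invoke the assumption that $v$ lies in the centre $\mathfrak{z}(\LieX_{I})$ together with the $\cX_{I}$-invariance of $\xi$ to conclude that $[v,\imag\xi]=0$ in $\LieGamma$, whence the flow of $v$ commutes with $e^{\imag\xi}$ and consequently preserves $\widetilde{I}=e^{\imag\xi}\cdot I$.

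For part (3), specialise to $(\gamma,\phi,\xi)=0$, so $\widetilde{\omega}=\omega$, $\widetilde{H}=H$ and $\widetilde{I}=I$. Given $(t,v)\in\RR\times\mathfrak{z}(\LieX_{I})$, set $(\phi_{0},\xi_{0}):=\mathbf{P}_{0}(t,v)$. By~\eqref{eq:Hamiltonianfunction=Greenfunction}, $\eta_{\phi_{0}}=p(v)$, so $\xi_{0}+\theta_{H}^{\perp}\eta_{\phi_{0}}=\theta_{H,I}v+\theta_{H}^{\perp}p(v)=v\in\LieX_{I}$, and the unconditional inclusion $\supseteq$ in~\eqref{eq:ker-L} places $(\phi_{0},\xi_{0})$ in $\ker\mathbf{L}_{\alpha,b}$; the $\cH_{J}$- and $\cX_{I}$-invariance required for the domain of $\hat{\mathbf{L}}_{\alpha,b}$ pass directly from $v\in\mathfrak{z}(\LieX_{I})$ to $(\phi_{0},\xi_{0})$ through the explicit formulas. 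Conversely, under the HYM and sign hypotheses, given $(\phi,\xi)\in\ker\hat{\mathbf{L}}_{\alpha,b}$, Proposition~\ref{propo:impfunc} yields $v:=\xi+\theta_{H}^{\perp}\eta_{\phi}\in\LieX_{I}$, which lies in $\mathfrak{z}(\LieX_{I})$ by the invariance of $(\phi,\xi)$; taking $t$ to be the mean of $\phi$ with respect to $\omega^{[n]}/\Vol_{\Omega}$ absorbs the constant left untouched by the Green operator and gives $\mathbf{P}_{0}(t,v)=(\phi,\xi)$.
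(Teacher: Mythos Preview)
Your proof is correct and follows essentially the same ideas as the paper's, with a slightly different organization. A few remarks:

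For part~(1), you should justify that $p(v)\in\LieH_{\widetilde{\omega}}$ before invoking \eqref{eq:Hamiltonianfunction=Greenfunction}: \emph{a priori} $p(v)$ is only Hamiltonian for $\omega$, not for $\widetilde{\omega}$. The paper handles this (and in fact organises the whole proof around it) by writing $p(v)=\widetilde{\eta}_\psi+\beta$ with $\beta$ parallel for $\widetilde{\omega}(\cdot,J\cdot)$ and observing that $\beta=0$ because $p(v)\in\LieH_J$ vanishes somewhere on $X$. This yields a recovery formula $v=\chi+\theta_{H,\widetilde{I}}^\perp\widetilde{\eta}_f$ with $\widetilde{\eta}_f=p(v)$, from which all three parts are read off; your direct approach works too, but needs this same observation.

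For part~(2), your route via \eqref{eq:extrpair-holomvectfield} and the identity $\zeta_\alpha(\widetilde{b})=-g_*^{-1}v$ is arguably more transparent than the paper's, which instead verifies the two equations~\eqref{eq:extremalpair} directly. Both arguments hinge on the same commutativity $[v,\imag\xi]=0$: in your formulation it gives $g_*^{-1}v=v\in\LieX_I$, while in the paper's it is used implicitly (together with~\eqref{eq:changecurvature}) to identify $\theta_{\widetilde{H},I}v$ with $4\alpha_1(\Lambda_{\widetilde{\omega}}F_{\widetilde{H}}-z)$. Note that centrality of $v$ is actually irrelevant here --- the $\cX_I$-invariance of $\xi$ (built into the definition of $\mathcal{V}$) already gives $[v,\xi]=0$ for every $v\in\LieX_I$.

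Part~(3) is identical to the paper's argument.
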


\begin{proof}
We first prove that, given $(t,v)\in\RR\times\mathfrak{z}(\LieX_I)$
and $(f,\chi)\defeq\mathbf{P}_{(\gamma,\phi,\xi)}(t,v)$, we have
\begin{equation}\label{eq:lemmaPTalpha1}
t = \int_X f\omega^{[n]}/\Vol_\Omega, \qquad v = \chi + \theta^\perp_{H,\widetilde{I}}\widetilde{\eta}_f,
\end{equation}
where $\widetilde{\eta}_f$ is the Hamiltonian vector field associated
to $f \in C^{\infty}(X)$ and $\widetilde{\omega}$. To see this, note
that, since $p(v)$ is holomorphic and preserves $\widetilde{\omega}$,
it can be written as
\[
p(v) = \widetilde{\eta}_\psi + \beta,
\]
where $\widetilde{\eta}_\psi$ is the real-holomorphic Hamiltonian
vector field associated to $\psi \in C^{\infty}(X)$ and
$\widetilde{\omega}$ and $\beta$ is a parallel vector field with
respect to $\widetilde{\omega}$ (see~\cite[\S 2]{LS1}). Then, since
$p(v)$ and $\widetilde{\eta}_\psi$ vanish somewhere on $X$, we have
that $\beta = 0$ and therefore
\[
d \psi = p(v)\lrcorner\widetilde{\omega} = df.
\]
Formula~\eqref{eq:lemmaPTalpha1} follows from the decomposition of $v$
into its vertical and horizontal parts with respect to
$\theta_{H,\tilde{I}}$.

Now, (1) follows from~\eqref{eq:lemmaPTalpha1}. To prove (2), suppose
$\hat{\operatorname{T}}_\alpha(\gamma,\phi,\xi)\in\Im\mathbf{P}_{(\gamma,\phi,\xi)}$,
i.e.
\begin{equation}
\label{eq:lemmaPTalpha2}
  f=S_\alpha(\widetilde{b}), \quad
  \chi=\theta_{h,\widetilde{I}}v
  =4\alpha_1(\Lambda_{\widetilde{\omega}}F_{H,\widetilde{I}}-z_{\widetilde{\Omega}}).
\end{equation}
From~\eqref{eq:lemmaPTalpha1}, it follows that
\begin{equation}
\label{eq:lemmaPTalpha-ext}
4\alpha_1d_{\widetilde{H}}\Lambda_{\widetilde{\omega}}F_{\widetilde{H}}
%%% =4\alpha_1 d_{\widetilde{H}}(e^{-\imag\xi}\cdot\Lambda_{\widetilde{\omega}}F_{H,\widetilde{I}})
= -\widetilde{\eta}_f\lrcorner F_{\widetilde{H}},
\quad
\operatorname{P}_{\widetilde{\omega}}f=-L_{\widetilde{\eta}_f}J = 0,
\end{equation}
where we have used~\eqref{eq:changecurvature} to obtain the first
equation, while the other identity follows because
$\widetilde{\eta}_\psi = \widetilde{\eta}_f$ is
real-holomorphic. Therefore
$\widetilde{b}=(\widetilde{\omega},\widetilde{H})$ is an extremal
pair.

To prove (3), note first that the inclusion
$\Im\mathbf{P}_{0}\subset\ker\hat{\mathbf{L}}_{\alpha,b}$ is a
straightforward consequence of~\eqref{eq:lemmaPTalpha1}. Suppose now
that $\alpha_0\alpha_1>0$ and $H$ is HYM with respect to $\omega$. Let
$(f,\chi)\in\ker\hat{\mathbf{L}}_{\alpha,b}$. By
Proposition~\ref{propo:impfunc},
$v\defeq\chi+\theta^\perp_{H,I}\eta_f$ is in $\LieX_I$. In fact,
$v\in\mathfrak{z}(\LieX_I)$, as $f$ is $\cH_J$-invariant and $\chi$ is
$\cX_I$-invariant by assumption
(see~\eqref{eq:Dalphapaoperatorinvariant}). Therefore
%% $(f,\chi)\in\Im\mathbf{P}_{0}$ because
$\mathbf{P}_{0}(v,t) = (f,\chi)$, where
$t\defeq\int_Xf\omega^{[n]}/\Vol_{\Omega}$.
\end{proof}

Let $\langle\cdot,\cdot\rangle_\omega$ be the $L^2$-inner product on $
L^2_{k}(X)^{\cH_J} \times L^2_{k+2}(\ad E_H)^{\cX_I}$
given by~\eqref{eq:pairing-product}. We claim that the orthogonal
projectors onto $\Im\mathbf{P}_{(\gamma,\phi,\xi)}$, denoted
\[
\Pi_{(\gamma,\phi,\xi)} \colon L^2_{k}(X)^{\cH_J} \times L^2_{k+2}(\ad E_H)^{\cX_I}
\lto L^2_{k}(X)^{\cH_J} \times L^2_{k+2}(\ad E_H)^{\cX_I}
\]
vary smoothly with $(\gamma,\phi,\xi) \in \mathcal{V}$. 
To prove this, note that the map
\[
\mathbf{P}\colon \cV\times \RR \times \mathfrak{z}(\LieX_I)\lto
L^2_{k}(X)^{\cH_J} \times L^2_{k+2}(\ad E_H)^{\cX_I}
\colon (\gamma,\phi,\xi,t,v)\longmapsto\mathbf{P}_{(\gamma,\phi,\xi)}(t,v)
\]
is $C^1$, as $\mathbf{P}^0_{(\gamma,\phi)}(t,v)$ is linear in
$(\gamma,\phi,t,v)$ and $\mathbf{P}^1_\xi(v)$ depends linearly on $v$
and smoothly on $\xi$.
Moreover, $\mathbf{P}_{(\gamma,\phi,\xi)}$ is an isomorphism onto its
image for all $(\gamma,\phi,\xi) \in \mathcal{V}$, by
Lemma~\ref{lemma:PTalpha}.
Let $\{w_j\}$ be a basis of the vector space
$\RR\oplus\mathfrak{z}(\LieX_I)$ and $\{\zeta_j(\gamma,\phi,\xi)\}$ be
the orthonormal basis of $\Im\mathbf{P}_{(\gamma,\phi,\xi)}$ extracted
from $\{\mathbf{P}_{(\gamma,\phi,\xi)} w_j\}$ by the Gram-Schmidt
orthogonalization process. Then the claim follows
%map~\eqref{eq:smooth-proj-operator} is smooth
by the above
observations and the fact that
\begin{equation}
\label{eq:Piexplicit}
\Pi_{(\gamma,\phi,\xi)} =
\sum_j \langle\zeta_j(\gamma,\phi,\xi),\cdot\rangle_{\omega} \zeta_j.
\end{equation}
Furthermore, since $\langle\zeta_j,\zeta_k\rangle_\omega$ are
continuous functions on $\cV$, the origin has an open neighbourhood
$\mathcal{V}_0 \subset \mathcal{V}$ such that for all
$(\gamma,\phi,\xi) \in \mathcal{V}_0$, the following holds
(cf.~\cite[(5.3)]{LS1}):
\begin{equation}
\label{eq:P0Pphi}
\ker(\Id - \Pi_{(\gamma,\phi,\xi)}) = \ker(\Id - \Pi_0)\circ(\Id - \Pi_{(\gamma,\phi,\xi)}).
\end{equation}

For any pair of non-negative integers $(l,m)$, let $I_{l,m}\subset
L^2_{l}(X)^{\cH_J} \times L^2_{m}(\ad E_H)^{\cX_I}$ be the orthogonal
complement of $\Im\mathbf{P}_{0}$. Define
\[
\mathcal{W} = \mathcal{V}_0 \cap (\mathcal{H}^{1,1}(X) \times I_{k+4,k+4}).
\]
Note that, under the assumptions in the last part of
Lemma~\ref{lemma:PTalpha}, the subspace $\mathcal{W}$ is perpendicular
to $\ker\mathbf{L}_{\alpha,b}$. We will use this fact to obtain
existence results about deformations of extremal pairs.
%%%
Define a LeBrun--Simanca map~\cite[\S 5]{LS2}
\begin{equation}
\label{eq:Tmap}
\begin{gathered}
\xymatrix @R=0ex @C=-15.5ex {
**[l] \mathbf{T}_\alpha \colon\;\;\;\;\;\;\; \mathcal{W} \ar[r] & **[r] I_{k,k+2}\\
**[l] (\gamma,\phi,\xi) \ar@{|->}[r] &  **[r]
(\Id - \Pi_0)\circ(\Id - \Pi_{(\gamma,\phi,\xi)})\circ \hat{\operatorname{T}}_\alpha(\gamma,\phi,\xi).\\
}
\end{gathered}
\end{equation}
Then $\mathbf{T}_\alpha$ is
$C^1$, because it is the composition of $C^1$ maps.

Given $(\dot{\phi},\dot{\xi}) \in I_{k+4,k+4}$, to calculate the
directional derivative $\delta_{(\dot{\phi},\dot{\xi})}
\mathbf{T}_\alpha$ of $\mathbf{T}_\alpha$ at the origin in
the direction $(0,\dot{\phi},\dot{\xi})$, we define the curve $b_t =
(0,t\dot{\phi},t\dot{\xi})$. Using~\eqref{eq:Talphamapder}, we obtain
\begin{align*}
\delta_{(\dot{\phi},\dot{\xi})}\mathbf{T}_\alpha =
\frac{d}{dt} &\mathbf{T}_\alpha(b_t)_{|t=0}  = (\Id - \Pi_0)\mathbf{L}_{\alpha,b}(\dot{\phi},\dot{\xi})\\
& + (\Id - \Pi_0)\((d(S_\alpha(b)),d\dot{\phi})_\omega,4\alpha_1J\eta_{\dot{\phi}}\lrcorner d_H(\Lambda_\omega F_H)\)\\
& - (\Id - \Pi_0)\frac{d}{dt}\(\Pi_{b_t}\operatorname{T}_\alpha(0)\)_{|t= 0}.
\end{align*}
Now, if $b=(\omega,H)$ is a solution to the coupled
equations~\eqref{eq:CYMeq2}, then the second summand of the
right-hand side vanishes and
$\Pi_{b_t}\operatorname{T}_\alpha(0) =
\operatorname{T}_\alpha(0)$ for all $t$, so the third summand of
the right-hand side vanishes too and hence, under this assumption, we
conclude that
\begin{equation}
\label{eq:Talphamapder2}
\delta_{(\dot{\phi},\dot{\xi})}\mathbf{T}_\alpha=(\Id - \Pi_0) \circ \hat{\mathbf{L}}_{\alpha,b}(\dot{\phi},\dot{\xi}).
%& + \delta_\gamma \operatorname{T}_\alpha(\dot{\gamma})_{|\gamma = 0}\big{)}.
\end{equation}

\begin{remark}
It is at this point that one runs into technical difficulties if one
attempts to apply the approach of LeBrun \& Simanca~\cite{LS2} to
obtain deformations of an extremal pair which is not a solution of the
coupled equations. The problem is that for an arbitrary extremal pair
$b=(\omega,H)$, if one proceeds as in~\cite[Lemma~1]{LS2}, then one
obtains
\[
\delta_{(\dot{\phi},\dot{\xi})}\mathbf{T}_\alpha
% \frac{d}{dt}\mathbf{T}_\alpha(b_t)_{|t=0}
= (\Id - \Pi_0)\(\hat{\mathbf{L}}_{\alpha,b}(\dot{\phi},\dot{\xi})
+ (0,-J\eta_\alpha(b,I) \lrcorner (d_H\dot{\xi}
+ \eta_{\dot{\phi}}\lrcorner F_H))\),
\]
and to construct deformations of $b$ which are also extremal pairs
using the approach of~\cite{LS2}, we need know
that~\eqref{eq:Talphamapder2} is satisfied. A natural condition which
implies that~\eqref{eq:Talphamapder2} holds is that
$S_\alpha(\omega,H)$ is constant. Furthermore, in the approach
of~\cite{LS2}, we need to know that
$\hat{\mathbf{L}}_{\alpha,b}$ is self-adjoint, with kernel
$\Im\mathbf{P}_0$, so another natural condition is that the
Hermitian--Yang--Mills equation is satisfied, by
Proposition~\ref{propo:impfunc} and Lemma~\ref{lemma:PTalpha}(2). In
other words, to get a direct generalization of the method
of~\cite{LS2}, it is natural to impose the condition that $b$ is a
solution of the coupled equations, as we will do below.
\end{remark}

We can now prove the two main results
of~\secref{sec:ANdeformingsolutions}. For this, given $\alpha\in\RR^2$, we call $b \in B_\Omega$ an \emph{extremal pair with coupling constants $\alpha$} if it satisfies~\eqref{eq:extremalpair}.

\begin{theorem}
\label{thm:DeformationCYMeq1}
Suppose $(\omega,H)$ is a solution to the coupled
equations~\eqref{eq:CYMeq2} with coupling constant $\alpha$ and
$[\omega]=\Omega$, where $\alpha=(\alpha_0,\alpha_1)\in\RR^2$
satisfies $\alpha_0\alpha_1>0$. Then $(\alpha,\Omega)$ has an open
neighbourhood $U\subset\RR^2\times H^{1,1}(X,\RR)$ such that for all
$(\widetilde{\alpha},\widetilde{\Omega})\in U$ there exists an
extremal pair $(\widetilde{\omega},\widetilde{H})$ with coupling
constants $\widetilde{\alpha}$ and such that $[\widetilde{\omega}]=\widetilde{\Omega}$.
\end{theorem}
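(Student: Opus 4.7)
The plan is to apply the implicit function theorem to an extended version of the map $\mathbf{T}_{\widetilde\alpha}$ from~\eqref{eq:Tmap}, now viewed as depending smoothly on the additional parameters $\widetilde\alpha \in \RR^2$ and $\gamma \in \cH^{1,1}(X)$. Fixing $k > n$, I consider the $C^1$ map
\[
\mathbf{T}\colon \RR^2 \times \cW \lto I_{k,k+2},
\qquad
(\widetilde\alpha, \gamma, \phi, \xi) \longmapsto \mathbf{T}_{\widetilde\alpha}(\gamma, \phi, \xi),
\]
near $(\alpha, 0, 0, 0)$. The hypothesis that $(\omega, H)$ solves~\eqref{eq:CYMeq2} translates into the vanishing $\mathbf{T}(\alpha, 0, 0, 0) = 0$: indeed $\hat{\operatorname{T}}_\alpha(0, 0, 0) = (-c, 0)$ is a constant pair, which lies in $\Im\mathbf{P}_0$, so its image under $\Id - \Pi_0$ vanishes.

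The next step is to identify the partial Fr\'echet derivative of $\mathbf{T}$ at the origin in the $(\phi, \xi)$-direction. Since $(\omega, H)$ satisfies the coupled equations, the simplification leading to~\eqref{eq:Talphamapder2} applies and this derivative reduces to $(\Id - \Pi_0) \circ \hat{\mathbf{L}}_{\alpha, b}$ restricted to $I_{k+4, k+4}$. The key claim is that this restriction is a Banach space isomorphism onto $I_{k, k+2}$. The hypothesis $\alpha_0\alpha_1 > 0$ combined with the fact that $H$ is Hermitian--Yang--Mills with respect to $\omega$ unlock the full force of Proposition~\ref{propo:impfunc}: the operator $\hat{\mathbf{L}}_{\alpha, b}$ is elliptic, formally self-adjoint, and has kernel exactly $\Im\mathbf{P}_0$. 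Self-adjointness forces its range to lie in the orthogonal complement $I_{k, k+2}$ (so the projection $\Id - \Pi_0$ is redundant on the image), while standard Fredholm theory for elliptic self-adjoint operators identifies this range with the entire orthogonal complement of the kernel, establishing the isomorphism.

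Applying the implicit function theorem then yields a $C^1$ family $(\phi(\widetilde\alpha, \gamma), \xi(\widetilde\alpha, \gamma))$ defined on an open neighbourhood $U'$ of $(\alpha, 0)$ in $\RR^2 \times \cH^{1,1}(X)$, vanishing at $(\alpha, 0)$, such that $\mathbf{T}_{\widetilde\alpha}(\gamma, \phi, \xi) = 0$ throughout $U'$. By~\eqref{eq:P0Pphi}, this equation is equivalent to $\hat{\operatorname{T}}_{\widetilde\alpha}(\gamma, \phi, \xi) \in \Im\mathbf{P}_{(\gamma, \phi, \xi)}$, and Lemma~\ref{lemma:PTalpha}(2) then guarantees that the corresponding pair $\widetilde b = (\widetilde\omega, \widetilde H)$ is extremal with coupling constants $\widetilde\alpha$ and K\"ahler class $\widetilde\Omega = \Omega + [\gamma]$. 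Pulling back through the Hodge-theoretic bijection $(\widetilde\alpha, \gamma) \mapsto (\widetilde\alpha, \Omega + [\gamma])$ gives the required open neighbourhood $U \subset \RR^2 \times H^{1,1}(X, \RR)$ of $(\alpha, \Omega)$. The main technical nuisance will be regularity---the implicit function theorem only produces Sobolev-class pairs---but choosing $k > n$ large enough that $\widetilde\omega$ is at least $C^2$ and $\widetilde H$ at least $C^4$, the bootstrap of Lemma~\ref{lem:reg-extremalpairs} upgrades these to smooth objects, yielding an extremal pair in the sense of Definition~\ref{def:ext-pair}.
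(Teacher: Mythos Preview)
Your proposal is correct and follows essentially the same route as the paper's proof: extend $\mathbf{T}_\alpha$ to depend on $\alpha$, use~\eqref{eq:Talphamapder2} to identify the linearisation in $(\phi,\xi)$ as $(\Id-\Pi_0)\circ\hat{\mathbf{L}}_{\alpha,b}$, invoke the HYM condition and $\alpha_0\alpha_1>0$ to see this is an isomorphism on $I_{k+4,k+4}\to I_{k,k+2}$, apply the implicit function theorem, and then use~\eqref{eq:P0Pphi}, Lemma~\ref{lemma:PTalpha}(2) and Lemma~\ref{lem:reg-extremalpairs}. One small citation point: the identification $\ker\hat{\mathbf{L}}_{\alpha,b}=\Im\mathbf{P}_0$ is the content of Lemma~\ref{lemma:PTalpha}(3) rather than Proposition~\ref{propo:impfunc} directly (the latter describes $\ker\mathbf{L}_{\alpha,b}$, and the passage to the invariant subspace and to $\Im\mathbf{P}_0$ is carried out in that lemma), but the logic is unaffected.
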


\begin{proof}
Note that $\eta_\alpha(b)=0$, as $b=(\omega,H)$ is a solution of the
coupled equations~\eqref{eq:CYMeq2}. Since the map
$\mathbf{T}_\alpha$ depends linearly on $\alpha =
(\alpha_0,\alpha_1)$, it can be viewed as a $C^1$ map
$\mathbf{T}\colon \RR^2 \times \mathcal{W} \to I_{k,k+2}$, whose the
Fr\'echet derivative at the origin with respect to $\phi$ and $\xi$ is
$\delta\operatorname{T}_\alpha = (\Id-\Pi_0)\circ
\hat{\mathbf{L}}_{\alpha,b}$, by~\eqref{eq:Talphamapder2}. Since $H$
is HYM with respect to $\omega$ and $\alpha_0\alpha_1>0$,
Lemma~\ref{lemma:PTalpha} applies and $(\Id - \Pi_0)\circ
\hat{\mathbf{L}}_{\alpha,b}$ is an isomorphism. Therefore, by the
implicit function theorem, there exists an open neighbourhood
$U\subset \RR^2\times\mathcal{H}^{1,1}(X)$ of $(\alpha,\Omega)$ such
that for all $(\widetilde{\alpha},\gamma) \in U$ there exists a pair
$(\phi,\xi) \in I_{k+4,k+4}$ such that
\[
\operatorname{T}_{\widetilde{\alpha}}(\gamma,\phi,\xi)\in\ker\((\Id-\Pi_0)(\Id-\Pi_{\gamma,\phi,\xi})\),
\]
so
$\operatorname{T}_{\widetilde{\alpha}}(\gamma,\phi,\xi)\in\Im\mathbf{P}_{(\gamma,\phi,\xi)}$ by~\eqref{eq:P0Pphi}. Hence the pair
$(\widetilde{\omega},\widetilde{H})$ determined by $(\gamma,\phi,\xi)$
is extremal with coupling constant $\widetilde{\alpha}$, by
Lemma~\ref{lemma:PTalpha}(1), and smooth by
Lemma~\ref{lem:reg-extremalpairs}.
\end{proof}

Let $H^{1,1}(X,\RR)^+\subset H^{1,1}(X,\RR)$ denote the `K\"{a}hler
cone' of $(X,J)$, i.e. the open subset of elements $\Omega\in
H^{1,1}(X,\RR)$ such that $\cK_\Omega$ is non-empty. Given
$(\alpha,\Omega) \in \RR^2_{>0}\times H^{1,1}(X,\RR)^+$, consider the
$\alpha$-Futaki character
$\cF_{\alpha,\Omega}\colon\LieGamma_I\lto\CC$ defined
in~\eqref{eq:Futakiformal} (or~\eqref{eq:alphafutakismooth}). Denote
\[
V(\cF)\defeq \{ (\alpha,\Omega) \,|\, \cF_{\alpha,\Omega}=0 \}
\subset \RR_{>0}^2 \times H^{1,1}(X,\RR)^+.
\]

\begin{theorem}
\label{thm:DeformationCYMeq2}
Let $S$ be the set of pairs $(\alpha,\Omega) \in \RR_{>0}^2 \times
H^{1,1}(X,\RR)^+$ for which there exists a solution $(\omega,H)\in
B_\Omega$ to the coupled equations~\eqref{eq:CYMeq2}.
\begin{enumerate}
\item[\textup{(1)}]
  Then $S \cap V(\cF)$ is open in $V(\cF)$.
\item[\textup{(2)}]
  If $\Aut(E^c,I)$ is finite, then $S \subset \RR^2 \times
  H^{1,1}(X,\RR)$ is open.
\end{enumerate}
\end{theorem}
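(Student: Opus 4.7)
The plan is to derive Theorem~\ref{thm:DeformationCYMeq2} as a fairly direct consequence of the deformation result Theorem~\ref{thm:DeformationCYMeq1} for extremal pairs, combined with the characterization of solutions as ``extremal pairs with vanishing Futaki character'' given by Proposition~\ref{prop:extremal-solution}. The idea is simply that among the nearby extremal pairs produced by Theorem~\ref{thm:DeformationCYMeq1}, those corresponding to parameters in $V(\cF)$ are automatically genuine solutions.

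For part (1), I would start with $(\alpha,\Omega)\in S\cap V(\cF)$ and pick a solution $(\omega,H)\in B_\Omega$ of~\eqref{eq:CYMeq2}. Because $\alpha=(\alpha_0,\alpha_1)\in\RR_{>0}^2$ we have $\alpha_0\alpha_1>0$, so Theorem~\ref{thm:DeformationCYMeq1} applies and furnishes an open neighbourhood $U\subset\RR^2\times H^{1,1}(X,\RR)$ of $(\alpha,\Omega)$ such that for every $(\widetilde{\alpha},\widetilde{\Omega})\in U$ there exists an extremal pair $(\widetilde{\omega},\widetilde{H})$ with coupling constants $\widetilde{\alpha}$ and $[\widetilde{\omega}]=\widetilde{\Omega}$. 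Shrinking $U$ if necessary so that $U\subset \RR^2_{>0}\times H^{1,1}(X,\RR)^+$, for any $(\widetilde{\alpha},\widetilde{\Omega})\in U\cap V(\cF)$ the associated $\widetilde{\alpha}$-Futaki character $\cF_{\widetilde{\alpha},\widetilde{\Omega}}$ vanishes by definition of $V(\cF)$, so Proposition~\ref{prop:extremal-solution} upgrades the extremal pair to a genuine solution of~\eqref{eq:CYMeq2}. Hence $U\cap V(\cF)\subset S\cap V(\cF)$, which shows that $S\cap V(\cF)$ is open in $V(\cF)$.

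For part (2), I would observe that if $\Aut(E^c,I)$ is a finite group then $\Lie\Aut(E^c,I)=0$, i.e. $\LieGamma_I=0$, so the $\alpha$-Futaki character $\cF_{\alpha,\Omega}\colon \LieGamma_I\to\CC$ is automatically the zero map for every pair $(\alpha,\Omega)$. Consequently $V(\cF)=\RR_{>0}^2\times H^{1,1}(X,\RR)^+$ and $S\cap V(\cF)=S$. The conclusion of part~(1) then gives that $S$ is open inside $\RR_{>0}^2\times H^{1,1}(X,\RR)^+$, which is itself an open subset of $\RR^2\times H^{1,1}(X,\RR)$, so $S$ is open in the ambient space as claimed.

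The only ingredient that is not completely formal is making sure that the \emph{same} Futaki character, indexed by the deformed parameters $(\widetilde{\alpha},\widetilde{\Omega})$, is the one that must vanish in order to invoke Proposition~\ref{prop:extremal-solution}; but this is precisely the content of the definition of $V(\cF)$ and of Definition~\ref{def:ext-pair}, since both the extremality condition~\eqref{eq:extremalpair} and the characterization of solutions depend on $(\alpha,\Omega)$ through the function $S_\alpha$ and the element $z_\Omega$. So I do not anticipate a serious obstacle beyond keeping the bookkeeping of parameters consistent; the heavy analytic lifting has already been absorbed into Theorem~\ref{thm:DeformationCYMeq1}.
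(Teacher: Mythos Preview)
Your proof is correct and follows essentially the same approach as the paper: both derive the result from Theorem~\ref{thm:DeformationCYMeq1} together with the characterization of solutions among extremal pairs. For part~(2) you reduce to part~(1) by noting that $\LieGamma_I=0$ forces $V(\cF)$ to be the whole parameter space, whereas the paper appeals directly to~\eqref{eq:extrpair-holomvectfield} (extremal $\Leftrightarrow \zeta_\alpha(b)\in\LieX_I$, hence $\zeta_\alpha(b)=0$ when $\Aut(E^c,I)$ is finite); this is the same content packaged slightly differently.
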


\begin{proof}
The proof is immediate from Theorem~\ref{thm:DeformationCYMeq1},
together with Proposition~\ref{prop:extremal-solution} for part (1)
and~\eqref{eq:extrpair-holomvectfield} for part (2).
\end{proof}

\subsection{Deformations of solutions in the weak coupling limit}
\label{sec:Defweakcoupling}

We will obtain now solutions to the coupled
equations~\eqref{eq:CYMeq2} in `weak coupling limit'
$0<\lvert\alpha_1/\alpha_0\rvert\ll 1$ by deforming solutions
$(\omega,H)\in B_\Omega$ with coupling constants $\alpha_0\neq 0,
\alpha_1=0$.
%%%
Since we will study these equations for coupling constants in a small
open neighbourhood of a pair $(\alpha_0,\alpha_1)\in\RR^2$ satisfying
$\alpha_0\neq 0,\alpha_1=0$, we can divide the second equation
in~\eqref{eq:CYMeq2} by $\alpha_0$. Hence in the sequel we will
normalize to $\alpha_0=1$ and $\alpha\defeq\alpha_1$ will be called
\emph{the} coupling constant.

Note that for $\alpha=0$, the coupled equations~\eqref{eq:CYMeq2} are
the condition that $\omega$ is a cscK metric on $(X,J)$ and $H$ is a
Hermitian--Yang--Mills reduction of $(E^c,I)$ with respect to $\omega$,
so in particular the pair $(\omega,H)$ satisfies the following
equations:
\begin{equation}
\label{eq:extremalpairsplit} \left. \begin{array}{l}
d_H^* F_H=0\\
L_{\eta_{S_\omega}} J=0
\end{array}\right \}
\end{equation}
Here, $d_H^*F_H=0$ is the Yang--Mills equation, which is equivalent to
\begin{equation}
\label{eq:dHlambdaF}
d_H\Lambda_{\omega}F_H=0
\end{equation}
by the K\"{a}hler identities (see e.g.~\cite[Proposition~3]{D3}), and
$\eta_{S_\omega}$ is the Hamiltonian vector field of the scalar
curvature $S_\omega$ over $(X,\omega)$, so $L_{\eta_{S_\omega}} J=0$
is the condition that $\omega$ is an extremal metric on $(X,J)$.

If one attempts to generalize Theorem~\ref{thm:DeformationCYMeq1} to
the weak coupling limit, one observes that
Proposition~\ref{prop:extremal-solution} cannot be used for
$\alpha=0$, but the system of equations~\eqref{eq:extremalpairsplit}
can be viewed as an adiabatic limit of equation~\eqref{eq:extremalpair}. In fact, a pair
$b_\lambda\defeq(\lambda\omega,H)$ satisfies~\eqref{eq:extremalpair}
with coupling constant $\alpha$, for a real number $\lambda>0$, if and
only if
\begin{equation}
\label{eq:extremalpairalpha}
4\alpha d_H\Lambda_{\omega}F_H + \lambda^{-1}\eta_\lambda \lrcorner F_H = 0,
\quad
L_{\eta_\lambda} J = 0,
\end{equation}
where $\eta_\lambda$ is the Hamiltonian vector field of
$S_{\alpha/\lambda}(\omega,H)$
%$S_{\omega}+(\alpha/\lambda)\Lambda^2_{\omega}((F_H-\hat{z}_\omega)\wedge(F_H-\hat{z}_\omega))$
over $(X,\omega)$, and~\eqref{eq:extremalpairsplit} is the formal
limit of~\eqref{eq:extremalpairalpha} when
$\lambda\to\infty$. Hence a strategy to obtain a solution to the
coupled equations~\eqref{eq:CYMeq2} for
$0<\lvert\alpha_1/\alpha_0\rvert\ll 1$ (equivalently, for $\lambda \gg
0$) could be to deform a solution to~\eqref{eq:CYMeq2} for $\alpha=0$
(which is therefore a solution to~\eqref{eq:extremalpairsplit}) to
obtain a solution of~\eqref{eq:extremalpairalpha}. The problem is that
the kernel of the operator $\mathbf{L}_{\alpha,b_\lambda}$ determined
by a solution $b_\lambda$ to the coupled equations~\eqref{eq:CYMeq2}
has a discontinuity in the limit $\lambda \to \infty$. More precisely,
this kernel for finite $\lambda>0$ can be identified with $\LieX_I$ (see
Proposition~\ref{propo:impfunc}), whereas the kernel of
$\mathbf{L}_{\alpha,b}$ in the limit $\lambda \to \infty$ is
\begin{equation}
\label{eq:LalphaKernelgeneral}
% \ker\mathbf{L}_{0,b} =
\{(\phi,\xi) \in C^\infty(X)\times\Omega^0(\ad E_H)  \,|
\eta_\phi\in\LieH_J,\, d_H^*(d_H \xi + \eta_\phi \lrcorner F_H) = 0\}
\end{equation}
(this follows directly from~\eqref{eq:Dalphapaoperator}). This
discontinuity causes serious technical problems when one attempts to
use this this strategy within the approach of LeBrun \& Simanca.

The source of this difficulty is related to the vanishing of the
factor $4\alpha_1$ multiplying the HYM term in the moment maps $\mu_b$
when $\alpha_1=0$ (see~\eqref{eq:thm-muX}). One way to get around this
problem is to apply the approach of LeBrun \& Simanca to the operator
obtained by dropping this factor in the moment map operator
$\operatorname{T}_\alpha$.
Fix an integer $k>n$ and keep the notation
of~\secrefs{sec:Defmmapoperator}, \ref{sec:ANdeformingsolutions}.
Then the resulting modified moment map operator is
\begin{equation}
\label{eq:Balphamap}
\begin{gathered}
\xymatrix @R=0ex @C=-9ex {  **[l] 
\operatorname{B}_\alpha\colon\;\;\;\;\;\;
\mathcal{U} \ar[r] & **[r] L^2_{k}(X)\times L^2_{k+2}(\ad E_H)\\ 
**[l](\gamma,\phi,\xi) \ar@{|->}[r] &  **[r]
\(S_\alpha(\widetilde{b}),\Lambda_{\widetilde{\omega}}F_{H,\widetilde{I}} - z_{\widetilde{\Omega}}\),
}
\end{gathered}
\end{equation}
where $\mathcal{U}$ is the open set in~\eqref{eq:subset-U} and
$\widetilde{\omega}, \widetilde{I}$ and $\widetilde{b}$ are given
by~\eqref{eq:omega-deformations} and~\eqref{eq:btilde}.

As we will see below, this modification on the moment map operator
within the approach of LeBrun \& Simanca produces the following
modified extremality condition
(cf.~\eqref{eq:extremalpair}).

\begin{definition}
A pair $b=(\omega,H)\in B_\Omega$ is called \emph{extremal} with
coupling constant $\alpha$ \emph{in the weak coupling limit} if it
satisfies the equations
\begin{equation}
\label{eq:extremalpairsplit0} \left. \begin{array}{l}
d_H^* F_H = 0\\
L_{\eta_\alpha(b)} J = 0
\end{array}\right \},
\end{equation}
where $\eta_\alpha(b)$ is the Hamiltonian vector field of
$S_\alpha(b)$ over $(X,\omega)$. %%%(see~\eqref{eq:Salpha})
\end{definition}

Note that the system of equations~\eqref{eq:extremalpairsplit0}
becomes~\eqref{eq:extremalpairsplit} when $\alpha=0$, while for
arbitrary $\alpha$ any solution to the coupled
equations~\eqref{eq:CYMeq2} is an extremal pair in the weak coupling
limit (see~\eqref{eq:dHlambdaF}). To obtain a partial converse, define
the characters
\begin{equation}
\label{eq:Futaki-0,infty}
\cF_{0,\Omega},\,\cF_{\infty,\Omega}\colon\Lie\Aut(E^c,I)\lto\CC,
\end{equation}
as the $\alpha$-Futaki characters of the K\"{a}hler class $\Omega$ for
$(\alpha_0,\alpha_1)$ equal to $(1,0)$ and $(0,1)$
in~\eqref{eq:alphafutakismooth}, respectively.
%%%
By~\eqref{eq:alphafutakismooth}, up to a multiplicative factor,
$\langle\cF_{0,\Omega},\zeta\rangle$ is the Futaki
character~\cite{Ft0} of the K\"ahler class $\Omega$ on $(X,J)$
evaluated at $p(\zeta)$, where $p$ is the map
in~\eqref{eq:Ext-Lie-group-holmorphic}.
%%%
It is also clear from~\eqref{eq:alphafutakismooth} that the existence
of a solution to the coupled equations~\eqref{eq:CYMeq2} does not
necessarily imply the vanishing of $\cF_{0,\Omega}$ or
$\cF_{\infty,\Omega}$.

\begin{proposition}
\label{prop:extremal-solution-weak}
A solution $b\in B_\Omega$ of~\eqref{eq:extremalpairsplit0} is a solution to the coupled
equations~\eqref{eq:CYMeq2} if $\cF_{0,\Omega} =
\cF_{\infty,\Omega} =0$ and the vector field $\eta_\alpha(b)$ over $X$
can be lifted to a holomorphic vector field over the total space of
$(E^c,I)$.
\end{proposition}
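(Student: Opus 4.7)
The plan is to exploit the linearity of the $\alpha$-Futaki character in the coupling constants. With the normalization $\alpha_0=1$, formula~\eqref{eq:alphafutakismooth} gives $\cF_{\alpha,\Omega}=\cF_{0,\Omega}+\alpha_1\cF_{\infty,\Omega}$ on $\Lie\Aut(E^c,I)$, so under the hypothesis the character $\cF_{\alpha,\Omega}$ vanishes identically there. The two equations of~\eqref{eq:CYMeq2} will then be obtained by evaluating this vanishing character on two carefully chosen elements of $\Lie\Aut(E^c,I)$.

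The first step is to use weak extremality to produce a canonical vertical element of $\Lie\Aut(E^c,I)$. By the K\"ahler identities, the Yang--Mills equation $d_H^*F_H=0$ gives $d_H\Lambda_\omega F_H=0$ (cf.~\eqref{eq:dHlambdaF}), so $\nu\defeq\Lambda_\omega F_H-z\in\Omega^0(\ad E_H)$ is parallel. In particular $\bar\partial_H\nu=0$, and $\nu$ thereby defines a holomorphic section of $\ad E^c$, that is, an element of the Lie subalgebra $\Lie\cG_I$ of vertical holomorphic vector fields in $\Lie\Aut(E^c,I)$. Since $\nu$ is vertical, the potential $\phi$ in~\eqref{eq:alphafutakismooth} vanishes and $\theta_H\nu=\nu$, so
\[
0=\cF_{\alpha,\Omega}(\nu)=-4\alpha_1\int_X\nu\wedge\nu\,\omega^{[n]}=-4\alpha_1\,\|\Lambda_\omega F_H-z\|_\omega^2.
\]
Since $\alpha_1\neq 0$, we conclude $\Lambda_\omega F_H=z$, the first equation of~\eqref{eq:CYMeq2}.

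The second step invokes the lifting hypothesis: let $\zeta\in\Lie\Aut(E^c,I)$ cover $\eta_\alpha(b)$. The weak extremality condition $L_{\eta_\alpha(b)}J=0$, together with $\eta_\alpha(b)$ being Hamiltonian, places $\eta_\alpha(b)\in\Lie\cH_J$ and hence $\zeta\in\Lie\cX_I$. The function associated with $\check\zeta=\eta_{S_\alpha(b)}$ in~\eqref{eq:alphafutakismooth} is then $\phi=S_\alpha(b)-\hat S_\alpha$ (normalized to zero mean, with $\hat S_\alpha\defeq\int_X S_\alpha(b)\,\omega^{[n]}/\Vol_\Omega$). Using the identity $\Lambda_\omega F_H=z$ just established, the curvature term in~\eqref{eq:alphafutakismooth} drops out and
\[
0=\cF_{\alpha,\Omega}(\zeta)=-\int_X\big(S_\alpha(b)-\hat S_\alpha\big)S_\alpha(b)\,\omega^{[n]}=-\|S_\alpha(b)-\hat S_\alpha\|_\omega^2,
\]
which forces $S_\alpha(b)$ to be constant, giving the second equation of~\eqref{eq:CYMeq2}.

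The main technical point to pin down is the identification $\nu\in\Lie\cG_I$: namely, that a real parallel section of $\ad E_H$ automatically yields a $\bar\partial_H$-closed section of the complexified adjoint bundle $\ad E^c$, and that such a section generates a $G^c$-equivariant holomorphic vertical vector field on $(E^c,I)$, so that the complex Futaki character $\cF_{\alpha,\Omega}$ can be legitimately evaluated on it.
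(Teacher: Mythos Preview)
Your proof is correct and follows essentially the same approach as the paper's. Both arguments identify $\nu=\Lambda_\omega F_H-z$ as a vertical element of $\Lie\Aut(E^c,I)$ via the Yang--Mills condition, evaluate the Futaki character on it to obtain the HYM equation, and then evaluate on the lift $\zeta$ of $\eta_\alpha(b)$ to force $S_\alpha(b)$ to be constant. The only difference is organizational: you first combine the two characters into $\cF_{\alpha,\Omega}=\cF_{0,\Omega}+\alpha_1\cF_{\infty,\Omega}$ and then evaluate, whereas the paper keeps $\cF_{0,\Omega}$ and $\cF_{\infty,\Omega}$ separate and writes $\|S_\alpha(b)-\hat S_\alpha\|_\omega^2$ as a sum of three terms (the two characters plus the HYM defect) that each vanish; your packaging is slightly more streamlined but the content is identical.
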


\begin{proof}
By~\eqref{eq:dHlambdaF}, $\Lambda_{\omega}F_H$ is
a vertical holomorphic vector field on the total space of $(E^c,I)$,
i.e. $\Lambda_{\omega}F_H\in\LieG_I$. Now, if
$\cF_{\infty,\Omega}=0$, then $H$ is HYM with respect to $\omega$,
because in this case, by~\eqref{eq:alphafutakismooth} we obtain
\[
\|\Lambda_{\omega} F_H - z_\Omega\|_\omega^2
= - \langle\cF_{\infty,\Omega},\Lambda_{\omega} F_H - z_\Omega\rangle = 0.
\]
Moreover, if $\cF_{0,\Omega} = \cF_{\infty,\Omega} =0$ and
$\eta_\alpha(b)=p(\zeta)$ for a holomorphic vector field $\zeta$ on
$(E^c,I)$, then by a straightforward computation
using~\eqref{eq:alphafutakismooth}, we obtain
\[
\|S_\alpha(b)-\hat S_\alpha\|_\omega^2 = \langle\cF_{0,\Omega},\eta_\alpha(b)\rangle + \alpha \langle\cF_{\infty,\Omega},\zeta\rangle + \alpha \langle\theta_H \zeta,\Lambda_{\omega} F_H - z_\Omega\rangle = 0,
\]
where $\hat S_\alpha = \int_X S_\alpha(b)\omega^{[n]}/\Vol_\Omega$,
so $b$ is a solution to the coupled equations~\eqref{eq:CYMeq2}.
\end{proof}

Extremal pairs in the weak coupling limit enjoy the same
good regularity properties:

\begin{lemma}
\label{lem:reg-extremalpairs-weak}
Let $(\omega,H)$ be a solution of~\eqref{eq:extremalpairsplit0} such that $\omega$ is a
K\"{a}hler form of class $C^2$ on $(X,J)$ and $H$ is a section of
$E^c/G$ of class $C^4$.
%%% with $\alpha\neq 0$.
Then both $\omega$ and $H$ are smooth.
\end{lemma}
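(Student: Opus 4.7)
The plan is to proceed by induction on $l\in\mathbb{N}$ and to show, as in Lemma~\ref{lem:reg-extremalpairs}, that $\omega$ is H\"older of class $C^{2l-1,\beta}$ and $H$ of class $C^{2l+1,\beta}$ for all $\beta\in(0,1)$. The base case is the hypothesis of the lemma. The treatment of the second equation in~\eqref{eq:extremalpairsplit0} is unchanged from Lemma~\ref{lem:reg-extremalpairs}: from $L_{\eta_\alpha(b)}J=0$, the vector field $\eta_\alpha(b)$ is real-holomorphic on the fixed complex manifold $(X,J)$, hence real analytic, so $dS_\alpha(b)=\eta_\alpha(b)\lrcorner\omega\in C^{2l-1,\beta}$ and $S_\alpha(b)\in C^{2l,\beta}$; inverting the formula~\eqref{eq:Salpha} then gives $S_\omega\in C^{2l-1,\beta}$, and the standard regularity argument for the Monge--Amp\`ere equation (applied, as in~\cite[Proposition~4]{LS2}, to the identity $S_\omega=\Delta_\omega\log\det\omega$ in local holomorphic coordinates) upgrades $\omega$ to $C^{2l+1,\beta}$.

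The new input lies in the first equation. In Lemma~\ref{lem:reg-extremalpairs}, the regularity of $\Lambda_\omega F_H$ was extracted from the fact that $\zeta_\alpha(b)$ is a real-holomorphic vector field on $(E^c,I)$, an argument that is unavailable here because in the weak coupling limit $\zeta_\alpha(b)$ carries no such property. I would substitute for it the observation that, by the K\"ahler identity invoked before Proposition~\ref{prop:extremal-solution-weak}, the Yang--Mills condition $d_H^*F_H=0$ is equivalent to $d_H\Lambda_\omega F_H=0$, so that the section $\sigma\defeq\Lambda_\omega F_H\in\Omega^0(\ad E_H)$ is $d_H$-covariantly constant and, a fortiori, solves the second-order elliptic equation $d_H^*d_H\sigma=0$. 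The coefficients of $d_H^*d_H$ are built from $\omega$, $\omega^{-1}$ and the Chern connection $\theta_H$, hence lie in $C^{2l-1,\beta}$ after the previous step, and Schauder regularity for homogeneous elliptic equations gives $\sigma\in C^{2l+1,\beta}$. Feeding this improved regularity into the same elliptic equation for $H$ used in the proof of Lemma~\ref{lem:reg-extremalpairs},
\[
\Delta_{\bar\partial}H=H\bigl(\Lambda_\omega F_H-\Lambda_\omega(\bar\partial(H^{-1})\wedge\partial H)\bigr),
\]
yields $H\in C^{2l+2,\beta}$; a second pass through the pair $\{d_H^*d_H\sigma=0,\ \Delta_{\bar\partial}H=\cdots\}$ with these upgraded regularities then produces $H\in C^{2l+3,\beta}$, closing the induction.

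The point that will require the most care is the interleaving of regularities for $\omega$, $H$ and $\sigma$: a single application of the Yang--Mills elliptic lift only gains one level for $H$, so two iterations inside each inductive step are needed to match the step size of Lemma~\ref{lem:reg-extremalpairs}. The remaining analytic ingredients, namely Schauder estimates for second-order linear elliptic operators with H\"older coefficients and the Monge--Amp\`ere regularity from~\cite[Proposition~4]{LS2}, are precisely those already used in Lemma~\ref{lem:reg-extremalpairs}, so once this bookkeeping is settled the smoothness of $\omega$ and $H$ follows.
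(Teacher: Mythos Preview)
Your proposal is correct and follows the same overall inductive scheme as Lemma~\ref{lem:reg-extremalpairs}: the treatment of the second equation and of the Monge--Amp\`ere step for $\omega$ is unchanged, and the bootstrapping for $H$ via the local elliptic equation $\Delta_{\bar\partial}H=H(\Lambda_\omega F_H-\Lambda_\omega(\bar\partial(H^{-1})\wedge\partial H))$ is exactly as before.

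Where your argument diverges is in the claim that the holomorphicity argument for $\Lambda_\omega F_H$ is ``unavailable here''. In fact it is still available, and this is why the paper can say the proof ``follows exactly''. The Yang--Mills condition $d_H^*F_H=0$ is equivalent, via the K\"ahler identities, to $d_H\Lambda_\omega F_H=0$; by Lemma~\ref{lem:infinit-action-connections} this says precisely that the vertical vector field $\Lambda_\omega F_H$ lies in $\LieG_I$, i.e.\ it is a $G^c$-invariant \emph{holomorphic} vector field on the fixed complex manifold $(E^c,I)$ (equivalently, a holomorphic section of the fixed holomorphic bundle $\ad E^c$; cf.\ the first line of the proof of Proposition~\ref{prop:extremal-solution-weak}). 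It is therefore real analytic, hence smooth outright, and one can feed it into the $\Delta_{\bar\partial}$-equation for $H$ exactly as in Lemma~\ref{lem:reg-extremalpairs}, without any auxiliary bootstrap on $\sigma$. Your substitute via Schauder estimates for $d_H^*d_H\sigma=0$ is valid, but it introduces an extra layer of iteration that the holomorphicity argument sidesteps.
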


\begin{proof}
This follows exactly as Lemma~\ref{lem:reg-extremalpairs}.
\end{proof}

We define now a linear differential operator which is closely related
to the linearization of $\operatorname{B}_\alpha$
(see~\eqref{eq:Balphamap}) when $\alpha=0$ and which will play the
role in the weak coupling limit of the Lichnerowicz
operator~\eqref{eq:P_omega} in the study of the cscK equation or the
operator $\mathbf{L}_{\alpha,b}$ defined
in~\secref{sec:ANdeformingsolutions} away from the weak coupling
limit.
%%%
This linear differential operator is
\begin{equation}
\label{eq:Coperator}
\begin{gathered}
\xymatrix @R=0ex @C=-18ex {**[l]
\mathbf{C} \colon
L^2_{k+4}(X)\times L^2_{k+4}(\ad E_H) \ar[r] &
**[r] L^2_{k}(X) \times L^2_{k+2}(\ad E_H)
\\ **[l]
(\phi,\xi)  \ar@{|->}[r] & **[r]
\(\operatorname{P}^*\operatorname{P}\phi,d_H^*(d_H\xi + \eta_{\phi}\lrcorner F_H)\),
}
\end{gathered}
\end{equation}
where $\operatorname{P}$ is defined as in~\eqref{eq:P_omega}.
%%%
It is easy to see (cf. Proposition~\ref{propo:impfunc}) that the
operator $\mathbf{C}$ is elliptic and self-adjoint with respect to the $L^2$-inner product
$\langle\cdot,\cdot\rangle_\omega$ given
by~\eqref{eq:pairing-product}.

It can be shown as in the proof of Proposition~\ref{propo:DefTalpha}
that $\operatorname{B}_\alpha$ is well-defined and $C^1$ and
that its Fr\'echet derivative at the origin $(0,0,0)$ when $\alpha=0$
is given by
\begin{equation}
\label{eq:Balphamapder}
\delta\operatorname{B}_0(\dot{\gamma},\dot{\phi},\dot{\xi})
=\mathbf{C}(\dot{\phi},\dot{\xi})
+((dS_\omega,d\dot{\phi})_\omega,0)
+ \delta_{\dot{\gamma}}\operatorname{B}_0,
\end{equation}
where $\delta_{\dot{\gamma}}\operatorname{B}_0$ is the directional
derivative of $\operatorname{B}_0$ at the origin in the direction
$(\dot{\gamma},0,0)$ (cf.~\eqref{eq:Talphamapder}).

To proceed as in~\secref{sec:Defweakcoupling} following the approach
of LeBrun \& Simanca, we need to consider the restriction of
$\operatorname{B}_\alpha$ and $\mathbf{C}$ to suitable
subspaces of the Sobolev spaces. Let
\[
L^2_{k}(X)^{\cH_J} \subset L^2_k(X) \textrm{ and }
L^2_{k}(\ad E_H)^{\cG_I}\subset L^2_{k}(\ad E_H)
\]
be the closed subspaces consisting of $\cH_J$-invariant
functions and $\cG_I$-invariant sections, respectively
(cf.~\eqref{eq:inv-Sobolev-spc}) and
\[
\mathcal{V}' = \mathcal{U}\cap
\(\mathcal{H}^{1,1}(X)\times L^2_{k+4}(X)^{\cH_J}\times L^2_{k+4}(\ad E_H)^{\cG_I}\)
\]
(cf.~\eqref{eq:subset-V}). By restriction of~\eqref{eq:Balphamap}
and~\eqref{eq:Coperator}, we obtain well-defined maps
\begin{subequations}
\label{eq:B-C-operators-inv}
\begin{align}
\label{eq:Boperator-inv}
\hat{\operatorname{B}}_\alpha \colon & \mathcal{V}'\lto L^2_{k}(X)^{\cH_J} \times L^2_{k+2}(\ad E_H)^{\cG_I},\\
\label{eq:Coperator-inv}
\hat{\mathbf{C}}\colon &
L^2_{k+4}(X)^{\cH_J}\times L^2_{k+4}(\ad E_H)^{\cG_I}
\lto L^2_{k}(X)^{\cH_J} \times L^2_{k+2}(\ad E_H)^{\cG_I},
\end{align}
\end{subequations}
where $\hat{\operatorname{B}}_\alpha$ is $C^1$ and
$\hat{\mathbf{C}}$ is a linear elliptic operator
(cf.~\eqref{eq:Talphamap-Lalpha-invariant}).

Note that in the construtions~\eqref{eq:B-C-operators-inv} we have
used the subspace $L^2_{k}(\ad E_H)^{\cG_I}\subset L^2_{k}(\ad E_H)$
rather than the possibly smaller subspace $L^2_{k}(\ad E_H)^{\cX_I}$
which appeared in~\eqref{eq:Talphamap-Lalpha-invariant}. In practice,
we could say that the exact
sequence~\eqref{eq:Ext-Lie-group-holmorphic}
in~\secref{sec:ANdeformingsolutions} degenerates to the trivial
extension
\begin{equation}
\label{eq:trivial-ext}
1 \to \cG_I \lto \cH_J\times\cG_I \lto \cH_J \to 1
\end{equation}
in the weak coupling limit $\alpha\to 0$. In particular, the centre
$\mathfrak{z}(\LieX_I)$ of $\LieX_I$ (see~\eqref{eq:Pbold}) is now
replaced by the centre $\mathfrak{z}(\LieH_J)\oplus
\mathfrak{z}(\LieG_I)$ of the Lie algebra of $\cH_J\times\cG_I$
and $\mathbf{P}_{(\gamma,\phi,\xi)}$ (see~\eqref{eq:Pbold}) is
replaced by
\begin{equation}
\label{eq:Qbold}
\begin{gathered}
\xymatrix @R=0ex @C=-22ex {**[l]
\mathbf{Q}_{(\gamma,\phi,\xi)} %%=(\mathbf{Q}^0_{(\gamma,\phi)},\mathbf{Q}^1_{\xi})
\colon
\RR\times\mathfrak{z}(\LieH_J)\oplus\mathfrak{z}(\LieG_I) \ar[r] & **[r]
L^2_{k+3}(X)^{\cH_J} \times L^2_{k+2}(\ad E_H)^{\cG_I} 
\\**[l]
(t,w,v) \ar@{|->}[r] &  **[r]
\(\mathbf{G}d^*(w\lrcorner\widetilde{\omega})+t,v\),\\
}
\end{gathered}
\end{equation}
with $(\gamma,\phi,\xi)\in\mathcal{V}'$.

\begin{lemma}
\label{lemma:PTalpha2}
Let $(\gamma,\phi,\xi)\in\mathcal{V}'$. If
$\hat{\operatorname{B}}_\alpha(\gamma,\phi,\xi)\in\Im\mathbf{Q}_{(\gamma,\phi,\xi)}$,
then $\widetilde{b}=(\widetilde{\omega},\widetilde{H})$ is a solution of~\eqref{eq:extremalpairsplit0}.
\end{lemma}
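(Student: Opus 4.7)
The hypothesis unpacks as the existence of a triple $(t,w,v) \in \RR \times \mathfrak{z}(\LieH_J) \oplus \mathfrak{z}(\LieG_I)$ satisfying
\[
S_\alpha(\widetilde{b}) = \mathbf{G}d^*(w \lrcorner \widetilde{\omega}) + t, \qquad
\Lambda_{\widetilde{\omega}} F_{H,\widetilde{I}} - z_{\widetilde{\Omega}} = v.
\]
The plan is to prove the two conditions in~\eqref{eq:extremalpairsplit0} separately, mirroring the structure of the proof of Lemma~\ref{lemma:PTalpha}(2): for the extremality condition $L_{\eta_\alpha(\widetilde{b})}J = 0$, I would identify $\eta_\alpha(\widetilde{b})$ with $w$, and for the Yang--Mills equation $d_{\widetilde{H}}^* F_{\widetilde{H}}=0$, I would show that $\Lambda_{\widetilde{\omega}} F_{\widetilde{H}}$ is a constant central section of $\ad E_H$. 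Both arguments will rely decisively on the invariance of $(\gamma,\phi,\xi)$ built into the space $\mathcal{V}'$.

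For the first step, I would begin by verifying that $w$ preserves $\widetilde{\omega}$: indeed $L_w\omega=0$ since $w\in\LieH$; $L_w(dd^c\phi)=dd^c(w\cdot\phi)=0$ because $\phi$ is $\cH_J$-invariant and $w$ preserves $J$; and $L_w\gamma=0$ because $\cH_J\subset\Diff_0 X$ acts on $(X,J,\omega)$ by K\"ahler isometries (it preserves both $\omega$ and $J$, hence the Hodge Laplacian), acting trivially on $H^{1,1}(X,\RR)$ and therefore on harmonic representatives. Since $w$ is Hamiltonian for $\omega$ it has zeros, and the standard Lichnerowicz--LeBrun--Simanca decomposition~\cite[\S 2]{LS1} then forces $w\lrcorner\widetilde{\omega}=d\widetilde{f}$ for some $\widetilde{f}\in C^\infty(X)$. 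Identity~\eqref{eq:Hamiltonianfunction=Greenfunction} applied to $w$ and $\widetilde{\omega}$ then yields $d S_\alpha(\widetilde{b})= d\mathbf{G}d^*(w\lrcorner\widetilde{\omega})=w\lrcorner\widetilde{\omega}$, whence $\eta_\alpha(\widetilde{b})=w\in\LieH_J$, and therefore $L_{\eta_\alpha(\widetilde{b})}J=L_wJ=0$.

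For the Yang--Mills equation I would transport the second identity from the pair $(H,\widetilde{I})$ to $(\widetilde{H},I)$. Setting $g=e^{\imag\xi}$, so that $g\cdot I=\widetilde{I}$ and $g^{-1}\cdot H=\widetilde{H}$, \eqref{eq:changecurvature} gives $F_{H,\widetilde{I}}=g\cdot F_{\widetilde{H},I}$, and since the pointwise gauge action commutes with $\Lambda_{\widetilde{\omega}}$,
\[
\Lambda_{\widetilde{\omega}} F_{\widetilde{H},I} = \Ad(g^{-1})(z_{\widetilde{\Omega}}+v).
\]
Since $z_{\widetilde{\Omega}}\in\mathfrak{z}$ is fixed by $\Ad(g^{-1})$, the key point is the identification of $\mathfrak{z}(\LieG_I)$ with constant elements of $\mathfrak{z}$: an element $v\in\mathfrak{z}(\LieG_I)$ is, by definition, $I$-holomorphic; its centrality in $\LieG_I$, combined with the fact that $\LieG_I$ contains the constant central sections, forces $v$ to take pointwise values in $\mathfrak{z}(\mathfrak{g}^c)$, and such an $I$-holomorphic section is covariantly constant, hence constant on connected $X$. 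Therefore $\Ad(g^{-1})v=v$, and $\Lambda_{\widetilde{\omega}} F_{\widetilde{H},I}=z_{\widetilde{\Omega}}+v\in\mathfrak{z}$ is a constant central section. Consequently $d_{\widetilde{H}}\Lambda_{\widetilde{\omega}} F_{\widetilde{H},I}=0$, and the K\"ahler identities combined with the Bianchi identity and the $(1,1)$-type of $F_{\widetilde{H},I}$ give $d_{\widetilde{H}}^* F_{\widetilde{H},I}=0$. Smoothness of the resulting pair is then inherited from Lemma~\ref{lem:reg-extremalpairs-weak}.

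The main obstacle I expect is the structural identification of $\mathfrak{z}(\LieG_I)$ with constant central elements of $\mathfrak{z}(\mathfrak{g}^c)$, together with the verification that $w$ remains Hamiltonian for the deformed form $\widetilde{\omega}$ rather than merely for $\omega$: both points are rooted in the $\cH_J$- and $\cG_I$-invariance hard-wired into $\mathcal{V}'$, and are slightly more delicate than the analogous considerations in the proof of Lemma~\ref{lemma:PTalpha}(2), where $\mathfrak{z}(\LieX_I)$ enters the picture through the Chern connection decomposition rather than through the trivial splitting~\eqref{eq:trivial-ext}.
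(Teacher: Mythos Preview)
Your argument for $L_{\eta_\alpha(\widetilde{b})}J=0$ is correct and parallels the paper's proof of Lemma~\ref{lemma:PTalpha}(2). The gap is in the Yang--Mills step: the assertion that every $v\in\mathfrak{z}(\LieG_I)$ takes pointwise values in $\mathfrak{z}(\mathfrak{g}^c)$, and is therefore a constant central section, is false. Your justification --- centrality in $\LieG_I$ together with the inclusion $\mathfrak{z}\subset\LieG_I$ --- is vacuous, because commuting with the constant central sections imposes no constraint whatsoever. For a concrete counterexample take $G=\U(2)$ and $(E^c,I)$ the frame bundle of $L_1\oplus L_2$ with $L_1\not\cong L_2$ holomorphic line bundles: then $\LieG_I\cong\mathfrak{u}(1)\oplus\mathfrak{u}(1)$ is abelian, so $\mathfrak{z}(\LieG_I)=\LieG_I$ is strictly larger than the scalars $\mathfrak{z}=\mathfrak{z}(\mathfrak{u}(2))$. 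Both conclusions you draw from this claim --- that $\Ad(g^{-1})v=v$ and that $d_{\widetilde{H}}v=0$ --- happen to be true, but need different reasoning.

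The fix uses precisely the $\cG_I$-invariance of $\xi$ that you flag in your last paragraph. Since $\xi\in L^2_{k+4}(\ad E_H)^{\cG_I}$, one has $[w,\xi]=0$ for every $w\in\LieG_I$; in particular $[v,\xi]=0$, whence $\Ad(g^{\pm1})v=v$ with $g=e^{\imag\xi}$, with no pointwise-centrality needed. Next, $v\in\LieG_I$ gives $\bar\partial_I v=0$, and combining with $\Ad(g^{-1})v=v$ yields $\bar\partial_{\widetilde{I}}v=\Ad(g)\,\bar\partial_I\!\bigl(\Ad(g^{-1})v\bigr)=0$; as $v$ is a real section of $\ad E_H$ this forces $d_{H,\widetilde{I}}v=0$. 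Transporting by~\eqref{eq:changecurvature} then gives $d_{\widetilde{H},I}\Lambda_{\widetilde{\omega}}F_{\widetilde{H},I}=\Ad(g^{-1})\,d_{H,\widetilde{I}}(z_{\widetilde{\Omega}}+v)=0$, and the K\"ahler identities conclude as you indicate. Note that the \emph{centrality} of $v$ in $\LieG_I$ plays no role in this argument --- it is needed only so that $\mathbf{Q}$ lands in the $\cG_I$-invariant subspace, not for the lemma itself.
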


\begin{proof}
This follows exactly as part (2) of Lemma~\ref{lemma:PTalpha}.
\end{proof}

Since $\mathbf{C}$ has kernel~\eqref{eq:LalphaKernelgeneral} by
elliptic regularity, part (3) of Lemma~\ref{lemma:PTalpha} has no
direct analogue in the weak coupling limit. Lemma~\ref{lemma:PTalpha3}
will provide a suitable replacement of this part of the
lemma.
%%%
Let $\langle\cdot,\cdot\rangle_\omega$ be the $L^2$-inner product on $
L^2_{k}(X)^{\cH_J} \times L^2_{k+2}(\ad E_H)^{\cG_I}$
given by~\eqref{eq:pairing-product}. One can prove as
in~\secref{sec:ANdeformingsolutions} that the orthogonal projector
\[
\Pi'_{(\gamma,\phi,\xi)}\colon
L^2_{k}(X)^{\cH_J}\times L^2_{k+2}(\ad E_H)^{\cG_I}
\lto L^2_{k}(X)^{\cH_J}\times L^2_{k+2}(\ad E_H)^{\cG_I}
\]
onto $\Im\mathbf{Q}_{(\gamma,\phi,\xi)}$ varies smoothly with
$(\gamma,\phi,\xi)\in\mathcal{V}'$ and, by continuity, there exists an
open neighbourhood $\mathcal{V}'_0\subset\mathcal{V}'$ of the origin
such that
\[
\ker(\Id - \Pi'_{(\gamma,\phi,\xi)}) = \ker(\Id-\Pi_0)\circ(\Id-\Pi'_{(\gamma,\phi,\xi)})
\]
for any $(\gamma,\phi,\xi) \in \mathcal{V}'_0$ (cf.~\eqref{eq:P0Pphi}).

For any pair of non-negative integers $(l,m)$, let $I'_{l,m}\subset
L^2_{l}(X)^{\cH_J} \times L^2_{m}(\ad E_H)^{\cG_I}$ be the orthogonal
complement of $\Im\mathbf{Q}_{0}$. Define
\[
\mathcal{W}' = \mathcal{V}'_0 \cap (\mathcal{H}^{1,1}(X) \times I'_{k+4,k+4}).
\]

\begin{lemma}
\label{lemma:PTalpha3}
The induced map $\hat{\mathbf{C}}\colon I'_{k+4,k+4}\lto I'_{k,k+2}$ is
an isomorphism.
\end{lemma}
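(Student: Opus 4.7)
The plan is to combine standard Fredholm theory for the elliptic self-adjoint operator $\hat{\mathbf{C}}$ with an explicit identification of $\ker\hat{\mathbf{C}}$ as $\Im\mathbf{Q}_0$. Since $\mathbf{C}$ is elliptic and self-adjoint, so is its restriction
\[
\hat{\mathbf{C}}\colon L^2_{k+4}(X)^{\cH_J}\times L^2_{k+4}(\ad E_H)^{\cG_I}\lto L^2_{k}(X)^{\cH_J}\times L^2_{k+2}(\ad E_H)^{\cG_I},
\]
hence Fredholm of index zero. By Hodge--Fredholm theory, its closed image equals the $L^2$-orthogonal complement of its kernel, and elliptic regularity upgrades this to an orthogonal decomposition at the Sobolev level used in the statement of the lemma.

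The crux reduces to the equality
\[
\ker\hat{\mathbf{C}}=\Im\mathbf{Q}_0,
\]
which I would establish in two steps. For $\Im\mathbf{Q}_0\subset\ker\hat{\mathbf{C}}$, take $(\phi,\xi)=\mathbf{Q}_0(t,w,v)$ with $w\in\mathfrak{z}(\LieH_J)$ and $v\in\mathfrak{z}(\LieG_I)$. Then $\eta_\phi=w\in\LieH_J$ gives $\operatorname{P}\phi=0$; on the other hand $v$ is parallel under $d_H$ (since it lies in $\LieG_I$), and the second component of $\hat{\mathbf{C}}(\phi,v)$ reduces to $d_H^*(w\lrcorner F_H)$. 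The vanishing of this last term follows by lifting $w\in\mathfrak{z}(\LieH_J)$ to an element of $\LieX_I$ whose vertical component provides an $\ad E_H$-valued primitive for $w\lrcorner F_H$, combined with the Yang--Mills identity $d_H^*F_H=0$ implicit in the deformation setup of Section~\ref{sec:Defweakcoupling}.

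For the reverse containment, take $(\phi,\xi)\in\ker\hat{\mathbf{C}}$. Pairing the first component with $\phi$ yields $\operatorname{P}\phi=0$, whence $\eta_\phi\in\LieH_J$; the $\cH_J$-invariance of $\phi$ then forces $\eta_\phi\in\mathfrak{z}(\LieH_J)$. The step requiring care is to promote the condition $d_H^*(d_H\xi+\eta_\phi\lrcorner F_H)=0$ together with the $\cG_I$-invariance of $\xi$ to the vanishing of the $1$-form $d_H\xi+\eta_\phi\lrcorner F_H$ itself. Once this is done, formula~\eqref{eq:infinit-action-connections2} identifies $\xi+\theta_H^\perp\eta_\phi$ as an element of the Lie algebra of the trivial extension~\eqref{eq:trivial-ext}, which projects onto $(w,v)\in\mathfrak{z}(\LieH_J)\oplus\mathfrak{z}(\LieG_I)$ with $\mathbf{Q}_0(t,w,v)=(\phi,\xi)$ for a unique $t\in\RR$.

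Once $\ker\hat{\mathbf{C}}=\Im\mathbf{Q}_0$ is established, the conclusion is purely formal. Self-adjointness shows that $\hat{\mathbf{C}}$ maps $I'_{k+4,k+4}=(\Im\mathbf{Q}_0)^\perp$ into $I'_{k,k+2}=(\Im\mathbf{Q}_0)^\perp$; it is injective there since $I'_{k+4,k+4}\cap\Im\mathbf{Q}_0=0$, and it is surjective since $\Im\hat{\mathbf{C}}=(\ker\hat{\mathbf{C}})^\perp=I'_{k,k+2}$. The open mapping theorem then delivers the topological isomorphism. The main obstacle is the technical identification of $\ker\hat{\mathbf{C}}$, and specifically the reverse inclusion: the challenge is that the weak elliptic equation combined with $\cG_I$-invariance must yield the strong infinitesimal holomorphicity condition $d_H\xi+\eta_\phi\lrcorner F_H=0$, whose proof uses in an essential way the HYM property of $H$ with respect to $\omega$.
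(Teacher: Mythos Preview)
Your plan hinges on the identification $\ker\hat{\mathbf{C}}=\Im\mathbf{Q}_0$, but this equality is neither proved in your sketch nor true in the generality of the lemma. For the inclusion $\Im\mathbf{Q}_0\subset\ker\hat{\mathbf{C}}$ you need $d_H^*(w\lrcorner F_H)=0$ for every $w\in\mathfrak{z}(\LieH_J)$. Your proposed mechanism --- lifting $w$ to $\LieX_I$ and invoking the Yang--Mills identity --- fails on two counts: the lemma carries no Yang--Mills or HYM hypothesis on $H$, and the map $p$ in~\eqref{eq:Ext-Lie-group-holmorphic} is not asserted to be surjective, so the lift need not exist. Even granting both, writing $w\lrcorner F_H=-d_H\chi$ only yields $d_H^*(w\lrcorner F_H)=-d_H^*d_H\chi$, which has no reason to vanish. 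For the reverse inclusion you correctly flag that $d_H^*(d_H\xi+\eta_\phi\lrcorner F_H)=0$ does not by itself force $d_H\xi+\eta_\phi\lrcorner F_H=0$; pairing with $\xi$ leaves the cross term $\langle d_H\xi,\eta_\phi\lrcorner F_H\rangle_\omega$, and the HYM hypothesis you invoke to kill it is, again, not available.

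The paper sidesteps all of this by exploiting the block-triangular structure of $\mathbf{C}$ rather than appealing to $\ker\hat{\mathbf{C}}$. Well-definedness of the restricted map uses only the weaker inclusion $\Im\mathbf{Q}_0\subset\ker(\operatorname{P}\oplus d_H)$: for $(f,\chi)\in\Im\mathbf{Q}_0$ one has $\operatorname{P}f=0$ and $d_H\chi=0$, so $\langle\hat{\mathbf{C}}(\phi,\xi),(f,\chi)\rangle_\omega=\langle\operatorname{P}\phi,\operatorname{P}f\rangle_\omega+\langle d_H\xi+\eta_\phi\lrcorner F_H,d_H\chi\rangle_\omega=0$. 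Injectivity is proved sequentially: from $\operatorname{P}^*\operatorname{P}\phi=0$ one gets $\operatorname{P}\phi=0$, hence $(\phi,0)\in\Im\mathbf{Q}_0$, and orthogonality of $(\phi,\xi)$ to $\Im\mathbf{Q}_0$ forces $\phi=0$; \emph{only then} does the second equation become $d_H^*d_H\xi=0$, giving $d_H\xi=0$, hence $(0,\xi)\in\Im\mathbf{Q}_0$ and $\xi=0$. Surjectivity is likewise handled by first solving $\operatorname{P}^*\operatorname{P}\phi=f$ and then $d_H^*d_H\xi=\chi-d_H^*(\eta_\phi\lrcorner F_H)$, i.e.\ by reducing to the diagonal operator $\operatorname{P}^*\operatorname{P}\oplus d_H^*d_H$. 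The point is that the cross term $d_H^*(\eta_\phi\lrcorner F_H)$ is strictly lower-triangular, so one never needs to control $\ker\hat{\mathbf{C}}$ as a whole.
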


\begin{proof}
This map is well-defined because $\Im\mathbf{Q}_0\subset
\ker\(\operatorname{P}\oplus d_H\)$.
%%%
If $\hat{\mathbf{C}}(\phi,\xi)=0$ for some $(\phi,\xi)\in
I'_{k+4,k+4}$, then $\operatorname{P}^*
\operatorname{P}\phi=0$, so
$\operatorname{P}\phi = 0$, which implies $\phi=0$, and $\hat{\mathbf{C}}(\phi,\xi)=0$
means $d_H^*d_H\xi=0$, so $d_H\xi=0$, which implies $\xi=0$. Thus $\hat{\mathbf{C}}$ is injective.
%%%
Finally, $\hat{\mathbf{C}}$ is surjective because so is
$\operatorname{P}^* \operatorname{P}\oplus
d_H^*d_H$.
\end{proof}

Define now a LeBrun--Simanca map~\cite[\S 5]{LS2}
\begin{equation}
\label{eq:Bmap}
\begin{gathered}
\xymatrix @R=0ex @C=-15ex {**[l]
\mathbf{B}_\alpha \colon
\mathcal{W}' \ar[r] & **[r] I'_{k,k+2}
\\**[l]
(\gamma,\phi,\xi) \ar@{|->}[r] &  **[r]
(\Id - \Pi'_0)\circ(\Id - \Pi'_{(\gamma,\phi,\xi)})\circ \hat{\operatorname{B}}_\alpha(\gamma,\phi,\xi).
}
\end{gathered}
\end{equation}
As $\mathbf{B}_\alpha$ is the composition of $C^1$-maps, it is
$C^1$. Using Lemma~\ref{lemma:PTalpha3} and~\cite[Lemma~1]{LS2}, we
can see that its directional derivative at the origin in the direction
$(0,\dot{\phi},\dot{\xi})$ for $\alpha=0$ is
\begin{equation}
\label{eq:Balphamapder2}
\delta_{(\dot{\phi},\dot{\xi})}\mathbf{B}_0
=(\Id - \Pi_0')\hat{\mathbf{C}}(\dot{\phi},\dot{\xi})
=\hat{\mathbf{C}}(\dot{\phi},\dot{\xi}),
\end{equation}
for all $(\dot{\phi},\dot{\xi})\in I'_{k+4,k+4}$.

We can now prove the two main results of~\secref{sec:Defweakcoupling}.

\begin{theorem}
\label{thm:DeformationCYMeq3}
Suppose that $\omega$ is an extremal K\"{a}hler metric on $(X,J)$ with
$\Omega=[\omega]$ and $H$ is a Yang--Mills reduction of $(E^c,I)$ with
respect to $\omega$. Then $(0,\Omega)$ has an open neighbourhood
$U\subset\RR\times H^{1,1}(X,\RR)$ such that for all
$(\widetilde{\alpha},\widetilde{\Omega})\in U$ there exists an
extremal pair $(\widetilde{\omega},\widetilde{H})$ with coupling
constant $\widetilde{\alpha}$ in the weak coupling limit such that
$[\widetilde{\omega}]=\widetilde{\Omega}$.
\end{theorem}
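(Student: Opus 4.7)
The plan is to apply the implicit function theorem to the LeBrun--Simanca map $\mathbf{B}_\alpha$ of~\eqref{eq:Bmap}, in direct analogy with the proof of Theorem~\ref{thm:DeformationCYMeq1}, but using the operator $\hat{\mathbf{C}}$ of~\eqref{eq:Coperator-inv} in place of $\hat{\mathbf{L}}_{\alpha,b}$. Since the only $\alpha$-dependence in $\operatorname{B}_\alpha$ enters linearly through $S_\alpha$, the map $\operatorname{B}$ is jointly $C^1$ in $(\alpha,\gamma,\phi,\xi)$, and composing with the smoothly-varying projectors $\Pi'_{(\gamma,\phi,\xi)}$ yields a $C^1$ map $\mathbf{B}\colon \RR\times\mathcal{W}'\to I'_{k,k+2}$. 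The implicit function theorem will be applied at the origin $(\alpha,\gamma,\phi,\xi)=(0,0,0,0)$.

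The first step is to verify that $\mathbf{B}_0(0,0,0)=0$, which reduces to checking that $\hat{\operatorname{B}}_0(0,0,0)=(S_\omega, \Lambda_\omega F_H-z_\Omega)$ lies in $\Im\mathbf{Q}_0$. For the scalar-curvature component, the Calabi theorem for extremal K\"ahler metrics yields $\eta_{S_\omega}\in\mathfrak{z}(\LieH_J)$, so $S_\omega=\mathbf{G}d^*(\eta_{S_\omega}\lrcorner\omega)+\bar{S}_\omega$ belongs to $\Im\mathbf{P}^0_0$. For the curvature component, the Yang--Mills equation combined with the K\"ahler identities gives $d_H\Lambda_\omega F_H=0$, so $\Lambda_\omega F_H\in\LieG_I$, and the standard splitting of a Yang--Mills connection into Hermite--Einstein summands shows that $\Lambda_\omega F_H-z_\Omega$ is central in $\LieG_I$, hence in $\mathfrak{z}(\LieG_I)$. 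The second step is the derivative computation~\eqref{eq:Balphamapder2}, which identifies the Fr\'echet derivative of $\mathbf{B}_\alpha$ at the origin in the $(\dot\phi,\dot\xi)$-directions with $\alpha=0$ as the restriction $\hat{\mathbf{C}}\colon I'_{k+4,k+4}\to I'_{k,k+2}$; this is an isomorphism by Lemma~\ref{lemma:PTalpha3}.

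The implicit function theorem then furnishes an open neighbourhood $U\subset\RR\times\mathcal{H}^{1,1}(X)\cong\RR\times H^{1,1}(X,\RR)$ of $(0,0)$ such that, for every $(\widetilde\alpha,\gamma)\in U$, there is a unique $(\phi,\xi)\in I'_{k+4,k+4}$ with $\mathbf{B}_{\widetilde\alpha}(\gamma,\phi,\xi)=0$. Exactly as in the analogue of~\eqref{eq:P0Pphi} established for the weak-coupling set-up, this vanishing means $\hat{\operatorname{B}}_{\widetilde\alpha}(\gamma,\phi,\xi)\in\Im\mathbf{Q}_{(\gamma,\phi,\xi)}$, and Lemma~\ref{lemma:PTalpha2} then guarantees that the associated pair $\widetilde b=(\widetilde\omega,\widetilde H)$ defined by~\eqref{eq:btilde} solves~\eqref{eq:extremalpairsplit0} with coupling constant $\widetilde\alpha$; Lemma~\ref{lem:reg-extremalpairs-weak} upgrades the Sobolev regularity to smoothness. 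The main obstacle is the curvature-component check in the first step: because a Yang--Mills reduction is generally \emph{not} Hermite--Einstein, $\Lambda_\omega F_H$ need not equal $z_\Omega$, and one has to exploit the fine structure of Yang--Mills connections (the holomorphic-orthogonal splitting into Hermite--Einstein pieces) to place $\Lambda_\omega F_H-z_\Omega$ into the centre $\mathfrak{z}(\LieG_I)$. This is precisely the point where the weak-coupling set-up (with the trivial extension~\eqref{eq:trivial-ext} replacing~\eqref{eq:Ext-Lie-group-holmorphic} and $\mathbf{Q}_0$ replacing $\mathbf{P}_0$) is essential, and it is the reason we work with $\operatorname{B}_\alpha$ rather than with the unrescaled $\operatorname{T}_\alpha$ in this limit.
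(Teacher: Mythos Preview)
Your proposal is correct and follows the same route as the paper, which simply refers back to the proof of Theorem~\ref{thm:DeformationCYMeq1} with $\hat{\mathbf{C}}$, $\mathbf{Q}$, and Lemmas~\ref{lemma:PTalpha2}, \ref{lemma:PTalpha3}, \ref{lem:reg-extremalpairs-weak} in place of their counterparts. You usefully spell out the check $\hat{\operatorname{B}}_0(0,0,0)\in\Im\mathbf{Q}_0$ that the paper leaves implicit; note that for the curvature component there is a cleaner argument than the Hermite--Einstein splitting (which is vector-bundle specific): any $g\in\cG_I$ stabilizes the Chern connection and hence its curvature, so $\operatorname{Ad}(g)\Lambda_\omega F_H=\Lambda_\omega F_H$ directly, and $z_\Omega\in\mathfrak{z}$ is already central.
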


\begin{proof}
This follows as Theorem~\ref{thm:DeformationCYMeq1}, combining~\eqref{eq:Balphamapder2} with Lemma~\ref{lemma:PTalpha3} and the
implicit function theorem, and then using Lemmas~\ref{lemma:PTalpha2}
and~\ref{lem:reg-extremalpairs-weak}.
\end{proof}

In the following theorem, we say that a reduction $H \in \cR$ is
irreducible if its Chern connection is irreducible, that is, if its
isotropy group inside the gauge group $\cG_H$ of $E_H$ is minimal---
the centre of $G$ (see~\secref{sec:CeqscalarKal-K} and also~\cite[\S
4.2.2]{DK}).

\begin{theorem}
\label{thm:DeformationCYMeq4}
Assume that there is a cscK metric $\omega$ on $(X,J)$ with cohomology
class $\Omega$ and there are no non-zero Hamiltonian Killing vector fields
on $X$. Then
\begin{enumerate}
\item[\textup{(1)}]
If $(E^c,I)$ admits an irreducible HYM reduction $H$ with respect to $\omega$, then $(0,\Omega)$ has
an open neighbourhood $U\subset\RR\times H^{1,1}(X,\RR)$ such that for
all $(\widetilde{\alpha}_1,\widetilde{\Omega})\in U$, there exists a
solution $(\widetilde{\omega},\widetilde{H})$ to the coupled
equations~\eqref{eq:CYMeq2} with coupling constant
$\widetilde{\alpha}=(1,\widetilde{\alpha}_1)$ and
$\widetilde{\omega}\in\widetilde{\Omega}$.
\item[\textup{(2)}]
If $(E^c,I)$ admits a HYM reduction $H$ with respect to $\omega$, then there exists
$\epsilon>0$ such that for all $\widetilde{\alpha}_1\in\RR$ with
$-\epsilon<\widetilde{\alpha}_1<\epsilon$, there exists a solution
$(\widetilde{\omega},\widetilde{H})$ to the coupled equations~
\eqref{eq:CYMeq2} with coupling constants $(1,\widetilde{\alpha}_1)$
and $\widetilde{\omega}\in\widetilde{\Omega}$.
\end{enumerate}
\end{theorem}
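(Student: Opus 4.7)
The plan is to apply Theorem~\ref{thm:DeformationCYMeq3} to deform $(\omega,H)$ into an extremal pair $(\tilde\omega,\tilde H)$ in the weak coupling limit with coupling constant $\tilde\alpha_1$ close to $0$, and then upgrade it to a solution of the coupled equations~\eqref{eq:CYMeq2} using the hypothesis on holomorphic vector fields, combined with irreducibility in Part~(1) and the vanishing of the $\alpha$-Futaki character $\cF_{\infty,\Omega}$ in Part~(2). First I would observe that $(\omega,H)$ is itself an extremal pair in the weak coupling limit with coupling constant $0$: the Yang--Mills equation $d_H^{\ast}F_H=0$ follows from $\Lambda_\omega F_H=z$ via the K\"ahler identities, and $\eta_0(\omega,H)=0$ because $S_\omega$ is constant, hence trivially $L_{\eta_0(\omega,H)}J=0$. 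Theorem~\ref{thm:DeformationCYMeq3} therefore produces, for each $(\tilde\alpha_1,\tilde\Omega)$ close to $(0,\Omega)$, an extremal pair $(\tilde\omega,\tilde H)\in B_{\tilde\Omega}$ in the weak coupling limit with coupling constant $\tilde\alpha_1$; for Part~(2) we keep the K\"ahler class fixed by restricting the deformation parameter $\gamma$ to be zero.

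The hypothesis that no non-zero holomorphic vector field on $(X,J)$ vanishes somewhere on $X$ has two crucial consequences. First, every real-holomorphic $\check\zeta$ on $(X,J)$ has trivial Hamiltonian components in the decomposition $\check\zeta=\eta_{\phi_1}+J\eta_{\phi_2}+\beta$ used in~\secref{sub:ANCeq}, since the Hamiltonian vector fields $\eta_{\phi_j}$ are holomorphic and vanish at critical points of $\phi_j$, so the hypothesis forces $\eta_{\phi_j}=0$ and hence $\phi_j=0$ after normalisation. Applied to the extremality condition $L_{\eta_{\tilde\alpha}(\tilde\omega,\tilde H)}J=0$, this gives $\eta_{\tilde\alpha}(\tilde\omega,\tilde H)=0$, so $S_{\tilde\alpha}(\tilde\omega,\tilde H)$ is constant on $X$, delivering the second equation of~\eqref{eq:CYMeq2} for coupling constants $(1,\tilde\alpha_1)$. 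Second, the same observation makes both summands in the explicit formula~\eqref{eq:alphafutakismooth} for $\cF_{\infty,\Omega}$ evaluated on the original pair $(\omega,H)$ vanish -- the first because $H$ is HYM and the second because $\phi=\phi_1+\imag\phi_2=0$ -- so the character $\cF_{\infty,\Omega}\colon\Lie\Aut(E^c,I)\to\CC$ is identically zero.

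It remains to establish $\Lambda_{\tilde\omega}F_{\tilde H}=z_{\tilde\Omega}$. From $d_{\tilde H}^{\ast}F_{\tilde H}=0$ and the K\"ahler identities, $d_{\tilde H}(\Lambda_{\tilde\omega}F_{\tilde H}-z_{\tilde\Omega})=0$, so $\zeta\defeq\Lambda_{\tilde\omega}F_{\tilde H}-z_{\tilde\Omega}$ is a parallel section of $\ad E_{\tilde H}$ and hence defines an element of $\Lie\Aut(E^c,I)$. For Part~(1), irreducibility of the Chern connection is an open condition, so $\tilde H$ remains irreducible for $(\tilde\alpha_1,\tilde\Omega)$ close to $(0,\Omega)$; in that case $\ker d_{\tilde H}$ consists of constant sections of $\ad E_{\tilde H}$ valued in $\mathfrak{z}$, and averaging combined with the definition~\eqref{eq:z(Omega,E)} of $z_{\tilde\Omega}$ forces $\zeta=0$. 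For Part~(2) with $\tilde\Omega=\Omega$ fixed, pairing the character $\cF_{\infty,\Omega}$ (which vanishes by the previous paragraph) with $\zeta$ through~\eqref{eq:alphafutakismooth}, and noting that $\zeta$ is vertical so that $\theta_{\tilde H}\zeta=\zeta$ and the Hamiltonian-function factor is zero, collapses the expression to $-4\|\Lambda_{\tilde\omega}F_{\tilde H}-z\|_{\tilde\omega}^{2}=0$, again forcing $\zeta=0$. The main obstacle is the degeneration of the linearisation of~\eqref{eq:CYMeq2} as $\alpha_1\to 0$, where the HYM side of the system becomes invisible; this is what forced the use of the modified moment map operator $\operatorname{B}_\alpha$ in Theorem~\ref{thm:DeformationCYMeq3} and it explains the asymmetry between the two parts, since without irreducibility one cannot conclude HYM from the extremal-pair condition alone and must instead rely on the fixed class $\Omega$ to invoke the vanishing of $\cF_{\infty,\Omega}$.
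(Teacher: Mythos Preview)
Your proposal is correct and follows essentially the same approach as the paper's proof: apply Theorem~\ref{thm:DeformationCYMeq3} to obtain extremal pairs in the weak coupling limit, then use $\LieH_J=0$ (which is exactly the content of the hypothesis on holomorphic vector fields) to conclude that $S_{\tilde\alpha}$ is constant, and finally use irreducibility (Part~(1)) or the vanishing of $\cF_{\infty,\Omega}$ (Part~(2)) to upgrade the Yang--Mills condition to HYM. The only difference is presentational: for Part~(2) the paper invokes Proposition~\ref{prop:extremal-solution-weak} as a black box (which requires both $\cF_{0,\Omega}=0$ and $\cF_{\infty,\Omega}=0$, together with liftability of $\eta_\alpha$), whereas you unpack that proposition inline and in fact bypass the need for $\cF_{0,\Omega}$ altogether by observing directly that $\eta_{\tilde\alpha}=0$ forces $S_{\tilde\alpha}$ constant; your argument is slightly more direct in this respect.
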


\begin{proof}
Since HYM reductions are Yang--Mills,
Theorem~\ref{thm:DeformationCYMeq3} implies that for all
$(\widetilde{\alpha},\widetilde{\Omega})$ in a neighbourhood
$U\subset\RR\times H^{1,1}(X,\RR)$ of $(0,\Omega)$, there exists an
extremal pair $(\widetilde{\omega},\widetilde{H})$ with coupling
constant $\widetilde{\alpha}$ in the weak coupling limit with
$[\widetilde{\omega}]=\widetilde{\Omega}$ and $\widetilde{H}$
irreducible.

Part (1) follows now since the function
$S_\alpha(\widetilde{\omega},\widetilde{H})$ defined
by~\eqref{eq:Salpha} is constant on $X$ for any extremal pair
$(\widetilde{\omega},\widetilde{H})$, as $\LieH_J=0$ and, furthermore,
the vertical real-holomorphic vector field on $(E^c,I)$ defined by
$\Lambda_{\widetilde{\omega}}F_{\widetilde{H}}$ is in $\mathfrak{z}$,
as $\widetilde{H}$ is irreducible.

Part (2) follows from Theorem~\ref{thm:DeformationCYMeq3} and
Proposition~\ref{prop:extremal-solution-weak}, because
$\cF_{0,\Omega}=\cF_{\infty,\Omega}=0$
by~\eqref{eq:alphafutakismooth}, as $\LieH_J=0$ and $(E^c,I)$
admits a HYM reduction $H$ with respect to $\omega$.
\end{proof}

\section{Examples and cscK metrics on ruled manifolds}
\label{chap:examples}

This section contains some examples of solutions to the coupled equations
\eqref{eq:CYMeq00}. In \secref{sec:example4} we also discuss how the existence
of solutions in the limit case $\alpha_0 = 0$ can be applied, using results of
Y. J. Hong in \cite{Ho2}, to obtain cscK metrics on ruled manifolds.

\subsection{Projectively flat bundles}
\label{sec:example1}

Let $(E^c,I)$ be a holomorphic principal $G^c$-bundle over a compact
complex manifold $X$. We fix a maximal compact subgroup $G \subset
G^c$ and a $G$-invariant metric $(\cdot,\cdot)$ on
$\mathfrak{g}$. Suppose that there exists a $G$-reduction $H$ on $E^c$
and a K\"ahler metric $\omega$ on $X$ satisfying
\begin{equation}
\label{eq:CYMeqdecoupled}
\left. \begin{array}{l}
F_H  = z \, \omega\\
S_\omega  = \hat S
\end{array}\right \},
\end{equation}
where $F_H$ is the curvature of the Chern connection of $H$, $z$ is
the element of $\mathfrak{z}$ (see~\eqref{eq:centre-z}) given
by~\eqref{eq:z(Omega,E)} and $\hat S \in \RR$. It is then
straightforward that the pair $(\omega,H)$ provides a solution of the
coupled equations \eqref{eq:CYMeq00}. Note that the first equation in
\eqref{eq:CYMeqdecoupled} implies that the $G$-bundle $E_H$
corresponding to $H$ is projectively flat, i.e. it is given by a
representation $\pi_1(X) \to G/Z(G)$, where $Z(G)$ denotes the centre
of $G$. Moreover, it implies the following topological constraint
\begin{equation}
\label{eq:topconstraint}
[z \wedge F_H] = |z|^2[\omega] \in H^2(X,\RR),
\end{equation}
where $[F_H \wedge z]$ is the Chern--Weil class associated to the
$G$-invariant linear form $(\cdot,z)$ on $\mathfrak{g}$. We discuss
now some examples of solutions of \eqref{eq:CYMeqdecoupled}. We apply
Theorem~\ref{thm:DeformationCYMeq4}(1) to perturb the K\"ahler class
of the given solution in order to obtain new solutions that do not
satisfy the topological constraint \eqref{eq:topconstraint}.

\begin{example}
\label{example0}
Let $X$ be a compact Riemann surface. Then the coupled equations
\eqref{eq:CYMeq00}, for a $G$-reduction $H$ on $E^c$ and a K\"ahler
metric $\omega$ on $X$, split into the system in separated variables
\eqref{eq:CYMeqdecoupled}, since $\dim_\CC X = 1$ and the term $(F_H
\wedge F_H)$ vanishes. Then the solutions of the coupled equations
\eqref{eq:CYMeq00} are given by pairs $(\omega,H)$, where $\omega$ is
a cscK metric and $H$ is a $G$-reduction such that its Chern
connection is Hermitian--Yang--Mills \eqref{eq:HYM}. Due to the
Narasimhan and Seshadri Theorem~\cite{D2}, and Ramanathan's
generalization~\cite{ramanathan:1975}, examples of solutions of the
coupled equations~\eqref{eq:CYMeq00} are given by polystable
$G^c$-bundles over $X$.
\end{example}

\begin{remark}
In~\cite{Pd}, Pandharipande used Geometric Invariant Theory to
compactify the moduli space of pairs $(X,F)$ consisting of a smooth
algebraic curve $X$ of genus $g>1$, polarised by a multiple of its
canonical bundle, and a semistable vector bundle $F$ over the
curve. By~\cite[Proposition~8.2.1]{Pd}, such a pair is GIT stable if
and only if $E$ is Mumford stable. An interesting issue is that this
decoupling phenomenon for the stability condition of a pair $(X,F)$ is
reflected in the decoupling of the equations~\eqref{eq:CYMeq00}, as
already observed in Example~\ref{example0}. In fact, combining the
Narasimhan--Seshadri Theorem with the uniformization Theorem on
Riemann surfaces, it follows that any GIT stable pair $(X,F)$ in
Pandharipande's construction, with $X$ smooth, admits one and only one
(irreducible) solution of \eqref{eq:CYMeq00} with K\"ahler class equal
to the class of the polarisation. This gives some evidence to the
claim that a Hitchin--Kobayashi correspondence for
equations~\eqref{eq:CYMeq00} exists in arbitrary dimensions, as
conjectured by the authors~\cite{GF1}. An important difference with
the curve case is that of course in higher dimensions one expects that
the stability condition equivalent to the existence of solutions will
involve conditions on the base manifold as well. In~\cite{GF1}, a new
notion of stability for degree zero bundles and polarised varieties
has been defined. We hope to address the relation between this
stability condition and the existence of solutions
of~\eqref{eq:CYMeq00} in future work.
\end{remark}

Let $(X,L)$ be a compact polarised manifold of complex dimension
$n$. Suppose that there exists a cscK metric
\[
\omega = \frac{\imag}{2\pi}{F_H} \in c_1(L),
\]
where $F_H$ is the curvature of a Hermitian metric $H$ on $L$. Then
$(\omega,H)$ is a solution of \eqref{eq:CYMeqdecoupled}, and hence a
solution of~\eqref{eq:CYMeq00}. Since $H$ is trivially an irreducible
HYM metric with respect to $\omega$, if there are no non-zero
Hamiltonian holomorphic vector fields on $X$, we can apply
Theorem~\ref{thm:DeformationCYMeq4} $(1)$ obtaining solutions
of~\eqref{eq:CYMeq00} with non-zero ratio of the coupling constants
and K\"ahler class close to $[\omega]$ in $H^{1,1}(X,\RR)$.

\begin{example}
\label{example1}
Let $X$ be a degree four hypersurface of $\PP^3$ and set $L =
\cO_X(1)$. Then $X$ is a K3 surface and, by Yau's solution~\cite{Yau}
of the Calabi conjecture (see e.g.~\cite{Be}), there exists a unique
K\"ahler Ricci flat metric $\omega \in c_1(L)$. Since $(X,\omega)$ is
K\"ahler Ricci flat, any holomorphic vector field on $X$ is
$\omega$-parallel and so $\Lie(\Aut X)$ contains no non-zero
Hamiltonian holomorphic vector fields. Therefore, applying
Theorem~\ref{thm:DeformationCYMeq4}(1), we obtain solutions
of~\eqref{eq:CYMeq00} with non-zero ratio of the coupling constants
$\alpha_1/\alpha_0$ and K\"ahler class $\widetilde{\Omega}$ close to
$\Omega=[\omega]$ in $H^{1,1}(X,\RR)$. As the dimension of
$H^{1,1}(X,\RR)$ is $20$, we can assume that $\widetilde{\Omega}$ is
not contained in the real line spanned by $\Omega$, and so it is not
obvious \emph{a priori} that such a K\"ahler class contains a solution
of \eqref{eq:CYMeq00} for our choice of manifold $X$ and bundle $L$.
\end{example}

When $(X,\omega)$ is a flat K\"ahler torus, we can relax condition \eqref{eq:CYMeqdecoupled} and assume that $E$ is an arbitrary projectively flat Hermitian bundle over $X$.

\begin{example}
Let $X \cong \CC^n/\Lambda_X$ be a complex torus given by a lattice
$\Lambda_X$ in $\ZZ^{2n}$ and endowed with a flat K\"ahler metric $\omega$. Examples of holomorphic vector bundles $E$
over $X$ admitting a projectively flat Hermitian metric $H$ are given
by representations of a central extension of $\Lambda_X$ into $\U(r)
\subset \GL(r,\CC)$. 
Suppose that $E$ is given by an irreducible representation of $\Lambda_X$ and take a projectively flat Hermitian metric $H$ on $E$, with curvature $\tau \Id$. By a conformal change on $H$, we can assume that $\tau$ is harmonic, and hence it is constant with respect to the natural coordinates in the torus.
Then $(\omega,H)$ is a solution to the coupled equations \eqref{eq:CYMeq00} for  arbitrary value of the coupling
constants $\alpha_0$ and $\alpha_1$.
\end{example}

\begin{remark}
In~\cite{ST}, Schumacher \& Toma constructed a moduli space of
(non-uniruled) polarised K\"ahler manifolds equipped with stable
vector bundles, using versal deformations. This moduli space is
endowed with a K\"ahler metric, provided that the cohomological
constraint~\eqref{eq:topconstraint} is satisfied, the base manifold
$X$ is K\"ahler--Einstein and the bundle is projectively flat. The
gauge-theoretic equations corresponding to this moduli construction
are therefore equivalent to \eqref{eq:CYMeqdecoupled}, whose solutions
are in particular solutions to the coupled
equations~\eqref{eq:CYMeq00}. Note here that the cscK equation and the
K\"{a}hler--Einstein equation are equivalent, by Hodge theory, if the
class of the polarisation is a multiple of $c_1(X)$.
\end{remark}

In the examples of~\secref{sec:example1}, the coupled
equations~\eqref{eq:CYMeq00} admit decoupled solutions arising from
the system in separated variables \eqref{eq:CYMeqdecoupled}. There is
a geometric interpretation for this in terms of the extended gauge
group $\cX$ in~\eqref{eq:Ext-Lie-groups} associated to a solution
$(\omega,H)$ to $F_H = z\omega$ and the moment map
interpretation of~\eqref{eq:CYMeq00} in~\secref{chap:Ceq}. Namely, the
Chern connection $A$ of $H$ determines a Lie algebra splitting of the
short exact sequence
\[
0 \to \LieG \lto \LieX \lto \LieH \to 0
\]
(see~\eqref{eq:Ext-Lie-alg-2}). The splitting is given by the Lie
algebra homomorphism
\begin{equation}
\label{eq:splittingLiecX}
\xymatrix @R=0ex @C=-9ex { **[l]
\Phi\colon \LieH \cong C^{\infty}_0(X) \ar[r]
&**[r]
\LieX
\\ **[l]
\phi \ar@{|->}[r] 
& **[r] 
\theta_A^{\perp}\eta_\phi - \phi z,
}
\end{equation}
(see~\eqref{eq:LieH}), where $\eta_\phi\lrcorner \omega = d\phi$ and
$\theta_A^\perp$ is the horizontal lift with respect to the connection
$A$. To see this, note that
\begin{align*}
[\Phi(\phi_1),\Phi(\phi_2)] & = [\theta_A^{\perp}\eta_{\phi_1} - \phi_1 z,\theta_A^{\perp}\eta_{\phi_2} - \phi_2 z]\\
& = \theta_A^{\perp}[\eta_{\phi_1},\eta_{\phi_2}] - \{\phi_1,\phi_2\} z + (F_A - z\omega)(\eta_{\phi_1},\eta_{\phi_2})\\
& = \Phi(\{\phi_1,\phi_2\}) + (F_A - \omega z)(\eta_{\phi_1},\eta_{\phi_2}),
\end{align*}
where $\{\phi_1,\phi_2\}$ is the Poisson bracket in $C^{\infty}_0(X)$
given by $\omega$. Note that this homomorphism does not extend in
general to the Lie algebra of the group of diffeomorphisms of
$X$. Therefore, when $\dim_\CC X = 1$ or $E$ is projectively flat, the
coupled system \eqref{eq:CYMeq00} may have `decoupled' solutions due
to the fact that $\LieX$ is a semidirect product of $\LieG$ and
$\LieH$.

\subsection{Homogeneous bundles over homogeneous K\"ahler manifolds}
\label{sec:example2}

For the basic material on this topic we refer to~\cite{Be}
and~\cite{Ko}. Let $X$ be a compact homogeneous K\"ahlerian manifold
(i.e. admitting a K\"ahler metric) of a compact group $G$. In other
words, $X = G/G_o$, for a closed subgroup $G_o \subset G$, equipped
with the canonical $G$-invariant complex structure
(see~\cite[Remark~8.99]{Be}). Then homogeneous holomorphic vector
bundles $E$ of rank $r$ over $X$ are in one-to-one correspondence with
representations of $G_o$ in $\GL(r,\CC)$. For any invariant K\"ahler
metric $\omega$ on $X$, there exists a unique $G$-invariant
Hermitian--Yang--Mills unitary connection $A$, provided that the
representation inducing $E$ is irreducible
(see~\cite[Proposition~6.1]{Ko}). Moreover, for any such choice of
invariant metric and connection, the scalar curvature $S_\omega$ and
the function $\Lambda_\omega^2\tr (F_A \wedge F_A)$ on $X$ are
$G$-invariant and hence constant. It hence turns out that $A$
satisfies the system of equations
\begin{equation}
\label{eq:ext-HYM}
\left. \begin{array}{l}
\Lambda_{\omega} F_A = \imag \lambda \Id\\
\Lambda^2_{\omega} \tr (F_A \wedge F_A) = - \frac{4\hat{c}}{(n-1)!}
\end{array}\right \},
\end{equation}
where $\hat c \in \RR$ is as in \eqref{eq:constant-c-hat} and $\lambda
\in \RR$ is determined by the first Chern class of $E$ and
$[\omega]$.
Equations~\eqref{eq:ext-HYM} corresponds to the limit
\[
  \alpha_0 \to 0
\]
in~\eqref{eq:CYMeq00}. Fix a pair of arbitrary coupling constants
$\alpha_0, \alpha_1 >0$ and a homogeneous holomorphic vector bundle
$E$ over $X$ associated to an irreducible representation. Then any
K\"ahler class on $X$ determines a unique $G$-invariant solution
$(\omega,A)$ to the coupled equations with coupling constants
$\alpha_0$ and $\alpha_1$. To see this, note that each de Rham class
on $X$ (in particular, each K\"ahler class) contains a unique
$G$-invariant representative, obtained from an arbitrary
representative by averaging. Trivially, the scalar curvature of any
$G$-invariant K\"ahler metric is constant. Therefore, the unique
$G$-invariant solution of \eqref{eq:CYMeq00} arises as a simultaneous
solution of the cscK equation and \eqref{eq:ext-HYM}, corresponding to
the limit cases $\alpha_0 = 0$, and $\alpha_1 = 0$.

\begin{example}
Let $(X,\omega)$ be a compact homogeneous K\"ahler--Einstein surface
$G/G_0$. By~\cite[Corollary~8.98]{Be}, this means that $X$ is a
complex torus or it is simply connected. Let $E$ be a homogeneous
vector bundle on $X$ induced by an irreducible representation of $G_o$ in
$\SU(r)$, with induced $G$-invariant Hermitian metric $H$ and
$G$-invariant unitary connection $A$. Then the pair $(\omega,A)$
satisfies the system of equations
\begin{equation}
\label{eq:KEYMeq} \left. \begin{array}{l}
F_A^{+} = 0\\
\alpha_0 (\rho_\omega - c'\omega) = \alpha_1 (2(\Lambda_{\omega} F_A)
\wedge F_A - \Lambda_\omega (F_A \wedge F_A) - c'' \omega)
\end{array}\right \},
\end{equation}
for real numbers $c',c''$, where $\rho_\omega$ is the Ricci form of
$\omega$ and $F_A^+ = 0$ is the Anti-Self-Duality equation for the
connection $A$. To prove this, note that $A$ is HYM and
\begin{align*}
2(\Lambda_{\omega} F_A) \wedge F_A - \Lambda_\omega (F_A \wedge F_A) & = - \Lambda_\omega (F_A \wedge F_A)\\
& = \Lambda_\omega (|F_A|^2 \omega^2)\\
& = |F_A|^2 \omega,
\end{align*}
(see~\eqref{eq:identitysquaredFA}), where $|\cdot|$ is the pointwise
norm with respect to $\omega$. Hence $(\omega,A)$
satisfies~\eqref{eq:KEYMeq} because the function $|F_A|^2$ is constant
over $X$ by invariance.
%%%
Observe that the system~\eqref{eq:KEYMeq} is stronger
than~\eqref{eq:CYMeq00}. Indeed, it can be readily checked
from~\cite[Proposition~9.61]{Be} that if $(\omega,A)$ satisfies
\eqref{eq:KEYMeq}, then the associated invariant Riemannian metric on
the total space of the frame $\U(r)$-bundle of $(E,H)$ over $X$,
constructed as in~\secref{sec:CeqscalarKal-K}, is Einstein, and
therefore $(\omega,A)$ satisfies~\eqref{eq:CYMeq00},
by~\eqref{eq:KK-eq}.
\end{example}

\subsection{Stable bundles and cscK manifolds}
\label{sec:example3}

We supply now some cases where Theorem~\ref{thm:DeformationCYMeq4} can
be applied, obtaining examples of solutions with non-zero ratio of the
coupling constants and fixed K\"ahler class. Starting with a cscK
metric, we check that the new K\"ahler metrics that we obtain are not
cscK. Using the contents of~\secref{sub:ANCeq}, we also give an
explicit Example~\ref{ex:obstruction} in which there cannot exist
solutions to the coupled equations.

\begin{example}
\label{ex:ASD-Donaldson}
Let $X$ be a high degree hypersurface of $\mathbb{P}^3$. By theorems
of Aubin and Yau (see e.g.~\cite[Theorem~11.7]{Be}), there exists
a unique K\"ahler--Einstein metric $\omega \in c_1(X)$ with negative
(constant) scalar curvature. Moreover, $c_1(X) < 0$ implies that the
group of automorphisms of the complex manifold $X$ is discrete
(see~\cite[Proposition~2.138]{Be}). Let $E$ be a smooth
$\SU(2)$-principal bundle over $X$ with second Chern number
\[
k = \frac{1}{8\pi^2}\int_X \tr (F_A \wedge F_A) \in \ZZ,
\]
where $A$ is a connection on $E$. When $k$ is sufficiently large, the
moduli space $M_k$ of Anti-Self-Dual (ASD) connections $A$ on $E$ with
respect to $\omega$ is non-empty
(see~\cite[Sec.~10.1.14]{DK}). Moreover, if $k$ is large enough, $M_k$
is non-compact but admits a compactification. Let $A$ be a connection
that determines a point in $M_k$. Then $A$ is irreducible and so we
can apply Theorem~\ref{thm:DeformationCYMeq4}(1), obtaining solutions
$(\omega_\alpha,A_\alpha)$ of~\eqref{eq:CYMeq00} with $[\omega_\alpha]
= [\omega]$, nonzero values of the coupling constants $\alpha_0$,
$\alpha_1$ and small ratio
\[
\alpha = \frac{\alpha_1}{\alpha_0}.
\]
We claim that if the pointwise norm
\begin{equation}
\label{eq:examplesFAnorm}
|F_{A_0}|_{\omega_0}^2\colon X \lto \RR
\end{equation}
of the initial HYM connection $A_0 = A$ with respect to the
K\"{a}hler--Einstein metric $\omega_0 = \omega$ is not constant, then
$\omega_\alpha$ is not cscK for $0 < \alpha \ll 1$. To see this, note
that $(\omega_\alpha,A_\alpha)$ approaches uniformly to
$(\omega_0,A_0)$ as $\alpha\to 0$ (see
Theorem~\ref{thm:DeformationCYMeq3}) and so
\[
\lim_{\alpha \to 0} \left||F_{A_\alpha}|_{\omega_\alpha}^2 -
  |F_{A_0}|_{\omega_0}^2\right|_{L^{\infty}} = 0.
\]
Hence if $\eqref{eq:examplesFAnorm}$ is not constant, then
$|F_{A_\alpha}|_{\omega_\alpha}^2$ is not constant for small $\alpha$,
so the claim follows from
\[
S_{\omega_\alpha} = \frac{c}{\alpha_0} - \alpha
\Lambda^2_{\omega_\alpha}(F_{A_\alpha} \wedge F_{A_\alpha}) =
\frac{c}{\alpha_0} + \alpha |F_{A_\alpha}|_{\omega_\alpha}^2,
\]
where $c \in \RR$. This last equation is satisfied because
$(\omega_\alpha,A_\alpha)$ is a solution to~\eqref{eq:CYMeq00}. To
choose an ASD connection for which \eqref{eq:examplesFAnorm} is not a
constant, we consider a sequence of ASD connections
$\{A^l\}_{l=0}^\infty$ defining points of $M_k$ and approaching a
point on the boundary of the compactification. When $l\gg 0$, the
connections $A_l$ start bubbling. This bubbling is reflected in the
fact that the function \eqref{eq:examplesFAnorm} becomes more and more
concentrated in a finite number of points of the manifold. Therefore,
eventually, we obtain an ASD irreducible connection for which
\eqref{eq:examplesFAnorm} is not a constant.

To be more precise, recall that any point on the boundary of the
compactification of $M_k$ is given by an ideal connection
(see~\cite[Definition~4.4.1]{DK}), i.e. an unordered $d$-tuple $(p_1,
\ldots, p_d)$ of points on $X$ and a connection $A_\infty$ on
$M_{k-d}$, the moduli space of ASD connections on a suitable smooth
$\SU(2)$-bundle $E_{k-d}$ with second Chern number $k-d$. If $[A_l]
\to [A_\infty]$ as $l\to \infty$, then for any continuous function $f$
on $X$ (see~\cite[Theorem~4.4.4]{DK}),
\begin{equation}
\label{eq:convergencemeasure}
\lim_{l \to \infty} \int_X f \tr (F_{A_l} \wedge F_{A_l}) = \int_X f \tr (F_{A_\infty} \wedge F_{A_\infty}) + 8\pi^2 \sum_{m=1}^{d}f(p_m).
\end{equation}
Take $A_\infty$ in $M_{k -d}$ with $d>0$. If $|F_{A_l}|_{\omega}^2$ is
constant for all $l$, using~\eqref{eq:convergencemeasure} and the
equality
\[
|F_{A_l}|_{\omega}^2 \omega^2 = \tr (F_{A_l} \wedge F_{A_l}),
\]
we obtain that $d = 0$ and hence a contradiction (e.g., in
\eqref{eq:convergencemeasure}, take a sequence $\{f_j\}_{j=1}^\infty$
of test functions approaching the delta function of a point $p_i$ on
$X$).
\end{example}

The hypothesis of Theorem~\ref{thm:DeformationCYMeq4} hold in much
more generality. By the Donaldson--Uhlenbeck--Yau
Theorem~\cite{D5,UY}, which admits a generalization to principal
bundles (see~\cite{ABi,RS}), a family of examples generalizing
Example~\ref{ex:ASD-Donaldson} is provided by polystable holomorphic
principal bundles over cscK manifolds with no non-zero Hamiltonian Killing vector fields. Recall that this
theorem states that if a holomorphic principal $G^c$-bundle $(E^c,I)$
is (Mumford--Takemoto) polystable with respect to a K\"ahler class
$\Omega$ on a compact complex manifold $X$, then for any K\"ahler form
$\omega \in \Omega$ there exists a reduction $H$ of $(E^c,I)$ to $G$
which is HYM with respect to $\omega$.

Let $(X,L)$ be a compact polarised manifold whose first Chern class
$c_1(X)$ satisfies
\[
c_1(X) = \lambda c_1(L)
\]
for some $\lambda \in \ZZ$. When $\lambda < 0$ (e.g. if $X$ is a high
degree hypersurface of $\mathbb{P}^m$), $X$ has finite group of
automorphisms and by the above result of Aubin and Yau, there exists a
unique K\"ahler--Einstein metric $\omega \in c_1(L)$. If $\lambda =
0$, then by Yau's a solution to Calabi's Conjecture (see
e.g.~\cite[Theorem~11.7]{Be}), there exists a unique Ricci flat metric
on $c_1(L)$. As the dimension of the group of automorphisms of such
manifolds is equal to its first Betti number (see
\cite[Remark~11.22]{Be}), the simply connected ones (e.g. K3 surfaces)
are complex Ricci flat manifolds with finite group of
automorphisms. If $\lambda > 0$, it has been recently proved \cite{ChDoSun,T5} that $c_1(L)$
admits a K\"ahler--Einstein metric if and only if $(X,L)$ is K-stable. Let us restrict to the
case
\[
X = \mathbb{P}^2 \; \sharp \; m \overline{\mathbb{P}}^2,
\]
the complex surface obtained by blowing up $\mathbb{P}^2$ at $m$
generic points (see \cite{TY}). If we take $m$ such that $3 < m < 8$
then $c_1(X) > 0$, $X$ has finite automorphism group
(see~\cite[Remark~3.12]{T4}) and it was proved in~\cite{TY} that $X$
admits a K\"ahler--Einstein metric.

On the other hand, given a polarised projective manifold $(X,L)$
(without any assumption on $c_1(X)$), an asymptotic result of Maruyama
\cite{Ma} states that there exist $c_1(L)$-stable vector bundles $E$
over $X$ of rank $r$, provided that $r > \dim X > 2$ and
\begin{equation}
\label{eq:stablebundlesufcondition}
c_2(E) \cdot c_1(L)^{n-2} \gg 0.
\end{equation}
If $X$ has finite group of automorphisms and it is endowed with a
K\"ahler--Einstein metric $\omega \in c_1(L)$ as before, then we can
apply Theorem~\ref{thm:DeformationCYMeq4}.

\begin{example}
Let $(X,\omega)$ be a K\"ahler--Einstein manifold. Then $\omega$ is
a cscK metric, which determines a Hermitian--Yang--Mills metric $H$ on
the tangent bundle $E^c=TX$. The pair $(\omega,H)$ is a solution
to~\eqref{eq:CYMeq2} with $\alpha_1 = 0$, but it is not a solution
with $\alpha_1 \neq 0$ unless the Chern connection of $H$ is flat. If
$c_1(X)\leq 0$, then there are no non-zero Hamiltonian holomorphic
vector fields over $X$, so $\cF_{0,\Omega}=\cF_{\infty,\Omega}=0$ and
as in Theorem~\ref{thm:DeformationCYMeq4}, $(0,\Omega)$ has an open
neighbourhood $U\subset \RR \times H^{1,1}(X,\RR)$ such that for all
$(\widetilde{\alpha},\widetilde{\Omega}) \in U$, there exists a
solution $(\widetilde{\omega},\widetilde{H})$ to the coupled
equations~\eqref{eq:CYMeq2} with coupling constants satisfying
$\alpha_1/\alpha_0=\widetilde{\alpha}$ and $[\widetilde{\omega}] =
\widetilde{\Omega}$.
\end{example}

We will now construct an example where the $\alpha$-Futaki character
$\cF_I$ obstructs the existence of solutions to the coupled equations
for small ratio of the coupling constants.

\begin{example}
\label{ex:obstruction}
Let $(X,\omega)$ be a K\"ahler manifold such that $\omega$ is not a
cscK metric but it is extremal (e.g. $\CC\mathbb{P}^2$ blown up at one
point~\cite{Ca}). Recall from~\secref{sec:Defextremalholomorphic} that
the extremality condition is equivalent to the condition that
$S_\omega$ is the Hamiltonian function of a real holomorphic Killing
vector field $\eta$. Since $\omega$ is not a cscK metric, it follows
from~\eqref{eq:alphafutakismooth} and~\eqref{eq:Futaki-0,infty} that
the classical Futaki character of the K\"ahler class $\Omega=[\omega]$
evaluated at $\eta$ is
\[
  \langle\cF_{0,\Omega},\eta\rangle = \int_X(S_\omega-\hat{S})^2\omega^{[n]} > 0.
\]
Note that $\eta$ lifts to a holomorphic vector field $\zeta \in \Lie
\Aut (TX)$ on the holomorphic tangent bundle $E^c=TX$ of $X$. It
follows from~\eqref{eq:alphafutakismooth} that the $\alpha$-Futaki
character $\cF_I$ evaluated at $\zeta$ is positive for sufficiently
small values of $\alpha_1/\alpha_0>0$. Hence the pair $(X,TX)$ does
not admit a solution $(\omega,H)$ to~\eqref{eq:CYMeq2} with
$\omega\in\Omega$ and these values of the coupling constants.

Given an arbitrary holomorphic principal $G^c$-bundle $E^c$ over $X$,
the obstruction to lift a holomorphic vector field on $X$ to a
$G^c$-invariant holomorphic vector field on $E^c$ lies in $H^1(X,\ad
E^c)$ (cf.~\eqref{eq:infinit-action-connections}). Note that when $G^c
= \CC^*$, the previous argument always applies.
\end{example}

Let $E^c$ be a stable holomorphic principal $G^c$-bundle over a polarised manifold $(X,L)$. In this situation, the Donaldson-Uhlembeck-Yau Theorem allows us to think of the coupled equations as a generalization of the constant scalar curvature equation for a K\"ahler metric $\omega \in c_1(L)$. More precisely, given such $\omega$ there exists a unique HYM reduction $H$ on $E^c$ with respect to $\omega$ and therefore \eqref{eq:CYMeq2} can be interpreted as a single scalar equation for the K\"ahler metric. Although this approach may not be very useful in general, it becomes very explicit for the case of a line bundle $E^c$. In this case, a solution of the coupled equations is equivalent to a pair $(\omega,\beta)$, where $\beta$ is a harmonic $(1,1)$-form with $[\beta]/\sqrt 2 = 2\pi c_1(E^c)$ and satisfying
\begin{equation}\label{eq:CYMeqline}
S_\omega - \alpha|\beta|^2_\omega = c'
\end{equation}
for a real constant $c' \in \RR$, where $\alpha =
\alpha_1/\alpha_0$. As for this, we simply note that $H$ is a HYM
Hermitian metric on $E^c$ with respect to $\omega$ if and only if
$iF_H$ is harmonic. Therefore, for line bundles, the coupled equations
provide a deformation of the constant scalar curvature equation by a
harmonic $(1,1)$-form (cf. \cite{Stoppa}). This point of view has been
recently used by Keller and T{\o}nnesen-Friedman to find solutions of
the coupled equations on polarised complex 3-folds that do not admit
any cscK metric \cite{KellerTonnesen}. We should stress that in
general equation \eqref{eq:CYMeqline} is as difficult as the cscK
equation, which has been completely solved only in the
K\"ahler--Einstein case \cite{ChDoSun,T5}. It would be interesting to
study these deformations in terms of K-stability.

\subsection{CscK metrics on ruled manifolds}
\label{sec:example4}

We now briefly discuss the relation between
equation~\eqref{eq:ext-HYM}, given by the limit
\[
\alpha_0 \to 0
\]
in~\eqref{eq:CYMeq00}, and the existence of solutions to the cscK
equation on ruled manifolds. We will use existence results of
Y. J. Hong~\cite{Ho1,Ho2}.

Let $(X,J,\omega)$ be a compact K\"ahler manifold with constant scalar
curvature and $E$ a holomorphic stable vector bundle of degree zero
over $X$ (examples of this type were already provided
in~\secref{sec:example3}). Let $H$ be a Hermitian metric on $E$ whose
Chern connection $A$ is HYM (it exists by the
Donaldson--Uhlenbeck--Yau Theorem~\cite{D5, UY}). Let $L$ be the
tautological bundle over the projectivised bundle $\mathbb{P}(E)$ of
$E$ and $F_{\textrm{A}_{L^{\ast}}}$ the curvature of the connection
induced by $A$ on $L^{\ast}$. Then the $2$-form
\[
\frac{i}{2\pi} F_{\textrm{A}_{L^{\ast}}}
\]
is non-degenerate on the fibres and in fact it induces the
Fubini--Study metric, so
\[
\widehat{\omega}_k
= \frac{i}{2\pi} F_{\textrm{A}_{L^{\ast}}} + k \pi^{\ast}\omega
\]
is a K\"ahler metric on $\mathbb{P}(E)$ for $k$ large enough. When the
automorphism group of $(X,J)$ is finite, Y.J. Hong~\cite{Ho1} used a
deformation argument to prove that the cohomology class
$[\widehat{\omega}_k]$ contains a cscK metric for $k \gg 0$. Let $\cX$
be the extended gauge group of the frame $\PU(r)$-bundle of the
Hermitian vector bundle $(E,H)$ and $\cX_I\subset\cX$ the stabilizer
of the connection $A$. The assumption on $\Aut X$ was removed
in~\cite{Ho2} (see~\cite[Definition~I.A]{Ho2}), under the additional
conditions that the subgroup
\[
\cX_I \subset \Aut \mathbb{P}(E)
\]
is finite and
\begin{equation}
\label{eq:Hongcondition}
\Lambda^2_{\omega} (\tr F_A \wedge \tr F_A + \tr F_A \wedge
\rho_{\omega} + F_A \wedge F_A) = \text{const.}.
\end{equation}
Since $c_1(E) = 0$, this second condition reduces to
\[
\Lambda^2_{\omega}\tr(F_A \wedge F_A) = - \frac{4\hat{c}}{(n-1)!} \in \RR.
\]
The condition \eqref{eq:Hongcondition} appears when one splits the
linearization of the cscK equation on $\mathbb{P}(E)$ into vertical and
horizontal parts with respect to the connection $A$.

Hence we conclude that when $c_1(E) = 0$ and $\cG_I$ is finite, the
existence of a solution to~\eqref{eq:ext-HYM} is a sufficient
condition for the existence of a cscK metric in the cohomology class
$[\widehat{\omega}_k]$ for $k\gg 0$
(see~\cite[Theorem~III.A]{Ho2}). It would be interesting to study
further this relation, trying to prove that the existence of solutions
to the coupled equations for small $\frac{\alpha_1}{\alpha_0} > 0$
implies the existence of constant scalar curvature K\"ahler metrics on
$\mathbb{P}(E)$ with K\"ahler class $kc_1(L)$ for large $k$. This
would provide a generalization of Hong's results in \cite{Ho2}.

%%%%%%%%%%%%%%%%%%%%%%%%%%%%%%%%%%%%%%%%%%%%%%%%%%%%%%%%%%%%%%%%%%%%%%%%%


\begin{thebibliography}{12}
\frenchspacing\smallbreak

\bibitem{ACMM}
        M. C. Abbati, R. Cirelli, A. Mani\`a and P. W. Michor,
        \emph{The Lie group of automorphisms of a principal bundle},
        J. Geom. Phys. \textbf{6} (1989) 215--235.

\bibitem{AGGvortices}
        L. \'Alvarez-C\'onsul, M. Garc\'{\i}a-Fern\'andez and O. Garc\'{\i}a-Prada,
        \emph{Gravitating vortices}, (in preparation).

\bibitem{ABi}
        B. Anchouche and I. Biswas,
        \emph{Einstein--Hermitian connections on polystable principal bundles over a compact K\"{a}hler manifold},
        Amer. J. Math. \textbf{123} (2001) 207--228.

%\bibitem{AGF}
%        B. Andreas and M. Garcia-Fernandez,
%        \emph{Solutions of the Strominger system via stable bundles on
%          Calabi-Yau threefolds},
%        \texttt{arXiv:1008.1018 [math.DG]} (2010).

\bibitem{AB}
        M. F. Atiyah and R. Bott,
        \emph{The Yang--Mills equations over Riemann
        surfaces},
        Philos. Trans. Roy. Soc. London \textbf{A 308} (1983)
        523--615.

\bibitem{Au}
        T. Aubin,
        \emph{Some Nonlinear Problems in Riemannian Geometry}, Springer, 1998.

%\bibitem{BM}
 %       R. Bartnik and J. McKinnon,
  %      \emph{Particlelike solutions of the Einstein--Yang--Mills equation},
  %      Phys. Rev. Lett. (2) {\bf 61} (1988) 141--144.

\bibitem{Be}
        A. L. Besse,
        \emph{Einstein Manifolds},
        Springer, 1987.

% \bibitem{Bi}
%         O. Biquard,
%         \emph{M\'{e}triques k\"{a}hl\'{e}riennes \`{a} courbure scalaire
%         constante}, S\'{e}minaire Bourbaki Vol. 2004/2005, Ast\'{e}risque \textbf{307} (2006) 1--31.

%\bibitem{BG}
 %       I. Biswas and T. L. G\'omez,
  %      \emph{Simplicity of stable principal sheaves},
 %       Bull. London Math. Soc. \textbf{40} (2008) 163--171.

\bibitem{Bo}
        J. P. Bourguignon,
        \emph{Invariants int\'{e}graux functionnels pour des \'{e}quations
        aux d\'{e}riv\'{e}es partielles d'origine g\'{e}om\'{e}trique},
        Partial differential equations \textbf{1}, \textbf{2} (1990) 65--73.

\bibitem{Ca}
        E. Calabi,
        \emph{Extremal Kahler metrics},
        in `Seminar on differential geometry' (S.-T. Yau ed.), Annals
        of Math. Studies \textbf{102}, Princeton Univ. Press, 1982,
        259--290.

\bibitem{Capo}
       L.~Caporaso,
       \emph{A compactification of the universal Picard variety over the moduli space of stable curves},
       J. Amer. Math. Soc. \textbf{7} (1994) 589--660.


\bibitem{Ch1}
        X. X. Chen,
        \emph{The space of K\"ahler metrics},
        J. Differential Geom. \textbf{56} (2000) 189--234.

\bibitem{Ch2}
        \bysame,
        \emph{Space of K\"ahler metrics. III. On the lower bound of the Calabi energy and geodesic distance},
        Invent. Math. \textbf{175} (2009) 453--503.

\bibitem{ChDoSun}
        X. X. Chen, S. K. Donaldson and S. Sun,
        \emph{K\"ahler--Einstein metrics and stability}, \texttt{arXiv:1210.7494} (2012).

\bibitem{ChT}
        X. X. Chen and G. Tian,
        \emph{Geometry of K\"ahler metrics and foliations by holomorphic discs},
        Publ. Math. Inst. Hautes \'Etudes Sci. \textbf{107} (2008) 1--107.

\bibitem{D2}
        S. K. Donaldson,
        \emph{A new proof of a theorem of Narasimhan and Seshadri},
        J. Differential Geom. \textbf{18} (1983) 269--277.

\bibitem{D3}
        \bysame,
        \emph{Anti-self-dual Yang--Mills connections on a complex algebraic
        surface and stable vector bundles},
        Proc. London Math. Soc. \textbf{50} (1985) 1--26.

\bibitem{D5}
        \bysame,
        \emph{Infinite determinants, stable bundles and curvature},
        Duke Math. J. \textbf{54} (1987) 231--247.

\bibitem{D1}
        \bysame,
        \emph{Remarks on gauge theory, complex geometry and $4$-manifold topology}, in `Fields Medallists' lectures' (Atiyah, Iagolnitzer eds.), World
        Scientific, 1997, 384--403.

\bibitem{D6}
        \bysame,
        \emph{Symmetric spaces, K\"ahler geometry and Hamiltonian Dynamics},
        in `Northern California Symplectic Geometry Seminar' (Y. Eliashberg et al. eds.),
        Amer. Math. Soc., 1999, 13--33.

\bibitem{D7}
        \bysame,
        \emph{Scalar curvature and projective embeddings, I},
        J. Differential Geom. \textbf{59} (2001) 479--522.

\bibitem{DK}
        S. K. Donaldson and P. B. Kronheimer,
        \emph{The geometry of four-manifolds}, Oxford University Press, 1990.

\bibitem{Fj}
        A. Fujiki,
        \emph{Moduli space of polarized algebraic manifolds and
        K\"ahler metrics}, Sugaku
        Expo. \textbf{5} (1992) 173--191.

\bibitem{Ft0}
        A. Futaki,
        \emph{An obstruction to the existence of Einstein K\"ahler metrics},
        Invent. Math. \textbf{73} (1983) 437--443.

\bibitem{Ft1}
        \bysame,
        \emph{Asymptotic Chow Semi-stability and integral invariants},
        Internat J. Math. \textbf{15} (2004) 967--979.

\bibitem{FO}
        A. Futaki and H. Ono,
        \emph{Einstein metrics and GIT stability}, Sugaku {\bf 60} (2008) 175--202.

\bibitem{GF1}
        M. Garc\'{i}a-Fern\'{a}ndez,
        \emph{Coupled equations for K\"ahler metrics and Yang--Mills connections}.
        PhD Thesis. Instituto de Ciencias Matem\'aticas
        (CSIC-UAM-UC3M-UCM), Madrid, 2009, \texttt{arXiv:1102.0985 [math.DG]}.

\bibitem{GFT}
        M. Garc\'{i}a-Fern\'{a}ndez and C. Tipler,
        \emph{Deformation of complex structures and the Coupled K\"ahler--Yang--Mills equations}, \texttt{arXiv:1301.4480 [math.DG]} (2013).

\bibitem{GiesMorr}
       D.~Gieseker and I.~Morrison,
       \emph{Hilbert stability of rank-two bundles on curves},
       J. Differential Geom. \textbf{19} (1984) 1--29.

\bibitem{Gr}
        M. Gromov,
        \emph{Pseudoholomorphic curves in symplectic manifolds},
        Invent. Math. \textbf{82} (1985) 307--347.

\bibitem{Ho1}
        Y.-J. Hong,
        \emph{Constant Hermitian scalar curvature equations on ruled manifolds},
        J. Differential Geom. \textbf{53} (1999) 465--516.

\bibitem{Ho2}
        \bysame,
        \emph{Stability and existence of critical Kaehler metrics on ruled manifolds},
        J. Math. Soc. Japan  \textbf{60} (2008) 265--290.

\bibitem{Hu}
        L. Huang, \emph{On joint moduli spaces}. Mat. Ann. \textbf{302} (2005) 61-79.

\bibitem{KellerTonnesen}
		J. Keller and C. T{\o}nnesen-Friedman,
		\emph{Non trivial examples of coupled equations for K\"ahler metrics and Yang-Mills connections}, Central European J. Math. (5) {\bf 10} (2012) 1673--1687.

\bibitem{KN}
        G. Kempf and L. Ness,
        \emph{The length of vectors in representation spaces},
        Lecture Notes in Mathematics \textbf{732}, Springer, 1982, 233--243.

%\bibitem{KM}
 %       A. Kriegl and P. W. Michor,
  %      \emph{The convenient setting of global analysis},
   %     Mathematical Surveys and Monographs \textbf{53},
    %    American Mathematical Society, 1997.

\bibitem{Ko}
       S. Kobayashi,
       \emph{Differential Geometry of Complex Vector Bundles},
       Princeton University Press, 1987.

\bibitem{KNI}
        S. Kobayashi and K. Nomizu,
        \emph{Foundations of Differential Geometry}, Volume I,
        Interscience Publishers, New york, 1963.

\bibitem{KNII}
        \bysame,
        \emph{Foundations of Differential Geometry}, Volume II,
        Interscience Publishers, New york, 1969.

\bibitem{Lb}
        C. LeBrun,
        \emph{The Einstein-Maxwell Equations, Extremal K\"{a}hler
        Metrics, and Seiberg-Witten Theory}, The many facets of
        geometry, Oxford Univ. Press, Oxford (2010) 17--33. Eds.: O. Garc\'{\i}a-Prada, J.-P. Bourguignon, S. Salamon.


\bibitem{LS2}
        C. LeBrun and R. Simanca,
        \emph{On the K\"ahler Classes of Extremal K\"ahler Metrics},
        Geometry and global analysis (1993) 225--271, Tohoku Univ., Sendai.

\bibitem{LS1}
        \bysame,
        \emph{Extremal K\"ahler metrics and complex deformation theory},
        Geometric and Functional Analysis \textbf{4} (1994) 298--336.


\bibitem{LiYau}
       J.~Li and S.-T.~Yau,
       \emph{The existence of supersymmetric string theory with torsion},
       J. Diff. Geom. \textbf{70} (2005) 143--181.

\bibitem{Mab2}
        T. Mabuchi,
        \emph{K-energy maps integrating Futaki invariants},
        Tohoku Math. J. \textbf{38} (1986) 575--593.

\bibitem{Mab1}
        \bysame,
        \emph{Some symplectic geometry on compact K\"ahler manifolds (I)},
        Osaka J. Math. \textbf{24} (1987) 227--252.

\bibitem{MMOPR}
        J. E. Marsden, G. Misiolek, J.-P. Ortega, M. Perlmutter and T. S. Ratiu,
        \emph{Hamiltonian Reduction by Stages}, in `Lecture Notes in Mathematics', Springer, Berlin, 2007.

\bibitem{Ma}
        M. Maruyama,
        \emph{Moduli of stable sheaves II},
        J. Math. Kyoto Univ. \textbf{18} (1979) 557--614.

\bibitem{McS}
        D. McDuff and D. Salamon,
        \emph{Introduction to Symplectic Topology},
        Oxford University Press, New York, Second edition, 1998.

\bibitem{MR}
        I. Mundet i Riera,
        \emph{A Hitchin--Kobayashi correspondence for K\"ahler Fibrations},
        J. Reine Angew. Math. \textbf{528} (2000) 41--80.

\bibitem{Pd}
        R. Pandharipande,
        \emph{A compactification over $M_g$ of the universal moduli space of slope-semistable vector bundles},
        J. Amer. Mat. Soc. \textbf{9} (1996) 425--471.

\bibitem{ramanathan:1975}
        A.~Ramanathan, \emph{Stable principal bundles on a compact {R}iemann surface},
        Math. Ann. \textbf{213} (1975) 129--152.

\bibitem{RS}
        A.  Ramanathan and S. Subramanian,
        \emph{Einstein-Hermitian connections on principal bundles and stability},
        J. Reine Angew. Math. \textbf{390} (1988) 21--31.

\bibitem{ST}
        G. Schumacher and M. Toma,
        \emph{Moduli of K\"ahler manifolds equipped with Hermite--Einstein vector bundles},
        Rev. Roumaine Math. Pures Appl. \textbf{38} (1993) 703--719.

\bibitem{Se}
        S. Semmes,
        \emph{Complex Monge--Amp\`ere and symplectic manifolds},
        Amer. J. Math. \textbf{114} (1992) 495--550.

\bibitem{Si}
        I. M. Singer,
        \emph{The geometric interpretation of a special connection},
        Pacific J. Math. \textbf{9} (1959) 585--590.

\bibitem{Stoppa}
       J.~Stoppa,
       \emph{Twisted constant scalar curvature K\"ahler metrics and K\"ahler slope stability},
       J. Diff. Geom. (3) \textbf{83} (2009) 663--691.

%\bibitem{Strom}
%       A. Stromminger,
%       \emph{Superstrings with torsion},
%       Nucl. Phys. B \textbf{274} (1986) 253--284.

\bibitem{Te}
        A. Teleman,
        Symplectic stability, analytic stability in non-algebraic complex geometry,
        Internat. J. Math. \textbf{15} (2004) 183--209.

\bibitem{T4}
        G. Tian,
        \emph{Canonical metrics in K\"ahler geometry. Notes taken by Meike Akveld}, Lectures in Mathematics, ETH, Z\"{u}rich, Birkh\"{a}user Verlag,
        Basel, 2000.

\bibitem{T5}
        \bysame,
        \emph{K-stability and K\"ahler--Einstein metrics}, \texttt{arXiv:1211.4669 [math.DG]} (2012).

\bibitem{TY}
        G. Tian and S.-T. Yau,
        \emph{K\"ahler--Einstein metrics on complex surfaces with $c_1 > 0$},
        Commun. Math. Phys. \textbf{112} (1987) 175--203.

% \bibitem{RT}
%         R. Thomas,
%         \emph{Notes on GIT and symplectic reduction for bundles and varieties}, Surv. Differ. Geom. \textbf{10}, Int. Press, Somerville, MA, 2006.

\bibitem{UY}
        K. K. Uhlenbeck and S.-T. Yau,
        \emph{On the existence of Hermitian--Yang--Mills connections
        on stable bundles over compact K\"ahler manifolds}, Comm. Pure and Appl. Math. \textbf{39-S} (1986) 257--293; \textbf{42} (1989) 703--707.

\bibitem{W}
        X. Wang,
        \emph{Moment map, Futaki invariant and stability of projective manifolds},
        Communications in analysis and geometry \textbf{12} (2004) 1009--1038.

\bibitem{Yang}
        Y. Yang,
        \emph{Prescribing Topological Defects for the Coupled Einstein and Abelian Higgs Equations},
        Comm. Math. Phys. {\bf 170} (1995) 541--582.

\bibitem{Yau}
        S.-T. Yau,
        \emph{On the Ricci curvature of a compact K\"ahler manifold and the complex Monge--Amp\`ere equation. I},
        Comm. Pure Appl. Math. 31 (1978) 339--411.

% \bibitem{Y1}
%         \bysame,
%         \emph{Metrics on complex manifolds},
%         Sci. China Math. (3) \textbf{53} (2010) 565--572.

\end{thebibliography}
\end{document}